\newtheorem{theorem}{Theorem}[section]
\newtheorem{lemma}[theorem]{Lemma}
\newtheorem{proposition}[theorem]{Proposition}
\newtheorem{hypothesis}[theorem]{Hypothesis}
\newtheorem{remark}[theorem]{Remark}
\newtheorem{algorithm}[theorem]{Algorithm}
\definecolor{darkred}{rgb}{0.6,0.1,0.1}
\definecolor{darkgreen}{rgb}{0.1,0.6,0.1}
\definecolor{darkblue}{rgb}{0.1,0.1,0.6}
\newcommand{\bD}{{\overline{D}}}
\newcommand{\rb}{\|_{{\mathcal B}}}
\newcommand{\cB}{\mathcal B}
\newcommand{\cG}{\mathcal G}
\newcommand{\bbR}{\mathbb{R}}
\newcommand{\bbQ}{\mathbb{Q}}
\newcommand{\cN}{\mathcal{N}}
\newcommand{\Real}{\mathbb{R}}
\newcommand{\ve}{\varepsilon}
\newcommand{\la}{\langle}
\newcommand{\ra}{\rangle}
\begin{document}
\title{Well-Posed Bayesian Geometric Inverse Problems Arising in Subsurface Flow}
\author{Marco A. Iglesias$^\dag$, Kui Lin$^\ast$ \footnote[0]{$^\ast$ Corresponding author.} and Andrew M. Stuart$\ddag$}
\address{$^\dag$ School of Mathematical Sciences, University of Nottingham, Nottingham NG7 2RD, UK\\
$^\ast$ School of Mathematical Sciences, Fudan University, Shanghai, 200433, China\\
$\ddag$ Mathematics Institute, University of Warwick, Coventry CV4 7AL, UK}
\ead{link10@fudan.edu.cn}

\begin{abstract}
In this paper, we consider the inverse problem of
determining the permeability of the subsurface from hydraulic
head measurements, within the framework of a
steady Darcy model of groundwater flow. We study geometrically defined
prior permeability fields, which admit layered, fault and channel structures, in order
to mimic realistic subsurface features; within each layer we adopt either constant
or continuous function representation of the permeability. This prior model
leads to a parameter identification problem for a finite number of unknown parameters
determining the geometry, together with either a finite number of permeability
values (in the constant case) or a finite number of fields (in the continuous
function case).
We adopt a Bayesian framework showing existence and well-posedness of
the posterior distribution. We also introduce novel Markov Chain-Monte Carlo (MCMC) methods, which
exploit the different character of the geometric and permeability
parameters, and build on recent advances in function space MCMC. These algorithms
provide rigorous estimates of the permeability, as well as the uncertainty
associated with it, and only require forward model evaluations. No adjoint
solvers are required and hence the methodology is applicable to black-box
forward models.
We then use these methods to explore
the posterior and to illustrate the methodology with numerical experiments.
\end{abstract}

%Uncomment for PACS numbers title message
%\pacs{00.00, 20.00, 42.10}
% Keywords required only for MST, PB, PMB, PM, JOA, JOB?
%\vspace{2pc}
%\noindent{\it Keywords}: Article preparation, IOP journals
% Uncomment for Submitted to journal title message
%\submitto{\JPA}
% Comment out if separate title page not required

\submitto{\IP}
\maketitle
%\tableofcontents[]
\section{Introduction}\label{se:sec1}

Quantification of uncertainty in the geologic properties of the subsurface is essential for optimal management and decision-making in subsurface flow applications such as nuclear waste disposal, geologic storage of CO$_2$ and enhanced-oil-recovery. The environmental impact of these applications, for example, cannot be properly assessed without quantifying the uncertainty in geologic properties of the formation. However, typical uncertainty of \textit{a priori} geologic information often results in large uncertainty in the flow predictions. This uncertainty is, in turn, detrimental to optimally managing the process under consideration. A common strategy to reduce uncertainty and improve our ability to make decisions is to incorporate (or assimilate) data, typically corrupted by noise, that arise from the flow response to the given geologic scenario. Because of the
prior uncertainty in geological information, and because of the noise in the
data, this assimilation of data into subsurface flow models is naturally
framed in a Bayesian fashion: the aim is to characterize the \textit{posterior} (conditional) probability of geologic properties given the observed data \cite{OliverReview} . The Bayesian framework is therefore a statistical approach for solving the inverse problem of identifying unknown geologic parameters given noisy data from the flow model. However, in contrast to deterministic approaches, the Bayesian framework provides a quantification of the uncertainty via the posterior; this in turn allows an assessment of the uncertainty arising in the corresponding model predictions. The goal of this paper is to study a class of
inverse problems, arising in subsurface flow, in which both the geometry
and the physical characteristics of the formation are to be inferred, and
to demonstrate how the full power of the Bayesian methodology can be
applied to such problems.
We study existence and well-posedness of the posterior distribution,
describe Markov Chain-Monte Carlo (MCMC)
methods tailored to the specifics of the problems of interest,
and show numerical results based on this methodology. We work within
the context of a steady single-phase Darcy flow model, but the
mathematical approach, and resulting algorithms, may be employed
within more complex subsurface flow models.

\subsection{Literature Review: Computational and Mathematical Setting}

The first paper to highlight the power of the Bayesian approach
to regularization of inverse problems is \cite{Fra70}, where linear inverse
problems, including heat kernel inversion, were discussed. The theory
of Bayesian inversion for linear problems was then further developed in
\cite{Man84, LPS89} whilst the book \cite{KS07b} demonstrated the
potential of the approach for a range of complex inverse problems,
linear and nonlinear, arising in applications. Alongside these
developments was the work of Lasanen which lays the mathematical
foundations of Bayesian inversion for a general class of nonlinear
inverse problems \cite{La02,las12,las12b}. The papers
\cite{marzouk2009stochastic, Mondal} demonstrate approximation results
for the posterior distribution, employing Kullback-Leibler divergence
and total variation metric respectively, for finite dimensional
problems. The paper \cite{AMS10}
demonstrated how the infinite dimensional perspective on Bayesian
inversion leads to a well-posedness and approximation theory, and
in particular to the central role played by the Hellinger
metric in these results.
The papers \cite{CDRS08,CDS09,DS11} demonstrated application
of these theories to various problems arising in fluid mechanics
and in subsurface flow. Most of this work stemming from
\cite{AMS10} concerns the use of Gaussian random field priors and
is hence limited to the reconstruction of continuous fields or to fields with discontinuous properties where the positions of
the discontinuities are known and can be built in explicitly, through the
prior mean, or through construction of the inverse covariance operators
as in \cite{kaipio1999inverse}. The article \cite{las09} introduced Besov priors in order to
allow for the solution of linear inverse problems arising in imaging
where edges and discontinuities are important and this work
was subsequently partially generalized to nonlinear inverse problems arising
in subsurface flow \cite{DHS12}. However none of this work is well-adapted
to the geometrical discontinuous structures observed in subsurface formations,
where layers, faults and channels may arise. In this paper we address
this by formulating well-posed nonlinear Bayesian inverse problems
allowing for such structures.

The computational approach highlighted in \cite{KS07b} is based
primarily on the ``discretize first then apply algorithm'' approach
with, for example, MCMC methods used
as the algorithm. The work highlighted in \cite{David} shows the
power of an approach based on ``apply algorithm then discretize'', leading
to new MCMC methods which have the advantage of having mixing rates
which are mesh-independent \cite{HSV12,Vollmer};
the importance of mesh-independence is
also highlighted in the work \cite{las09}. However the work overviewed
in \cite{David} is again mainly aimed at problems with Gaussian
priors. In this paper we build on this work and develop MCMC
methods which use Metropolis-within-Gibbs methodologies to
separate geometric and physical parameters within the overall MCMC
iteration; furthermore these MCMC methods require only solution
of the forward problem and no linearizations of the forward operator, and are hence suitable
in situations where no adjoint solvers are available and only  black-box forward simulation software is
provided. We use the resulting MCMC methodology to solve some hard
geometric inverse problems arising in subsurface modelling.

\subsection{Literature Review: Subsurface Applications}

While standard approaches for data assimilation in subsurface flow models are mainly based on the Bayesian formalism, most of those approaches apply the Bayesian framework to the resulting finite-dimensional approximation of the model under consideration \cite{OliverReview}. However, recent work \cite{Us} has shown the potential detrimental effect of directly applying standard MCMC methods
to approximate finite-dimensional posteriors which arise from discretization of PDE based Bayesian inverse problems. For standard subsurface flow models, the forward (parameter-to-output) map is nonlinear, and so even if the prior distribution is Gaussian, the posterior is in general non-Gaussian. Therefore, the full characterization of the posterior can only be accomplished by sampling methods such as Markov Chain Monte Carlo (MCMC). On the other hand, unknown geologic properties are in general described by functions that appear as coefficients of the PDE flow model.  Then, the Bayesian posterior of these discretized functions is defined on a very high-dimensional space (e.g $10^{5}\sim 10^{6}$) and sampling with standard MCMC methods \cite{EmeRey,Oliver2} becomes computationally prohibitive. Some standard MCMC approaches avoid the aforementioned issue by parametrizing the unknown geologic properties in terms a small number of parameters (e.g. $10\sim 20$) \cite{Efendiev1,Mondal}. Some others, however, consider the full parameterization of the unknown (i.e. as a discretized function) but are only capable of characterizing posteriors from one-dimensional problems on a very coarse grids \cite{EmeRey,Oliver2}. While the aforementioned strategies offer a significant insight into to the solution of Bayesian inverse problems in subsurface models, there remains substantial opportunity for the improvement and development of Bayesian data assimilation techniques capable of describing the
posterior distributions accurately and efficiently, using the mesh-independent
MCMC schemes overviewed in \cite{David} and applied to subsurface applications
in \cite{Us}. In particular we aim to do so in this paper in the context
of geometrically defined models of the geologic properties.

The petrophysical characterization of complex reservoirs involves the identification of the geologic facies of the formation. For those complex geologies, prior knowledge may include uncertain information of the interface between the geologic facies as well as the spatial structure of each of the rock types. In addition, prior knowledge of complex reservoirs may also include information of potential faults. Moreover, if the depositional environment is known a priori, then geometrical shapes that characterize the petrophsyical properties of the formation may constitute an integral part of the prior information. Whenever the aforementioned information is part of the prior knowledge, the conditioning or assimilation of data should accommodate the geologic data provided a priori. This can be accomplished with the proper parameterization of the geologic properties so that different facies are honored. In \cite{Landa} for example, a channelized structure was parameterized with a small number of unknowns and a deterministic history matching (data assimilation) was conducted on a two-phase flow model. More sophisticated parameterization of geologic facies involves the level-set approach for history matching used by \cite{Dorn,Iglesias4} in a deterministic framework. Recently, in \cite{EfendievL} the level-set approach is combined with the Bayesian framework to provide a characterization of the posterior. This Bayesian application is constructed on the finite-dimensional approximation of the flow-model and is therefore subject to the computational limitations previously indicated, namely mesh-dependent convergence properties. While the work of \cite{EfendievL} provides an upscaling to provide computational feasibility, the computations reported involved a limited number of samples, potentially insufficient for the proper characterization of the Bayesian posterior.

There are also several facies estimation approaches based on
{\em ad hoc} Gaussian approximations of the posterior. For example, in \cite{Liu2005147} a pluri-Gaussian model of the geologic facies was used with an ensemble Kalman filter (EnKF) approach to generate an ensemble of realizations. In \cite{Bi} a randomized likelihood method was used to estimate the parameters characterized with channels. In \cite{EnKF_level} a level-set approach was used with EnKF to generate an ensemble of updated facies. Although the aforementioned implementations are computationally feasible and may recover the truth within credible intervals (Bayesian confidence intervals), the methods may provide uncontrolled approximations of the posterior. Even for simple Gaussian priors, in \cite{Us} numerical evidence has been provided of the poor characterization that ensemble methods may produce when compared to a fully resolved posterior.

It is also worth mentioning the recent work of \cite{CW} where the Bayesian framework was used for facies identification in petroleum reservoirs. This work considers a parametrization of the geologic facies in terms of piecewise constant permeabilities on a multiphase-flow model. The paper demonstrates the need to
properly sample multi-modal posterior distributions for which the
standard ensemble-based methods will perform poorly.

\subsection{Contribution of This Work}

We develop a mathematical and numerical framework for Bayesian inversion to identify geometric and physical parameters of the permeability in a steady Darcy flow model. The geometric parameters aim at characterizing the location and shape of regions where discontinuities in the permeability arise due to the presence of different geologic facies. The physical parameters represent the spatial (usually continuous) variability of the permeability within each of the geologic facies that constitute the formation. We make three primary contributions: (i) we demonstrate the
existence of a well-posed posterior distribution on the geometric
and physical parameters including both piecewise constant
(finite dimensional) and piecewise continuous (infinite dimensional)
representations of the physical parameters; (ii) we describe
appropriate MCMC methods
which respect both the geometry and the possibly infinite dimensional
nature of the physical parameters and which require only forward flow solvers and not
the adjoint; (iii) we exhibit numerical studies
of the resulting posterior distributions.

Clearly piecewise continuous fields will be able to represent more
detailed features within the subsurface than piecewise constant ones.
On the other hand we expect that piecewise constant fields will lead to
simpler Bayesian inference, and in particular to speed-up of the
Markov chains. There is hence a trade-off between accuracy and
efficiency within geometric models of this type. The decision about
which model to use will depend on the details of the problem at hand
and in particular the quantities of interest under the posterior
distribution. For this reason we study both piecewise continuous
and piecewise constant fields.

Continuity of the forward mapping from unknown parameters to data, which
is a key ingredient in establishing the existence of the posterior
distribution \cite{AMS10},
is not straightforward within classic elliptic theories based on $L^{\infty}$
permeabilities, because small changes in the geometry do not induce small
changes in $L^{\infty}$. Nonetheless, one can prove that the forward mapping is
continuous with respect to the unknown parameters, which allows us to show
that the posterior distribution is well-defined. Furthermore, well-posedness
of the inverse problem is established, namely continuity of the posterior
distribution in the Hellinger and total variation metrics, with respect to small
changes in the data. In the piecewise constant case, for log-normal and uniform priors on the values of the
permeability, the data to posterior mapping is Lipschitz in these metrics, whilst
for exponential priors it is H\"{o}lder with an exponent less than $1/2$
(resp. $1$) in the Hellinger (resp. total variation) metrics; problems
for which the dependence is  H\"{o}lder but not Lipschitz have not been
seen in the examples considered to date, such as those in \cite{AMS10},
and hence this dependence is interesting in its own right and may also
have implications for the rate of convergence of numerical approximations. In the case of log-normal permeability field, the posterior is Lipschitz continuous with respect to data in both of these metrics. A novel
Metropolis-within-Gibbs method is introduced in which proposals are made which are
prior-reversible, leading to an accept-reject determined purely by the likelihood
(or model-data mismatch) hence having clear physical interpretation, and
in which the geometric and physical parameters are updated alternately,
within the Gibbs framework.
Finally some numerical examples are presented, for both multiple layers and fault
models, demonstrating the feasibility of the methodology.
We develop a rigorous application of the Bayesian framework for the estimation of geologic facies parametrized with small number of parameters, together
with a finite or infinite dimensional set of physical parameters within
each of the facies. Regarding the geometry
we consider a general class of problems that includes stratified reservoirs with a potential fault, similar to the model used in \cite{CW}. In addition, we consider a simple channelized structure parameterized with small number of parameters, similar to the one described in \cite{Landa}.

The content of this paper is organized as follows. In \Sref{sec:forward}, we provide a simplified description of the the forward model, with piecewise continuous permeabilities, and prove the continuity of the forward and observation map
with respect to the geometric and physical parameters. \Sref{sec:ips} is devoted to the resulting Bayesian inverse problem from the geometric and physical parameters. The prior model is built both for the geometry and the values of permeability. We then show that the posterior measure is well defined and prove
well-posedness results with respect to the data under this prior modeling. In \Sref{se:MCMC}, we introduce the novel Metropolis-within-Gibbs MCMC method to probe the posterior distribution. Some numerical results are shown in \Sref{se:Numer} to illustrate the effectiveness of the proposed methods.

%%%%%%%%%%%%%%%%%%%%%%%%%%%%%%%%%%%%%%%%%%%%%%%%%%%%%%%%%%%%%%%%%%%%%%%%

\section{Forward Model}
\label{sec:forward}

In this section we introduce the subsurface flow model that we employ
for application of the Bayesian framework. In subsection \ref{ssec:fp}
we describe the underlying Darcy flow PDE model, in subsection
\ref{ssec:geo} we introduce the family of geometrically defined
permeabilities employed within the PDE and in subsection \ref{ssec:pd}
we describe the observation model and the permeability to data map.

\subsection{Darcy Flow Model}
\label{ssec:fp}
We are interested in characterizing the geologic properties of an aquifer whose physical domain is denoted by $D$. We assume that $D$ is a bounded open subset of
$\Real^2$ with Lipschitz boundary $\partial D$. We define the Hilbert spaces
$H:=(L^2(D),\la\cdot,\cdot\ra,\|\cdot\|)$, $V:=(H^1_0(D),\la\nabla\cdot,\nabla\cdot\ra,\|\nabla\cdot\|)$, where $H^1_0(D)$ is the usual Sobolev space with zero trace.
Let $V^*$ be as the dual space of $V$. We denote by $X$ the
subset of strictly positive $L^{\infty}$ functions on $D$ that
$X:=\{L^{\infty}(D;\Real)|\rm{ess}\inf_{x \in D} f(x)>0\}$.
We consider steady-state single-phase Darcy-flow described by,
\begin{eqnarray}
    -\nabla\cdot(\kappa\nabla p)& =f,\quad x \in D,\nonumber\\
    \phantom{-\nabla\cdot(\kappa(\nabla}p & =0,\quad x \in \partial D,
\label{eq:elliptic}
\end{eqnarray}
where $p$ denotes the hydraulic head and $\kappa$ the permeability
(proportional to hydraulic conductivity) tensor. For simplicity, the
permeability tensor is assumed to be isotropic and hence represented
as a scalar field. The right hand side $f$ accounts for groundwater recharge. For simplicity we consider Dirichlet boundary conditions where the hydraulic head is prescribed.

The forward Darcy flow problem is, given $\kappa\in X$, to find a weak solution $p\in V$ of (\ref{eq:elliptic}) for any $f\in V^*$. This forward problem
is well-posed by the Lax-Milgram Lemma: if $\kappa_{min}=\rm{ess}\inf_{x \in D} \kappa(x)>0$ then there exists a unique weak solution $p\in V$ satisfying
\begin{equation}
\|p\|_V \leq \|f\|_{V^*}/\kappa_{min},
\label{eq:lax-mil}
\end{equation}
which enables us to define a forward map $G:X \rightarrow V,$ by
\begin{equation}
\label{eq:G}
G(\kappa)=p.
\end{equation}
We concentrate on cases where $\kappa(x)$ is a piecewise function defined
by a geometrical parameterization designed to represent layers and faults or channels. We now describe how we do this.

\subsection{Permeability Model}
\label{ssec:geo}

We are interested in permeability functions $\kappa(x)$ which are either piecewise constant or
piecewise continuous function on different subdomains $D_i$ of $D$, each of which represents
a layer or a facies. Thus we write
\begin{equation}
\kappa(x)=\sum_{i=1}^n \kappa_i \chi_{D_i}(x),
\label{eq:k_piecewise}
\end{equation}
where $\{D_i\}_{i=1}^n$ are open subsets of $D$, moreover $D_i\cap D_j=\varnothing, \forall i\neq j$ and $\cup_{i=1}^n\overline{D_i}=\overline{D}$. Choices of the $D_i$
will be specified for two different geometric models in what follows and we use these models throughout
the paper for both our analysis and our numerical experiments. They are illustrated in Figure \ref{Figure1}(a) and Figure \ref{Figure1}(b).
To completely specify the models we need to parameterize
the geometry $\{D_i\}_{i=1}^n$,
and then describe the variability of the permeability within
each subdomain $D_i.$ We now describe how these are both done.

\begin{figure}[!ht]
\centering
\subfigure[Layer Model with Fault]
{
\includegraphics[width=.3\textwidth]{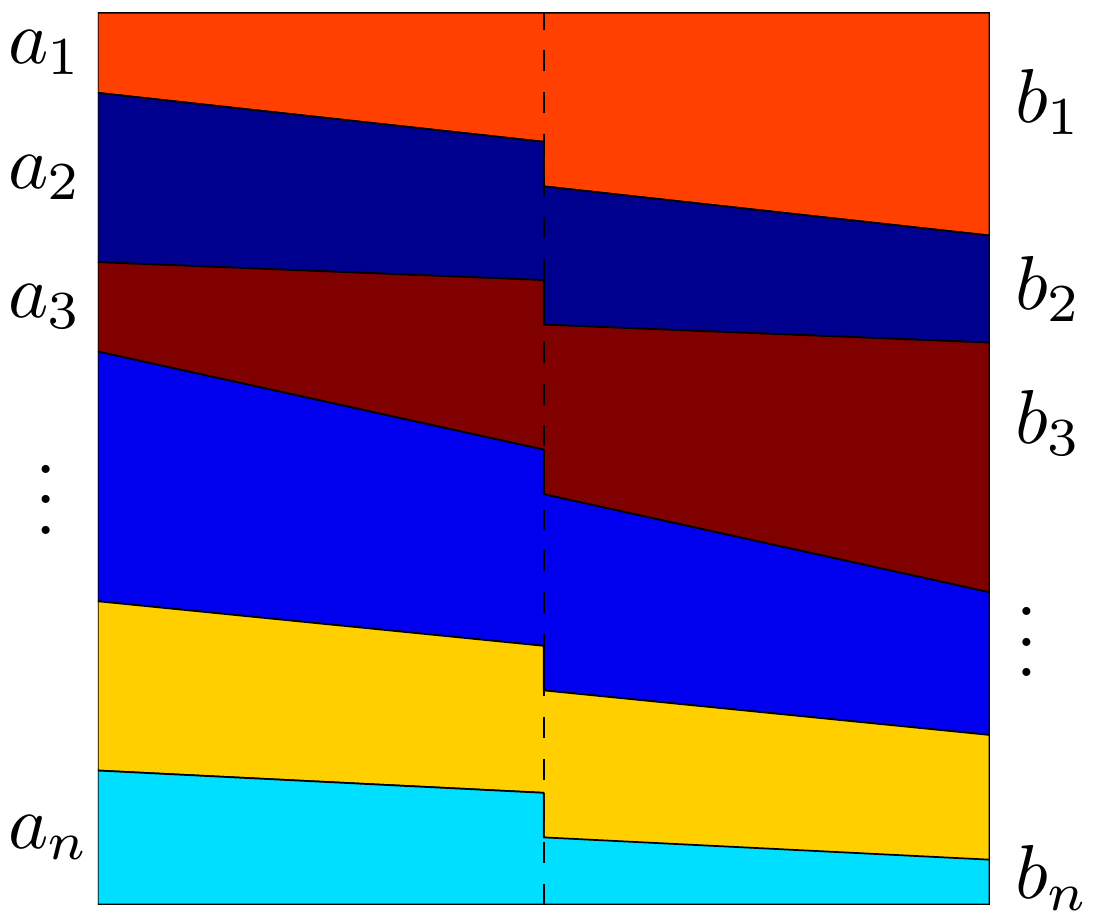}
%\caption{Fault Model}
\label{fig:test2_fault}
}
\subfigure[Channelized Model]
{
\includegraphics[width=.25\textwidth]{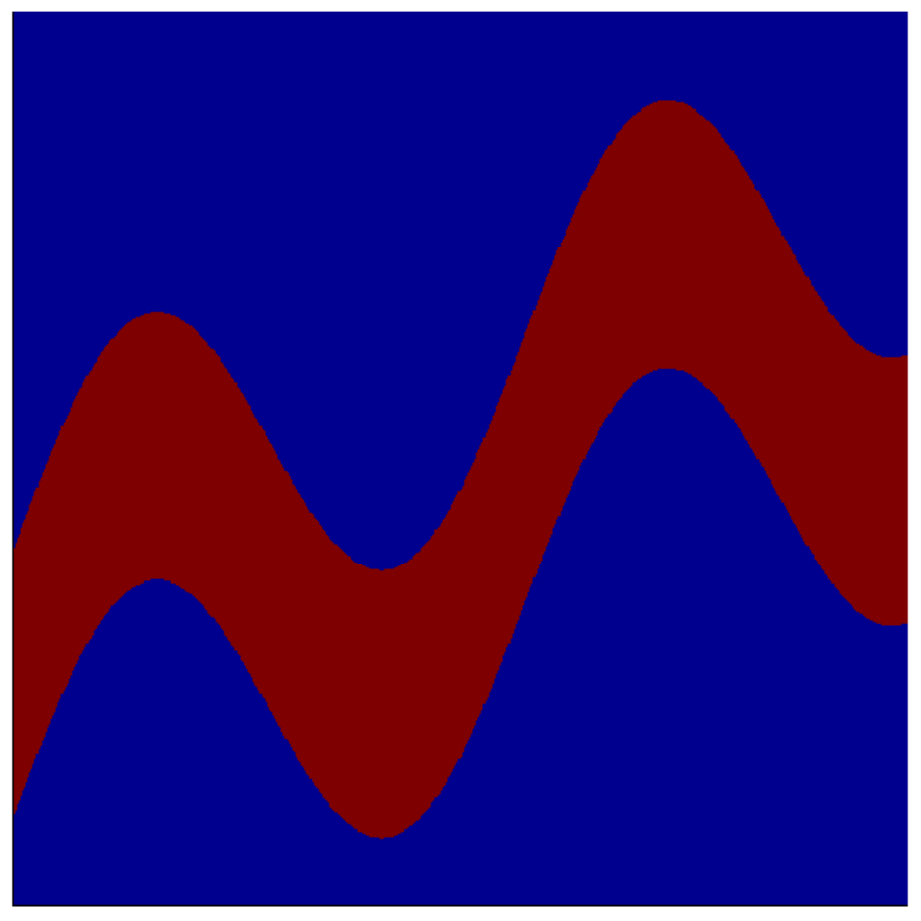}
%\caption{channelize Model}
\label{fig:test3_channel}
}
\subfigure[Layer Model]
{
\includegraphics[width=.3\textwidth]{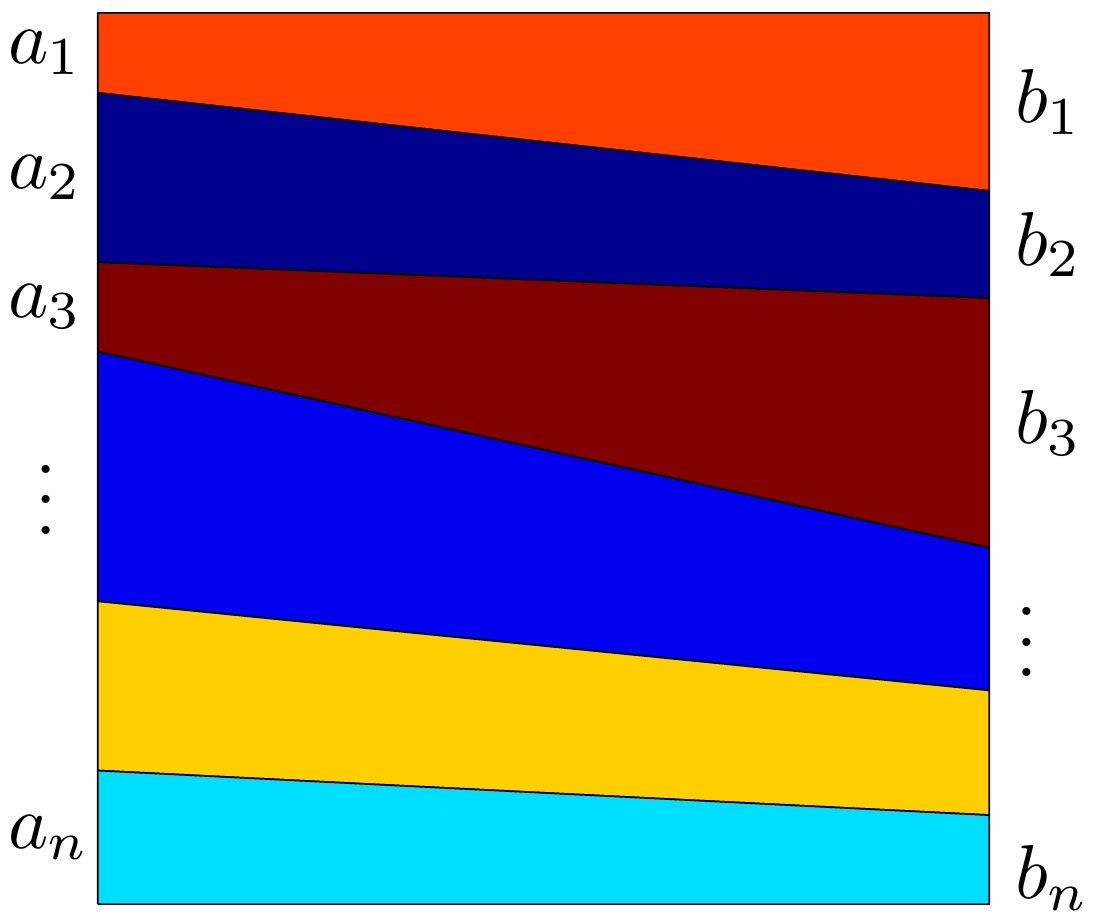}
%\caption{Layer Model}
\label{fig:test1_layer}
}
\caption{Test Models}

\label{Figure1}
\end{figure}

\subsubsection{Geometric Parameterization}

In the layer model of Figure \ref{fig:test2_fault},  we assume that the interfaces between the layers are straight lines. The slope of the interface is determined by the thickness of each layer on the two sides.
Hence, we use the layer thicknesses on the two sides to parameterize the geometry of the piecewise permeability function $\kappa(x)$. Furthermore, to describe potential faults, we introduce an additional parameter which determines the displacement of a vertical fault, whose horizontal location is specified. In the channel model, shown in Figure \ref{fig:test3_channel}, we simply assume the channel to be sinusoidal as in \cite{Landa}; the geometry can then be parameterized by the
amplitude, frequency and width together with parameters defining
the intersection of the channel with the boundaries.
All of these models are then parameterized by a finite set of real numbers
as we now detail, assuming that $D=(0,1)^2$ and letting $(x,y) \in D$ denote
the horizontal and vertical respectively.
%Figure \ref{fig:test1_layer}
\begin{itemize}

\item\textbf{Test Model $1$ (Layer Model with Fault):}
Given a fixed number $n$ of layers, the geometry in \Fref{fig:test2_fault} is determined by $\{a_i\}_{i=0}^n$ and $\{b_i\}_{i=0}^n$ and the {\em slip} $c$ that describes the (signed) height of
the vertical fault at a specified location (for simplicity fixed at $x=\frac12$). The geometry for the case of three layers is displayed in Figure \ref{Figure1A}. All the layers on the left hand side slip down$(c>0)$ or up $(c<0)$ with the same displacement $c$. Moreover because of the
constraint that the layer widths are positive and sum to one we can reduce
to finding the $2n-1$ parameters $a=(a_1,\cdots,a_{n-1})$, $b=(b_1,\cdots,b_{n-1})$,
each in
\begin{equation*}
A:=\{\textbf{x}\in \Real^{n-1}|\sum_{i=1}^{n-1}x_i\leq 1, x_i \geq 0\} \subset \Real^{n-1},
\end{equation*}
and $c\in C:=[-c^\star,c^\star]$. For this case we then define the geometric parameter $u_{g}$ and the space of geometric parameters $U_{g}$ by
\begin{equation*}
u_{g}= (a,b,c),\qquad U_{g}= A^2\times C
\end{equation*}
This geometric model thus has $2n-1$ parameters and $n$ domains $D_i$. Note that a particular case of this model is the layered model shown in Figure \ref{fig:test1_layer} where we take $c=0$ as a known parameter.

\begin{figure}[htbp]
\begin{center}
\subfigure[Two-layer configuration]{\includegraphics[scale=0.44]{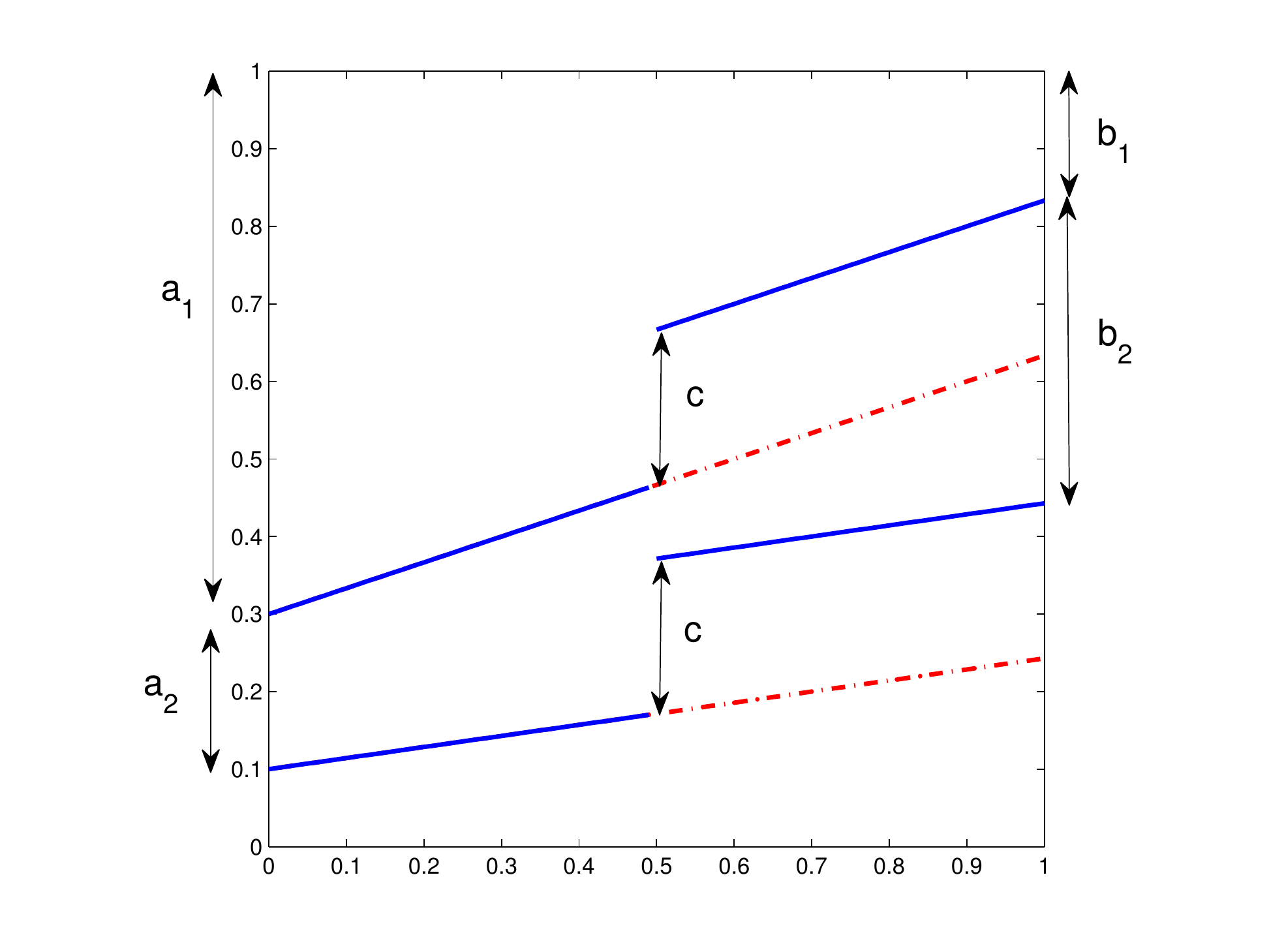} \label{Figure1A}}
\subfigure[Channel configuration]{
\includegraphics[width=6.5cm,height=6.5cm]{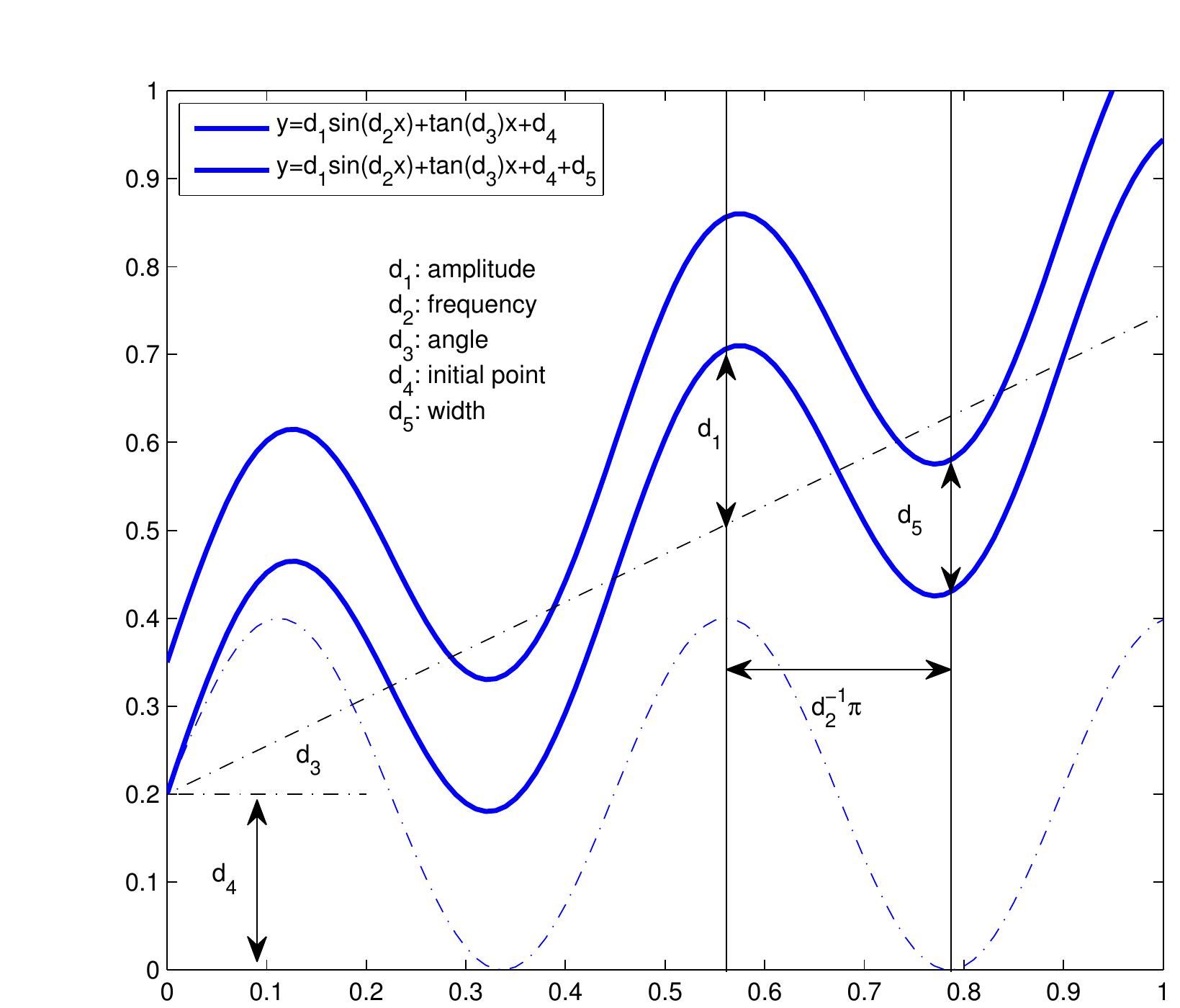}\label{Figure1B}}
\caption{Geometry of Test Models}

\label{FigureX}

\end{center}
\end{figure}

\item\textbf{Test Model $2$ (Channelized Model):}
Test model $2$ is shown in \Fref{Figure1B}. We assume that the lower
boundary of the  channel is described by the
sinusoid
$$y=d_1\sin(d_2x)+\tan(d_3)x+d_4$$
and we employ width parameter $d_5$. In this case, we characterize the geometric with
\begin{equation*}
u_{g}= (d_1,\cdots,d_5) \in \bbR^5,\qquad U_{g}= \prod_{i=1}^5[{d_i^-},d_i^+].
\end{equation*}
This geometric model thus has $5$ parameters and two
domains $D_1$ and $D_2$ denoting the interior and exterior of the channel
respectively, which gives $n=2$; note that $D_2$ contains disjoint components.

\end{itemize}

We now introduce an hypothesis that will be useful when we discuss the continuity of the forward map. Let $u_{g}^{\ve}$ represent
perturbations of $u_{g}$ and let the $D_i^{\ve}$ be the corresponding induced perturbations of the domains $D_i$. Thus $\{D_i^{\ve}\}_{i=1}^n$ is also a set of open subsets of $D$ such that $D_i^{\ve}\cap D_j^{\ve}=\varnothing$, for all  $i\neq j$ and $\cup_{i=1}^n\overline{D_i^{\ve}}=\overline{D}$.
\begin{hypothesis}
\label{hy:geo}
For all $i\neq j$, the Lebesgue measure of $D_j^{\ve}\cap D_i$, denoted as $|D_j^{\ve}\cap D_i|$ goes to zero, if $u_{g}^\ve\to u_{g}$.
\end{hypothesis}
It is clear that this hypothesis holds true for all the test models.

\subsubsection{Permeability Values}
%\textbf{Piecewise constant:}\\

Throughout the paper, we use $u_{\kappa}$ to denote the unknown parameters describing $\kappa_{i}$ in (\ref{eq:k_piecewise}) and $U_{\kappa}$ as the admissible set for the parameters. We will consider two parameterizations of the functions $\kappa_i$ appearing
in (\ref{eq:k_piecewise}): as constants and as continuous functions.
\begin{itemize}
\item Piecewise constant: Each $\kappa_i$ is a positive constant. We then have the following choices:
\begin{eqnarray*}
 u_{\kappa}=(\kappa_1,\cdots,\kappa_{n}), \qquad U_{\kappa}= (0,\infty)^n
\end{eqnarray*}
\item Continuous: We consider each $\kappa_i$ to be defined on the whole of $D$. We work with $\log \kappa$ as the exponential of this will always give a {\em positive} continuous function as required for the existence of
solution to (\ref{eq:elliptic}). This induces the following choices for $u_{\kappa}$ and $U_{\kappa}$:
\begin{eqnarray*}
u_{\kappa}=(\log \kappa_{1},\dots, \log \kappa_{n}) , \qquad U_{\kappa}= C(\bD;\bbR^n)
\end{eqnarray*}
\end{itemize}

% Then, for this case $U=U_{g}\times U_{\kappa}\subset \Real^{3n-1}$.
 % \item Channelized Model: the unknown parameters are $u_{g}=d$, $u_{\kappa}=(\kappa_1,\cdots,\kappa_{n})$, and the parameter %spaces $U_{g}=B$ and $ \subset \Real^{n+5}$. $U:=(0,\infty)^n\times B \subset \Real^{n+5}$.
%  \item Layer Model: the unknown parameter $u=(\log \kappa_{1},\dots, \log \kappa_{n}, a ,b,c)$ and the parameter space $U:=C(\bD;\bbR^n)\times A^2 \times [-c^\star,c^\star]\subset  C(\bD;\bbR^n) \times \Real^{2n-1}$;
%  \item Channelized Model: the unknown parameter $u=(\log \kappa_1,\cdots,\log \kappa_{n},d)$ and the parameter space %$U:=C(\bD;\bbR^n)\times B \subset  C(\bD;\bbR^n) \times \Real^{5}.$
%\end{itemize}

We may consider Test Models $1$ or $2$ with either constant or continuous $\kappa_i$, leading to four different models of the permeability.
In all four cases we define the unknown parameter and associated space by
\begin{eqnarray*}
 u= (u_{g},u_{\kappa}), \qquad U= U_{g}\times U_{\kappa}.
\end{eqnarray*}
For the cases of defined above, $U$ is a subset of
a separable Banach space $\bigl(\cB,\|\cdot\rb \bigr)$.

\subsection{Observation Model}
\label{ssec:pd}
Given the parameterizations described at the end of the previous section,
we define the function $F:U\rightarrow X$ as an abstract map from parameter space to
the space of the permeabilities, by
\begin{equation}
\label{eq:F}
F(u)=\kappa.
\end{equation}
Let $L$ denote a bounded linear observation operator on $V$, comprising a finite number of linear functionals $l_j\in V^*, j=1,\cdots, J$ that $L(p)=(l_1(p),\cdots,l_{J}(p))^T$. The measurements are some noisy observations from
\begin{equation*}
y_j=l_j(p)+\eta_j,
\end{equation*}
where $\eta_j$ represents the noise polluting the observation.

Let $y:=(y_1,\cdots,y_J)^T\in Y,$ where $Y:=\Real^J$, equipped
with Euclidean norm $|\cdot|$ and similarly for $\eta$; then, given
(\ref{eq:G},\ref{eq:F}), we define the observation operator
$\cG: U\rightarrow Y$ by $\cG=L\circ G\circ F$. We then have
\begin{equation}
y=\cG(u)+\eta.
\label{eq:ips}
\end{equation}
In remaining sections of the paper we study the inverse problem of using $y$ to
determine the unknown parameter $u \in U$.
A key foundational result underlying our
analysis is the the continuity of $\cG$.

\begin{theorem}
\label{t:c}
For all four of the permeability parameterizations the mapping
$\cG: U \subset \cB \to Y$ is continuous.
\end{theorem}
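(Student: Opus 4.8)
The plan is to use the factorization $\cG=L\circ G\circ F$ given in the text. Since $L$ is a bounded linear map into the finite--dimensional space $Y=\bbR^J$, each component $l_j\in V^{*}$ is weakly sequentially continuous on $V$; hence it suffices to prove that $u\mapsto p:=G(F(u))$ maps $U\subset\cB$ continuously into $V$ (even weak continuity of this map would already do). I would fix a sequence $u^\ve\to u$ in $\cB$ and set $\kappa^\ve=F(u^\ve)$, $p^\ve=G(\kappa^\ve)$, $\kappa=F(u)$, $p=G(\kappa)$. The first, easy point is uniform ellipticity: the geometric parameters enter only through the subdomains $D_i$ and not through the values $\kappa_i$, so $u^\ve\to u$ forces $\kappa_i^\ve\to\kappa_i$ (piecewise--constant case) or $\log\kappa_i^\ve\to\log\kappa_i$ in $C(\bD)$, hence $\kappa_i^\ve\to\kappa_i$ uniformly on $\bD$ (continuous case); therefore there are $0<m\le M<\infty$ and $\ve_0>0$ with $m\le\kappa,\kappa^\ve\le M$ on $D$ for all $\ve<\ve_0$, and (\ref{eq:lax-mil}) gives the uniform bound $\|p^\ve\|_V\le\|f\|_{V^*}/m$.

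The crux — and the reason this is not just a composition of continuous maps — is that $F$ is \emph{not} continuous into $L^\infty$, since moving an interface changes $\kappa$ by $O(1)$ on the swept region. What is available, and what I would exploit, is that $\kappa^\ve\to\kappa$ in every $L^q(D)$, $1\le q<\infty$, equivalently in measure. To see this I would decompose $D$, up to the Lebesgue--null set of interface points, into the disjoint pieces $D_i\cap D_j^\ve$ and write $\int_D|\kappa^\ve-\kappa|^q=\sum_{i,j}\int_{D_i\cap D_j^\ve}|\kappa_j^\ve-\kappa_i|^q$. The diagonal terms are bounded by $|D|\sup_x|\kappa_i^\ve(x)-\kappa_i(x)|^q\to0$ by the value convergence just noted, and the off--diagonal terms by $(2M)^q|D_i\cap D_j^\ve|\to0$ by Hypothesis~\ref{hy:geo}. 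This argument is insensitive to which geometric model and which representation of the $\kappa_i$ is used, so it covers all four cases; passing to a subsequence we may also assume $\kappa^\ve\to\kappa$ a.e.\ in $D$.

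With these two facts I would pass to the limit by a compactness--and--uniqueness argument. Any subsequence of $(p^\ve)$ has a further subsequence with $p^\ve\rightharpoonup q$ weakly in $V$ and $\kappa^\ve\to\kappa$ a.e.; for $\phi\in V$ one has $\kappa^\ve\nabla\phi\to\kappa\nabla\phi$ strongly in $L^2(D)$ by dominated convergence (dominant $2M|\nabla\phi|$), while $\nabla p^\ve\rightharpoonup\nabla q$ weakly in $L^2(D)$, so the weak form $\la\kappa^\ve\nabla p^\ve,\nabla\phi\ra=\la f,\phi\ra$ passes to $\la\kappa\nabla q,\nabla\phi\ra=\la f,\phi\ra$ for all $\phi\in V$, whence $q=p$ by uniqueness in (\ref{eq:elliptic}). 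To upgrade to norm convergence I would test the equation for $p-p^\ve$ against itself, obtaining $m\|\nabla(p-p^\ve)\|^2\le\la(\kappa^\ve-\kappa)\nabla p,\nabla(p-p^\ve)\ra+\la\kappa\nabla p,\nabla(p-p^\ve)\ra-\la f,p-p^\ve\ra$; the last two terms vanish in the limit because $p^\ve\rightharpoonup p$ (using $\kappa\nabla p\in L^2(D)$ and $f\in V^*$), and the first is bounded by $\|(\kappa^\ve-\kappa)\nabla p\|\,\|\nabla(p-p^\ve)\|$ with the first factor tending to $0$ by dominated convergence and the second bounded. A standard subsequence argument (every subsequence of $\cG(u^\ve)$ has a further subsequence along which $p^\ve\to p$ in $V$, hence $\cG(u^\ve)\to\cG(u)$) then yields continuity of $\cG$ on $U$ in all four models.

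The main obstacle is thus concentrated in the middle steps: one cannot invoke $L^\infty$--continuity of $F$ and must instead make the weaker, genuinely available convergence $\kappa^\ve\to\kappa$ in measure carry the argument — first to identify the weak limit of $p^\ve$, then to promote it to strong $V$--convergence. The one point that needs care is that in the energy identity $\nabla p^\ve$ is paired with $f$ (yielding $\la f,p-p^\ve\ra\to0$) rather than with $\kappa^\ve-\kappa$; this rearrangement is what makes the estimate go through with $\nabla p^\ve\in L^2$ only, so no Meyers--type higher--integrability estimate for $\nabla p^\ve$ is needed. Without it, one would be left to control $\int_D(\kappa^\ve-\kappa)\nabla p^\ve\cdot\nabla(p-p^\ve)$, for which $L^2$ bounds on $\nabla p^\ve$ do not suffice.
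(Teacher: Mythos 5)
Your proof is correct and, despite the different packaging, its decisive step is the same as the paper's: you test the equation for $p-p^{\ve}$ in such a way that the difference $\kappa^{\ve}-\kappa$ ends up paired with the \emph{fixed} gradient $\nabla p$ rather than with $\nabla p^{\ve}$, and you then kill $\|(\kappa^{\ve}-\kappa)\nabla p\|_{L^2}$ by splitting $D$ into the cells $D_i\cap D_j^{\ve}$, handling the diagonal cells by uniform convergence of the $\kappa_i^{\ve}$ and the off-diagonal cells by Hypothesis~\ref{hy:geo}; this is precisely the paper's estimate (\ref{eq:estimate1}) and the decomposition that follows it. The one substantive difference is that your preliminary compactness-and-uniqueness stage (weak limit $p^{\ve}\rightharpoonup q$, identification $q=p$) is redundant: in your final energy identity the two terms $\la\kappa\nabla p,\nabla(p-p^{\ve})\ra-\la f,p-p^{\ve}\ra$ do not merely vanish in the limit, they cancel \emph{identically}, since $\la\kappa\nabla p,\nabla\phi\ra=\la f,\phi\ra$ for every $\phi\in V$ and in particular for $\phi=p-p^{\ve}$; once you notice this, your inequality collapses to $m\|\nabla(p-p^{\ve})\|^2\le\|(\kappa^{\ve}-\kappa)\nabla p\|\,\|\nabla(p-p^{\ve})\|$, which is the paper's one-line bound, and no weak convergence, a.e.\ subsequence extraction, or subsequence-of-a-subsequence argument is needed. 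What your route buys is a statement ($\kappa^{\ve}\to\kappa$ in every $L^q$, $q<\infty$) that is reusable elsewhere and makes explicit why $L^{\infty}$-continuity of $F$ fails; what the paper's direct version buys is a quantitative modulus of continuity in terms of $\int|\kappa^{\ve}-\kappa|^2|\nabla p|^2$ with no detours. One small point of hygiene either way: the convergence $\int_{D_j^{\ve}\cap D_i}|\nabla p|^2\to 0$ is best justified by absolute continuity of the integral of the fixed $L^1$ function $|\nabla p|^2$ over sets of vanishing measure (the characteristic functions $\chi_{D_j^{\ve}\cap D_i}$ need not converge pointwise), a point your in-measure formulation sidesteps cleanly.
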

\begin{proof}
See the Appendix.
\end{proof}

\section{Bayesian Inverse Problems}
\label{sec:ips}
The inverse problem of interest here is to estimate the parameter $u$ from $y$ given by
(\ref{eq:ips}).
We adopt a Bayesian approach. The pair $(u,y) \in \cB \times \Real^J$ is modeled as
a random variable: we put a prior probability measure $\mu_0$ for $u$ viewed
as an element of the separable Banach space $\cB$, and define the random variable
$y|u$ by assuming that $\eta \sim N(0,\Gamma)$ independently of $u$.
The Bayesian solution to the inverse problem is then the posterior measure $\mu^y$ on
the random variable $u|y.$ We thereby define the probabilistic information about
the unknown $u$ in terms of the measurements $y$, forward model and prior information.
The paper  \cite{AMS10} and lecture notes \cite{AMS} describe a mathematical
framework for Bayes' Theorem in this infinite dimensional setting. We define
the priors and likelihood and then use this mathematical framework to establish
the existence and well-posedness of the posterior distribution; here
well-posedness refers to continuity of the posterior measure $\mu^y$, in the
Hellinger or total variation metrics, with respect to the data $y$.

\subsection{Prior Modeling}
\label{ssec:prior}
The unknown parameter $u$ is viewed as an element of the separable Banach space
$\cB$ defined for each of the four permeability models. Under the prior
we assume that the physical parameter $u_{\kappa}$ is independent of the geometric parameter $u_{g}$. Therefore we can build up a prior measure $\mu_0$ by
defining the geometric prior distribution density $\pi^G_0$ and
the permeability (or log permeability) prior measure $\mu_0^i$ respectively.

\subsubsection{Geometric Prior}

In the layer model with a fault, the geometry variables $a, b$ satisfy the geometric constraint
$a, b \in A$. In addition, we consider the case where there is no preference that the thickness of a certain layer is larger than another one. In other words, we consider the case where $a$ and $b$ are i.i.d random vector drawn from uniform distribution with a density $\pi_0^{A,g}(x)$ such that
\begin{equation}\label{pi0}
  \pi_0^{A,g}(x)=\left\{
  \begin{array}{cc}
  \frac{1}{|A|} &  x \in A,\\
    0 & x \notin A.
  \end{array}
  \right.
\end{equation}
In addition, the slip parameter $c$ is drawn uniformly from $C= [-c^\star,c^\star]$, and independently of $(a,b)$. Then, the prior density for this geometric model is
\begin{equation*}
  \pi_0^{G}(u_{g})= \pi_0^{A,g}(a)\pi_0^{A,g}(b)\pi_0^{C,g}(c)
\end{equation*}
For the Channel Model we assume the geometric parameter $u_{g}$ is drawn uniformly from the admissible set $U_{g}$. Therefore, we consider the prior
\begin{equation*}
  \pi_0^{G}(u_{g})= \Pi_{i=1}^{5}\pi_0^{[d_{i}^{-},d_{i}^{+}]}(d_{i})
\end{equation*}

\subsubsection{Permeability Prior}

We first discuss the case where the $\kappa_i$ are constant.
Under the prior we assume that they are all independent and
that each component $\kappa_i$ is drawn from a measure $\mu_0^i$
with Lebesgue density $\pi_0^i$, $i=1,\cdots,n$,  defined by one of the
following three cases:
\begin{itemize}
  \item Lognormal: $\log \kappa_i$ has the Gaussian distribution $N(m_i,\sigma_i^2)$.
  \item Uniform: $\kappa_i$ has the uniform distribution in $[\kappa^{i,-},\kappa^{i,+}]$, where $\kappa^{i,-}>0$.
  \item Exponential: $\kappa_i$ has the exponential distribution with parameter $\lambda_i$.
\end{itemize}
In the case of variable $\kappa_i$ we will assume that each $\log \kappa_i$
is independent of the others and is distributed according to a random field
prior $\mu_0^i=N(m_i,C_i)$ where the mean and covariance are chosen so that
$\mu_0^i\Bigl(C(\bD;\bbR)\Bigr)=1$; that is, so that draws from $\mu_0^i$
give rise to continuous functions almost surely.

\subsubsection{The Prior}
Combining the foregoing we obtain, in the case of piecewise constant permeabilities,
the following Lebesgue density of the prior for the Layer Model with fault:
\begin{equation}\label{prioreq1}
  \pi_0(u)=\pi_0^{A,g}(a)\pi_0^{A,g}(b)\pi_0^{C,g}(c)\prod_{i=1}^{n}\pi_0^i(\kappa_i).
\end{equation}
This may be viewed as the Lebesgue density of a probability measure $\mu_0$ on $U$;
here $U$ is finite dimensional. In the piecewise function case
we have a prior measure $\mu_0$ on the infinite dimensional space $U$
and it is given by
\begin{equation}\label{prioreq2}
\mu_0(du)=\pi_0^{A,g}(a)da \otimes \pi_0^{A,g}(b)db\otimes \pi_0^{C,g}(c)dc\otimes \prod_{i=1}^n\mu_0^i(d \alpha_i),
\end{equation}
where $\alpha_i=\log \kappa_i.$ Thus in both cases we have constructed a measure $\mu_0$ in the measure
space $\cB$ equipped with the Borel $\sigma-$algebra. Furthermore
the measure is constructed so that $\mu_0(U)=1.$
By similar arguments, we may construct the prior measure for
the Channelized Models with the same properties. We omit the details for brevity.

\subsection{Likelihood}
We assume the noise $\eta$ in (\ref{eq:ips}) is independent of $u$, and
drawn from the Gaussian distribution $N(0,\Gamma)$ on $Y$, with $\Gamma$ a self-adjoint
positive matrix. Thus $y|u \sim N(\cG(u),\Gamma)$.
We define the model-data misfit function $\Phi(u;y):U\times Y\rightarrow \Real$ by
\begin{equation}
    \Phi(u;y)=\frac{1}{2}|y-\cG(u)|^2_\Gamma,
    \label{eq:potential}
\end{equation} where $|\cdot|_\Gamma=|\Gamma^{-\frac{1}{2}}\cdot|$.
The negative log likelihood is given, up to a constant independent of
$(u,y)$, by $\Phi(u;y).$

\subsection{Posterior Distribution}

\begin{comment}
Now we recall two results from \cite{AMS} concerning well-definedness of the posterior distribution.
\begin{lemma}[\cite{AMS}]
\label{lem:meas}
  Let $(Z,C)$ be a measurable space, and assume that $f\in C(Z;\Real)$ and that $\nu(Z)=1$ for some probability measure $\nu$ on $Z$. Then $f$ is a $\nu$ measurable function.
\end{lemma}
\end{comment}
We now show that the posterior distribution is well-defined by applying
the basic theory in \cite{AMS10, AMS}.
Let $\bbQ_0$ be the Gaussian distribution $N(0,\Gamma)$. Define $\nu_0$ as a probability measure on $U \times Y$ by $$\nu_0(du,dy)=\mu_0(du)\otimes\bbQ_0(dy).$$
The following Proposition~\ref{thm:bayes} is a infinite dimensional version of Bayes Theorem, which implies the existence of a posterior distribution.
\begin{proposition}[Bayes Theorem \cite{AMS}]
\label{thm:bayes}
Assume that $\Phi:U\times Y\rightarrow$ is $\nu_0$ measurable and that, for $y~\bbQ_0$ -a.s.,
\begin{equation*}
Z=\int_U \exp(-\Phi(u;y))\mu_0(du)>0.
\end{equation*}
Then the conditional distribution of $u|y$ exists and is denoted $\mu^y$. Furthermore $\mu^y\ll\mu_0$ and for $y~\bbQ_0$ -a.s.,
\begin{equation*}
\label{eq:qqq}
\frac{d\mu^y}{d\mu_0} =\frac{1}{Z} \exp\left(-\Phi(u;y)\right).
\end{equation*}
\end{proposition}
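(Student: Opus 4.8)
The plan is to bypass abstract existence machinery and instead build the correctly coupled joint law of $(u,y)$ explicitly and then disintegrate it over $y$, verifying directly that the candidate measure $\mu^y$ with density $Z^{-1}\exp(-\Phi(u;y))$ satisfies the defining property of a conditional distribution. A helpful simplification is that the data space $Y=\Real^J$ is finite dimensional, so the likelihood kernel $y|u\sim N(\cG(u),\Gamma)$ has an explicit Lebesgue density and the Radon--Nikodym computations reduce to elementary Gaussian algebra rather than Cameron--Martin theory. Throughout I take the two hypotheses as given inputs: the $\nu_0$-measurability of $\Phi$ and the positivity of $Z$ for $\bbQ_0$-a.e.\ $y$.

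First I would introduce the coupled joint measure $\nu$ on $U\times Y$ defined by $\nu(du,dy)=N(\cG(u),\Gamma)(dy)\,\mu_0(du)$; this is the law of $(u,\cG(u)+\eta)$ with $u\sim\mu_0$ and $\eta\sim N(0,\Gamma)$ independent, and it is a probability measure by construction, well defined because the continuity of $\cG$ established in Theorem~\ref{t:c} makes $u\mapsto N(\cG(u),\Gamma)$ a measurable kernel. Since $\nu$ and $\nu_0$ share the $u$-marginal $\mu_0$ and differ only in the $y|u$ conditional, forming the ratio of the two finite-dimensional Gaussian densities gives
\begin{equation*}
\frac{d\nu}{d\nu_0}(u,y)=\frac{dN(\cG(u),\Gamma)}{d\bbQ_0}(y)=\exp\Bigl(\tfrac12|y|_\Gamma^2-\Phi(u;y)\Bigr),
\end{equation*}
where the assumed joint measurability of $\Phi$ guarantees this density is measurable and hence that $\nu\ll\nu_0$ with the stated derivative.

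Next I would compute the $y$-marginal $\nu^Y$ by integrating out $u$ with Tonelli's theorem: because $\Phi\ge 0$ forces $\exp(-\Phi)\in(0,1]$, the map $y\mapsto Z(y)=\int_U\exp(-\Phi(u;y))\,\mu_0(du)$ is measurable and finite, and one obtains
\begin{equation*}
\frac{d\nu^Y}{d\bbQ_0}(y)=Z(y)\,\exp\Bigl(\tfrac12|y|_\Gamma^2\Bigr).
\end{equation*}
The positivity hypothesis $Z(y)>0$ for $\bbQ_0$-a.e.\ $y$ enters crucially here: it ensures $\nu^Y$ is equivalent to $\bbQ_0$ and, more importantly, that the candidate density $Z(y)^{-1}\exp(-\Phi(u;y))$ is well defined (no division by zero) for $\nu^Y$-a.e.\ $y$, so that $\mu^y$ defined by $d\mu^y/d\mu_0=Z(y)^{-1}\exp(-\Phi(u;y))$ is a genuine probability measure on $U$ with $\mu^y\ll\mu_0$.

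Finally I would verify the disintegration identity: for every bounded measurable $g:U\times Y\to\Real$,
\begin{equation*}
\int_{U\times Y} g(u,y)\,d\nu(u,y)=\int_Y\Bigl(\int_U g(u,y)\,\mu^y(du)\Bigr)\,\nu^Y(dy).
\end{equation*}
Substituting the three densities above and applying Tonelli, the factors $\exp(\tfrac12|y|_\Gamma^2)$ and $Z(y)$ cancel and both sides collapse to $\int_{U\times Y} g\,\exp(\tfrac12|y|_\Gamma^2-\Phi)\,d\nu_0$; this is exactly the statement that $\mu^y$ is a version of the conditional law of $u|y$, which simultaneously yields existence of $\mu^y$, the absolute continuity $\mu^y\ll\mu_0$, and the claimed formula for $d\mu^y/d\mu_0$ holding $\bbQ_0$-a.s. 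The main obstacle is the measure-theoretic care needed to treat $y\mapsto\mu^y$ as a bona fide probability kernel--checking joint measurability of the candidate density and of $Z$, and discarding the $\bbQ_0$-null set where $Z$ might vanish--and to legitimise the disintegration; the latter is underpinned by $\cB$ being a separable Banach (hence Polish) space, so that regular conditional distributions on $U$ are guaranteed to exist and the explicitly constructed kernel may be identified with the conditional law.
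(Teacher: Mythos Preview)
The paper does not actually prove Proposition~\ref{thm:bayes}: it is quoted verbatim as a black-box result from the reference~[AMS] and then applied to establish Theorems~\ref{thm:existence_constant_case} and~\ref{PC}. So there is no ``paper's own proof'' to compare against here.

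Your argument is correct and is, in fact, essentially the proof that appears in the cited source. The construction of the joint law $\nu$, the explicit Gaussian Radon--Nikodym computation giving $d\nu/d\nu_0=\exp\bigl(\tfrac12|y|_\Gamma^2-\Phi\bigr)$, the marginalisation to obtain $d\nu^Y/d\bbQ_0=Z\exp\bigl(\tfrac12|y|_\Gamma^2\bigr)$, and the verification of the disintegration identity are all sound, and you correctly identify where the two hypotheses (measurability of $\Phi$, positivity of $Z$) are consumed. One small remark: you invoke continuity of $\cG$ from Theorem~\ref{t:c} to make $u\mapsto N(\cG(u),\Gamma)$ a measurable kernel, but the proposition as stated assumes only $\nu_0$-measurability of $\Phi$; in principle one can recover measurability of $\cG$ from that of $\Phi$ via the polarisation $\Phi(u;y)-\Phi(u;0)=\tfrac12|y|_\Gamma^2-\langle y,\cG(u)\rangle_\Gamma$, so the abstract statement does not actually need the extra regularity. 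This is cosmetic in the present paper since $\cG$ is continuous anyway.
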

We establish this Bayes theorem for our specific problem to show the well-definedness of the posterior distribution $\mu^y$. The key ingredient is the continuity of the forward and observation map $\cG(u)$ (and hence $\Phi(\cdot;y)$) on a full $\mu_0$ measure set $U$;
this may be used to establish the required measurability. We state two theorems,
one concerning the case of piecewise constant permeability and the other
concerning the case of variable permeability within each subdomain $D_i.$

\begin{theorem}[{\bf Piecewise Constant}]
\label{thm:existence_constant_case}
%Consider the cases where $\kappa(x)$ is a piecewise constant function.
Let $\Phi(u;y)$ be the model-data misfit function in (\ref{eq:potential}), $\mu_0$ be the prior distribution in (\ref{prioreq1}), then the posterior distribution $\mu^y$ is well-defined. Moreover, $\mu^y\ll\mu_0$ with a Radon-Nikodym derivative
\begin{equation}
\frac{d\mu^y}{d\mu_0} =\frac{1}{Z} \exp\left(-\Phi(u;y)\right),
\label{eq:post}
\end{equation}where
\begin{equation*}
Z=\int_U \exp(-\Phi(u;y))\mu_0(du)>0.
%\label{eq:Z}
\end{equation*}
\end{theorem}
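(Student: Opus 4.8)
The plan is to verify the two hypotheses of the infinite-dimensional Bayes Theorem, Proposition~\ref{thm:bayes}, for the prior $\mu_0$ with Lebesgue density (\ref{prioreq1}) and the misfit $\Phi$ of (\ref{eq:potential}): namely (a) that $\Phi:U\times Y\to\bbR$ is $\nu_0$-measurable, and (b) that the normalising constant $Z=\int_U\exp(-\Phi(u;y))\,\mu_0(du)$ satisfies $0<Z<\infty$ for $\bbQ_0$-almost every $y$. Granting (a) and (b), the conclusions of the theorem---well-definedness of $\mu^y$, the absolute continuity $\mu^y\ll\mu_0$, and the Radon--Nikodym identity (\ref{eq:post})---are precisely the content of Proposition~\ref{thm:bayes} applied to $\nu_0=\mu_0\otimes\bbQ_0$.

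For (a): since $\Phi(u;y)=\frac12|y-\cG(u)|_\Gamma^2$ depends continuously on the pair $(\cG(u),y)$, Theorem~\ref{t:c} gives that $(u,y)\mapsto\Phi(u;y)$ is continuous on $U\times Y$. The prior is constructed so that $\mu_0(U)=1$, hence $\nu_0$ assigns full mass to $U\times Y$; a continuous real-valued function on a measurable set of full measure is measurable with respect to the completed $\sigma$-algebra (cf.\ \cite{AMS, AMS10}), so $\Phi$ is $\nu_0$-measurable. In the piecewise constant case $\cB$ is the finite-dimensional Euclidean space containing $U=A^2\times C\times(0,\infty)^n$, so this step is routine, but it still has to be argued through the full-measure set $U$ because $\cG$ is only asserted continuous on the constrained set $U$, not on all of $\cB$.

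For (b): since $\Phi\ge 0$ we have $\exp(-\Phi)\le 1$, hence $Z\le\mu_0(U)=1<\infty$ for every $y$. For strict positivity, fix $y\in Y$ and pick a point $u_0\in U$ at which the density (\ref{prioreq1}) is positive; this is possible because (\ref{prioreq1}) is a product of the uniform densities on $A$, $A$ and $C$ with one of the three admissible one-dimensional densities (lognormal, uniform, exponential) on each permeability factor, each of which is strictly positive on a non-empty open set. Choose a bounded open ball $B\subset U$ about $u_0$ on which the density stays bounded below by a positive constant, so $\mu_0(B)>0$, and on whose closure the continuous function $\Phi(\cdot;y)$ is bounded above by some $M=M(y)<\infty$. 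Then $Z\ge\int_B\exp(-\Phi(u;y))\,\mu_0(du)\ge e^{-M}\mu_0(B)>0$. Since this holds for every $y\in Y$ it holds in particular $\bbQ_0$-almost surely, establishing (b).

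The only genuinely delicate point is (a). Continuity of $\cG$ in Theorem~\ref{t:c} holds on the parameter set $U$, not on the ambient separable Banach space $\cB$---because, as remarked after that theorem, small perturbations of the geometric parameters do not produce small $L^\infty$ perturbations of $\kappa$, so the classical elliptic estimates do not give continuity on all of $\cB$---and so measurability of $\Phi$ must be routed through the full-$\mu_0$-measure set $U$. Here that is mild since $U$ is finite-dimensional, but the same mechanism is what drives the proof of the piecewise continuous companion theorem, where $U$ is infinite-dimensional.
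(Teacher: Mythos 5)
Your proof is correct and follows the same overall strategy as the paper: verify the two hypotheses of Proposition~\ref{thm:bayes} --- measurability of $\Phi$ via continuity of $\cG$ on the full-$\mu_0$-measure set $U$ (Theorem~\ref{t:c}), and positivity of $Z$ --- and then invoke that proposition. The one place you genuinely diverge is the lower bound on $Z$. The paper does not use compactness: it uses the Lax--Milgram estimate (\ref{eq:lax-mil}) to obtain the explicit bound $|\Phi(u;y)|\le |y|_\Gamma^2+\bigl(C/\min_i\kappa_i\bigr)^2$, restricts the integral to the positive-prior-measure set $\cap_i\{\kappa_i>\ve\}$ (the smallness of $\ve$ mattering only for uniform priors), and bounds $\exp(-\Phi)$ below there by $\exp\bigl(-|y|_\Gamma^2-(C/\ve)^2\bigr)$. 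You instead bound $\Phi(\cdot;y)$ above on the closure of a small ball by continuity plus compactness. Both are valid, but two remarks: (i) your step needs $\overline{B}$ to be a compact subset of $U$, i.e.\ the $\kappa$-components bounded away from $0$, which should be said explicitly since $U$ is not closed in the $\kappa$ directions; and (ii) the compactness argument is tied to the finite-dimensionality of the piecewise constant case and does not transfer to the companion Theorem~\ref{PC}, whereas the paper's Lax--Milgram bound is exactly what is reused there (via (\ref{eq:upper}) and the fact that Gaussian measure charges all balls). The paper's route thus buys a uniform argument across both theorems at the cost of invoking the PDE estimate; yours is marginally more elementary but strictly finite-dimensional.
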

\begin{proof}
Since $\cG(u)$ is continuous on $U$ by Theorem \ref{t:c}, $\Phi(u;y)$ is continuous on $U\times Y$. We also have $\nu_0(U\times Y)=\mu_0(U)\bbQ_0(Y)=1$. Since $\Phi: U \times Y \to {\mathbb R}$ is continuous it is a measurable function (Borel) between the spaces $U \times Y$ and ${\mathbb R}$
equipped with their respective Borel $\sigma-$algebras. Note that, by using (\ref{eq:lax-mil}), we have
\begin{eqnarray*}
|\Phi(u;y)|&= \frac{1}{2}|y-\cG(u)|_{\Gamma}^2 \leq |y|_{\Gamma}^2+|\cG(u)|_{\Gamma}^2\leq |y|_{\Gamma}^2+\left(\frac{C}{\min\{\kappa_i\}}\right)^2.
\end{eqnarray*}
Thus, for sufficiently small \footnote{This is needed only in the case of
uniform priors; for exponential and log-normal any positive $\ve$ will do.} positive $\ve>0$,
\begin{eqnarray}
\label{eq:Zpositive}
\fl Z =\int_U\exp(-\Phi(u;y))\mu_0(du) \geq \int_{\cap_{i}\{\kappa_i>\ve\}}\exp\left(-|y|_{\Gamma}^2-\left(\frac{C}{\ve}\right)^2\right)\mu_0(du)\nonumber\\
\fl                                    =\exp\left(-|y|_{\Gamma}^2-\left(\frac{C}{\ve}\right)^2\right)\mu_0(\cap_{i}\{\kappa_i>\ve\})
                                   =\exp\left(-|y|_{\Gamma}^2-\left(\frac{C}{\ve}\right)^2\right)\Pi_{i=1}^n\mu_0^i(\{\kappa_i>\ve\})
                                  >0.\nonumber
\end{eqnarray}
Therefore, by Proposition~\ref{thm:bayes} we obtain the desired result.
\end{proof}

\begin{remark}
In the piecewise constant case $u$ is in a finite dimensional space. Then
the posterior has a density $\pi^y$ with respect to Lebesgue measure
and we can rewrite (\ref{eq:post}) as

  \begin{equation}
    \label{eq:post_density}
    \pi^y(u)=\frac{1}{Z}\exp(-\Phi(u;y))\pi_0(u),
  \end{equation} which is exactly the usual Bayes rule.
\end{remark}
\begin{theorem}[{\bf Piecewise Continuous}]\label{PC}
%Assume that $\kappa(x)$ is a piecewise continuous function.
Let $\Phi(u;y)$ be the model-data misfit function in (\ref{eq:potential})
and  $\mu_0$ be the prior measure in (\ref{prioreq2}),
corresponding to Gaussian $\log\kappa_i$ in each domain.
Then the posterior distribution $\mu^y$ is well-defined. Moreover, $\mu^y\ll\mu_0$ with a Radon-Nikodym derivative
\begin{equation}
\label{eq:post_function}
\frac{d\mu^y}{d\mu_0} =\frac{1}{Z} \exp\left(-\Phi(u;y)\right),
\end{equation}where
\begin{equation*}
Z=\int_U \exp(-\Phi(u;y))\mu_0(du)>0.
\end{equation*}
\end{theorem}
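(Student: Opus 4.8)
The plan is to mirror the proof of Theorem~\ref{thm:existence_constant_case} and reduce everything to an application of Proposition~\ref{thm:bayes} (Bayes' Theorem). The two hypotheses of that proposition must be verified: first, that $\Phi(\cdot;\cdot)$ is $\nu_0$-measurable; and second, that the normalising constant $Z$ is strictly positive for $\bbQ_0$-a.e.\ $y$. Since by construction $\mu_0(U)=1$ and $\bbQ_0(Y)=1$, we have $\nu_0(U\times Y)=1$, so it suffices to work on the full-measure set $U\times Y$.

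For measurability, I would invoke Theorem~\ref{t:c}: the observation map $\cG:U\subset\cB\to Y$ is continuous on $U$ for all four parameterisations, in particular for the piecewise-continuous one, so $\Phi(u;y)=\frac12|y-\cG(u)|_\Gamma^2$ is continuous on $U\times Y$, hence Borel measurable as a map into $\Real$; restricting to the full-$\nu_0$-measure set $U\times Y$ this gives $\nu_0$-measurability. The only subtlety here, relative to the piecewise-constant case, is that $U=U_g\times C(\bD;\bbR^n)$ sits inside the infinite-dimensional separable Banach space $\cB$, but the argument is unchanged: continuity on $U$ plus $\mu_0(U)=1$ is exactly what Theorem~\ref{t:c} and the prior construction in \eqref{prioreq2} deliver, and no adjustment to the measurability reasoning is needed.

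For positivity of $Z$, the structure of the bound changes slightly because $\kappa_i=\exp(\alpha_i)$ is no longer bounded below by a constant uniformly over the support of $\mu_0$: the Gaussian field $\alpha_i$ is almost surely continuous on the compact set $\bD$ but its infimum is an unbounded random variable. The idea is to restrict the integral to an event on which $\kappa_{\min}$ is controlled from below. Concretely, fix $M>0$ and consider the set $E_M=\{u\in U: \|\alpha_i\|_{C(\bD)}\le M \text{ for all } i=1,\dots,n\}$; on $E_M$ one has $\kappa_{\min}=\mathrm{ess\,inf}\,\kappa\ge e^{-M}$, so by \eqref{eq:lax-mil} the solution satisfies $\|p\|_V\le \|f\|_{V^*}e^{M}$ and hence $|\cG(u)|_\Gamma\le C e^{M}$ for a constant $C$ depending only on $\Gamma$ and the functionals $l_j$. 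Therefore $\Phi(u;y)\le |y|_\Gamma^2 + C^2 e^{2M}$ on $E_M$, giving
\begin{equation*}
Z \ge \int_{E_M}\exp(-\Phi(u;y))\,\mu_0(du) \ge \exp\bigl(-|y|_\Gamma^2 - C^2 e^{2M}\bigr)\,\mu_0(E_M).
\end{equation*}
It then remains to observe that $\mu_0(E_M)>0$ for $M$ large enough: this follows because each $\mu_0^i=N(m_i,C_i)$ is a Gaussian measure on the separable Banach space $C(\bD;\bbR)$ charging the whole space, and by Fernique's theorem (or simply the fact that open balls around any point, in particular around $0$ if $0$ is in the support, have positive measure, while in general balls of radius $M$ eventually have positive measure) $\mu_0^i(\{\|\alpha_i\|_{C(\bD)}\le M\})\to 1$ as $M\to\infty$; by independence of the $\alpha_i$ and of $u_g$, $\mu_0(E_M)=\prod_{i=1}^n\mu_0^i(\{\|\alpha_i\|_{C(\bD)}\le M\})\to 1$, so it is positive for all sufficiently large $M$. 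Choosing such an $M$ (independently of $y$) yields $Z>0$ for every $y\in Y$, which is stronger than the required $\bbQ_0$-a.s.\ statement. Proposition~\ref{thm:bayes} then gives the claimed well-posedness and the Radon--Nikodym formula \eqref{eq:post_function}.

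The main obstacle is the positivity of $Z$: unlike the piecewise-constant case, there is no deterministic lower bound on $\kappa_{\min}$, so one must localise onto a norm ball in $C(\bD;\bbR^n)$ and then use the structure of Gaussian measures on Banach spaces to guarantee that this ball has positive prior mass. Everything else — measurability via continuity, the upper bound on $\Phi$ via Lax--Milgram, the final appeal to Bayes' Theorem — is routine and parallels the proof of Theorem~\ref{thm:existence_constant_case}.
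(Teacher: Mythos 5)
Your proof is correct and follows essentially the same route as the paper: measurability via the continuity of $\cG$ from Theorem \ref{t:c}, then positivity of $Z$ by localising onto a sup-norm ball $\cap_i\{\|\alpha_i\|_\infty\le M\}$, bounding $\Phi$ there via (\ref{eq:lax-mil}) and (\ref{eq:upper}), and using positivity of the Gaussian mass of that ball. The only (cosmetic) difference is that the paper fixes the radius to $1$ and invokes the fact that Gaussian measures on separable Banach spaces charge all balls, whereas you take $M$ large and use $\mu_0^i(\{\|\alpha_i\|\le M\})\to 1$ -- a marginally more elementary and, if anything, more robust justification.
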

\begin{proof}
Recall that the measure given by
(\ref{prioreq2}) is constructed so that $\mu_0(U)=1$. By the same argument as in the proof of Theorem \ref{thm:existence_constant_case},
we deduce that $\Phi(u;y)$ is $\nu_0$ measurable. Again using (\ref{eq:lax-mil}), we have
\begin{equation*}
|\Phi(u;y)|
  \leq |y|_{\Gamma}^2+\left(\frac{C}{\kappa_{\min}}\right)^2,
\end{equation*}
where $\kappa_{\min}=\min\limits_{i,x}\kappa_i(x)$. Since $\kappa_{\min}\ge \min\limits_i \exp(-\|\alpha_i(x)\|_{\infty})$, we have
\begin{equation}
\frac{1}{\kappa_{\min}} \leq \max\limits_i \exp(\|\alpha_i(x)\|_{\infty})
\label{eq:upper}
\end{equation}
Thus we have, using that Gaussian measure on a separable Banach space
charges all balls with positive measure,
\begin{eqnarray}
\label{eq:Zpositive_fun}
\fl Z  =\int_U\exp(-\Phi(u;y))\mu_0(du) \geq \int_U\exp\Bigl(-|y|_{\Gamma}^2-\left(\frac{C}{\kappa_{\min}}\right)^2\Bigr)\mu_0(du)\nonumber\\
\fl    \geq \int_{\cap_{i}\{\|\alpha_i\|_{L^{\infty}}\leq 1\}}\exp\left(-|y|_{\Gamma}^2-C^2 \max\limits_i \exp(2\|\alpha_i(x)\|_{\infty})\right)\mu_0(du)\nonumber\\
\fl    \geq \int_{\cap_{i}\{\|\alpha_i\|_{L^{\infty}}\leq 1\}}\exp\left(-|y|_{\Gamma}^2-C^2\exp(2)\right)\mu_0(du)
   =\exp\left(-|y|_{\Gamma}^2-C^2\exp(2)\right)\mu_0(\cap_{i}\{\|\alpha_i\|_{L^{\infty}}\leq 1\})\nonumber\\
  \fl  =\exp\left(-|y|_{\Gamma}^2-C^2\exp(2)\right)\Pi_{i=1}^n\mu_0^i(\{\|\alpha_i\|_{L^{\infty}}\leq 1\})   >0.\nonumber
\end{eqnarray}
Thus we have the desired result.
\end{proof}

\subsection{Well-Posedness}
Now we study the continuity property of the posterior measure $\mu^y$ with respect to $y$. We recall definitions of the total variation metric $d_{TV}$ and Hellinger metric $d_{Hell}$ on measures, and then study Lipschitz and H\"{o}lder continuity of the
posterior measure $\mu^y$, with respect to the data $y$, in these metrics.

Let $\mu$ and $\mu'$ be two measures, and choose a common reference
measure with respect to which both are absolutely continuous (the average
of the two measures for example). Then the Hellinger distance is defined by
  \begin{equation*}
  d_{{Hell}}(\mu,\mu')=\left(\frac{1}{2} \int_U\left(\sqrt{\frac{d\mu}{d\nu}}-\sqrt{\frac{d\mu'}{d\nu}}\right)^2d\nu\right)^{\frac{1}{2}}
\end{equation*}
and the Total variation distance $d_{TV}$ is defined by
  \begin{equation*}
  d_{{TV}}(\mu,\mu')=\frac{1}{2} \int_U\left|\frac{d\mu}{d\nu}-\frac{d\mu'}{d\nu}\right|d\nu.
\end{equation*}
Furthermore, the Hellinger and total variation distance are related as follows:
\begin{equation}
  \label{eq:tv_hell}
  \frac{1}{\sqrt{2}}d_{TV}(\mu,\mu')\leq d_{{Hell}}(\mu,\mu') \leq d_{{TV}}(\mu,\mu')^{\frac{1}{2}}.
\end{equation}
The Hellinger metric is stronger than total variation as, for square integrable
functions, the Hellinger metric defines the right order of magnitude of
perturbations to expectations caused by perturbation of the measure; the
total variation metric does this only for bounded functions.
See, for example, \cite{AMS10}, section 6.7.

The nature of the continuity result that we can prove depends, in general,
on the metric used and on the assumptions made about the prior model that
we use for the permeability $\kappa$. This is illustrated for piecewise
constant priors in the following theorem.

\begin{theorem}[{\bf Piecewise Constant}]
\label{thm:wellposed}
Assume that $\kappa(x)$ is a piecewise constant function correspond to parameter $u \in
U \subset \cB$.
Let $\mu_0$ be a prior distribution on $u$ satisfying $\mu_0(U)=1$. The resulting posterior distribution $\mu(\textrm{resp.}~ \mu')\ll\mu_0$ with Radon-Nikodym derivative is given by (\ref{eq:post}), for each $y(\textrm{resp.}~ y')\in Y$.
Then, for each of the test models we have:
\begin{enumerate}
\item for lognormal and uniform priors on the permeabilities, then for every fixed $r>0$, there is a $C_i=C_i(r), i=1,2$, such that, for all $y, y'$ with $\max\{|y|_{\Gamma},|y'|_{\Gamma}\}<r$,
\begin{eqnarray*}
  d_{{TV}}(\mu,\mu') \leq C_1 \left| y - y' \right|_{\Gamma},\\
  d_{{Hell}}(\mu,\mu') \leq C_2 \left| y - y' \right|_{\Gamma};
\end{eqnarray*}
\item for exponential priors on the permeabilities, then for every fixed $r>0$ and $\iota \in (0,1)$ there is $K_i=K_i(r,\iota)$ such that for all $y,y'$,with $\max\{|y|_{\Gamma},|y'|_{\Gamma}\}<r$,
\begin{eqnarray}
 \label{eq:tv} d_{{TV}}(\mu,\mu') \leq K_1 \left| y - y' \right|_{\Gamma}^{\iota},\\
 \label{eq:hell} d_{{Hell}}(\mu,\mu') \leq K_2 \left| y - y' \right|_{\Gamma}^{\frac{\iota}{2}}.
\end{eqnarray}
\end{enumerate}
\end{theorem}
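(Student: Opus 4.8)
The plan is to use the Radon--Nikodym representation (\ref{eq:post}) established in Theorem~\ref{thm:existence_constant_case} and to estimate, for data $y,y'$ with $\max\{|y|_\Gamma,|y'|_\Gamma\}<r$, the two sources of $y$-dependence in $d\mu^y/d\mu_0$: the exponent $\Phi(\cdot;y)$ and the normalisation $Z(y)=\int_U\exp(-\Phi(u;y))\,\mu_0(du)$. Three preliminary estimates are needed. \textbf{(i)} A uniform lower bound $Z(y)\ge z_0(r)>0$: exactly as in the proof of Theorem~\ref{thm:existence_constant_case}, restricting the integral to $\cap_i\{\kappa_i>\ve\}$ and using $|y|_\Gamma<r$ gives $Z(y)\ge \exp(-r^2-(C/\ve)^2)\prod_{i=1}^n\mu_0^i(\{\kappa_i>\ve\})=:z_0(r)$. \textbf{(ii)} A local Lipschitz-in-data bound for the misfit: writing $g=\cG(u)$ and using $|a|_\Gamma^2-|b|_\Gamma^2=\la a-b,a+b\ra_\Gamma$ together with (\ref{eq:lax-mil}), boundedness of $L$, and $\|\Gamma^{-1/2}\|$,
\[
|\Phi(u;y)-\Phi(u;y')|\le \frac12|y-y'|_\Gamma\bigl(|y|_\Gamma+|y'|_\Gamma+2|\cG(u)|_\Gamma\bigr)\le |y-y'|_\Gamma\Bigl(r+\frac{C}{\kappa_{\min}}\Bigr),
\]
with $\kappa_{\min}=\min_i\kappa_i$. \textbf{(iii)} Moment bounds on $\kappa_{\min}^{-1}$ under $\mu_0$: since $\kappa_{\min}^{-p}=\max_i\kappa_i^{-p}\le\sum_i\kappa_i^{-p}$ and the $\kappa_i$ are independent, it suffices to control $\mathbb{E}_{\mu_0^i}[\kappa_i^{-p}]$, which is finite for all $p>0$ in the lognormal case ($\kappa_i^{-1}$ is again lognormal), bounded by $(\kappa^{i,-})^{-p}$ in the uniform case, and finite for exponential priors precisely when $p<1$, with $\mathbb{E}[\kappa_i^{-p}]\to\infty$ as $p\uparrow1$. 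Hence $\mathbb{E}_{\mu_0}[\kappa_{\min}^{-2}]<\infty$ in the lognormal and uniform cases, whereas for the exponential prior only $\mathbb{E}_{\mu_0}[\kappa_{\min}^{-\iota}]<\infty$, for each $\iota\in(0,1)$.

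For the total variation estimate, taking $\mu_0$ as common reference measure and adding and subtracting $Z(y)^{-1}\exp(-\Phi(u;y'))$ gives, with $I:=\int_U|e^{-\Phi(u;y)}-e^{-\Phi(u;y')}|\,\mu_0(du)$,
\[
d_{TV}(\mu^y,\mu^{y'})\le\frac{1}{Z(y)}\Bigl(I+\tfrac12|Z(y)-Z(y')|\Bigr)\le\frac{I}{z_0(r)},
\]
since $|Z(y)-Z(y')|\le I$. Using $|e^{-a}-e^{-b}|\le\min(1,|a-b|)$ for $a,b\ge0$ together with (ii): in the lognormal and uniform cases I would bound by $|a-b|$ and integrate using $\mathbb{E}_{\mu_0}[\kappa_{\min}^{-1}]<\infty$, obtaining $d_{TV}(\mu^y,\mu^{y'})\le C_1|y-y'|_\Gamma$; in the exponential case I would use $\min(1,t)\le t^\iota$ to get $|e^{-\Phi(u;y)}-e^{-\Phi(u;y')}|\le|y-y'|_\Gamma^\iota(r+C\kappa_{\min}^{-1})^\iota$ and integrate using $\mathbb{E}_{\mu_0}[\kappa_{\min}^{-\iota}]<\infty$, obtaining (\ref{eq:tv}) with $K_1=K_1(r,\iota)$.

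For the Hellinger estimate in the lognormal and uniform cases I would repeat the argument with square roots: adding and subtracting $Z(y)^{-1/2}\exp(-\Phi(u;y')/2)$ and using $(e^{-a/2}-e^{-b/2})^2\le\frac14|a-b|^2$ for $a,b\ge0$, together with (i), (ii) and $\mathbb{E}_{\mu_0}[\kappa_{\min}^{-2}]<\infty$, yields $d_{Hell}(\mu^y,\mu^{y'})\le C_2|y-y'|_\Gamma$. In the exponential case there is nothing further to do: (\ref{eq:tv_hell}) gives $d_{Hell}(\mu^y,\mu^{y'})\le d_{TV}(\mu^y,\mu^{y'})^{1/2}\le K_1^{1/2}|y-y'|_\Gamma^{\iota/2}$, which is (\ref{eq:hell}). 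All constants above depend only on $r$ (and $\iota$), on $\|f\|_{V^*}$, $\|L\|$, $\|\Gamma^{-1/2}\|$, and on the fixed permeability priors, and not on the particular geometric parameterisation; hence the statements hold for each of the test models.

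The main obstacle is the exponential case. The natural Lipschitz argument needs $\kappa_{\min}^{-1}\in L^1(\mu_0)$, which fails for exponential priors, so one must replace it by the truncation inequality $\min(1,t)\le t^\iota$ and the sub-linear moment bounds $\mathbb{E}_{\mu_0}[\kappa_{\min}^{-\iota}]<\infty$, $\iota\in(0,1)$, accepting that the constant $K_1(r,\iota)$ degenerates as $\iota\uparrow1$ (reflecting the borderline divergence of $\int_0^1 x^{-1}\,dx$ built into the exponential density). Verifying these moment bounds and the uniform-in-$y$ lower bound on $Z$ is the only genuinely delicate point; the remaining manipulations of the density (\ref{eq:post}) are routine.
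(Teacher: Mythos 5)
Your proposal is correct and follows essentially the same route as the paper: a uniform lower bound on $Z$, the local Lipschitz estimate $|\Phi(u;y)-\Phi(u;y')|\le M(r,u)|y-y'|_{\Gamma}$ with $M(r,u)=r+|\cG(u)|_{\Gamma}$, moment bounds on $M$ under each prior (with the key point that exponential priors only give $\int_U M^{\iota}\,d\mu_0<\infty$ for $\iota<1$), and deduction of the Hellinger bound in the exponential case from (\ref{eq:tv_hell}). The only differences are cosmetic: you write out the $L^2_{\mu_0}$ argument for part (i) that the paper delegates to Theorem 4.3 of \cite{AMS}, and your single inequality $\min(1,t)\le t^{\iota}$ replaces, and slightly streamlines, the paper's splitting of the integral over $\{|\Phi(u;y)-\Phi(u;y')|\le 1\}$ and its complement combined with a Markov-inequality lemma.
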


\begin{remark}
For (ii), we only need to prove (\ref{eq:tv}), since (\ref{eq:hell})
then follows from (\ref{eq:tv_hell}). We can also deduce (\ref{eq:hell}) directly by a similar argument as the proof of (\ref{eq:tv}), but this does not improve the
H\"older exponent. Mathematically it would be possible to use uniform priors on the
permeabilities whose support extends to include the origin; in
terms of well-posedness this would result in a degradation
of the H\"older exponent as we see in Theorem \ref{thm:wellposed}(ii) for exponential
priors.
\end{remark}
\begin{proof} First note that
\begin{eqnarray}
\fl   |\Phi(u;y)-\Phi(u,y')| =   \left|\frac{1}{2}|y-\cG(u)|_{\Gamma}^2-\frac{1}{2}|y'-\cG(u)|_{\Gamma}^2\right| =  \frac{1}{2} \left| \langle y-y' , y+y'-2\mathcal{G}(u) \rangle_{\Gamma} \right|\nonumber\\
\label{eq:diff_phi}
                        \leq  \left(r+|\cG(u)|_{\Gamma}\right)|y-y'|_{\Gamma}.
    \end{eqnarray}
We denote the Lipschitz constant $$M(r,u):=r+|\cG(u)|_{\Gamma}.$$
Throughout the rest of this proof, the constant $C$ may depend on $r$ and changes from occurrence to occurrence. Let $Z$ and $Z'$ denote the normalization constant for $\mu^y$ and $\mu^{y'}$, so that
\begin{eqnarray*}
Z =\int_U\exp(-\Phi(u;y))\mu_0(du)\qquad \textrm{and} \qquad
Z'  =\int_U\exp(-\Phi(u;y'))\mu_0(du).
\end{eqnarray*}
By (\ref{eq:lax-mil}) $Z$ has a positive lower bound as we now show:
\begin{eqnarray}
\fl Z  \geq \int_{\cap_i\{\kappa_i>\ve\}}\exp\left(-|y|_{\Gamma}^2-\left(\frac{C}{\ve}\right)^2\right)\mu_0(du)\geq \exp\left(-r^2-\left(\frac{C}{\ve}\right)^2\right)\mu_0(\cap_{i}\{\kappa_i>\ve\}) >0,\nonumber
\end{eqnarray}
when $|y|_{\Gamma}<r$ and $\ve>0$, which should be sufficiently small in the case of uniform prior. We have an analogous lower bound for $Z'$.

The proof of (i) is an application of Theorem 4.3 in \cite{AMS}:
if the prior measure for $\kappa$ is uniform distribution with $\kappa_{min}>0$, or
a lognormal distribution, then $M^2(r,u)\in L^1_{\mu_0}$, which leads to Lipschitz
continuity.

Proof of (ii): We just need to prove (\ref{eq:tv}). If we substitute $\mu$ and $\mu'$ into $d_{TV}(\mu,\mu')$, we have
\begin{eqnarray*}
 d_{{TV}}(\mu,\mu')\leq I_1+I_2.
\end{eqnarray*}
where
\begin{eqnarray*}
  & I_1=\frac{1}{2Z}\int_U \left|\exp\left(-\Phi(u,y)\right)-\exp\left(-\Phi(u,y')\right)\right|d\mu_0\\
  & I_2=\frac{1}{2}|Z^{-1}-(Z')^{-1}|\int_U\exp\left(-\Phi(u,y')\right)d\mu_0.
\end{eqnarray*}
Note that
\begin{eqnarray*}
\fl |Z^{-1}-(Z')^{-1}| &= \frac{|Z-Z'|}{ZZ'}\leq  \frac{1}{ZZ'} \int_U \left|\exp\left(-\Phi(u,y)\right)-\exp\left(-\Phi(u,y')\right)\right|d\mu_0 = \frac{2}{Z'}I_1.
\end{eqnarray*}
Thus, by using the  positive bound on $Z'$ from below, we have
\begin{equation*}
I_2\leq C I_1.
\end{equation*}
 Therefore, we just need to estimate $I_1$. For any $\iota\in(0,1)$,
\begin{eqnarray*}
\fl       2ZI_1 = \int_U \left|\exp\left(-\Phi(u,y)\right)-\exp\left(-\Phi(u,y')\right)\right|\mu_0(du)\nonumber\\[0.7em]
\fl        = \int_{\{|\Phi(u,y)-\Phi(u,y')| \leq 1\}}|\exp\left(-\Phi(u,y)\right)-\exp\left(-\Phi(u,y')\right)|\mu_0(du)\nonumber\\[0.7em]
\fl + \int_{\{|\Phi(u,y)-\Phi(u,y')| > 1\}}|\exp\left(-\Phi(u,y)\right)-\exp\left(-\Phi(u,y')\right)|\mu_0(du)\nonumber\\[0.7em]
\fl        \leq \int_{\{|\Phi(u,y)-\Phi(u,y')| \leq 1\}}|\Phi(u,y)-\Phi(u,y')|^{\iota}\mu_0(du)+2\mu_0\left(\{|\Phi(u,y)-\Phi(u,y')| > 1\}\right).\nonumber
\label{eq:constantZ1}
\end{eqnarray*}
By (\ref{eq:phi_integral}) and Lemma \ref{lem:markov}, we have
\begin{eqnarray*}
\fl 2ZI_1  \leq \int_U M^{\iota}(r,u)d\mu_0~|y-y'|_{\Gamma}^{\iota}+2\int_U M^{\iota}(r,u)d\mu_0 ~|y-y'|_{\Gamma}^{\iota}= 3\int_U M^{\iota}(r,u)d\mu_0 ~|y-y'|_{\Gamma}^{\iota}.
\end{eqnarray*}
By using the  positive lower bound on $Z$, for any $\iota \in (0,1)$, we have
$$I_1\leq C_1|y-y'|_{\Gamma}^{\iota}$$
Thus
\begin{eqnarray*}
d_{{TV}}(\mu,\mu') \leq I_1 + I_2 \leqslant C_1 \left| y - y' \right|_{\Gamma}^{\iota} + C_2 \left| y - y' \right|_{\Gamma}^{\iota} \leqslant K_1(r) \left| y - y' \right|_{\Gamma}^{\iota}
\end{eqnarray*}
therefore, we obtain
\begin{equation*}
d_{{TV}}(\mu,\mu') \leq K_1 \left| y - y' \right|_{\Gamma}^{\iota}, \quad 0 < \iota < 1,
\end{equation*}
which completes the proof.
\end{proof}

As for the piecewise function case, we have a similar well-posedness result, which shows Lipschitz continuity with respect to data.
\begin{theorem}[{\bf Piecewise Continuous}]\label{thm:wellposed_function}
Assume that $\kappa(x)$ is a piecewise continuous function corresponding to parameter
$u \in U \subset \cB$ and that the prior $\mu_0$ is Gaussian on the variable $\log\kappa_i$ and satisfies $\mu_0(U)=1.$ The
resulting posterior distribution is $\mu(resp.~ \mu')\ll\mu_0$ with Radon-Nikodym derivative given by (\ref{eq:post_function}), for each $y(resp. ~y')\in Y$.
Then, for each of the test models we have that for every fixed $r>0$, there is a $C_i=C_i(r), i=1,2$, such that, for all $y, y'$ with $\max\{|y|_{\Gamma},|y'|_{\Gamma}\}<r$,
\begin{eqnarray*}
  d_{{TV}}(\mu,\mu') \leq C_1 \left| y - y' \right|_{\Gamma},\qquad   d_{{Hell}}(\mu,\mu') \leq C_2 \left| y - y' \right|_{\Gamma}.
\end{eqnarray*}
\end{theorem}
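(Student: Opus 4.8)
The plan is to recognise this as an instance of the general well-posedness machinery of \cite{AMS10, AMS}, run exactly as in the proof of Theorem~\ref{thm:wellposed}(i): once one has (a) a lower bound $Z, Z' \geq z(r) > 0$ for all $y, y'$ with $\max\{|y|_\Gamma, |y'|_\Gamma\} < r$, (b) the local Lipschitz estimate $|\Phi(u;y) - \Phi(u;y')| \leq M(r,u)\,|y-y'|_\Gamma$ with $M(r,u) = r + |\cG(u)|_\Gamma$, and (c) $M(r,\cdot) \in L^2_{\mu_0}$, the two bounds follow from a routine two-term splitting. Measurability of $\Phi$, needed to make sense of the posterior, is already available from Theorem~\ref{t:c} and the proof of Theorem~\ref{PC}.

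Ingredient (a) is the computation already carried out in the proof of Theorem~\ref{PC}: using (\ref{eq:lax-mil}) and (\ref{eq:upper}) one obtains
\begin{equation*}
Z \geq \exp\bigl(-r^2 - C^2\exp(2)\bigr)\prod_{i=1}^n \mu_0^i\bigl(\{\|\alpha_i\|_{L^\infty} \leq 1\}\bigr) =: z(r) > 0,
\end{equation*}
the product being strictly positive because a Gaussian measure on a separable Banach space charges every ball; the same holds for $Z'$. Ingredient (b) is precisely (\ref{eq:diff_phi}). The one genuinely new point is (c). From (\ref{eq:lax-mil}) and (\ref{eq:upper}),
\begin{equation*}
M^2(r,u) \leq 2r^2 + 2|\cG(u)|_\Gamma^2 \leq 2r^2 + 2C^2 \sum_{i=1}^n \exp\bigl(2\|\alpha_i\|_{L^\infty}\bigr),
\end{equation*}
so it suffices to show $\int \exp(2\|\alpha_i\|_{L^\infty})\,\mu_0^i(d\alpha_i) < \infty$ for each $i$. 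This follows from Fernique's theorem: since $\mu_0^i$ is Gaussian on $C(\bD;\bbR)$ there is $\beta > 0$ with $\int \exp(\beta\|\alpha_i\|_{L^\infty}^2)\,\mu_0^i(d\alpha_i) < \infty$, and $2t \leq \beta t^2 + \beta^{-1}$ for all $t \geq 0$ gives the claim. The geometric part of $\mu_0$ contributes nothing, being a uniform probability measure on a bounded set while the bound above for $M(r,u)$ is independent of $u_g$. Hence $M(r,\cdot) \in L^2_{\mu_0} \subset L^1_{\mu_0}$.

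With (a)--(c) in hand I would finish exactly as in the proof of Theorem~\ref{thm:wellposed}(i). For total variation, split $d_{TV}(\mu,\mu') \leq I_1 + I_2$ as there; the elementary bound $|e^{-a} - e^{-b}| \leq |a-b|$ for $a, b \geq 0$ together with (b) gives $Z\,I_1 \leq \frac{1}{2}\|M(r,\cdot)\|_{L^1_{\mu_0}}|y-y'|_\Gamma$ and $|Z - Z'| \leq \|M(r,\cdot)\|_{L^1_{\mu_0}}|y-y'|_\Gamma$, whence $I_2 \leq |Z - Z'|/(2Z) \leq C(r)|y-y'|_\Gamma$ after inserting the lower bound from (a); adding the two yields the total variation estimate. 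For the Hellinger bound I would not pass through (\ref{eq:tv_hell}), which would cost a square root, but instead estimate $2d_{Hell}^2(\mu,\mu')$ directly via the analogous two-term split, using $|e^{-a/2} - e^{-b/2}| \leq \frac{1}{2}|a-b|$ for $a,b \geq 0$ and $|Z^{-1/2} - (Z')^{-1/2}| \leq C(r)|Z-Z'|$ (valid by (a)): the first term is then controlled by $C(r)\|M^2(r,\cdot)\|_{L^1_{\mu_0}}|y-y'|_\Gamma^2$ and the second, since $\int e^{-\Phi(u;y')}\mu_0(du) = Z' \leq 1$, by $C(r)|Z-Z'|^2 \leq C(r)|y-y'|_\Gamma^2$, so that $d_{Hell}(\mu,\mu') \leq C_2(r)|y-y'|_\Gamma$. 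The only real obstacle is ingredient (c): everything else is bookkeeping, and the essential new input relative to the finite-dimensional piecewise-constant case (Theorem~\ref{thm:wellposed}) is the Fernique bound that controls the unbounded quantity $\kappa_{\min}^{-1}$ under the Gaussian prior.
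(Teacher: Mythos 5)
Your proof is correct and takes essentially the same route as the paper: both reduce the statement to the general well-posedness machinery of \cite{AMS} (Theorem 4.3 there) with $M_2=r+C\max_i\exp(\|\alpha_i\|_{\infty})$, and the one substantive new verification --- that $M_2\in L^2_{\mu_0}$ --- is carried out in both cases by dominating $\exp(2\|\alpha_i\|_{\infty})$ by $\exp(\epsilon\|\alpha_i\|_{\infty}^2)$ up to a constant and invoking the Fernique theorem. The only cosmetic difference is that the paper proves the Hellinger bound first and deduces the total variation bound from $d_{TV}\leq\sqrt{2}\,d_{Hell}$, whereas you write out both two-term splittings explicitly; both give the Lipschitz dependence claimed.
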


\begin{proof}
We just need to prove the Hellinger distance case as the TV distance case then
follows from (\ref{eq:tv_hell}). The remainder of the proof is an application of
the methods used Theorem 4.3 in \cite{AMS} with $M_1=0$ and
$$M_2=r+C\max_i\exp(\|\alpha_i(x)\|_{\infty}),$$
after noting that the normalization constant $Z>0.$
    By (\ref{eq:upper}), we obtain
    \begin{eqnarray*}
     \int_U M_2^2d\mu_0& \leq&  2r^2\mu_0(U)+2C^2\int_U \max_i \exp(2\|\alpha_i(x)\|_{\infty})d\mu_0\\
                          &\leq& 2r^2\mu_0(U)+2C^2\sum_{i=1}^n\int_U \exp(2\|\alpha_i(x)\|_{\infty})d\mu_0
    \end{eqnarray*}
Since for any $\epsilon>0$, there exist a $C(\epsilon)$ that $\exp(x)\leq C(\epsilon)\exp(\epsilon x^2)$ we obtain that for each $i$, $$\int_U \exp(2\|\alpha_i(x)\|_{\infty})d\mu_0\leq C(\epsilon)\int_U \exp(\epsilon\|\alpha_i(x)\|^2_{\infty})d\mu_0.$$
From the Fernique Theorem we deduce that,
as long as $\epsilon$ is small enough, $$\int_U M^2(r,u)d\mu_0<\infty.$$
  \end{proof}

\section{MCMC Algorithm}
\label{se:MCMC}
We have demonstrated the existence and well-posedness of the posterior distribution, which is the Bayesian solution to the inverse problem (\ref{eq:ips}). We now demonstrate
numerical methods to extract information from this posterior distribution; one way to do this, which we focus on in this paper, is to generate samples distributed according to the posterior distribution. These can
be used to approximate expectations with respect to the posterior and
hence to make predictions about, and quantify uncertainty in, the
permeability.  In this section we construct a class of MCMC methods to
generate such samples, using  a Gibbs splitting to separate geometric
and physical parameters, and using Metropolis (within Gibbs) proposals
which exploit the structure of the prior. We thereby generate
samples from the posterior given by (\ref{eq:post}) or (\ref{eq:post_function}) in Theorems \ref{thm:existence_constant_case} and \ref{PC}, respectively.
The resulting accept-reject parts of the algorithm depend only
on differences in the log-likelihood at the current and proposed
states, or alternatively on differences of the misfit
function $\Phi$ given by (\ref{eq:potential}).
Similar methods have been identified as beneficial for Gaussian priors
in \cite{neal,David} for example and here we extend to the non-Gaussian prior measure on the permeability values
(uniform distribution and exponential distribution) and on the geometry.

\subsection{Prior Reversible Proposals}
\label{ssec:rev_pro}

We start by demonstrating how prior reversible proposals lead to
a simple accept-reject mechanism, depending only on differences
in the model-data misfit function, within the context of the
Metropolis-Hasting algorithm applied to Bayesian inverse problems.
When the target distribution has density $\pi$ defined on $\Real^d$
this algorithm proceeds by proposing to move from current state
$u$ to proposed state $v$, drawn from Markov kernel $q(u,v)$,
and accepting the move with probability
\begin{equation}
a(u,v) = \frac{\pi(v) q(v,u)}{\pi(u) q(u,v)} \wedge 1.
\label{MH}
\end{equation}
Crucial to this algorithm is the design of the proposal $q(u,v)$;
algorithm efficiency will increase if we use a proposal which leads
to low integrated correlation in the resulting Markov chain.

When we have piecewise constant permeabilities then the posterior
density $\pi^y$ is given by (\ref{eq:post_density})
with $\Phi$ defined in (\ref{eq:potential}). If we chose a prior-reversible proposal
density which then satisfies
\begin{equation}\label{detbal}
\pi_0(u) q(u,v) = \pi_0(v) q(v,u) \quad u, v \in \bbR^d.
\end{equation}
then the acceptance probability from (\ref{MH}) becomes
 \begin{eqnarray*}
\fl a(u,v) &= \frac{\pi^y(v) q(v,u)}{\pi^y(u) q(u,v)} \wedge 1= \frac{\pi_0(v)\exp(-\Phi(v;y))q(v,u)}{\pi_0(u)\exp(-\Phi(u;y)) q(u,v)} \wedge 1
\end{eqnarray*}
so that
\begin{equation}
a(u,v)= \exp(\Phi(u;y)-\Phi(v;y)) \wedge 1.  \label{eq:ap}
\end{equation}
Thus, if the proposed state $v$ corresponds to a lower value of the model-data
misfit function $\Phi(v;y)$ than the current state $\Phi(u;y)$, it will accept the proposal definitely, otherwise it will accept with probability $a(u,v)<1.$
Hence, the accept-reject expressions depend purely on the model-data mismatch function $\Phi(u;y)$, having clear physical interpretation.

\subsection{Prior Reversible Proposal on $\Real^d$}
\label{ssec:rev_pro}

We now construct prior reversible proposals for the finite dimensional case when the permeability is piecewise constant. In this case $u_{\kappa}$ lives in
a subset $U$ of $\bbR^n$ where $n$ is the number of parameters to be
determined. The prior distribution $\pi_0$ constrains the parameters to the
admissible set $U$,  that is $\pi_0(x)=0$ if $x\notin U$. Given current
state $v$ we then construct a prior reversible proposal kernel $q$
as follows: we let
\begin{equation}
v = \left\{ \begin{array}{ll} w & w \in U \\ u & w \notin U \end{array} \right. ,
\label{eq:gen_v}
\end{equation}
where $w$ is drawn from kernel $p(u,w)$ that satisfies
  \begin{equation}
\pi_0(u) p(u,w) = \pi_0(w) p(w,u)\quad \forall u, w \in U.
\label{eq:rev_p}
\end{equation}
Note that the previous expression is constrained to $U$, rather than $\mathbb{R}^{d}$ as in (\ref{detbal}). Note, however, that we have not assumed that $p(u,v)$ is a Markov kernel on $U \times U$; it may generate draws outside $U$.
 Therefore, in (\ref{eq:gen_v}) we just simply accept or reject the proposal $w$ based on whether $w$ is in $U$ or not.  The reversibility with respect to the prior of the proposal  $q(u,v)$ given by (\ref{eq:gen_v}) follows from:
\begin{proposition}
\label{thm:mh}
Consider the Markov kernel $q(u,v)$ on $U \times U$ defined via (\ref{eq:gen_v})-(\ref{eq:rev_p}). Then
%..." and the displayed line should be "\forall u,v \in U
%If $p(u,w)$ satisfies (\ref{eq:rev_p}) and $v$ is defined by (\ref{eq:gen_v}), then
  \begin{equation*}
\pi_0(u) q(u,v) = \pi_0(v) q(v,u)\quad \forall u, v \in U.
\end{equation*}
\end{proposition}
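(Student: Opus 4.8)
The plan is to read the stated identity at the level of measures, i.e.\ to show that the joint law of a current/proposed pair is symmetric, which is the rigorous content of ``$\pi_0(u)q(u,v)=\pi_0(v)q(v,u)$''. First I would write the proposal kernel defined by (\ref{eq:gen_v})--(\ref{eq:rev_p}) explicitly: for $u\in U$,
\[
q(u,dv)=p(u,v)\mathbf{1}_U(v)\,dv+\lambda(u)\,\delta_u(dv),\qquad \lambda(u):=\int_{U^c}p(u,w)\,dw,
\]
so that $q(u,\cdot)$ has an absolutely continuous part on $U$ with density $p(u,\cdot)|_U$, together with an atom at the current state $u$ carrying exactly the mass that $p(u,\cdot)$ would have placed outside $U$. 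Since $p(u,\cdot)$ is a probability density, $\int_U p(u,v)\,dv+\lambda(u)=1$, so $q$ is a genuine Markov kernel on $U\times U$; this normalization is precisely why the atomic term must be kept.

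Next I would form the joint measure $\nu(du,dv):=\pi_0(u)\,du\,q(u,dv)$ on $U\times U$ and show it is invariant under the coordinate swap $(u,v)\mapsto(v,u)$, which is detailed balance for $q$ with respect to $\pi_0$. Decompose $\nu=\nu_{\mathrm{ac}}+\nu_{\mathrm{diag}}$ with
\[
\nu_{\mathrm{ac}}(du,dv)=\pi_0(u)\,p(u,v)\,\mathbf{1}_{U\times U}(u,v)\,du\,dv,\qquad
\nu_{\mathrm{diag}}(du,dv)=\pi_0(u)\,\lambda(u)\,\delta_u(dv)\,du.
\]
For $\nu_{\mathrm{ac}}$, symmetry under the swap is exactly hypothesis (\ref{eq:rev_p}); this is assumed only on $U\times U$, but that is the whole support of $\pi_0$, so nothing is lost. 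For $\nu_{\mathrm{diag}}$, the measure is carried by the diagonal $\{u=v\}$, on which the swap acts as the identity, so $\nu_{\mathrm{diag}}$ is automatically symmetric; concretely, testing against a bounded measurable $g$ gives $\int g(u,v)\,\nu_{\mathrm{diag}}(du,dv)=\int g(u,u)\,\pi_0(u)\lambda(u)\,du=\int g(v,u)\,\nu_{\mathrm{diag}}(du,dv)$. Adding the two pieces shows $\nu$ is symmetric, which is the claim; rephrased in the density notation of the statement, on the off-diagonal the density of $q(u,\cdot)$ is $p(u,v)$, so (\ref{eq:rev_p}) gives $\pi_0(u)q(u,v)=\pi_0(v)q(v,u)$ pointwise there, while on the diagonal the identity is vacuous.

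The only real point requiring care -- and the step I would watch most closely -- is handling the atomic part of $q$ honestly: one must not discard the $\delta_u$ term (it is needed for $q(u,\cdot)$ to be normalized, hence for $\pi_0$-invariance), yet one must check it does not break detailed balance, which it does not precisely because it is supported on the diagonal. I would also note that $\lambda(u)$ enters only through this normalization and that the argument is completely insensitive to how $p(u,\cdot)$ spreads its mass over $U^c$. No nontrivial estimate is involved; the content is entirely bookkeeping of the absolutely continuous versus atomic parts of $q$, with (\ref{eq:rev_p}) supplying the symmetry of the former.
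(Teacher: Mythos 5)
Your proof is correct, but it takes a genuinely different route from the paper's. You verify detailed balance directly at the level of measures: you write the kernel explicitly as $q(u,dv)=p(u,v)\mathbf{1}_U(v)\,dv+\lambda(u)\,\delta_u(dv)$, split the joint law $\pi_0(u)\,du\,q(u,dv)$ into its absolutely continuous and diagonal parts, and check symmetry of each piece under the coordinate swap --- hypothesis (\ref{eq:rev_p}) handles the former, and the latter is trivially symmetric because it lives on the diagonal. The paper instead recognizes (\ref{eq:gen_v}) as a disguised Metropolis--Hastings algorithm: it extends $\pi_0$ by zero to all of $\bbR^d$, proposes $w\sim p(u,\cdot)$, and accepts with probability $a(u,w)=\frac{\pi_0(w)p(w,u)}{\pi_0(u)p(u,w)}\wedge 1$; by (\ref{eq:rev_p}) this acceptance probability is identically $1$ when $w\in U$ and $0$ when $w\notin U$, so the Metropolis--Hastings chain coincides with $q$, and reversibility then follows from the general theory of Metropolis--Hastings kernels. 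Your argument is more self-contained and makes the measure-theoretic bookkeeping fully explicit --- in particular the atom at the current state, which the paper's proof never writes down and which, as you rightly emphasize, must be retained for $q$ to be a normalized kernel; the paper's argument is shorter and conceptually illuminating, since it explains why the ``reject if outside $U$'' rule is the natural one (it is exactly an accept/reject step for the extended target). Both are complete proofs of the proposition.
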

\begin{proof}
See Appendix.
\end{proof}
For a uniform prior $\pi_0(u)$, $p(u,w)$ may be any symmetric function that satisfies $p(u,w)=p(w,u)$. For example, we can use a Gaussian local move such that $p(u,w) \propto \exp \left( - \frac{1}{2\varepsilon^2} |w-u|^2 \right)$ or we
can also propose $w$ by a local move drawn uniformly from an $\epsilon$
ball around $u$.
An analogous discussion also applies when the geometric parameter is uniform (as before) but the permeability values are exponential. For the exponential permeability prior $\exp(\lambda)$, with support $[0,\infty)$, we may choose $w=u-\lambda\delta+\sqrt{2\delta}\xi$ with $\xi\sim \cN(0,1)$. For all these examples, it is easy to check that the proposal density $p(u,w)$  satisfies (\ref{eq:rev_p}). Therefore, by Proposition \ref{thm:mh} we obtain that the proposal (\ref{eq:gen_v}) is reversible with respect to the prior. Thus, the acceptance probability that results is given by (\ref{eq:ap}).

\subsection{Prior Reversible Proposal on $C({\overline D};\bbR^n)$}
\label{ssec:rev_pro}

For the case of continuous permeabilities, we recall that $u_{\kappa,i}=\log \kappa_{i}$ has prior Gaussian measure $\mu_0^{\kappa}(du_{\kappa})= \cN(m,C)$ with covariance and mean chosen so that it charges the space $C({\overline D};\bbR^n)$ with full measure. In this case a prior reversible proposal is given by
 \begin{equation*}
% \label{eq:pcn}
 v_{\kappa}=m+\sqrt{1-\beta^2}(u_{\kappa}-m)+\beta \xi, \quad \xi\sim\cN(0,C), ~\beta\in[0,1].
 \end{equation*}
This is the pCN-MCMC method introduced in \cite{beskos2008mcmc}
(see the overview in \cite{David}) and the acceptance
probability is again given by (\ref{eq:ap}) in this infinite dimensional
context.

\subsection{Metropolis-within-Gibbs: Separating Geometric and Physical Parameters}

In practice the geometric and physical parameters have very different
effects on the model-data misfit and efficiency can be improved
by changing them separately. Using the
independence between the geometric parameter $u_{g}$ and the
physical parameters (permeabilities) $u_{\kappa}$ this can be
obtained by employing the
following Metropolis-within-Gibbs algorithm.

\begin{algorithm}[Metropolis-within-Gibbs]\label{MwG}{~}

Initialize $u^{(0)}=(u_{\kappa}^{(0)},u_{g}^{(0)})\in U$. \\
For $k=0,\dots$
\begin{enumerate}
  \item[(1)] Propose $v_{g}$ from $q_{g}(u_{g}^{(k)},v_{g})$
	\subitem(1.1) Draw $w\in \bbR^d$ from $p_{g}(u^{(k)}_{g},w)$.
	\subitem(1.2) Let $v_{g} \in U_{g}$ defined by
  \begin{equation*}
v_{g} = \left\{ \begin{array}{ll} w & w \in U \\ u_{g}^{(k)} & w \notin U \end{array} \right. .
\end{equation*}

  \item[(2)] Accept or reject $v=(u_{\kappa}^{(k)},v_{g})$:
\begin{equation*}
(u_{\kappa}^{(k)},u_{g}^{(k+1)}) =
\left\{
\begin{array}{cc}
 v & \mathrm{with~probability}~a(u^{(k)},v)  \\
 u^{(k)} & \mathrm{otherwise}
\end{array} \right. . \nonumber
\end{equation*}
where $u^{(k)}\equiv (u_{\kappa}^{(k)},u_{g}^{(k)})$ and
with $a(u,v)$ defined by (\ref{eq:ap}).
  \item[(3)] Propose $v_{\kappa}$ from $q_{\kappa}(u_{\kappa}^{(k)},v_{\kappa})$:\\
In the constant permeabilities case, propose $v_{\kappa}$ from $q_{\kappa}(u_{\kappa}^{(k)},v_{\kappa})$ as follows
	\subitem(3.1) Draw $w\in \bbR^d$ from $p_{\kappa}(u_{\kappa}^{(k)},w)$.
	\subitem(3.2) Let $v_{\kappa} \in U_{\kappa}$ defined by
  \begin{equation*}
v_{\kappa} = \left\{ \begin{array}{ll} w & w \in U_{\kappa} \\ u_{\kappa}^{(k)} & w \notin U_{\kappa} \end{array} \right. .
\end{equation*}
In the continuous permeabilities case we propose $v_{\kappa}$ according to
\begin{equation*}
 \label{eq:pcn2}
 v_{\kappa}=m+\sqrt{1-\beta^2}(u_{\kappa}^{(k)}-m)+\beta \xi, \quad \xi\sim\cN(0,C), ~\beta\in[0,1],\\
 \end{equation*}
  \item[(4)] Accept or reject $v=(v_{\kappa},u_{g}^{(k+1)})$:
\begin{equation*}
(u_{\kappa}^{(k+1)},u_{g}^{(k+1)}) =
\left\{
\begin{array}{cc}
 v & \mathrm{with~probability}~a(\hat{u}^{(k)},v)  \\
 \hat{u}^{(k)} & \mathrm{otherwise}
\end{array} \right. . \nonumber
\end{equation*}
where $\hat{u}^{(k)}\equiv (u_{\kappa}^{(k)},u_{g}^{(k+1)})$ and
with $a(u,v)$ defined by (\ref{eq:ap}).
\end{enumerate}
\end{algorithm}

Furthermore, in the experiments which follow we sometimes find it
advantageous to split the geometric parameters into different groupings,
and apply the Metropolis-within-Gibbs idea to these separate groupings;
again the independence of the parameters under the prior allows this
to be done in a straightforward fashion, generalizing the preceding
Algorithm \ref{MwG}. In particular, by using
the independence of the geometric parameters under the prior, together
with prior-reversibility of the proposals used, it again follows that
the accept-reject criterion for each Metropolis-within-Gibbs step is
given by (\ref{eq:ap}).
In addition, we generate multiple parallel MCMC chains to sample the posterior in the subsequent experiments. For some of those chains we often find that a low probability mode is explored for a very large number of iterations. In order to accelerate the convergence of these chains, within the Metropolis step of the aforementioned algorithm, we implement proposals where the local moves described earlier are replaced, with probability 1/2, by independent samples from the prior.

\section{Numerical Experiments}
\label{se:Numer}
We present some numerical examples to demonstrate the feasibility of our methodology. Specifically, the performance is illustrated by application to
three examples derived from our Test Models of \Sref{sec:forward}. For these examples, the forward model consists of the elliptic equation on a domain $D=[0,1]\times [0,1]$ discretized with the  finite difference method on a mesh of size $50\times 50$. In addition, $J$ measurement functionals $\ell_j(p)$ are defined by
\begin{equation}\label{meas_func}
\ell_j(p)=\frac{1}{2\pi\sigma^2}\int_D\exp\left(-\frac{|x-x_j|^2}{2\sigma^2}\right)p(x)dx, \qquad j\in\{1,\cdots,J\}
\end{equation}
which can be understood as a smooth approximation of pressure evaluations at certain locations $x_j$'s in the domain $D$.
This is because the kernel under the integrand
approaches a Dirac measure at $x_j$ as $\sigma \to 0.$
Note that $\ell_j\in V^{*}$, the dual space of $V$, as required for the analysis of \Sref{sec:forward}.
In all of the
numerical experiments reported $\sigma$ takes the value $0.01.$

For the subsequent experiments we generate synthetic data by first computing $p^{\dagger}$, the solution to the elliptic equation with a ``true'' permeability $\kappa^{\dagger}(x)$ associated to the true parameters $u^{\dagger}$. Then, synthetic data is defined by $y_j=\ell_j(p^{\dagger})+\eta_j$, where $\eta_j$ are i.i.d. Gaussian noise from $\cN(0,\gamma^2 )$. Our choices of $u^{\dagger}$ and $\gamma$ are described below. In order to avoid the inverse crime, the synthetic data is computed from the true permeability defined on a domain discretized on $100\times 100$ cells while the Bayesian inversions are performed on a $50\times 50$ grid. It is important to remark that the effect of the observational noise, at the scale we introduce
it, is sufficient to induce significant inversion challenges even in this
perfect model scenario. Furthermore, if model error is to be studied, it
is perhaps more pertinent to study the effect of modelling the geometry
through a small finite set of parameters, when real interfaces and faults
will have more nuanced structures, or the effect of modelling spatially
varying fields as constants. We leave the detailed study of these and
other grid-based model errors for separate study.

\subsection{Example. A Three-layer Model With Fault}

For this experiment we consider a permeability of the form
\begin{equation}\label{eq:num1}
\kappa(x)=\kappa_1 \chi_{D_1}(x)+\kappa_2 \chi_{D_2}(x)+\kappa_3 \chi_{D_3}(x)
\end{equation}
where $\{D_i\}_{i=1}^3$ are the open subsets defined by the geometric parameters $a=(a_{1},a_{2})$, $b=(b_{1},b_{2})$ and $c$ as in Figure \ref{Figure1A}. Therefore, the unknown parameter is $u=(a_{1},a_{2},b_{1},b_2,c,\kappa_{1},\kappa_{2},\kappa_{3})\in \mathbb{R}^{8}$. We consider the true $\kappa^{\dagger}(x)$ shown in Figure \ref{Figure2} (top-left) which corresponds to (\ref{eq:num1}) for the true values $u^{\dagger}$ of \Tref{Table1}.
\begin{table}[!ht]
\centering
\begin{tabular}{|cc|rl|cc|}
  \hline
  % after \\: \hline or \cline{col1-col2} \cline{col3-col4} ...
    parameter  &  true value  & data set 1& data set 1& data set 2& data set 2\\
& &  mean &  variance &  mean& variance\\
\hline
  $a_1$ & $0.39$ &  0.386& $3.4\times 10^{-3}$&  0.394& $2\times 10^{-4}$\\
  $b_1$ & $0.18$ &0.174& $4.6\times 10^{-3}$&  0.177& $7\times 10^{-4}$\\
  $a_2$ & $0.35$ &0.486& $6.9\times 10^{-3}$&  0.400& $6\times 10^{-4}$\\
  $b_2$ & $0.6$ &0.618& $8.9\times 10^{-3}$&  0.637& $3.5\times 10^{-3}$\\
  $c$ & $0.15$   &0.239& $1.04\times 10^{-2}$&  0.192& $1.2\times 10^{-3}$\\
 $\kappa_1$ & $12$ &11.718 & 1.796&  11.337& $1.0128$\\
  $\kappa_2$& $1$  &1.228& $4.57\times 10^{-2}$& 1.177& $9.5\times 10^{-3}$\\
    $\kappa_3$  &  $5$ &5.148&$ 6.65\times 10^{-1}$& 4.55& $1.378\times 10^{-1}$\\
  \hline
\end{tabular}

 \caption{Data relevant to the experiment with the 3-layers model}
 \label{Table1}
\end{table}
We consider 16 measurement functionals defined by (\ref{meas_func}) with measurement locations distributed as shown in Figure \ref{Figure2} (top-right). Synthetic data (data set 1) is generated as described above with $\gamma=2\times 10^{-3}$.

According to subsection \ref{ssec:prior}, the prior distribution for this parameter is defined by
\begin{equation*}
  \pi_0(u)=\pi_0^{A,g}(a)\pi_0^{A,g}(b)\pi_0^{C,g}(c)\pi_0^1(\kappa_1)\pi_0^2(\kappa_2)\pi_0^3(\kappa_3)
  \label{eq:prior_layer}
\end{equation*}
where each of the $\pi_0^i(\kappa_i)$ and $\pi_0^{C,g}(c)$ is a uniform distribution on a specified interval. The uniform prior associated to the geometrical parameters $a$ and $b$ is
defined with support everywhere on the edges of the squared domain.
However, specific restricted intervals are specified for the construction
of the uniform priors corresponding to the values of the permeabilities and,
in particular, we choose intervals which do not include the origin. The reason
for selecting these restrictive priors for the values of the permeabilities
is motivated by the subsurface flow application where prior knowledge of
a range of nominal values of permeabilities for each rock-type are typically
available from geologic data. The densities $\pi_0^{A,G}(a)$ and $\pi_0^{A,G}(b)$ are defined according to expression (\ref{pi0}) where $A=\{\textbf{x}\in \Real^{}|x_1+x_2\leq 1, x_1 \geq 0, x_2 \geq 0\}$. In the top row of Figure \ref{Figure3} we display some permeabilities defined by (\ref{eq:num1}) for parameters $u$ drawn from the prior distribution defined by the previous expression.

The forward model, the prior distribution and the synthetic data described above define the posterior measure given by (\ref{eq:post}), Theorem \ref{thm:existence_constant_case}, that we sample with a variant of the MCMC method of Algorithm \ref{MwG}. In concrete, we implement the outer Gibbs loop by considering groupings of the unknown $u=(a_{1},a_{2},b_{1},b_2,c,\kappa_{1},\kappa_{2},\kappa_{3})$ as follows: (i) the high permeabilities $\kappa_{1}$ and $\kappa_{3}$, (ii) the low permeability $\kappa_{2}$, (iii) the slip $c$, (iv) the right-hand lengths $a=(a_{1},a_{2})$ and (v) the left-hand lengths $b=(b_{1},b_{2})$. This separation of the unknown results in short decorrelation times compared to the ones when more variables of the unknown are updated simultaneously within the Metropolis part of Algorithm \ref{MwG}. We consider 20 different chains started with random draws from the prior. Trace plots from the first $10^5$ steps of one of the chains is presented in Figure \ref{Figure4}, together with the running mean and standard deviations. We monitor the convergence of the multiple chains with the multivariate potential scale reduction factor (MPSRF) \cite{Gelman}. Once approximate convergence has been reached, all chains are merged and the samples from the combined chains are used for the subsequent results. In the middle row of Figure \ref{Figure3} we show permeabilities (defined by (\ref{eq:num1})) for some samples $u$ of the posterior distribution. In \Tref{Table1} we display the values of the mean $\hat{u}$ and the variance of the posterior measure characterized with a total of $1.5\times 10^7$ samples from our MCMC chains. The permeability that results from (\ref{eq:num1})  for the mean parameter $\hat{u}$ of the posterior is presented in Figure \ref{Figure2} (bottom-left).

We now repeat this experiment with a different set of synthetic data (data set 2) generated as before but with a smaller error with standard deviation $\gamma=5\times 10^{-4}$. This set of synthetic data defines a new posterior measure that we characterize with the MCMC Algorithm \ref{MwG} used for the previous experiments (using exactly the same parameters in the proposal generators). The corresponding values of $u$ are also displayed in \Tref{Table1} and the resulting permeability is displayed in Figure \ref{Figure2} (bottom-right). Some permeabilities associated to the samples of the posterior are presented in Figure \ref{Figure3} (bottom row ). For both experiments, the marginals of the posterior distribution are displayed in \Fref{Figure5} along with the marginals of the prior. The marginals of the posterior pushed forward by the forward operator are displayed in Figure \ref{Figure6}. In Figure \ref{Figure7} we display the integrated autocorrelation for each of the components of the unknown for both posterior that arise from the data set 1 (top) and the data set 2 (bottom).

From Table \ref{Table1} we observe that, for both experiments, the mean of the parameters are in very good agreement with the truth. In fact, the corresponding permeabilities from Figures \ref{Figure2} produce very similar results. However, it comes as no surprise that more accurate synthetic data (data set 2) result in a posterior density that is more peaked around the truth ( see Figure \ref{Figure5}). In other words, the posterior associated with smaller error variance quantifies less uncertainty in the unknown parameters. While both estimates provide a good approximation of the truth, the associated uncertainties are substantially different from one another. Indeed, from Figure \ref{Figure3} we observe a larger variability in the samples of the posterior that we obtain from the data set 1.

From Figure \ref{Figure7} we note that the correlation of the samples is larger for observational error with smaller covariance. Indeed, during the MCMC algorithm for sampling of the posterior, the local move in the proposal is more likely to be rejected for smaller values of $\gamma$ (recall we are using the same MCMC algorithm for both experiments).

\begin{figure}[htbp]
\begin{center}
\includegraphics[scale=0.30]{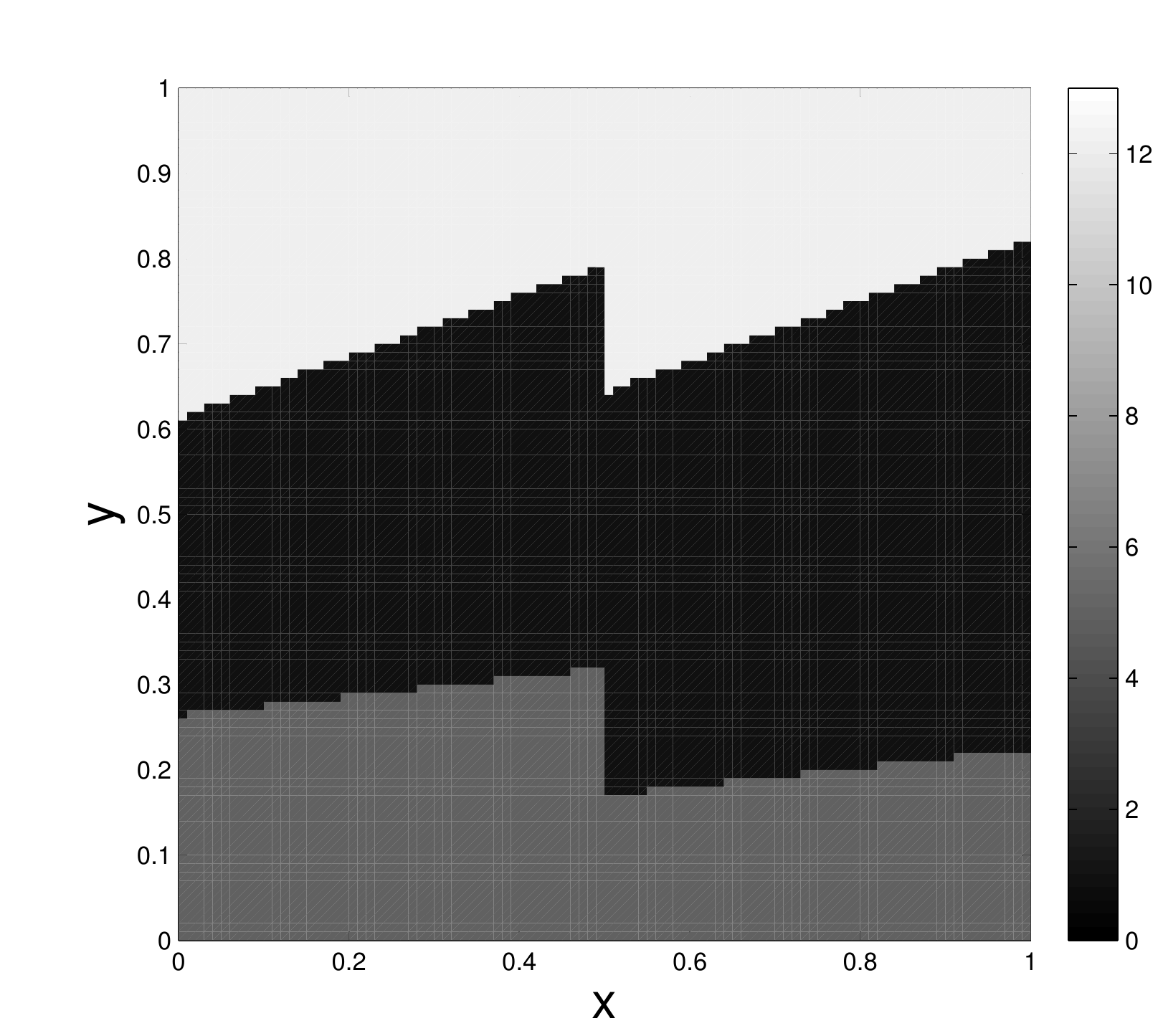}
\includegraphics[scale=0.30]{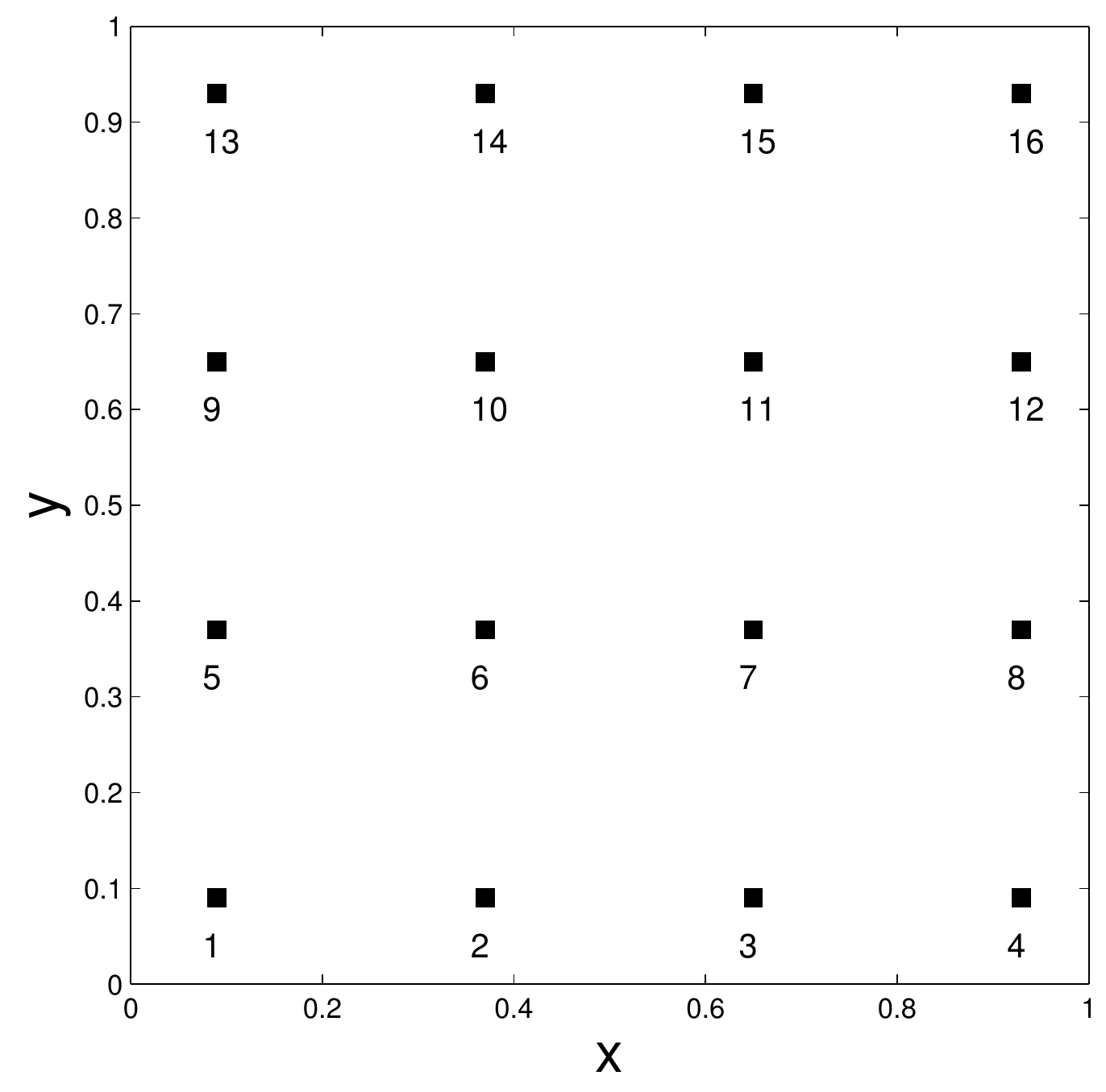}\\
\includegraphics[scale=0.30]{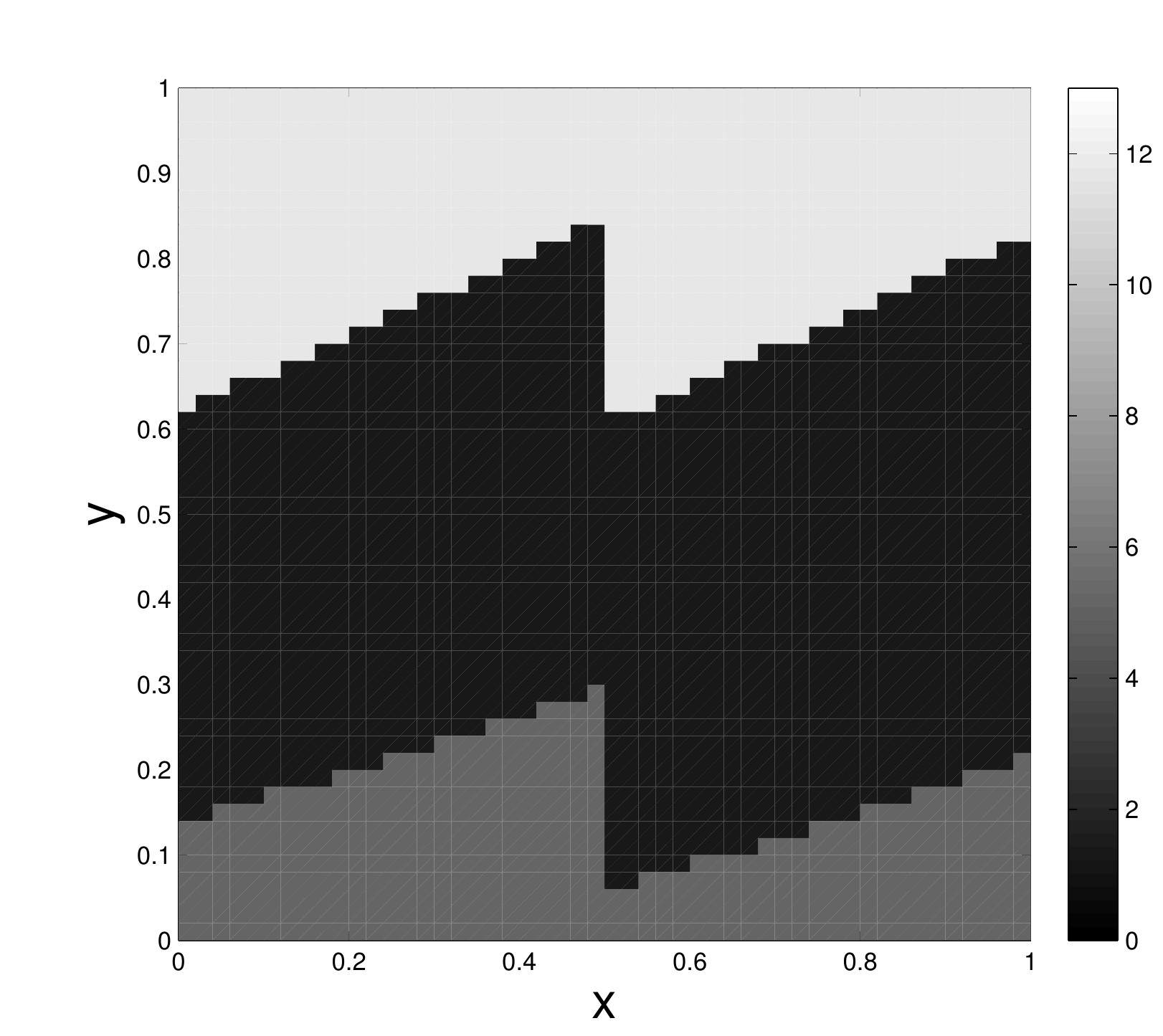}
\includegraphics[scale=0.30]{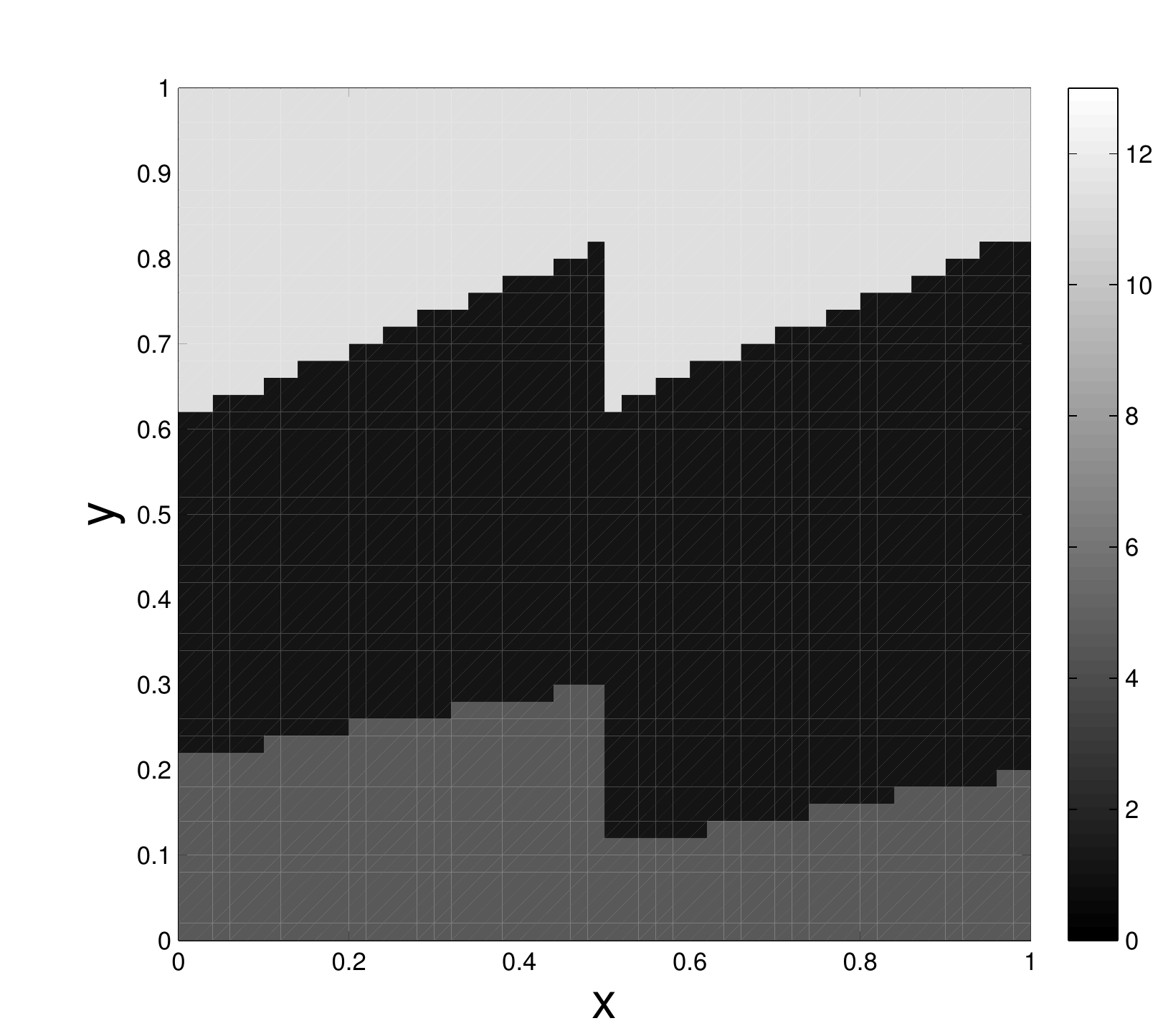}
 \caption{Permeabilities for the 3-layers fault model. Top-left: truth. Top-right: measurement locations. Bottom-left: mean (from less accurate data). Bottom-right: mean (from more accurate data).}

    \label{Figure2}
\end{center}
\end{figure}

\begin{figure}[htbp]
\begin{center}
\includegraphics[scale=0.2]{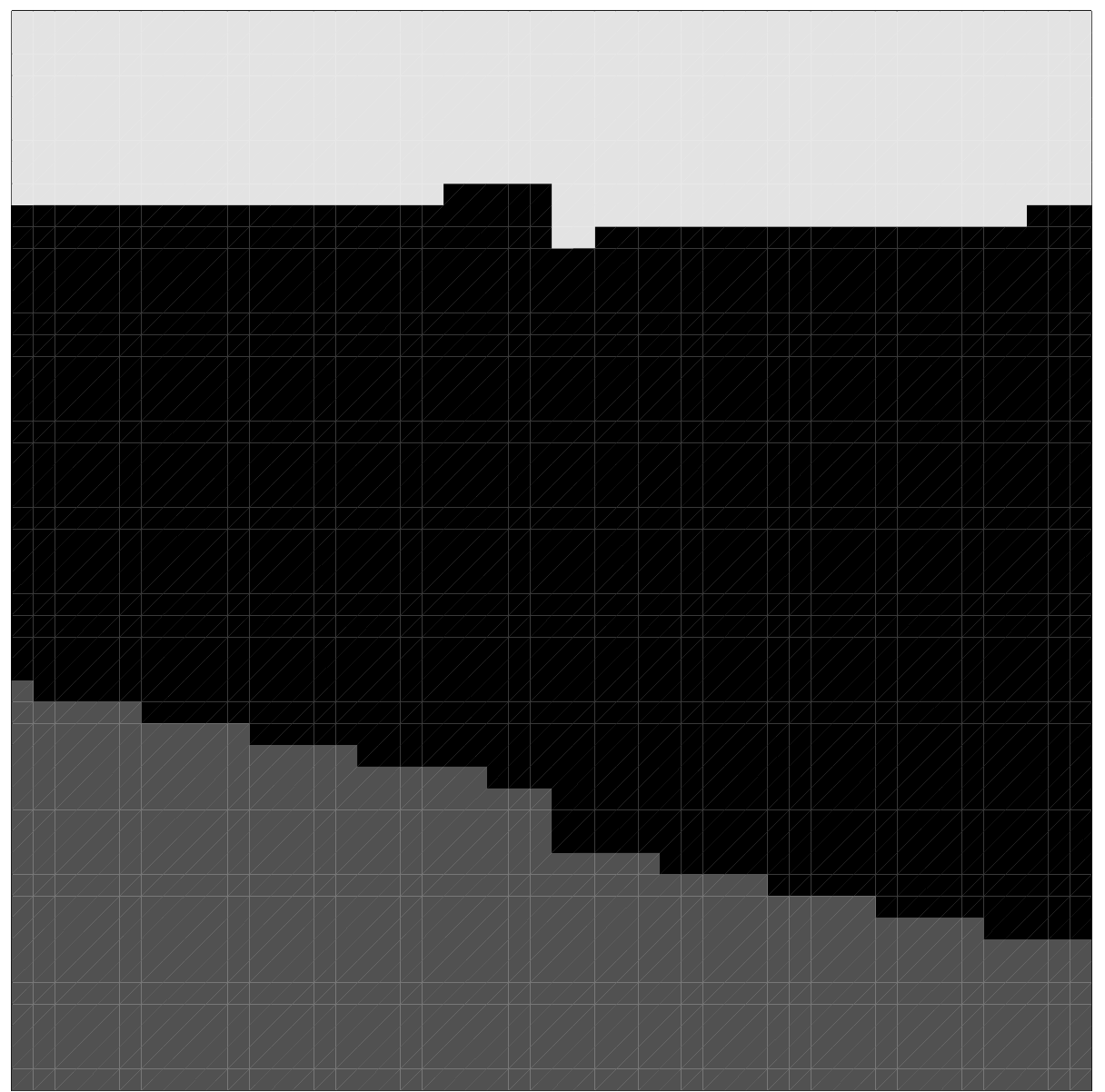}
\includegraphics[scale=0.2]{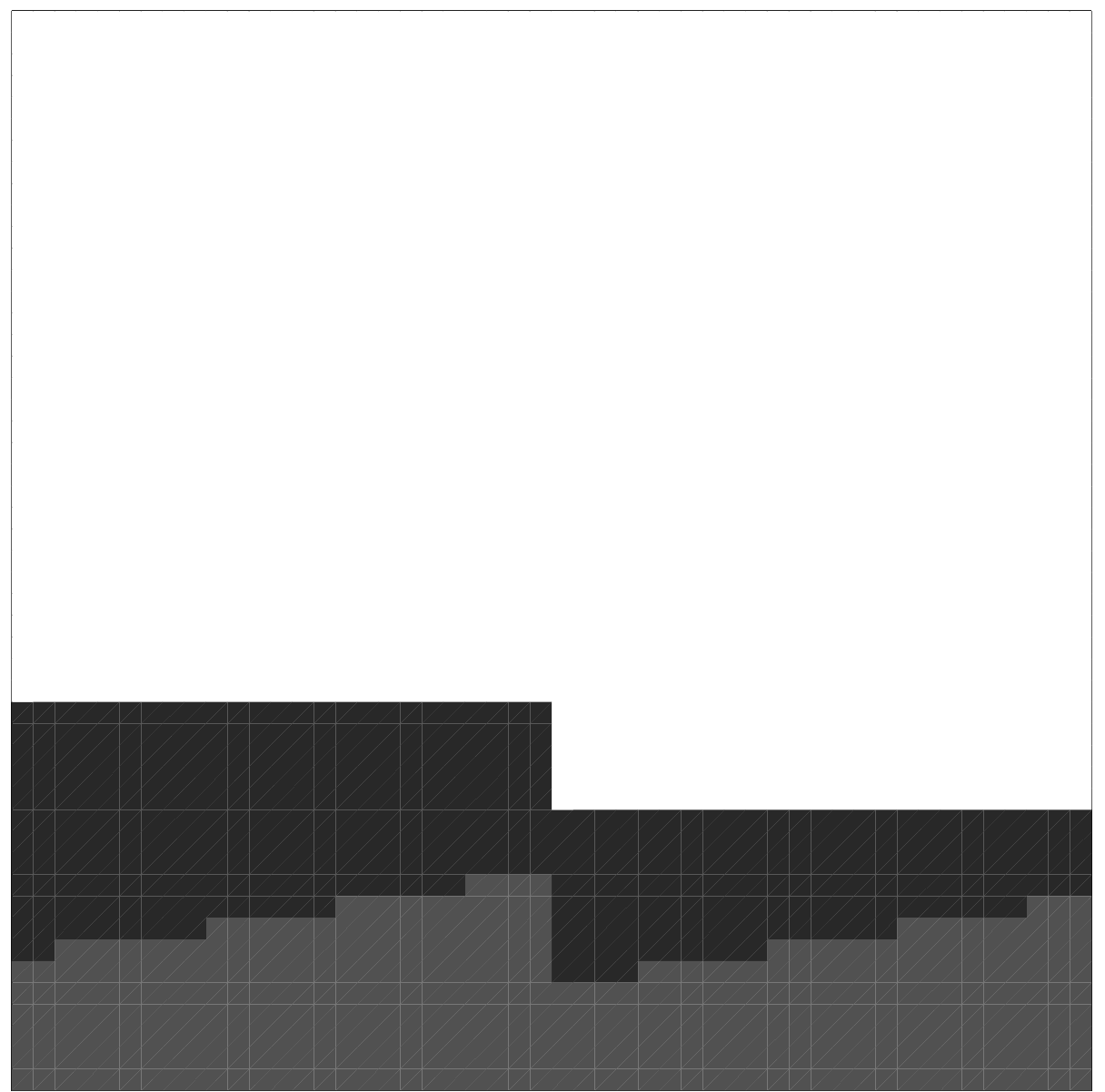}
\includegraphics[scale=0.2]{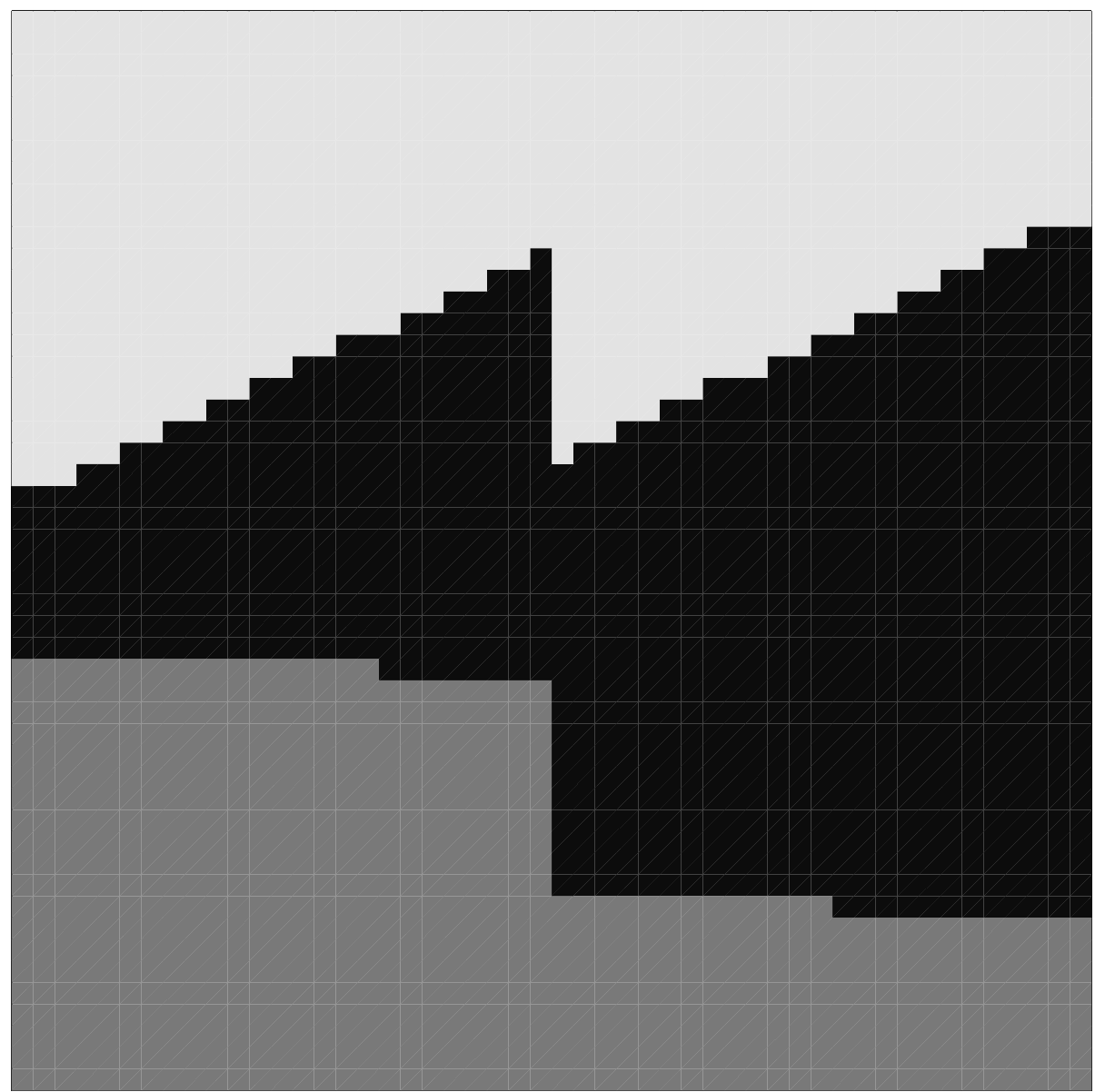}
\includegraphics[scale=0.2]{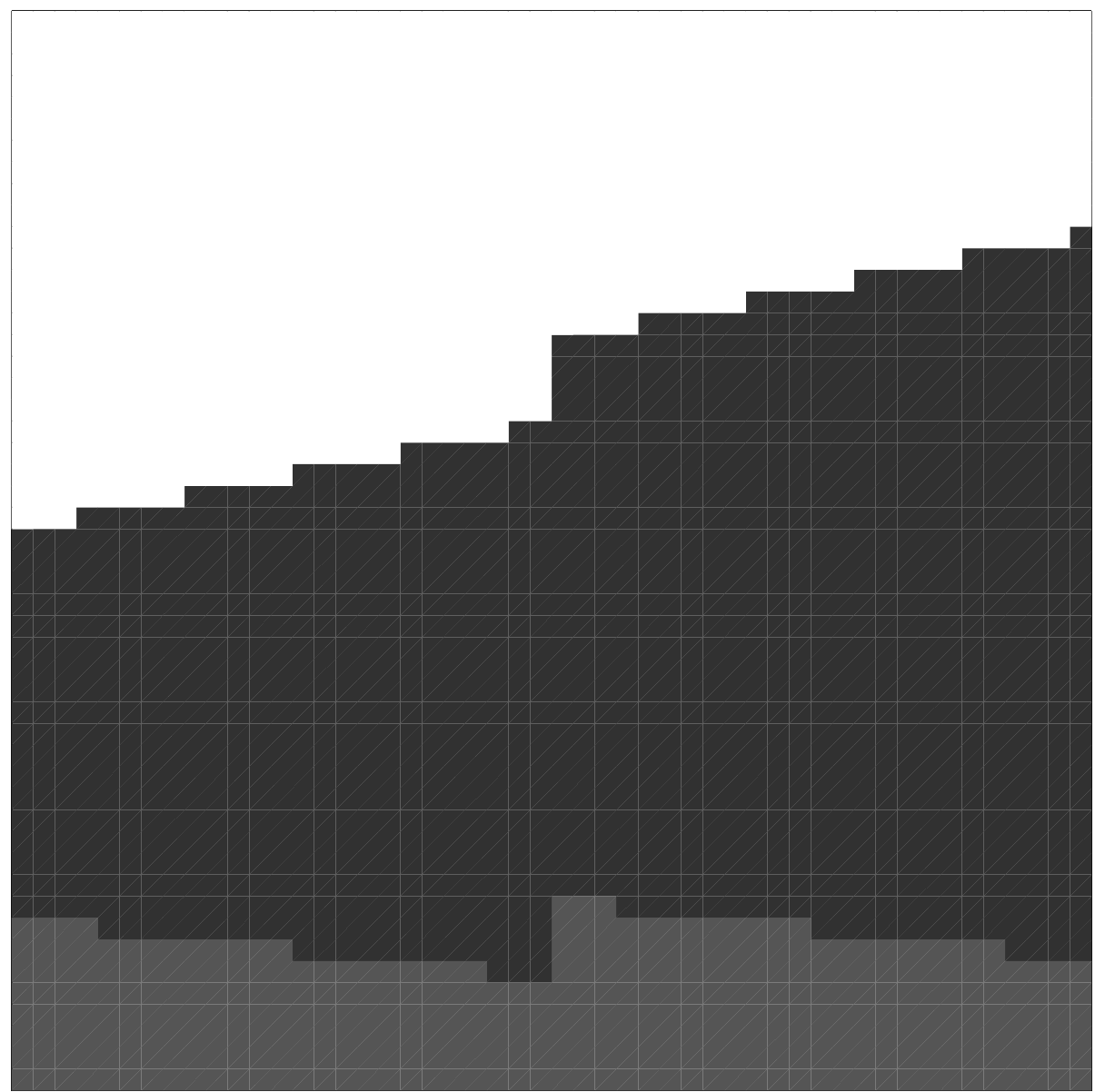}
\includegraphics[scale=0.2]{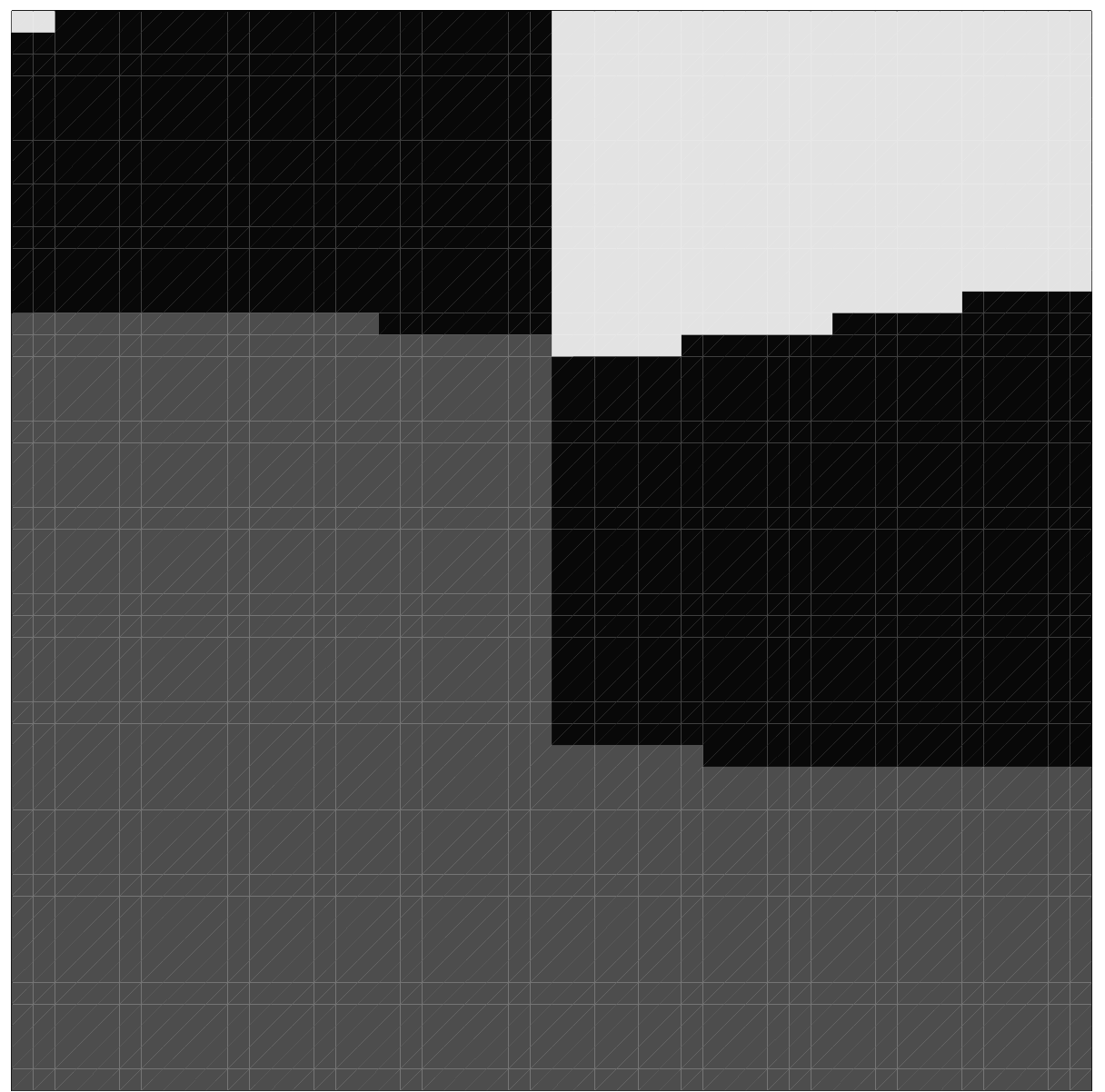}
\includegraphics[scale=0.2]{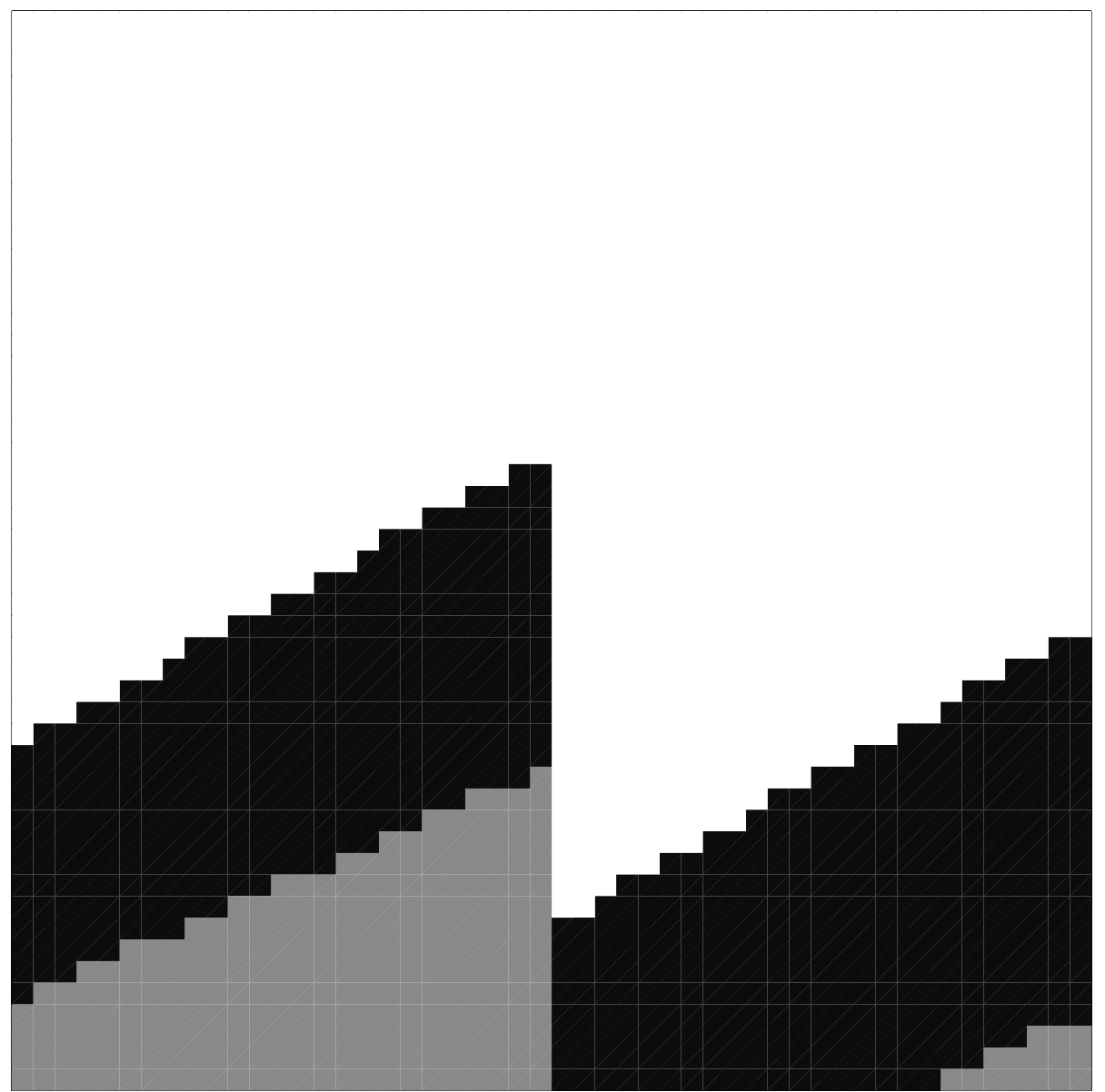}\\
\includegraphics[scale=0.2]{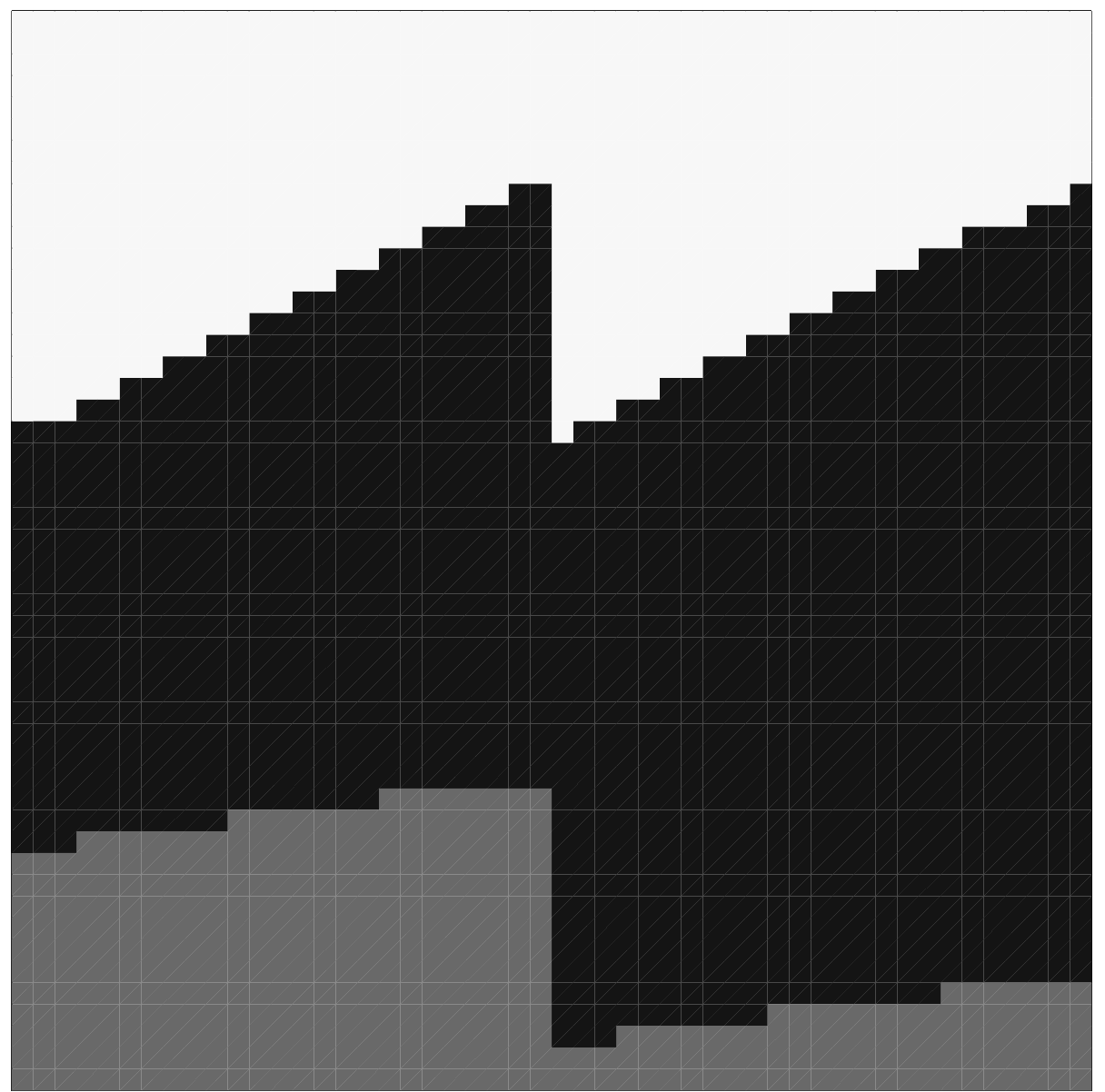}
\includegraphics[scale=0.2]{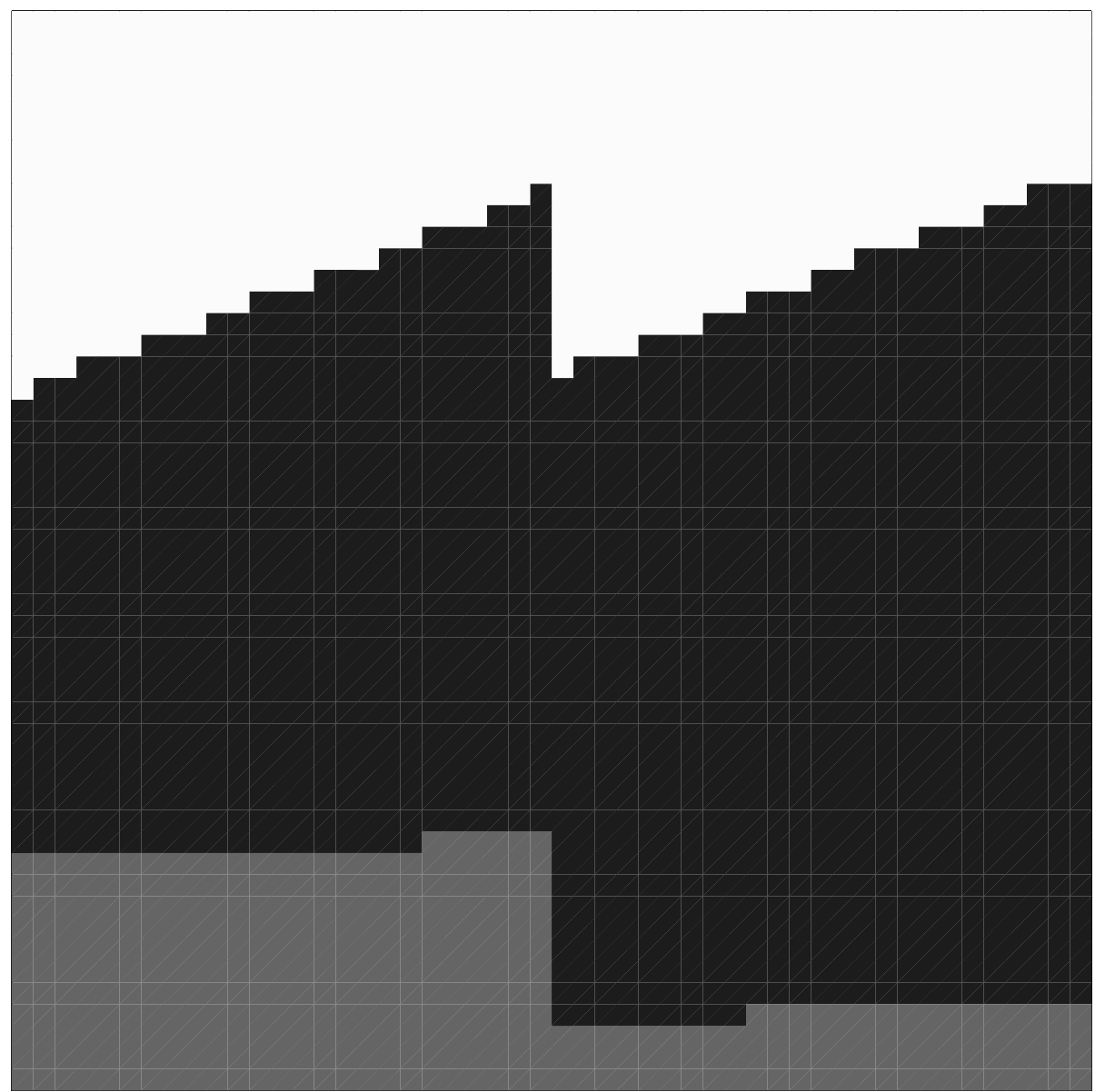}
\includegraphics[scale=0.2]{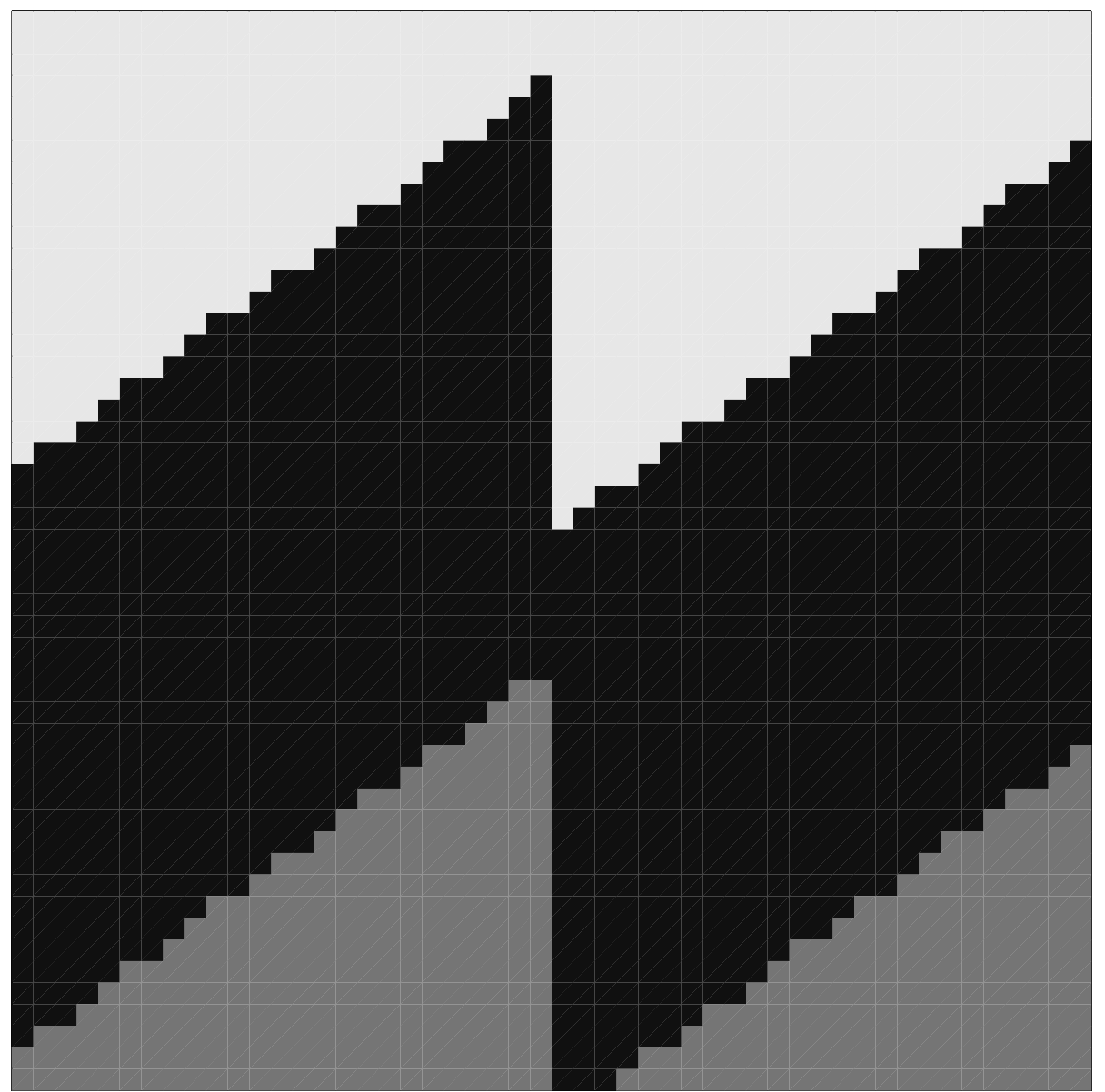}
\includegraphics[scale=0.2]{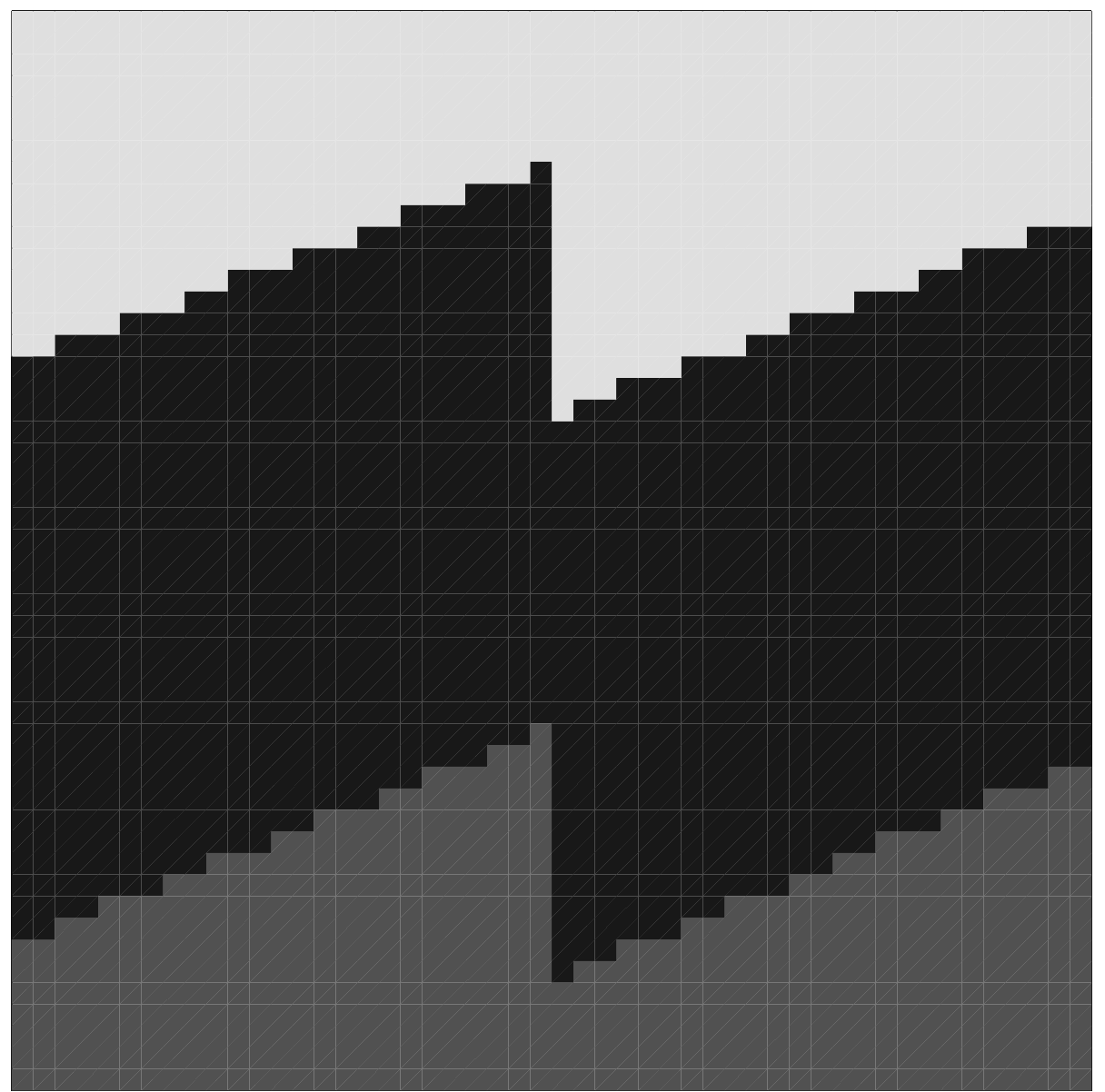}
\includegraphics[scale=0.2]{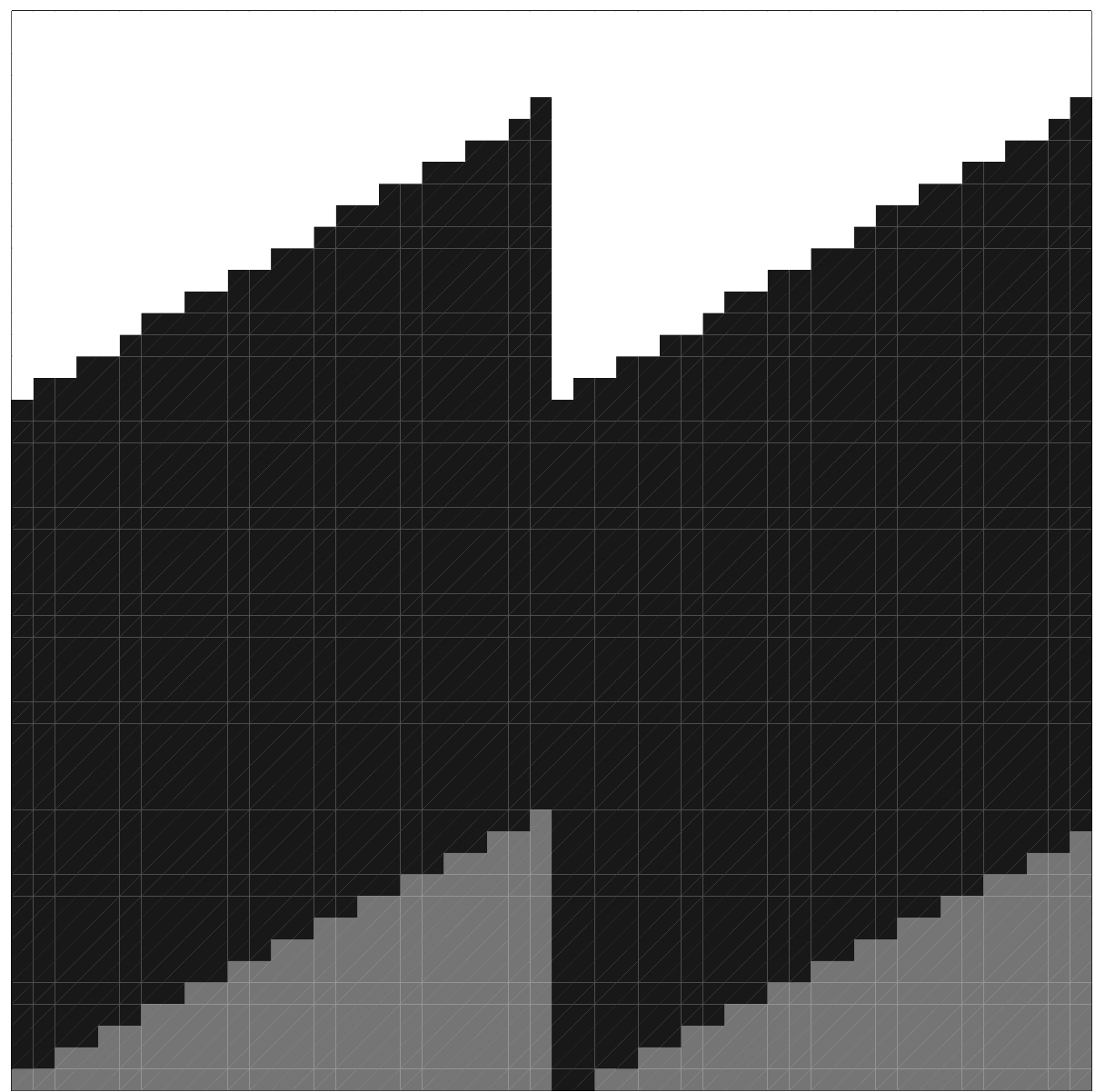}
\includegraphics[scale=0.2]{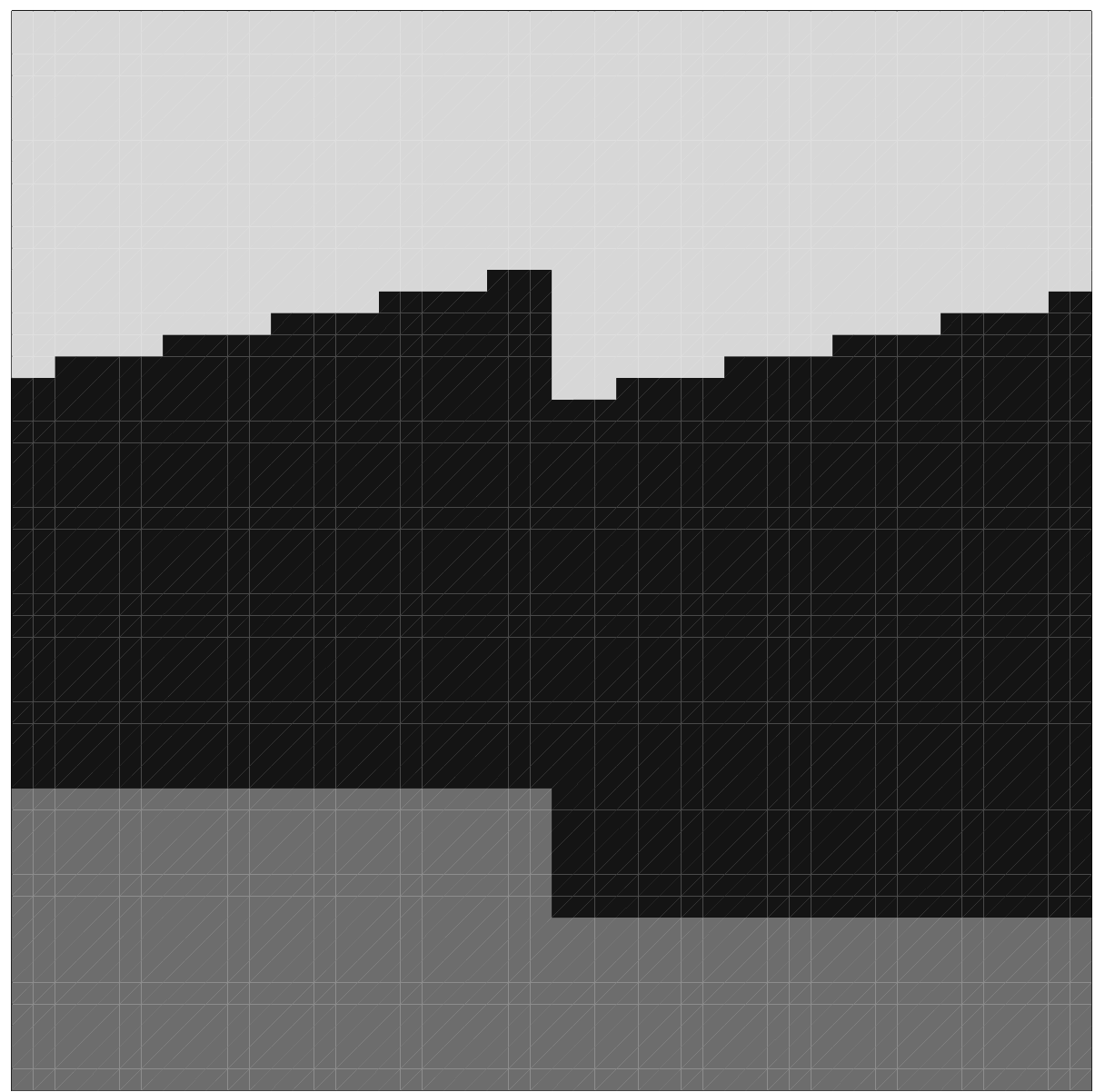}\\
\includegraphics[scale=0.2]{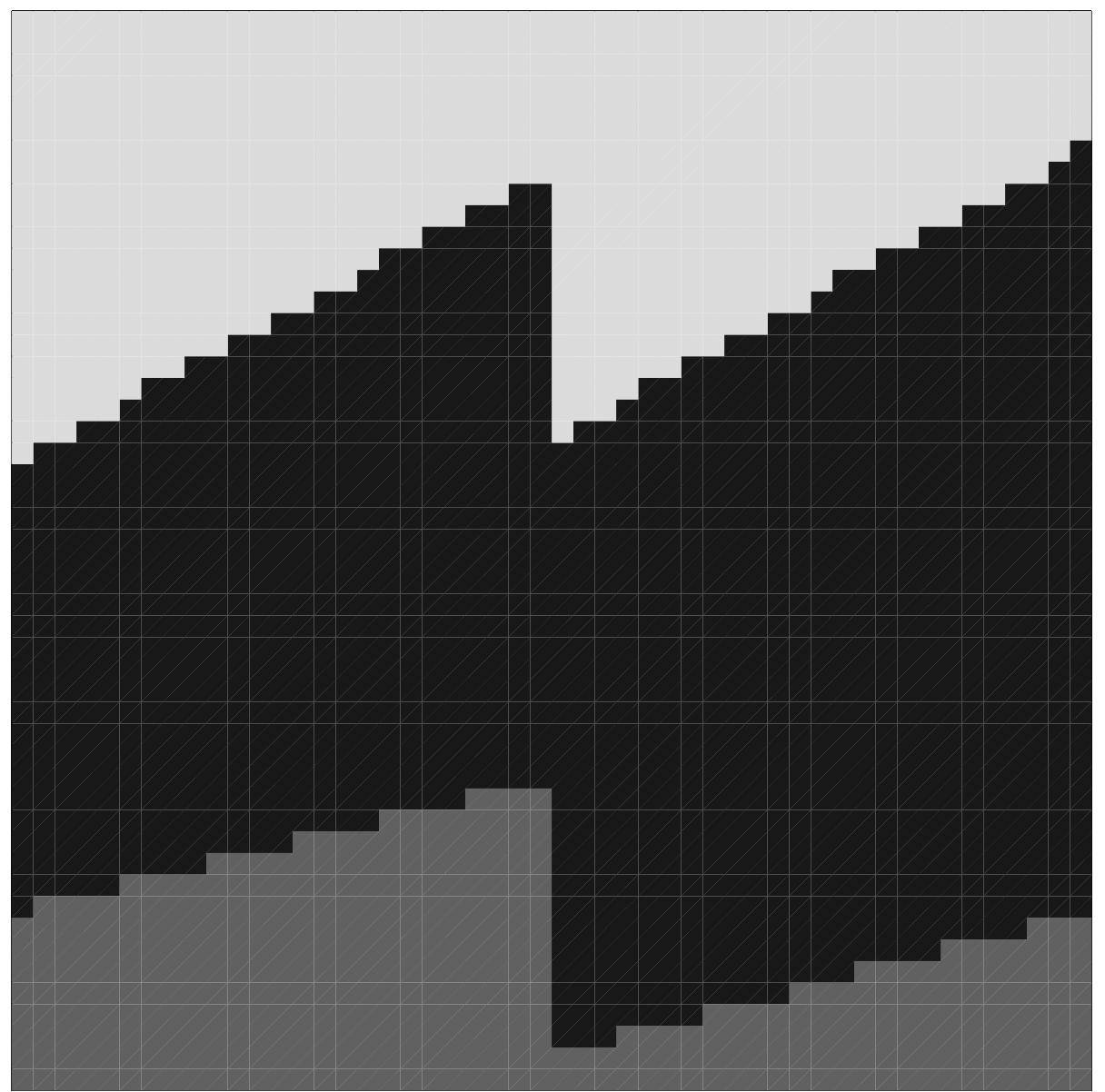}
\includegraphics[scale=0.2]{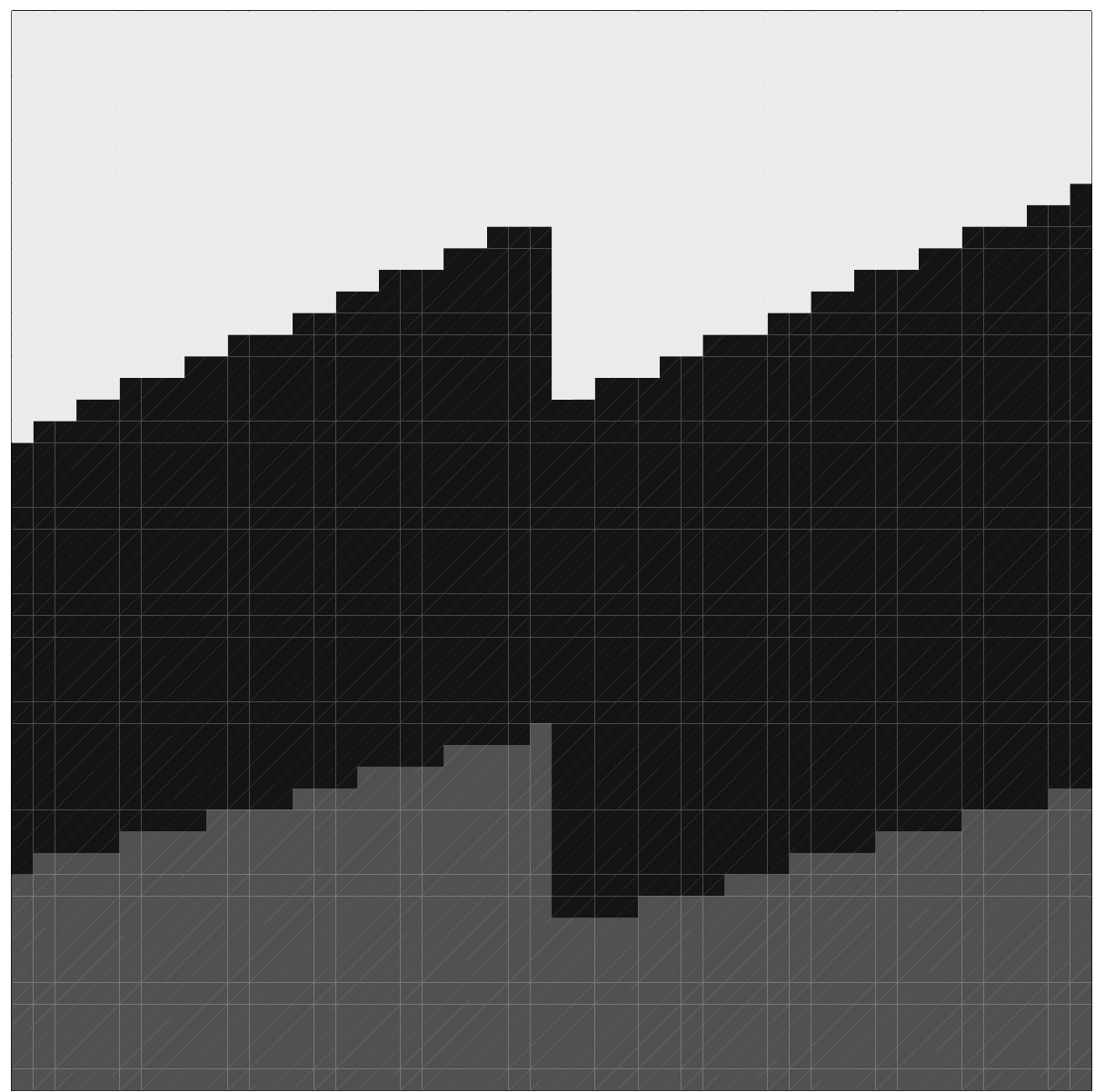}
\includegraphics[scale=0.2]{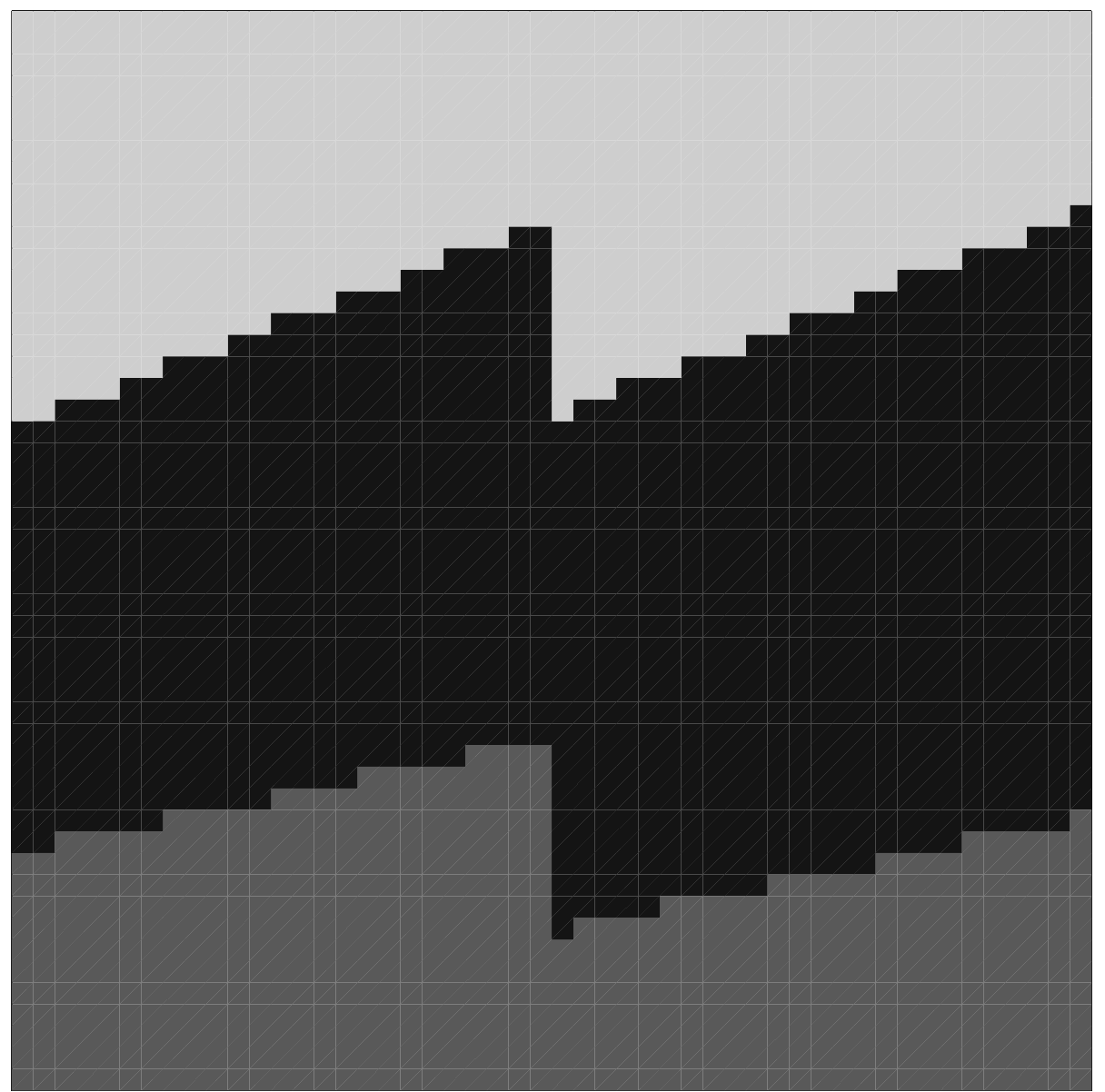}
\includegraphics[scale=0.2]{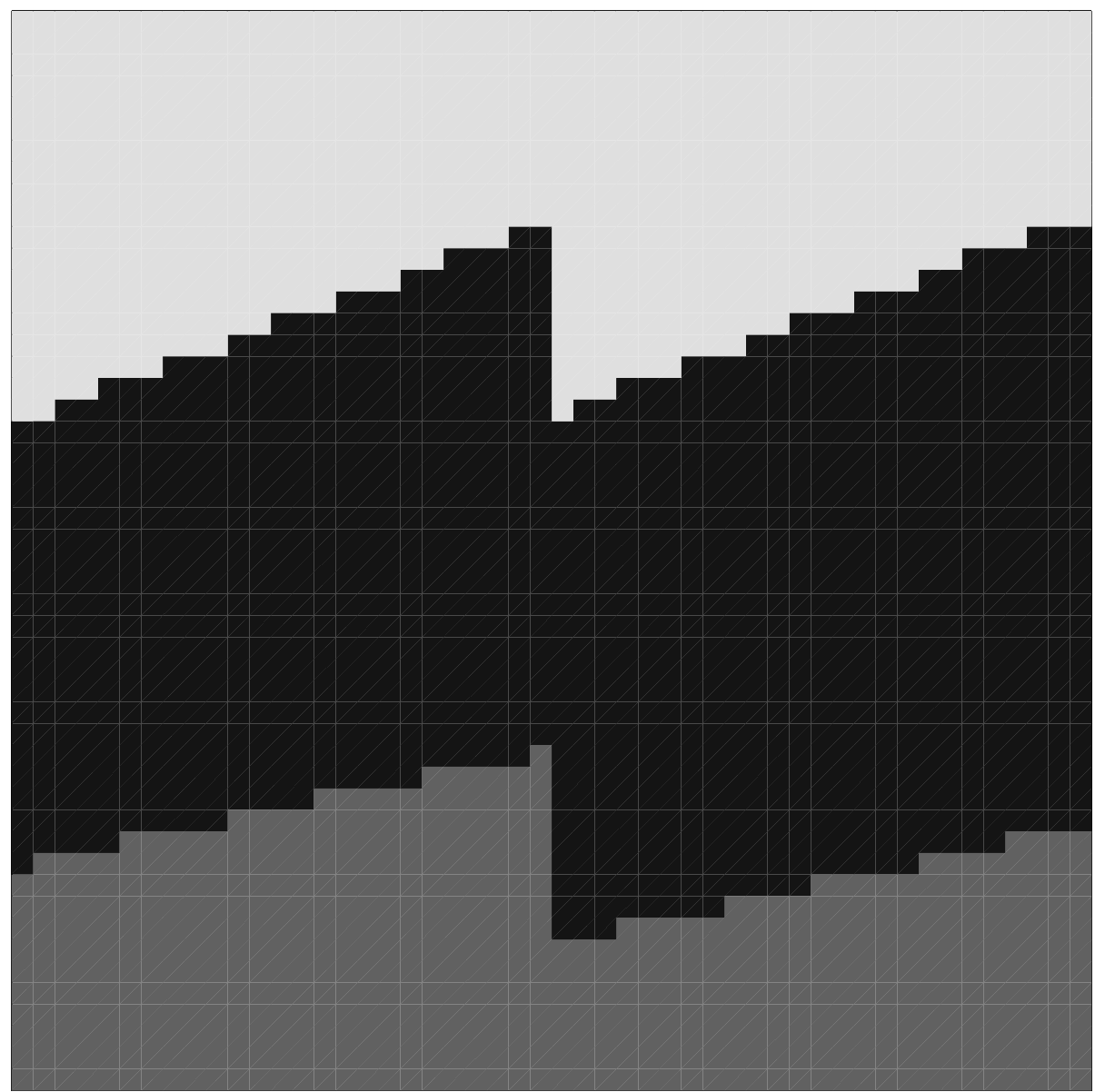}
\includegraphics[scale=0.2]{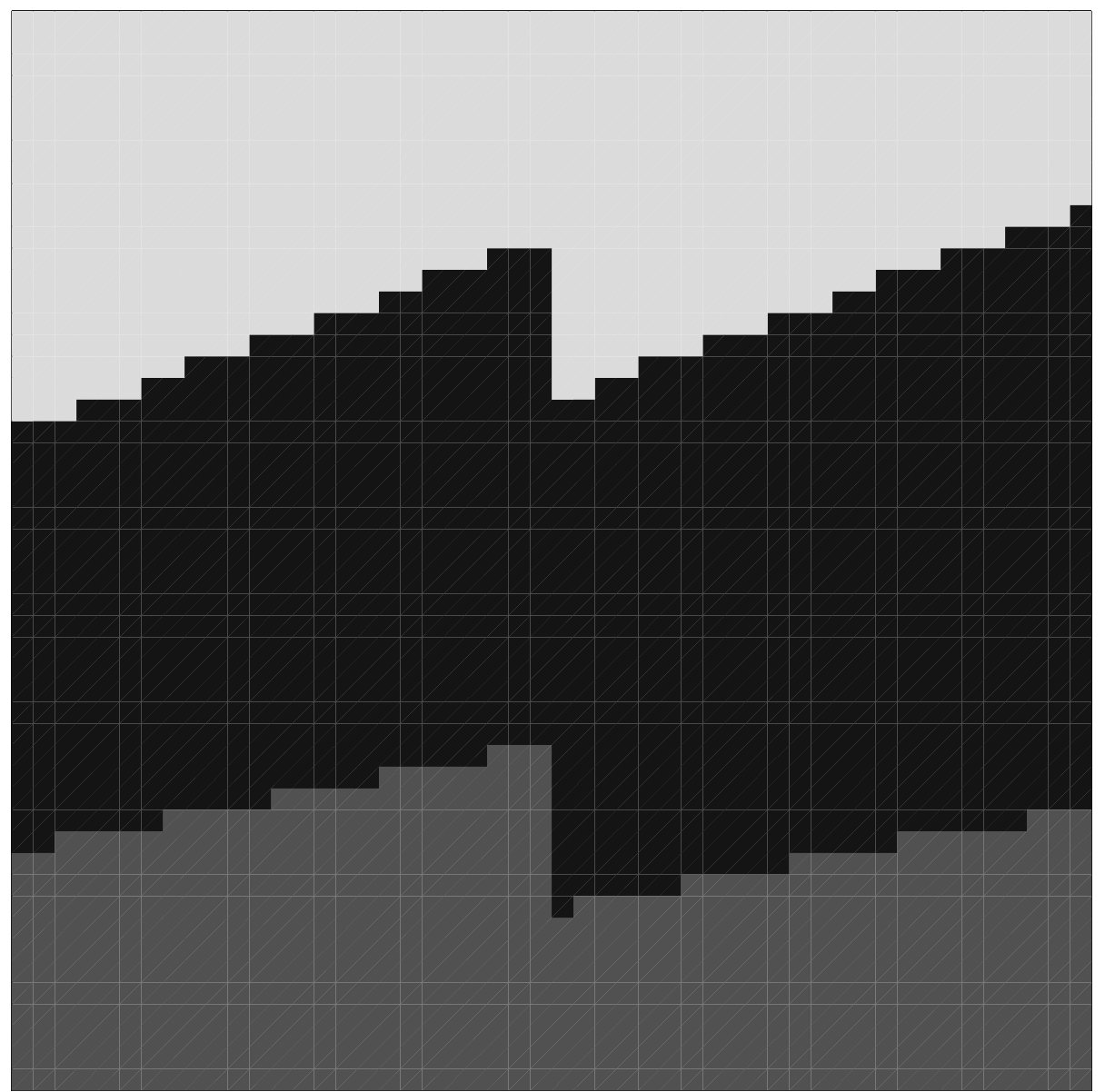}
\includegraphics[scale=0.2]{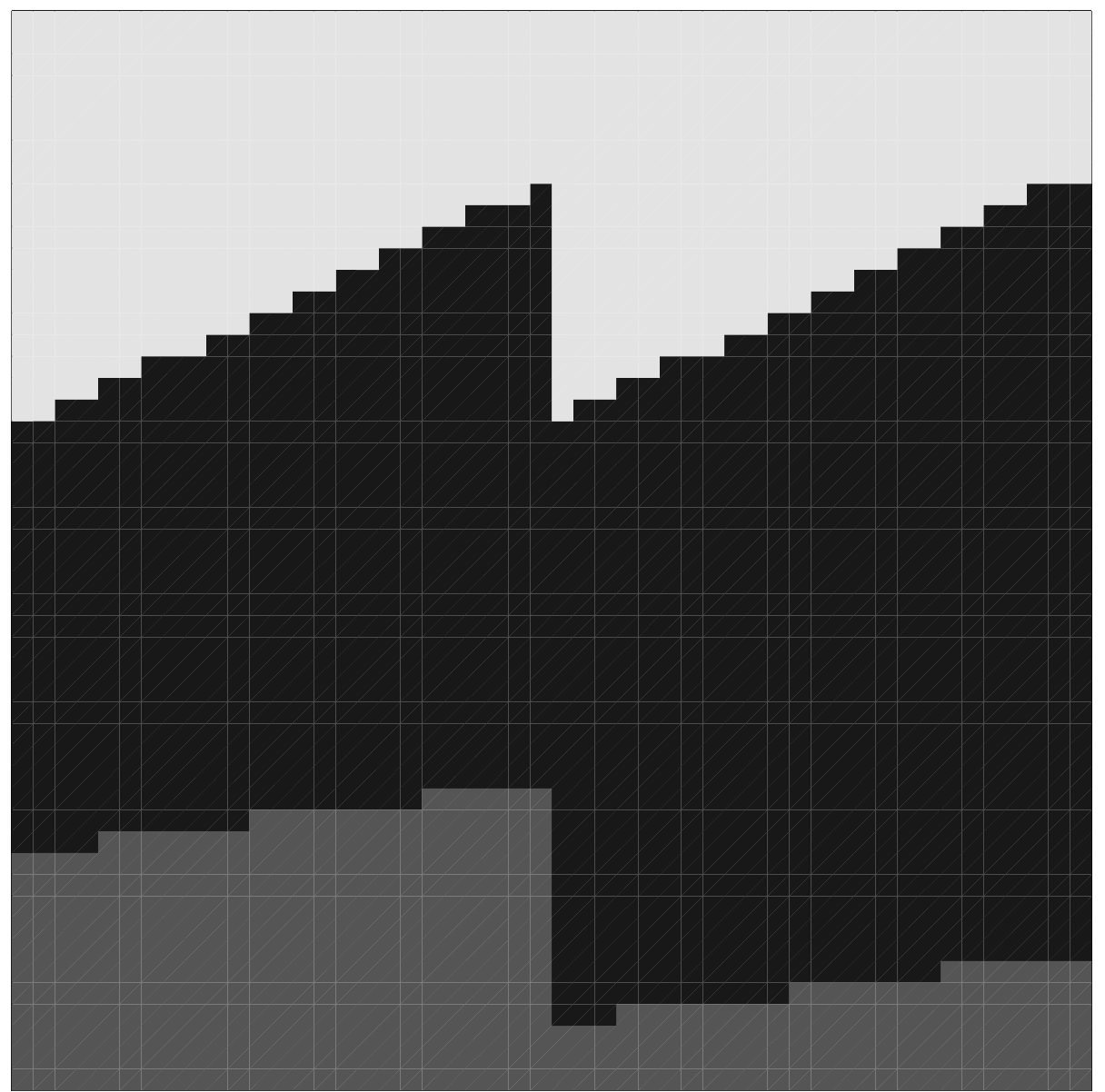}\\
 \caption{Permeabilities defined by (\ref{eq:num1}) from samples of the prior (top row), posterior with less accurate measurements (middle row) and posterior with more accurate measurements (bottom-row)}    \label{Figure3}
\end{center}
\end{figure}

\begin{figure}[htbp]
\begin{center}
\includegraphics[scale=0.28]{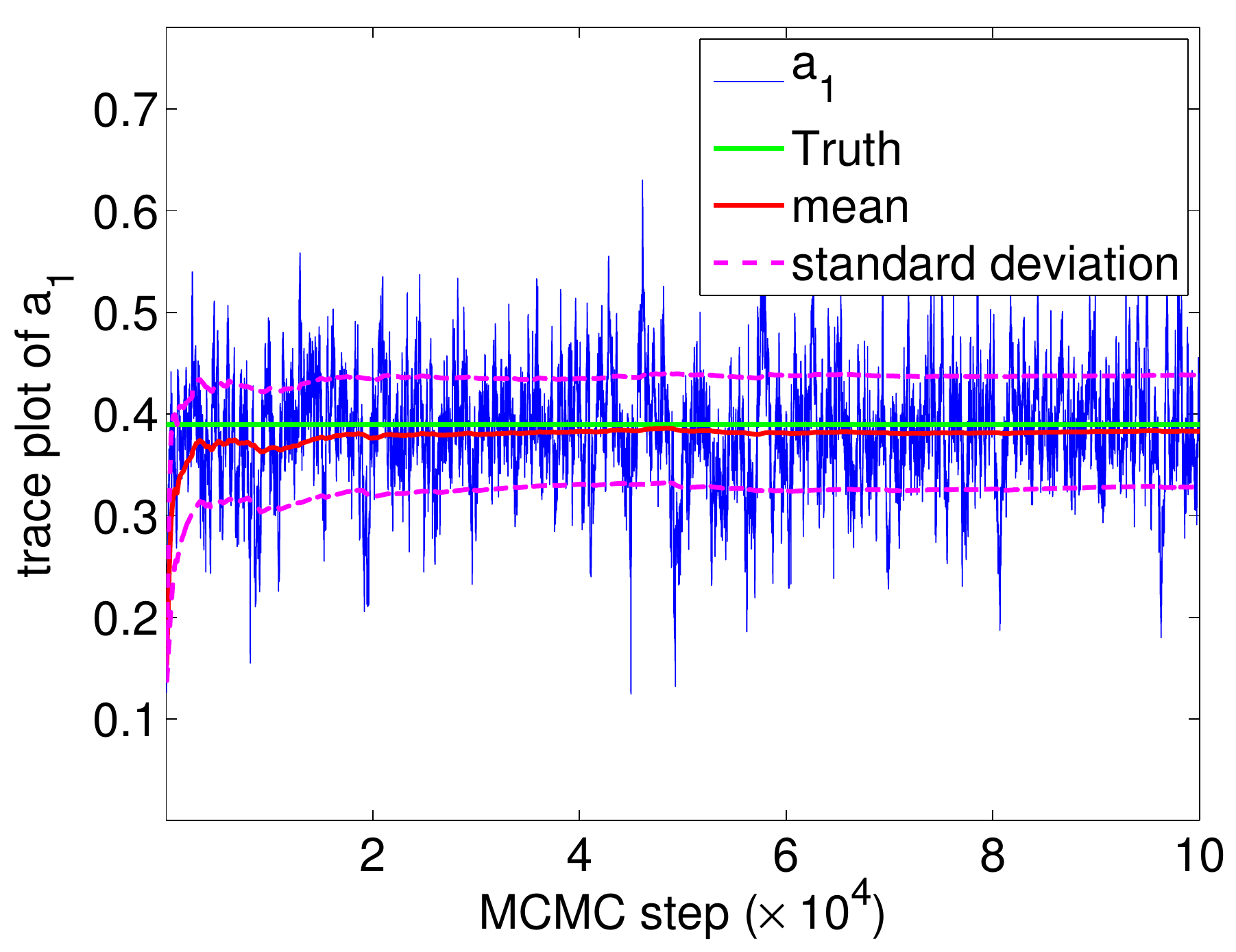}
\includegraphics[scale=0.28]{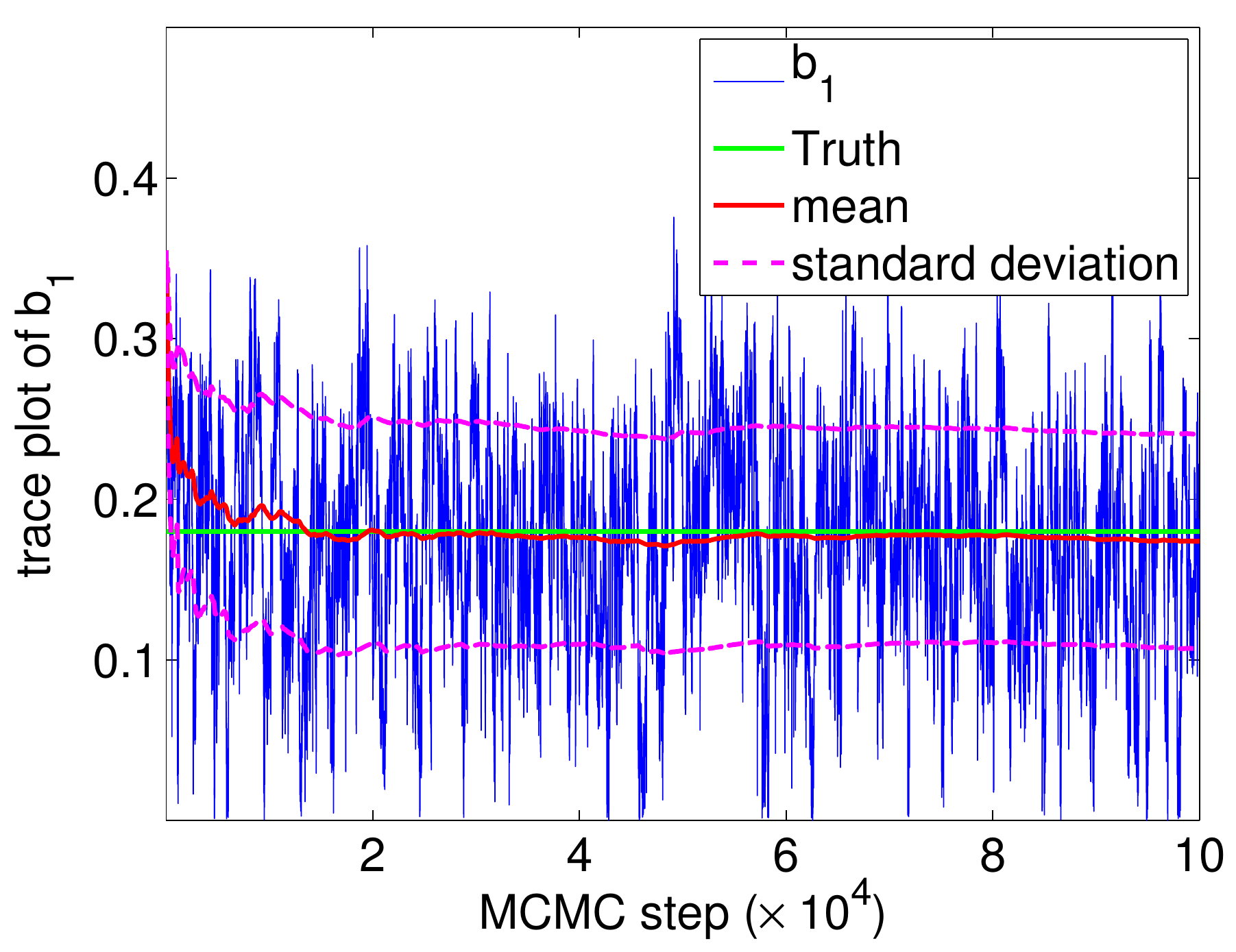}
\includegraphics[scale=0.28]{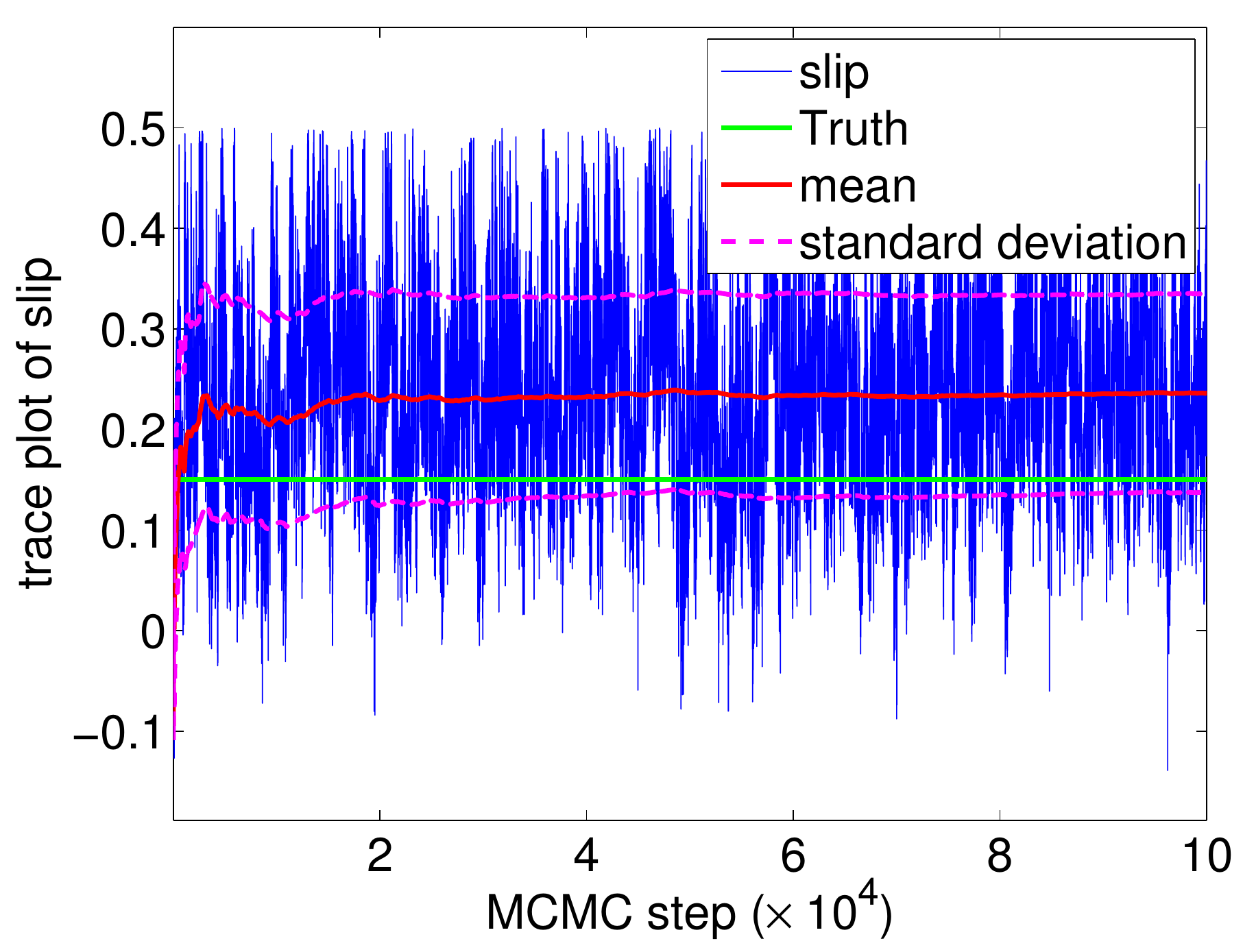}\\
\includegraphics[scale=0.28]{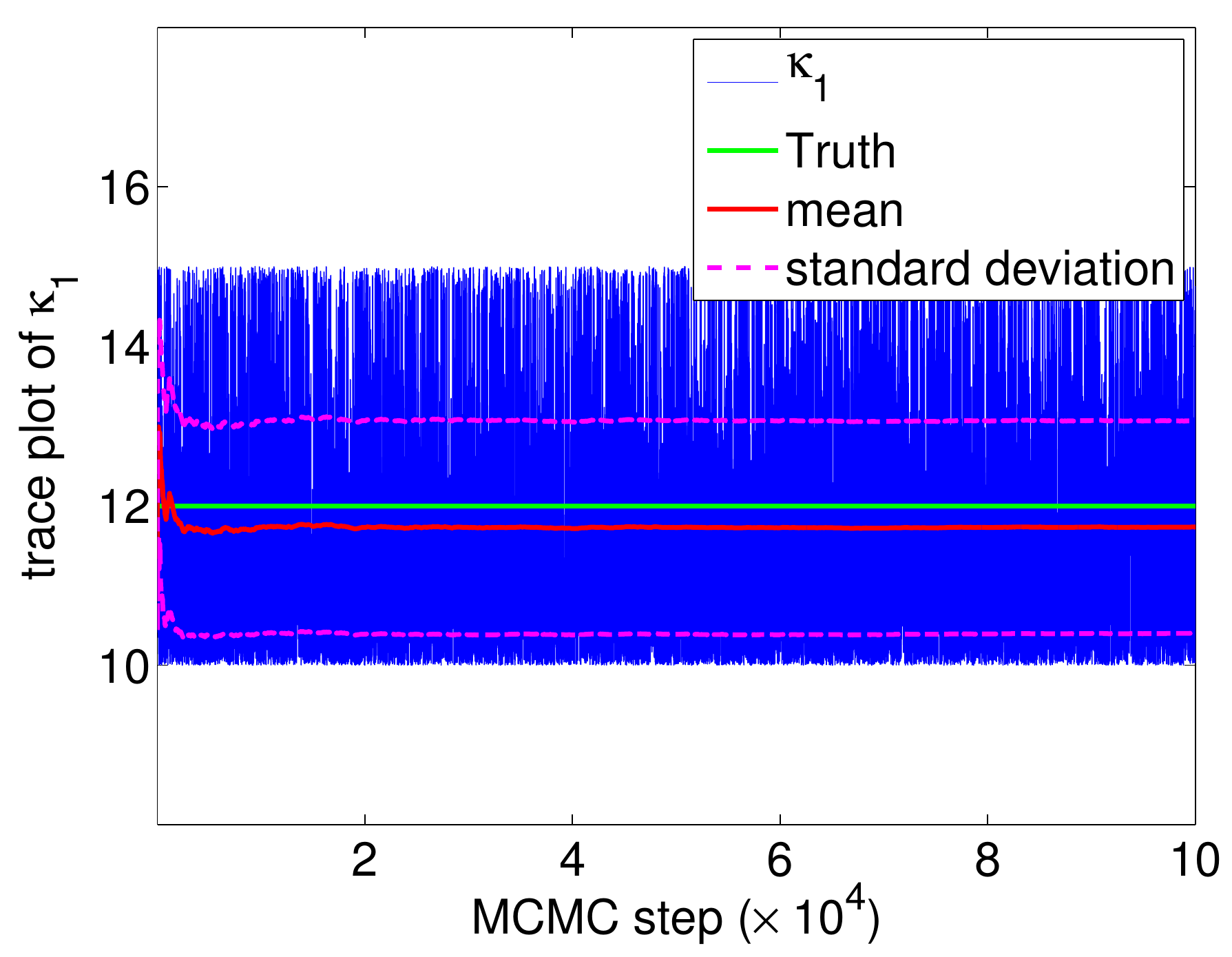}
\includegraphics[scale=0.28]{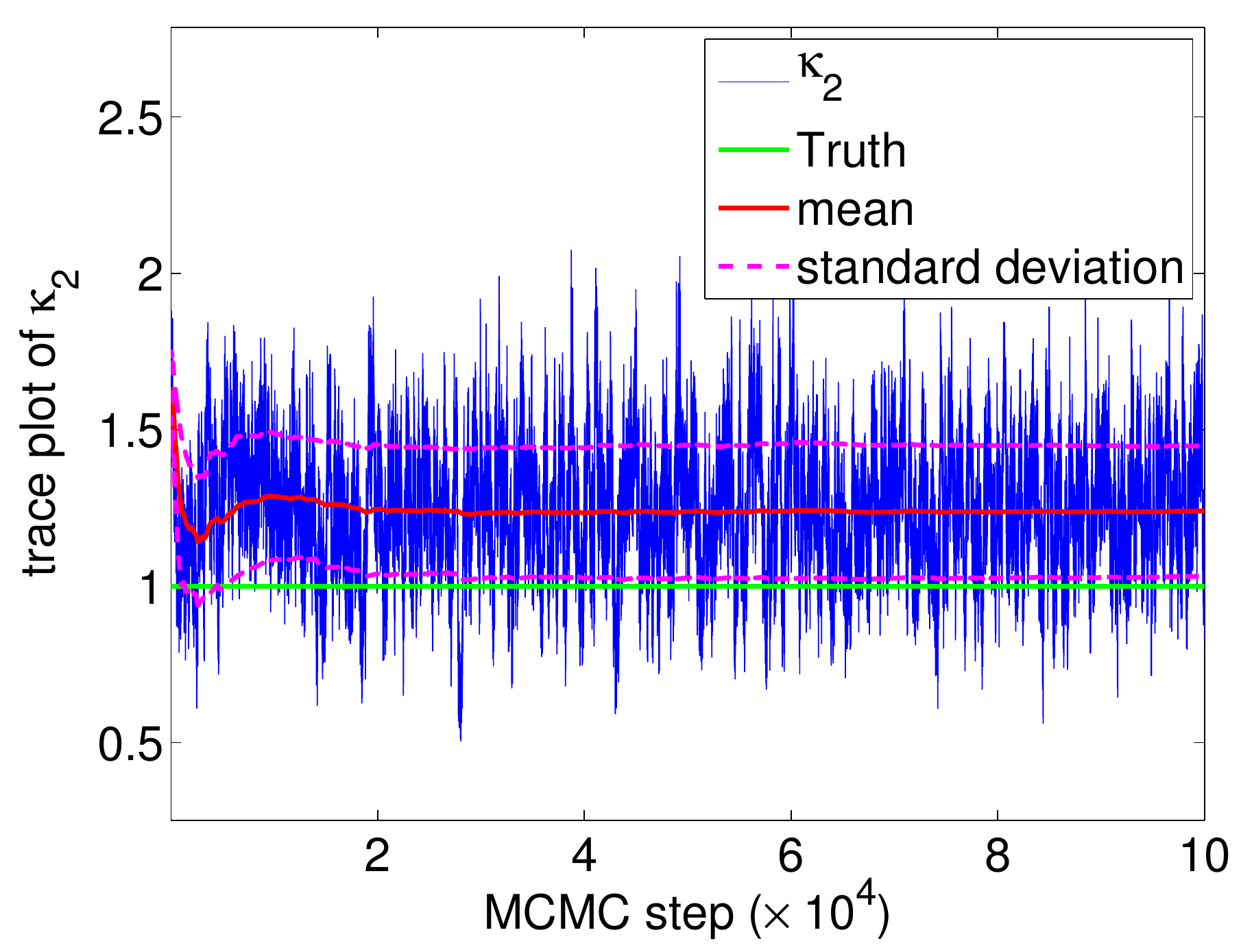}
\includegraphics[scale=0.28]{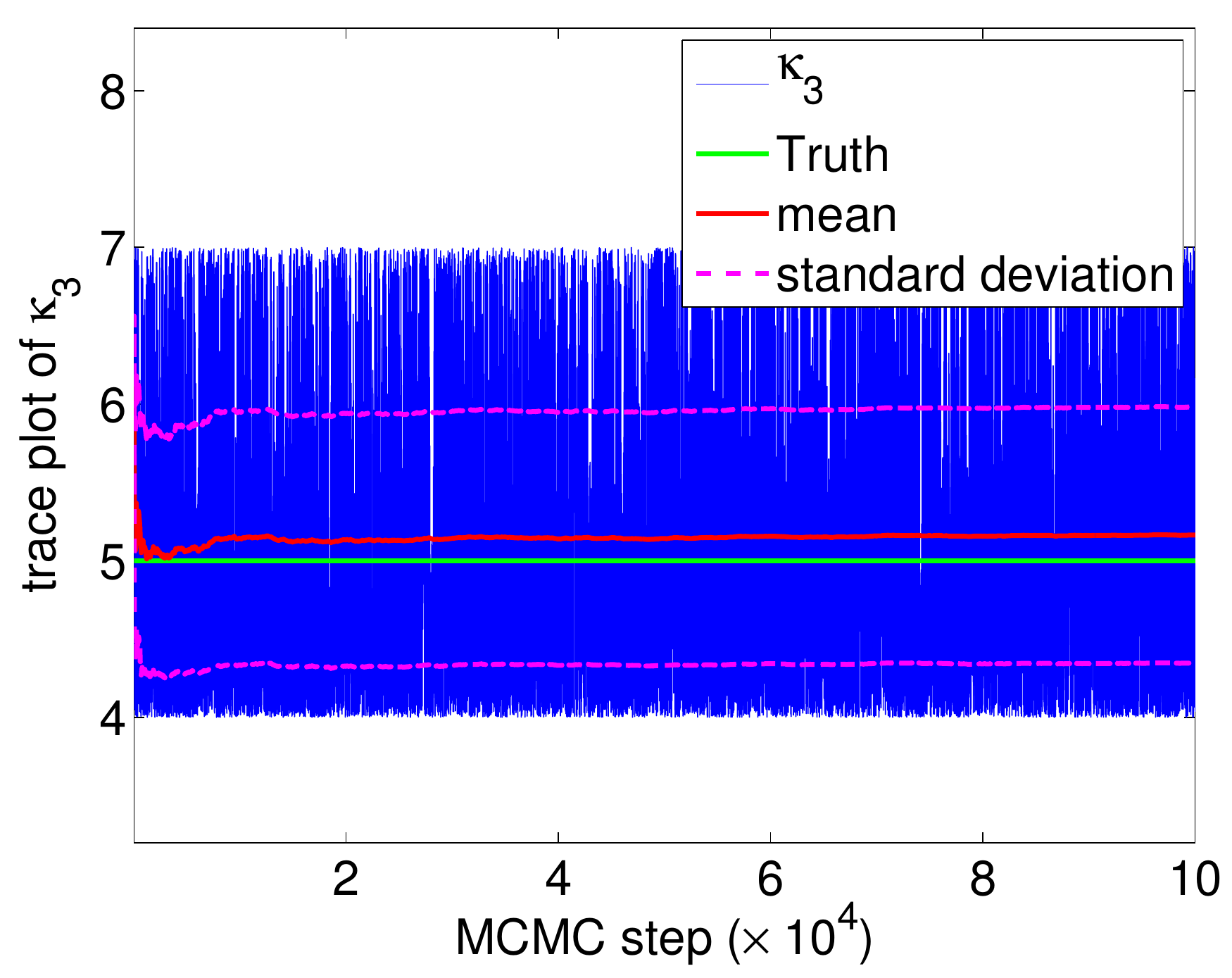}
 \caption{Trace plot from one MCMC chain (data set 1). Top: three geometric parameters. Bottom: permeability values}
    \label{Figure4}
\end{center}
\end{figure}

\begin{figure}[htbp]
\begin{center}
\includegraphics[scale=0.25]{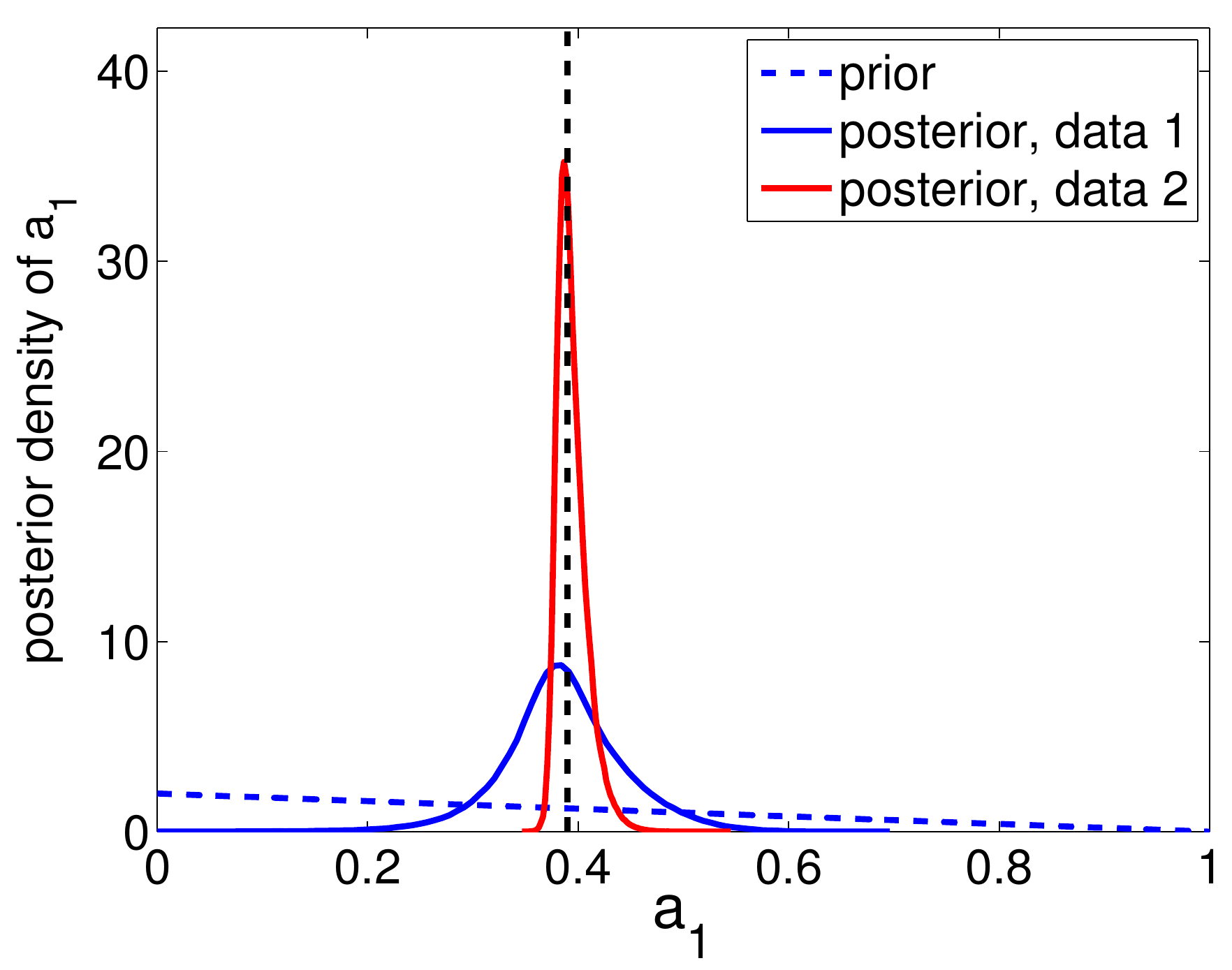}
\includegraphics[scale=0.25]{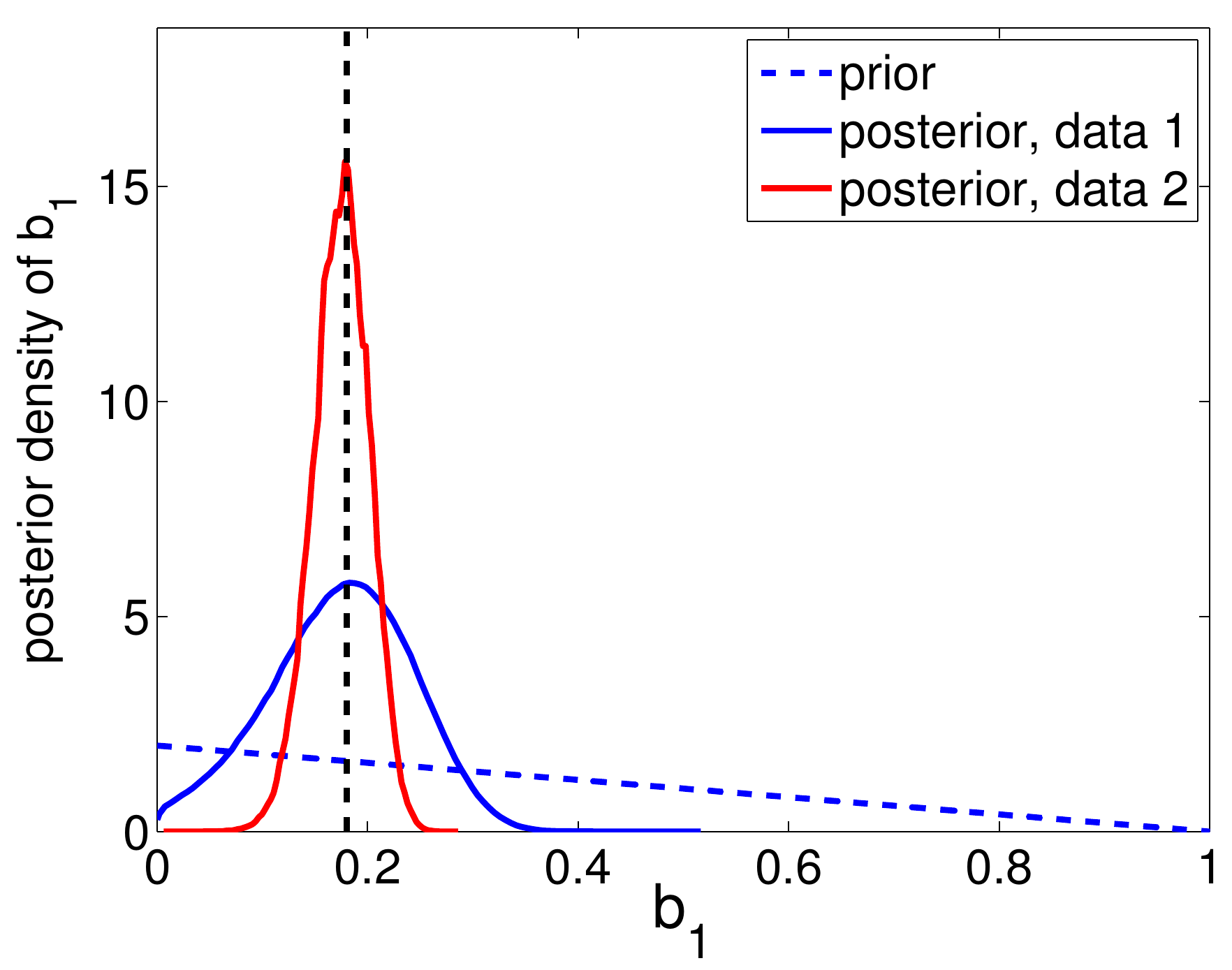}
\includegraphics[scale=0.25]{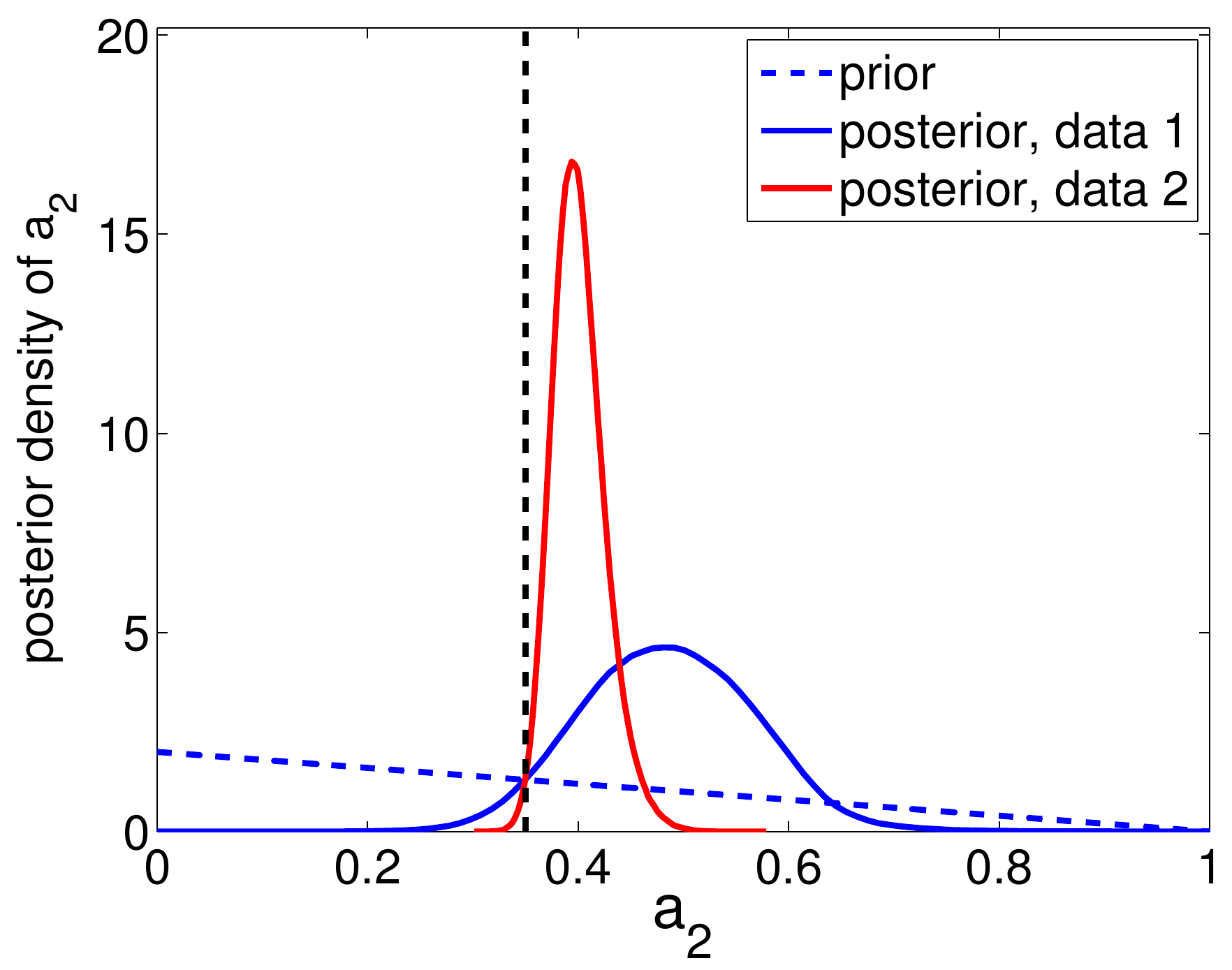}
\includegraphics[scale=0.25]{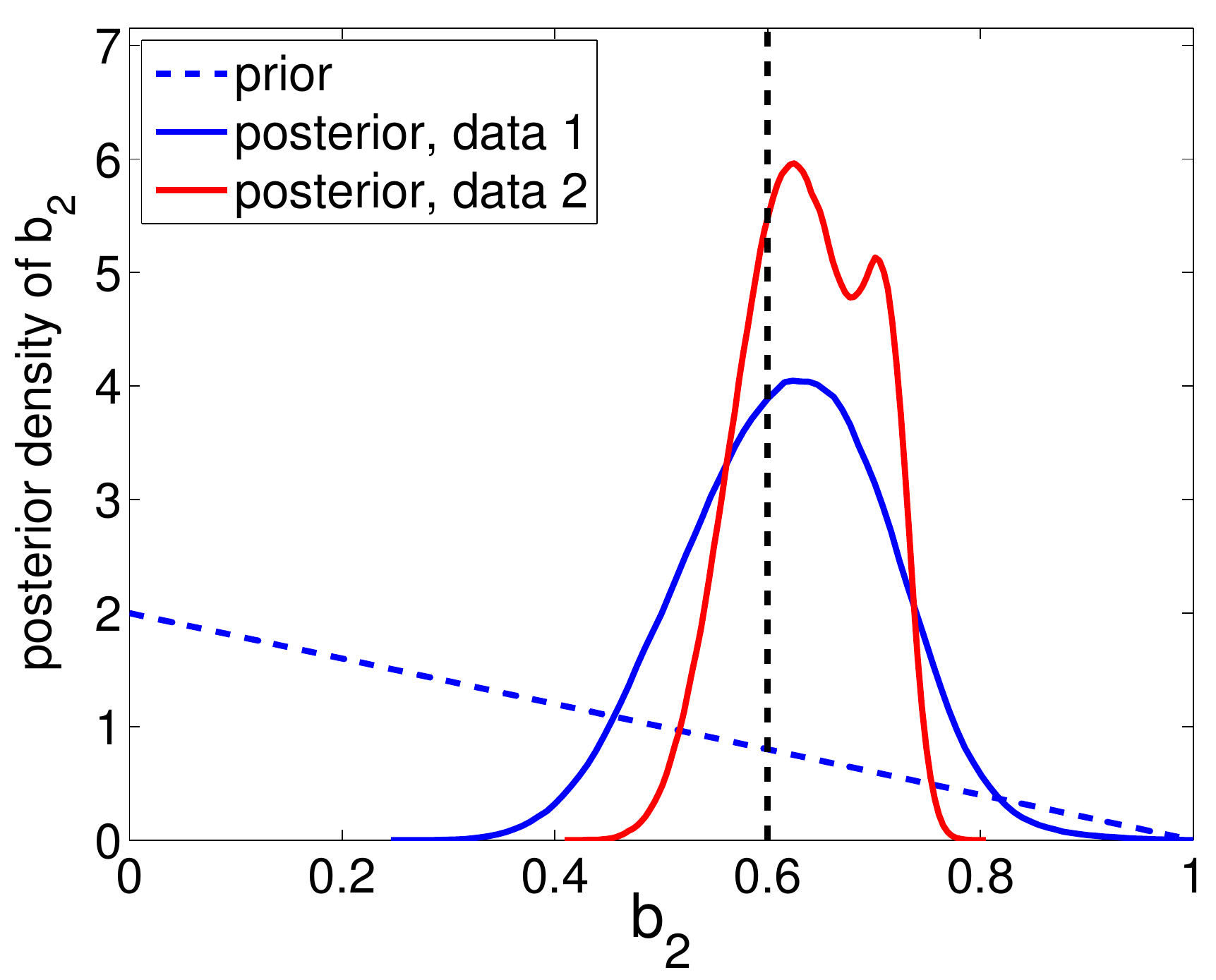}
\includegraphics[scale=0.25]{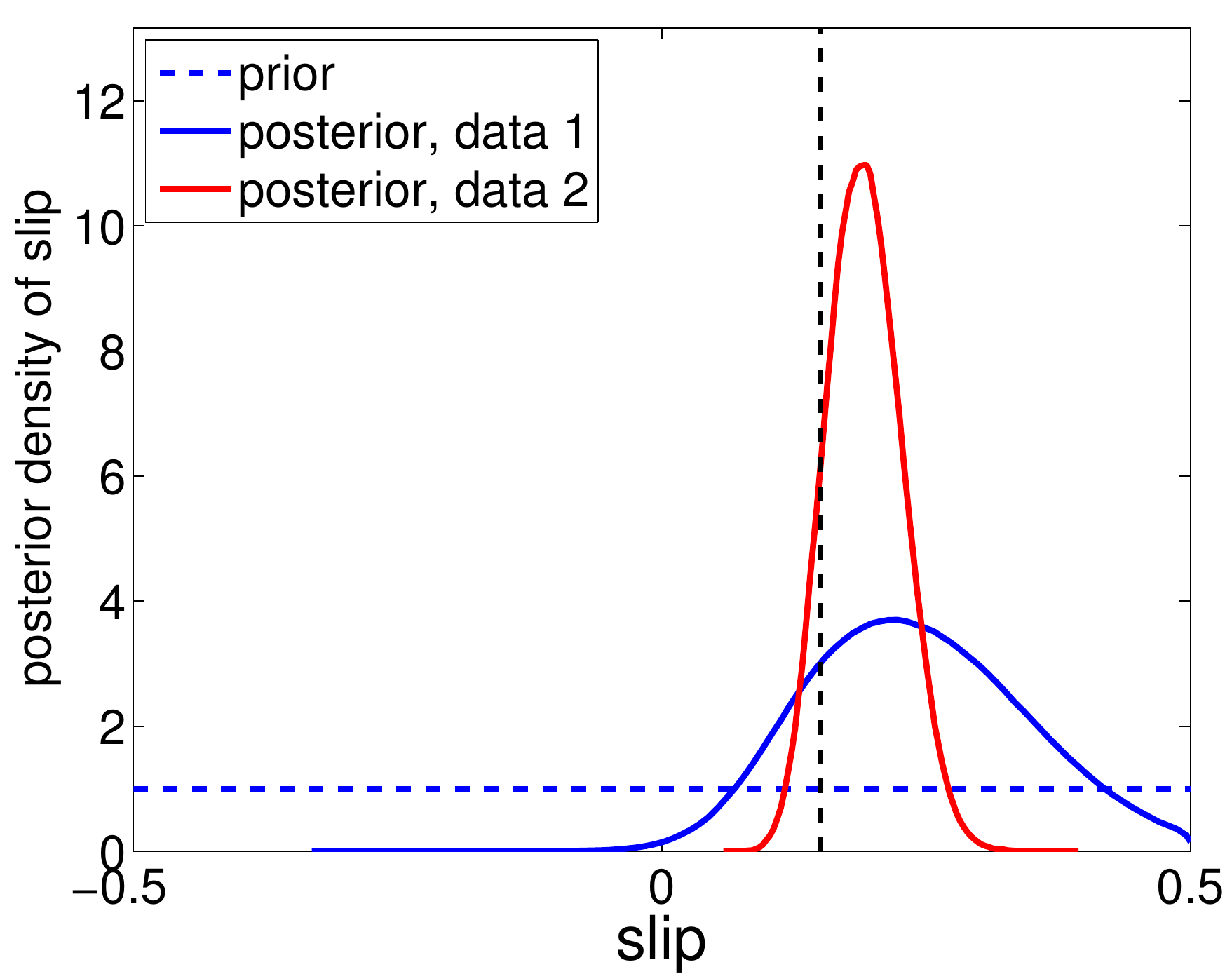}
\includegraphics[scale=0.25]{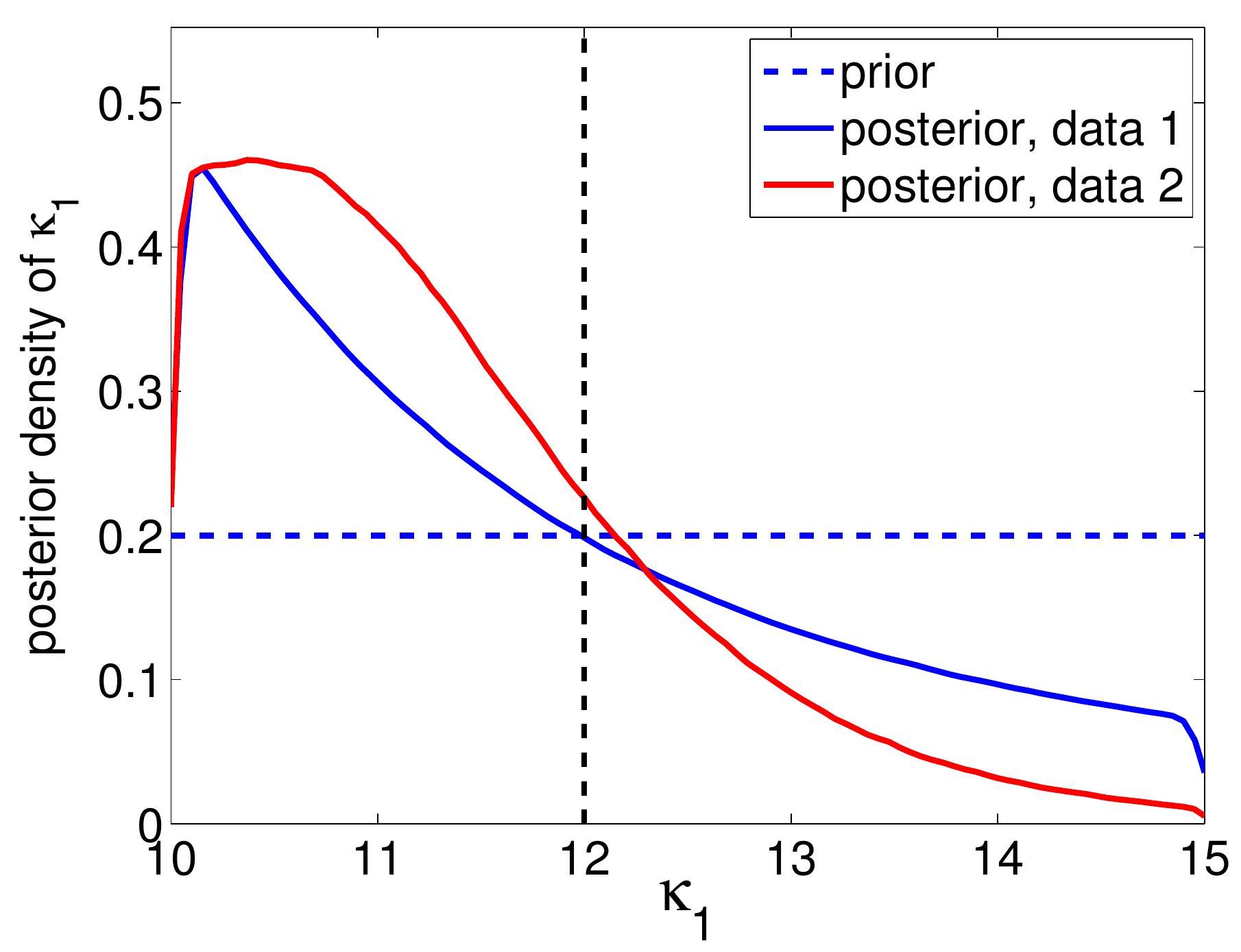}
\includegraphics[scale=0.25]{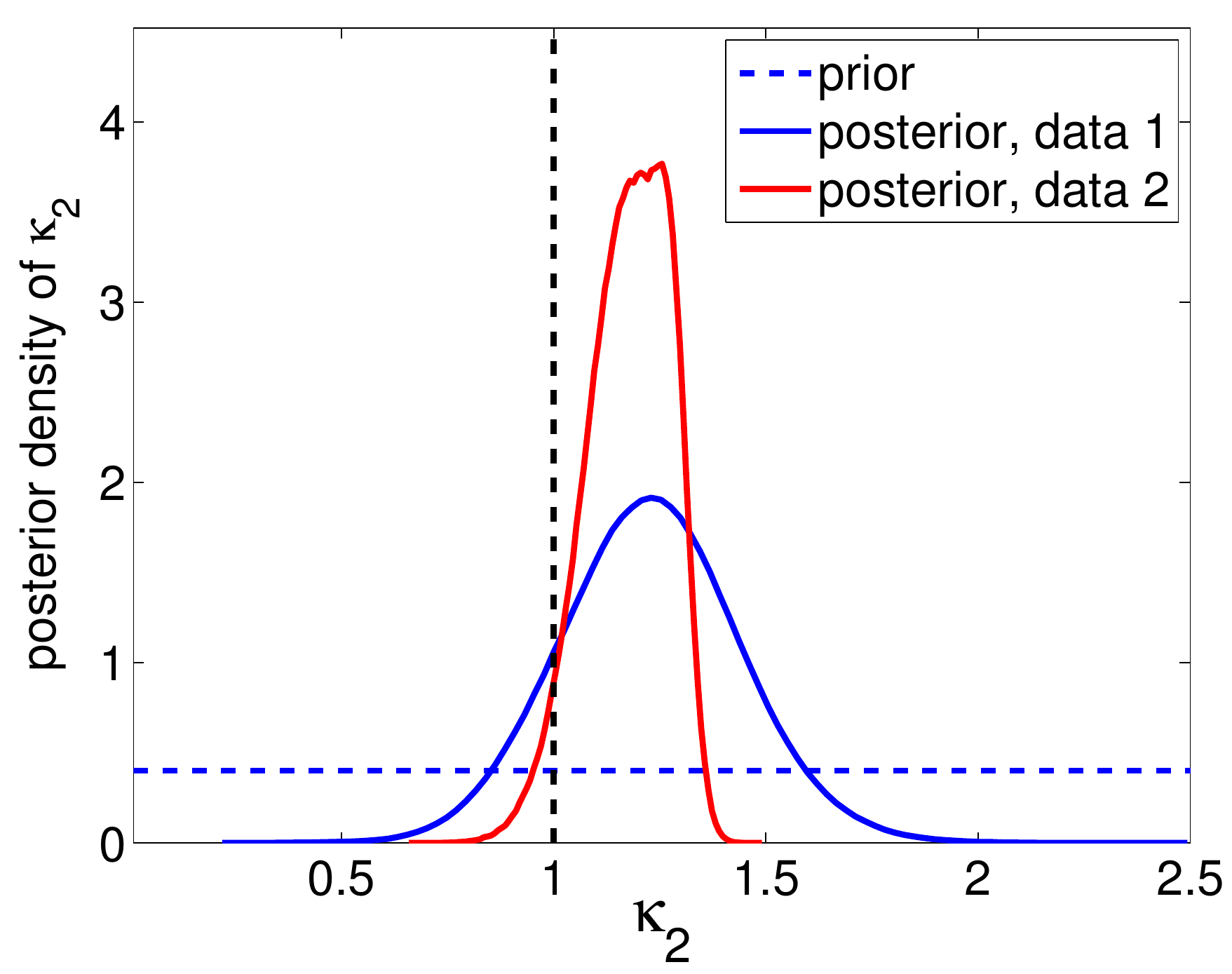}
\includegraphics[scale=0.25]{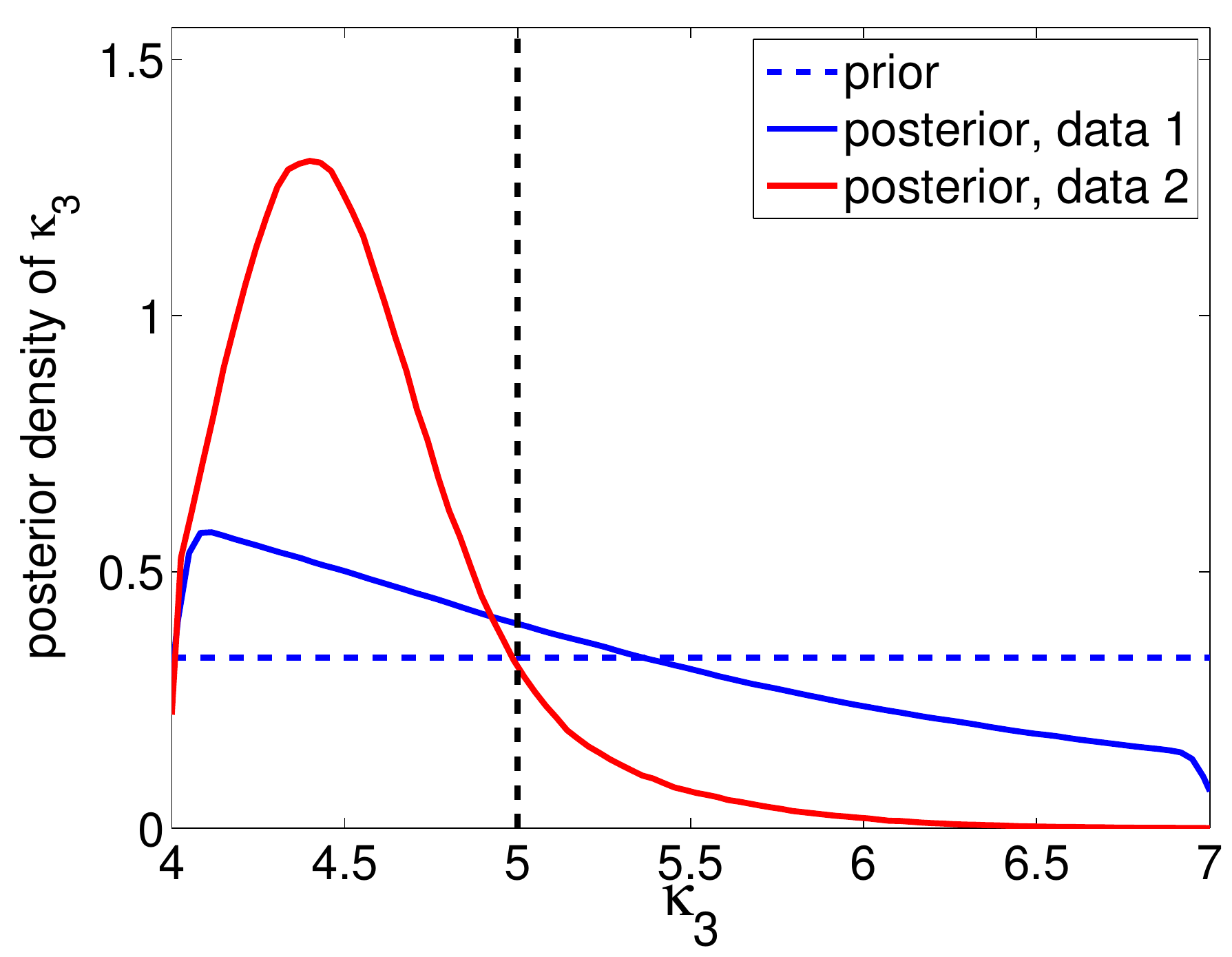}

 \caption{Prior and posterior densities of the unknown $u$. Vertical line indicates the true value of $u$.}
    \label{Figure5}
\end{center}
\end{figure}

\begin{figure}[htbp]
\begin{center}
\includegraphics[scale=0.25]{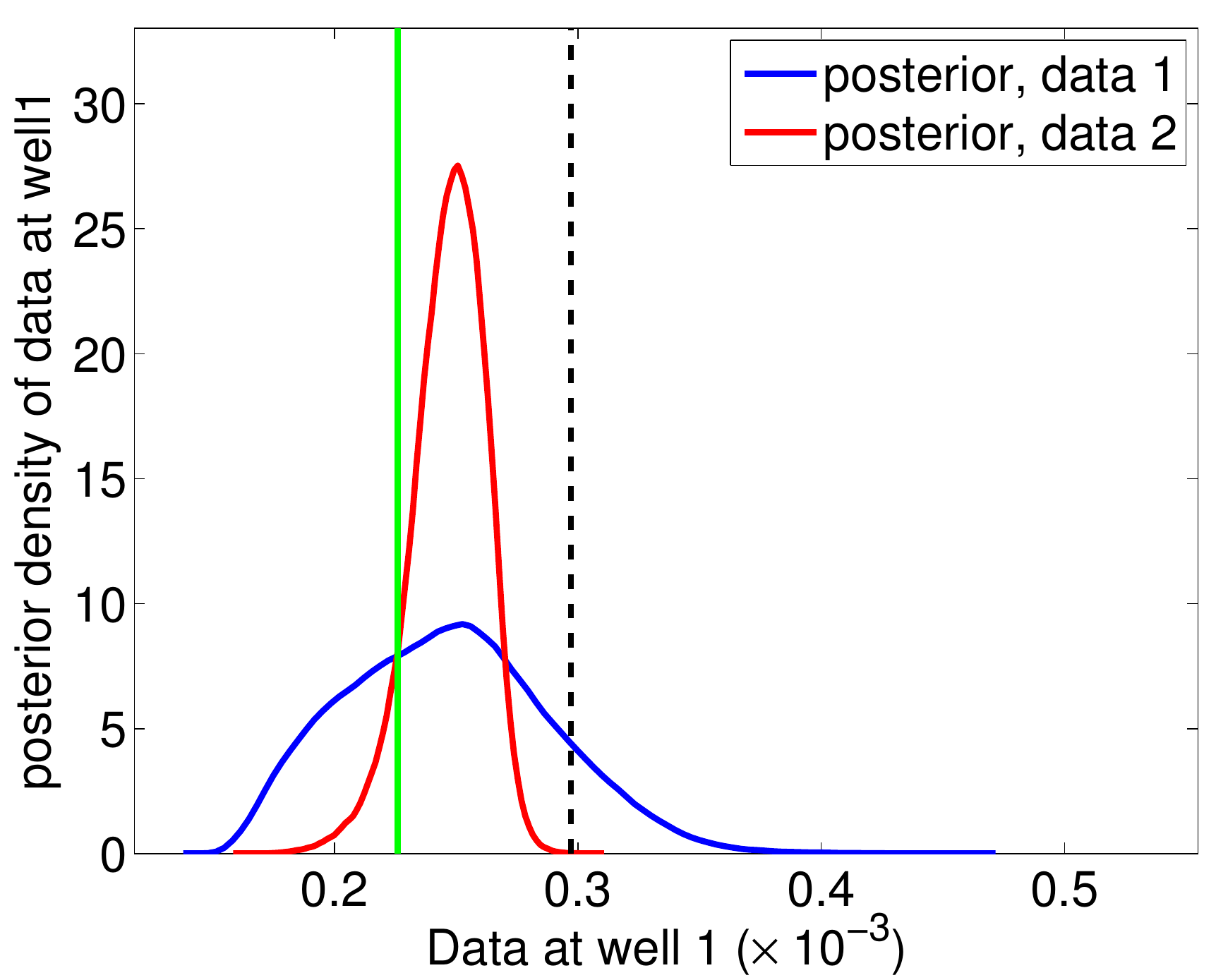}
\includegraphics[scale=0.25]{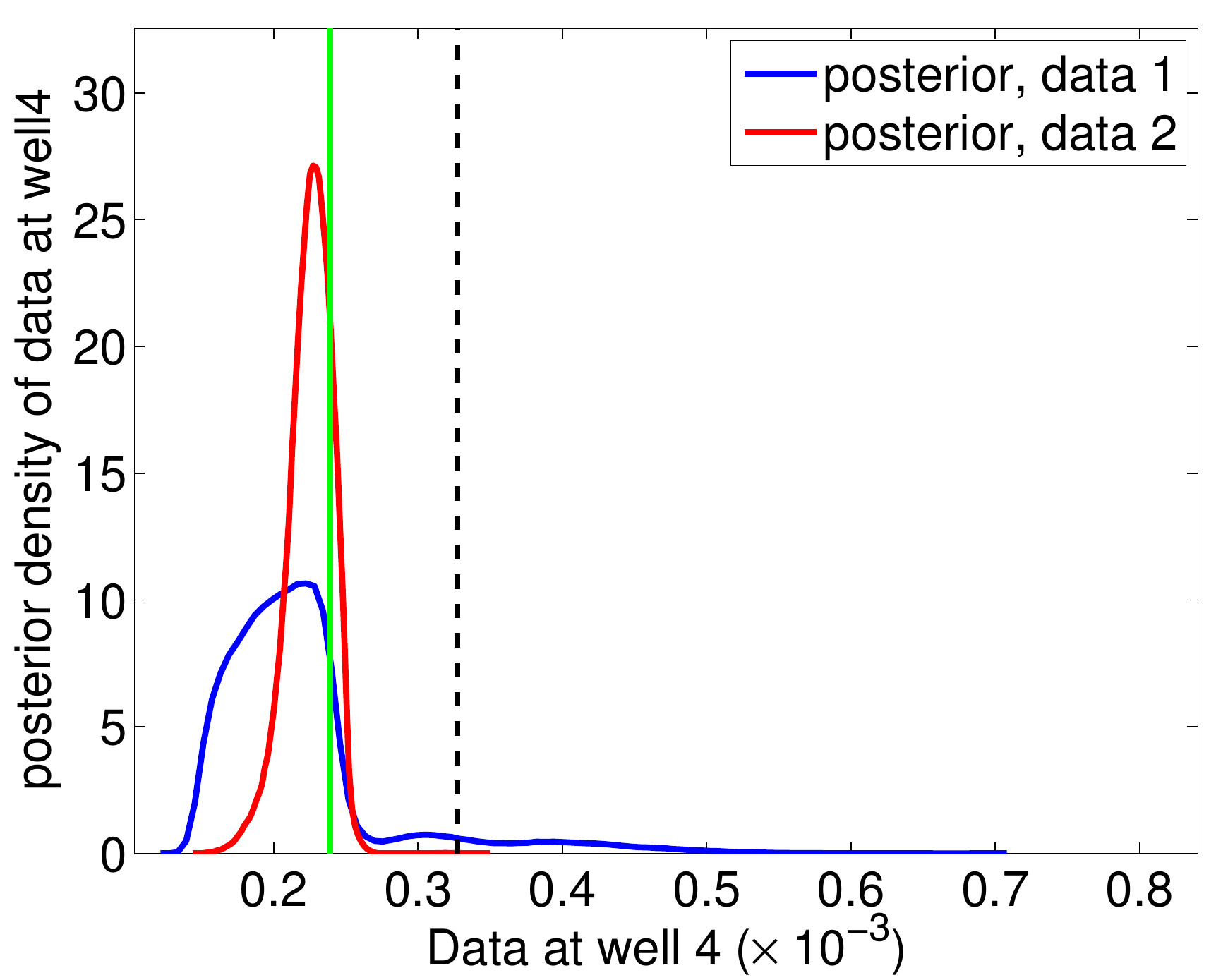}
\includegraphics[scale=0.25]{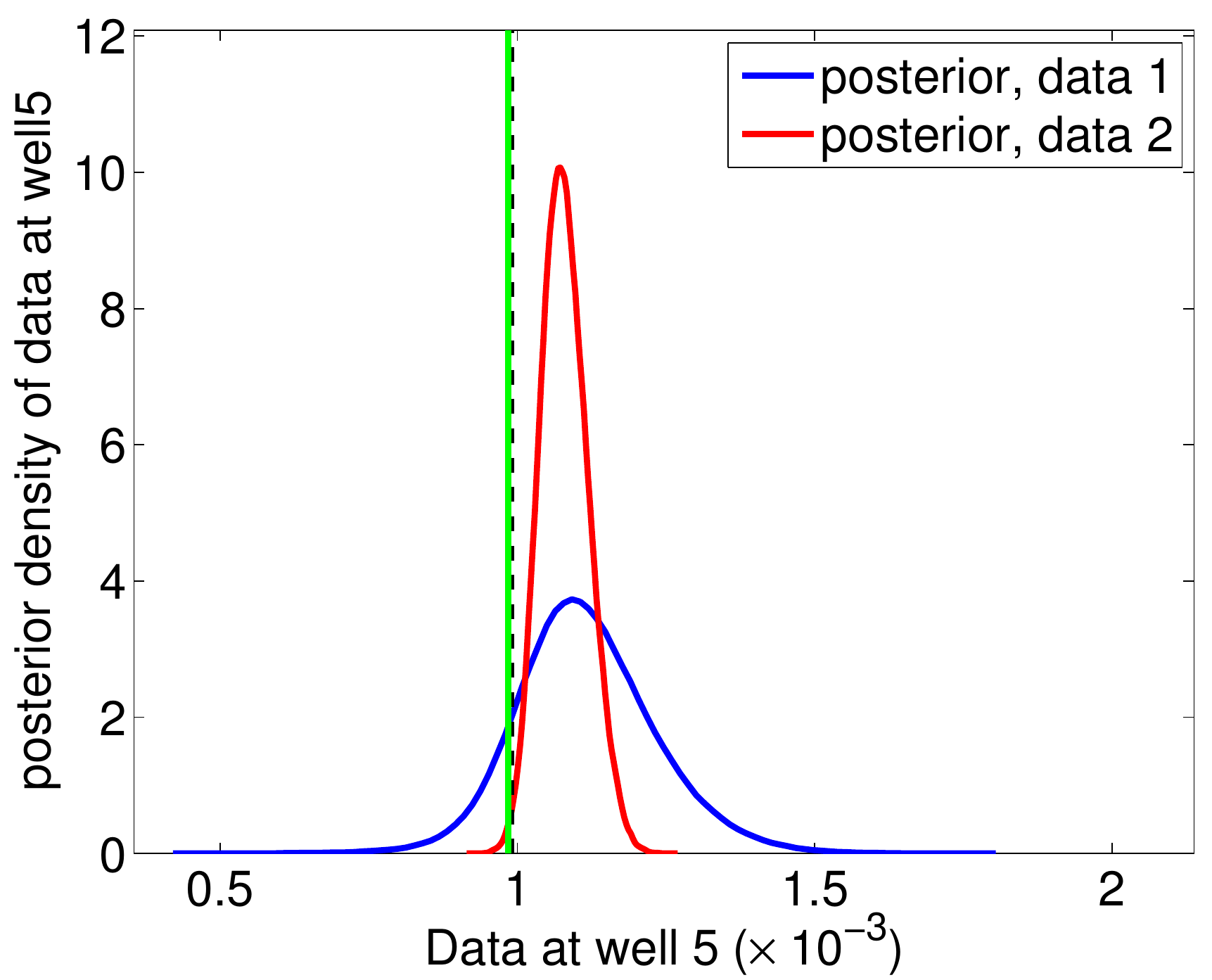}
\includegraphics[scale=0.25]{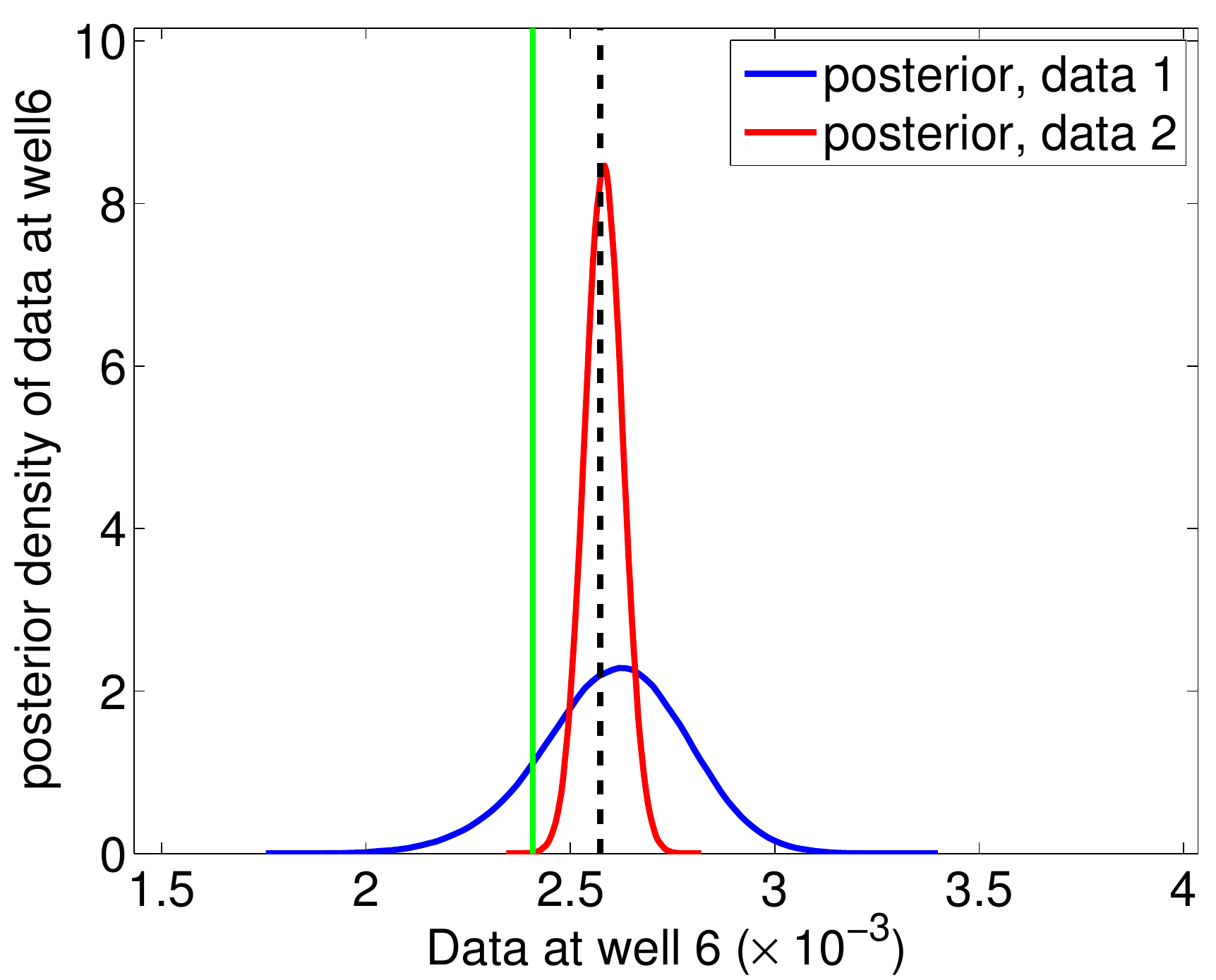}
\includegraphics[scale=0.25]{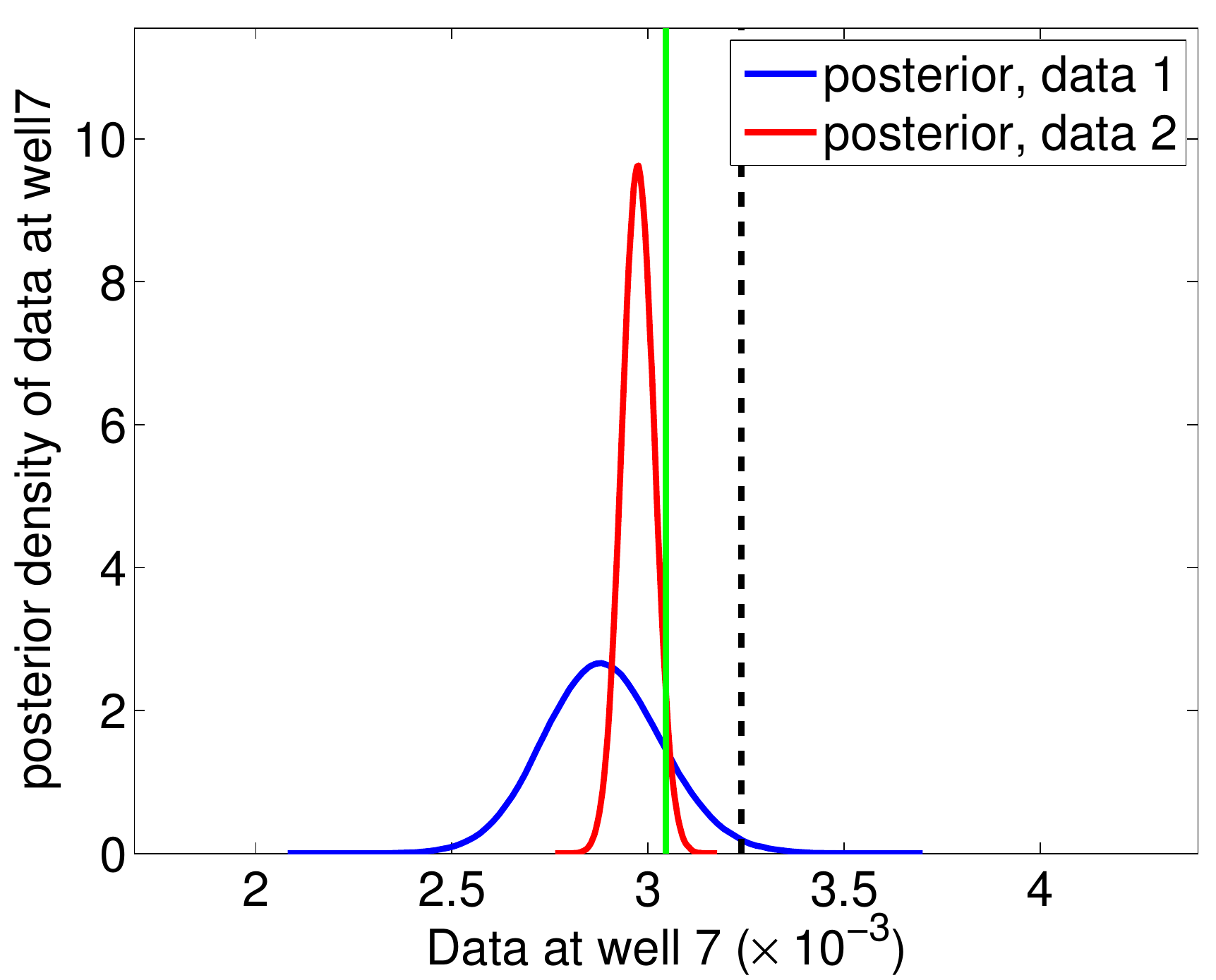}
\includegraphics[scale=0.25]{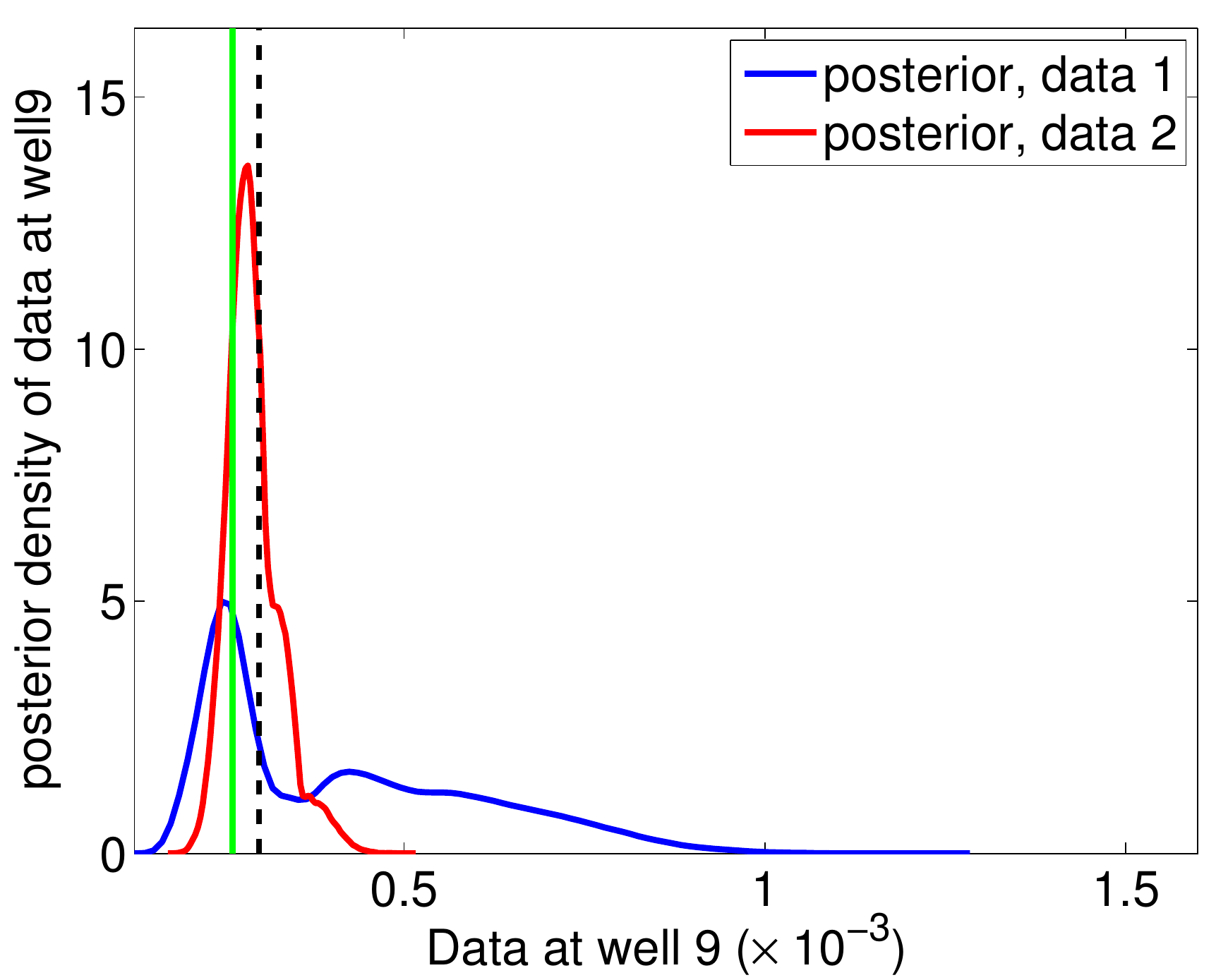}
\includegraphics[scale=0.25]{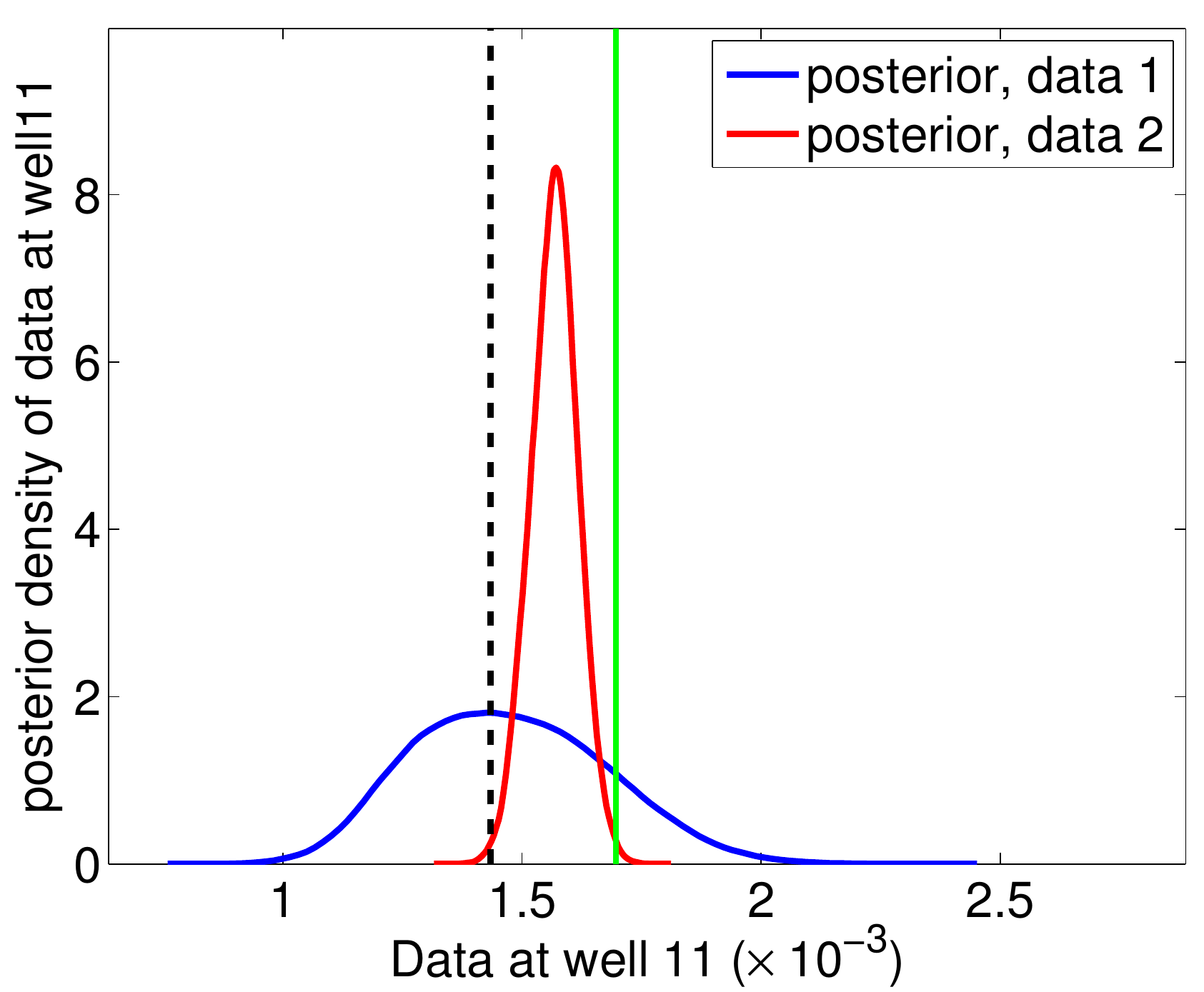}
\includegraphics[scale=0.25]{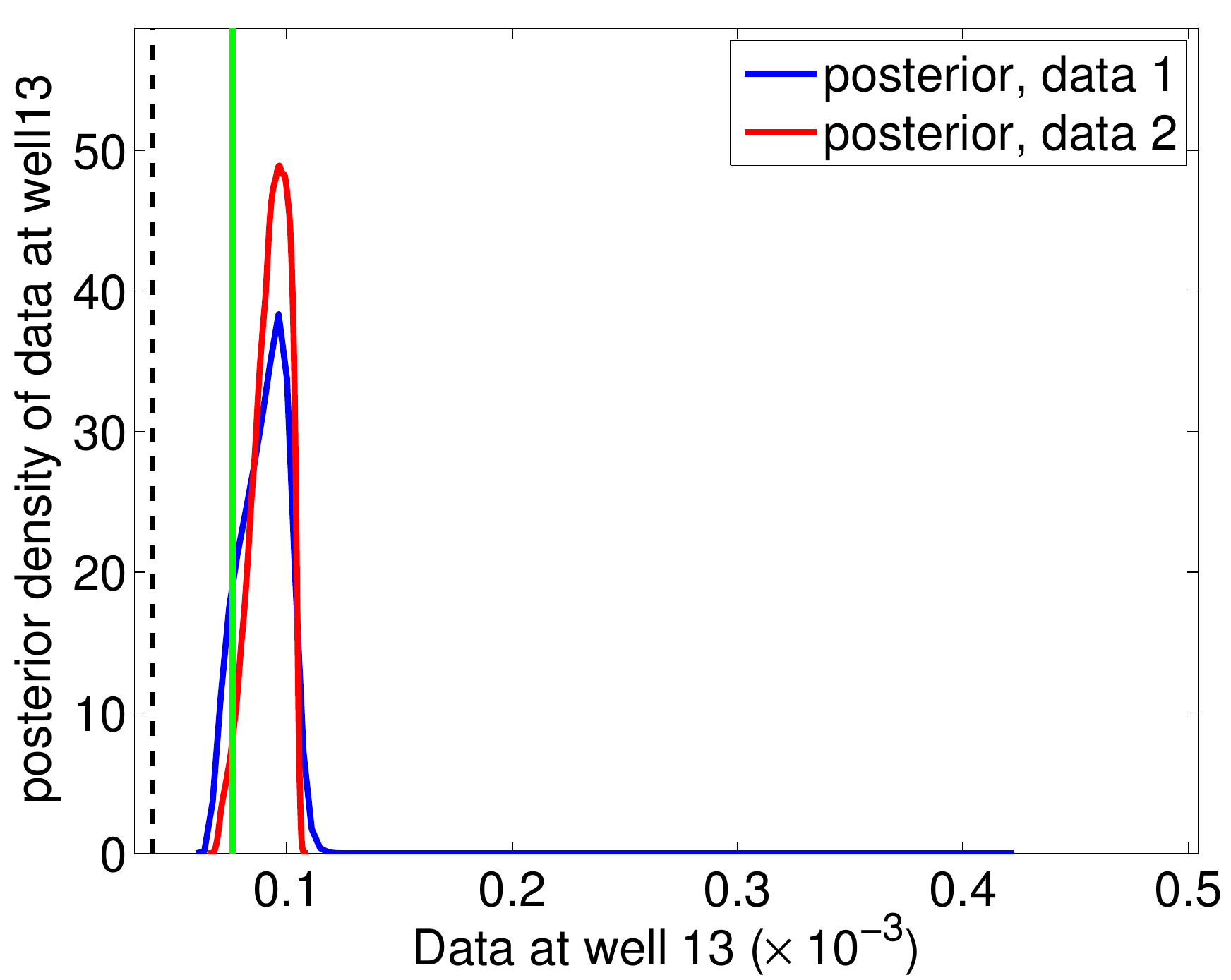}
\includegraphics[scale=0.25]{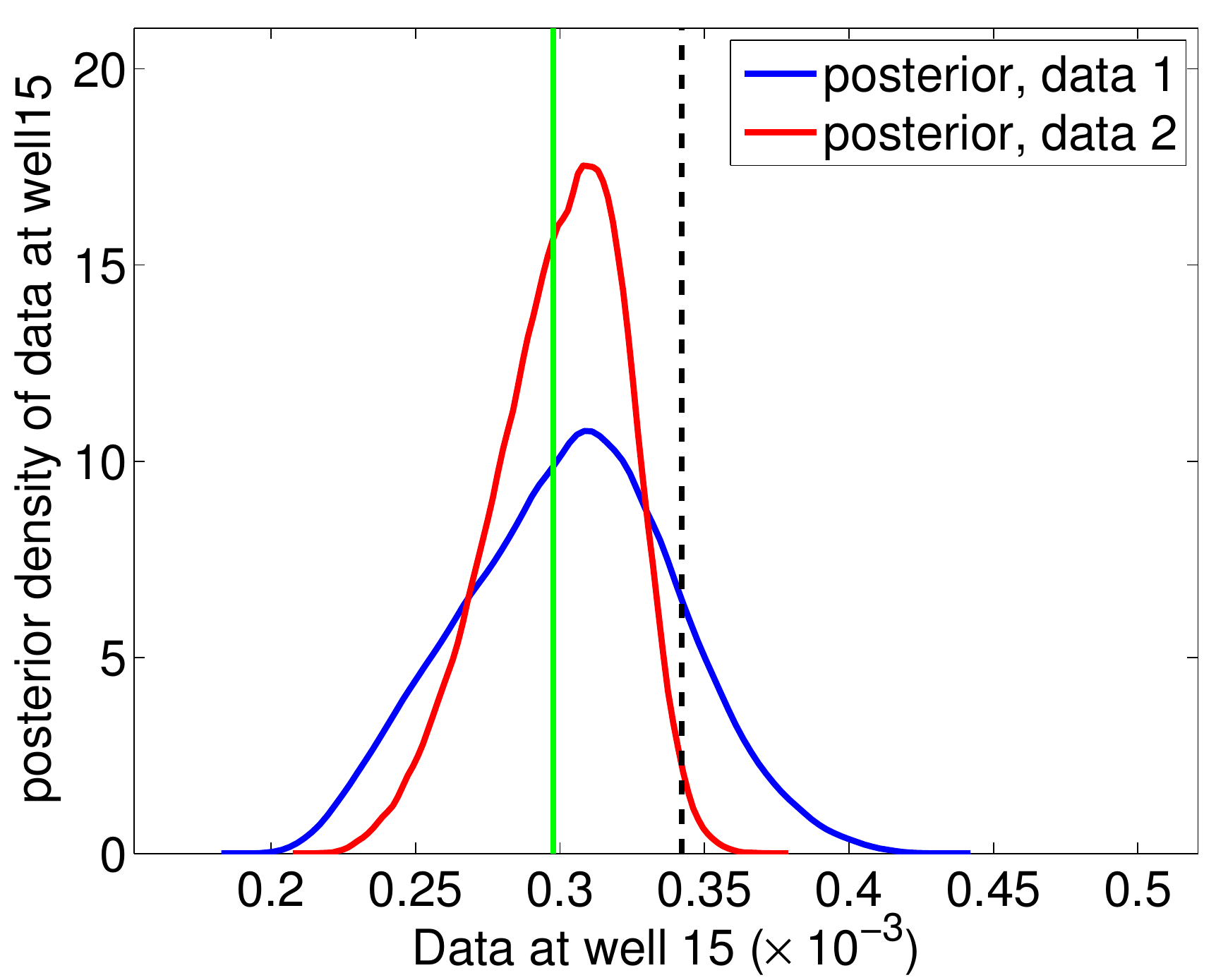}
 \caption{Posterior densities of the data predictions $G(u)$. Vertical lines indicate the nominal value of synthetic observations: dotted black (less accurate), green (more accurate) }
    \label{Figure6}
\end{center}
\end{figure}

\begin{figure}[htbp]
\begin{center}
\includegraphics[scale=0.35]{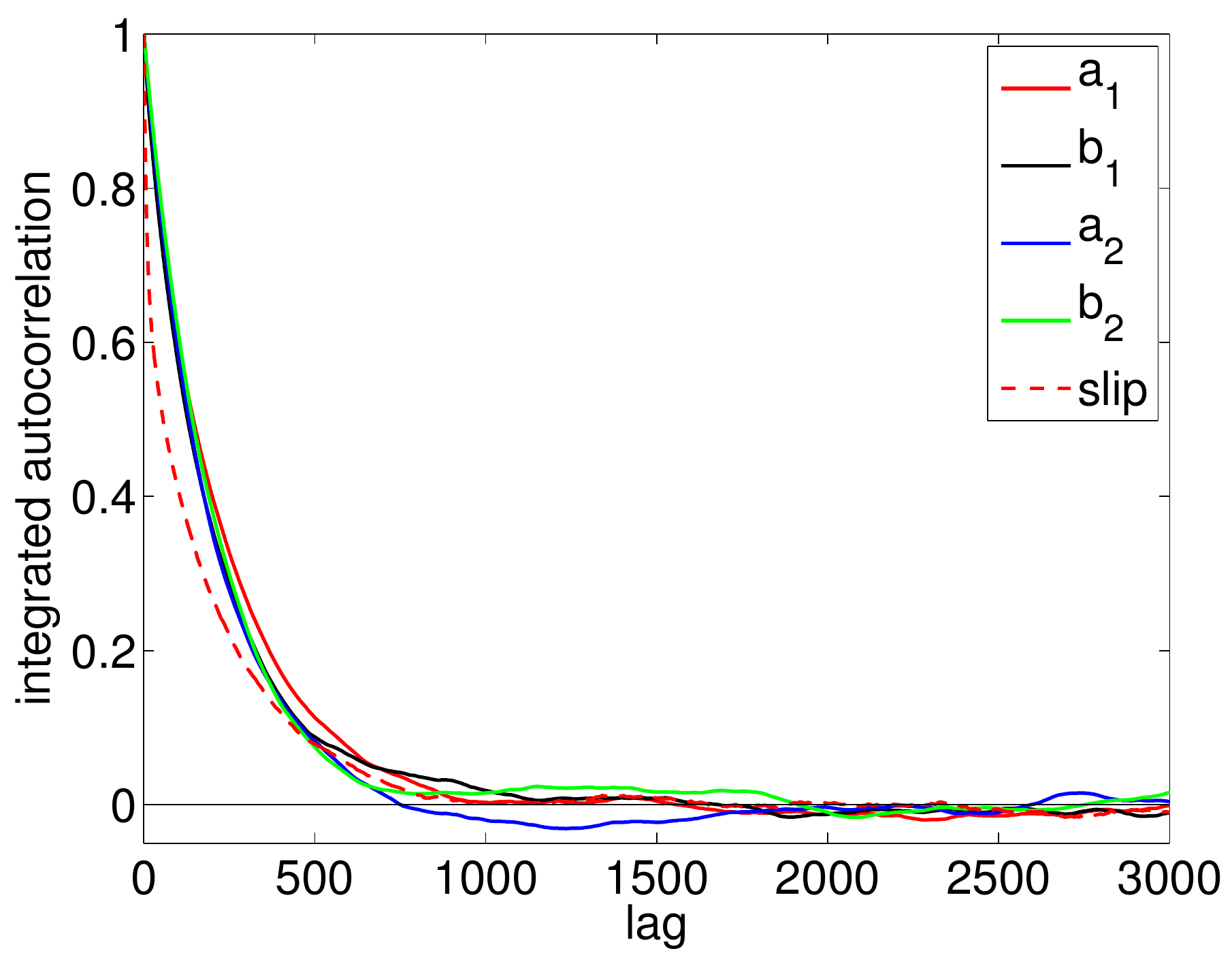}
\includegraphics[scale=0.35]{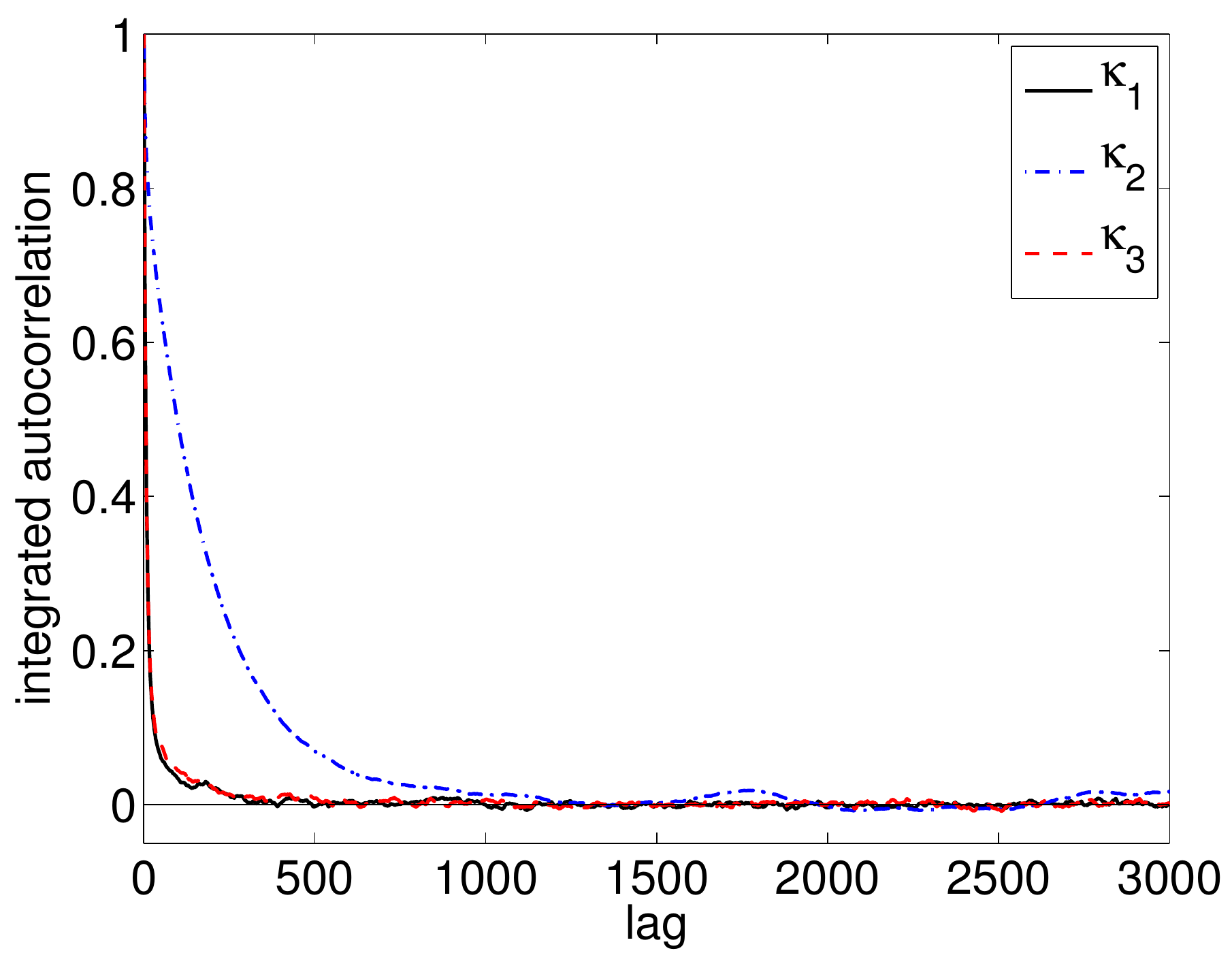}\\
\includegraphics[scale=0.35]{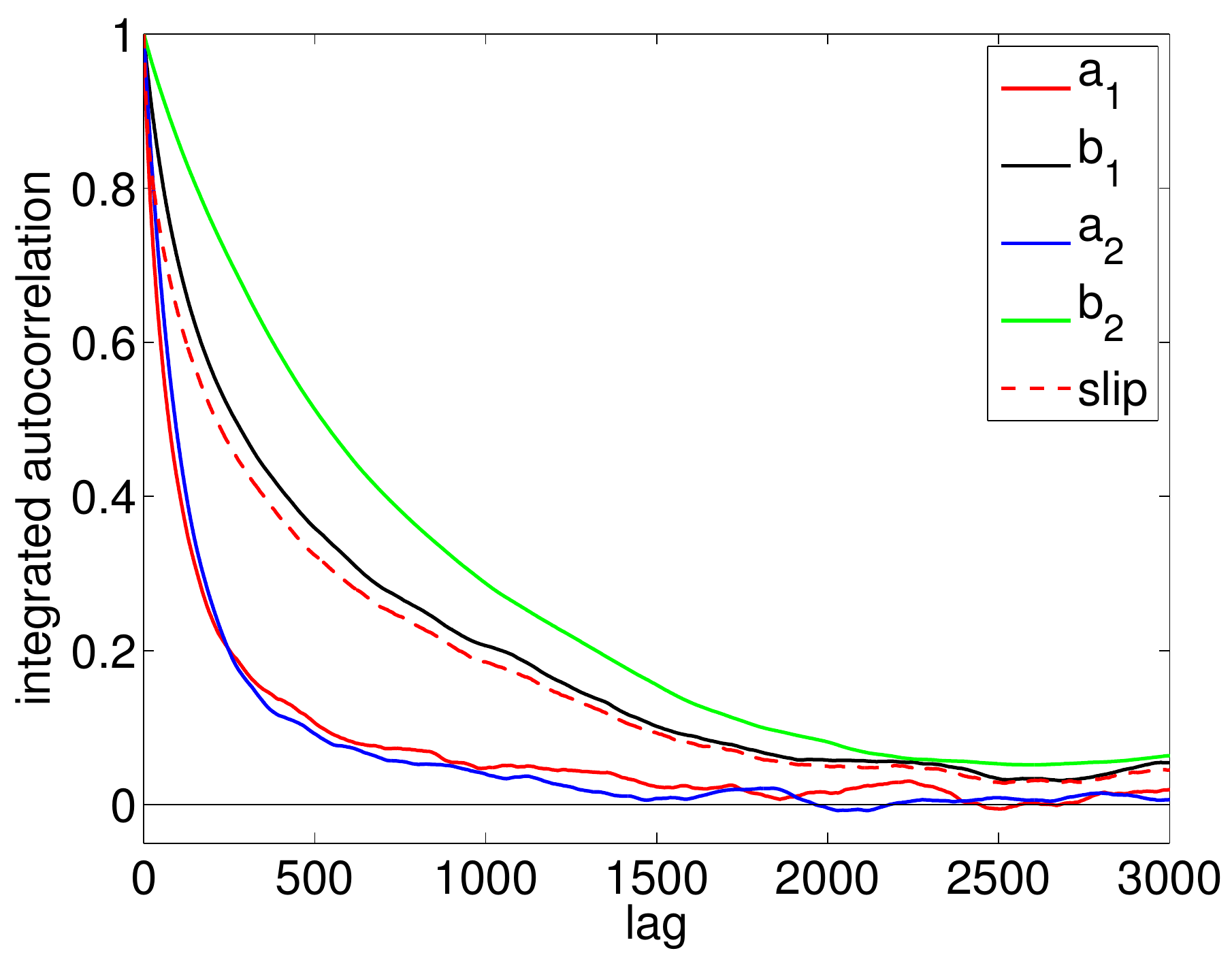}
\includegraphics[scale=0.35]{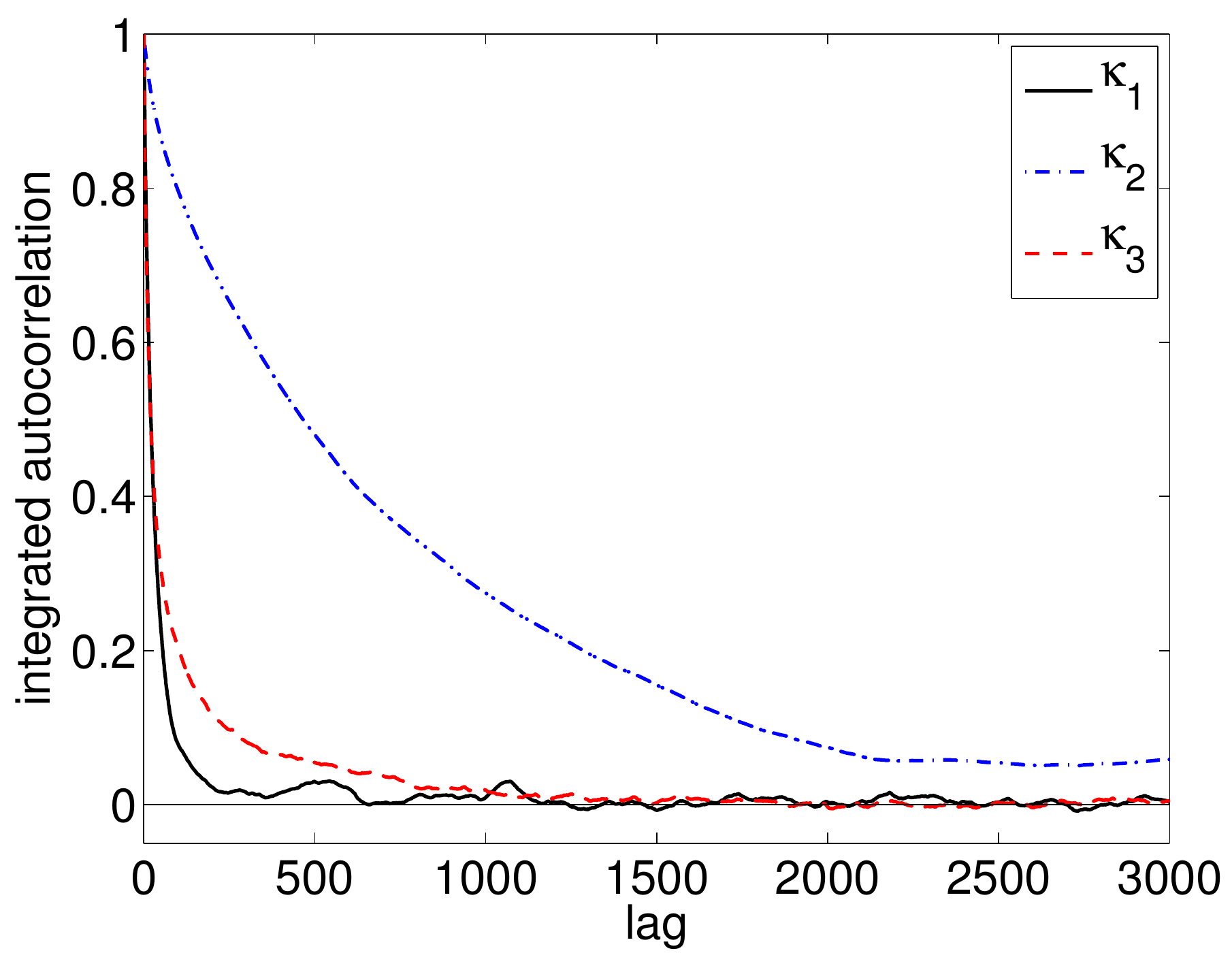}
 \caption{Autocorrelation from one MCMC chain. Top: data 1. Bottom: data 2. Left: geometric parameters. Right: permeability values.}
    \label{Figure7}
\end{center}
\end{figure}
\clearpage

%\subfigure[Layer Model]
%\label{fig:test1_layer}
%\subfigure[Layer Model with a fault]
%\label{fig:test2_fault}
%\subfigure[Channelized Model]
%\label{fig:test3_channel}

\subsection{Example. Two-layer Model With Spatially-varying Permeabilities}
This is an infinite-dimensional example where the permeability is described by
\begin{equation}\label{eq:num3}
\kappa(x)=\kappa_{1}(x) \chi_{D_1}(x)+\kappa_2(x) \chi_{D_2}(x)
\end{equation}
where $D_{1}$  and $D_{2}$ are the subsets defined by $a$ and $b$ as shown in Figure \ref{fig:test1_layer}. For this case we consider the case that $c=0$ is a known parameter. The unknown parameter in this case is $u=(a,b,\log \kappa_{1}, \log \kappa_{2})\in [0,1]^2\times C(\bD;\bbR^2)$.

We consider a prior measure of the form
\begin{equation}\label{eq:num4}
\mu_{0}(du)= \pi_0^{A,g}(a)da~\pi_0^{A,g}(b)db~\delta(c)\otimes N(m_1,C_{1}) N(m_2,C_{2})
\end{equation}
where $\delta$ is the Dirac distribution ($c=0$ is known) and where $A=[0,1]$ and $C_{1}$ and $C_{2}$ are covariance operators defined from correlation functions typical in geostatistics \cite{Oliver}. In concrete the correlation functions of $C_{1}$ (resp. $C_{2}$) is spherical (resp. exponential) and its maximum correlation is along $\pi/4$ (resp. $3\pi/4$) \cite{Oliver}. It is convenient, both conceptually and computationally, to parameterize the
log permeabilities via their Karhunen-Loeve (KL) expansions of the form
\begin{equation}\label{eq:num5}
\log(\kappa_{i}(x)) =m_i+\sum_{j=1}^{\infty}\sqrt{\lambda_{j,i}} v_{j,i}(x)\xi_{j,i}
\end{equation}
where $\lambda_{j,i}$ and $v_{j,i}(x)$ are the eigenvalues and eigenfunctions of $C_{i}$ and $\xi_{j,i}\in \mathbb{R}$. We assume that in the previous expression $\{\lambda_{j,i}\}_{j=1}^{\infty}$ are ordered as $\lambda_{1,i}\ge\lambda_{2,i} \ge\cdots$. Under this representation, $\xi_{j,i}\sim N(0,1)$ produces $\log \kappa_{i}\sim N(m_i,C_{i})$. Thus, the KL representation enables us to sample from the prior distribution and therefore generate proposals in Algorithm \ref{MwG}. For the numerical implementation of the Bayesian inversion, we consider expression (\ref{eq:num5}) truncated to $N = 50^2$ terms corresponding to all the eigenvectors of the discretized (on a $50\times 50$ grid) covariance $C_{i}$. However we emphasize that the approach we adopt corresponds to a well-defined
limiting Bayesian inverse problem in which the series is not truncated and
the PDE is solved exactly. The theory of paper \cite{CDS09} may be used to quantify
the error arising from the truncation of the KL expansion and the approximation
of the solution of the PDE; this gives a distinct advantage to the
``apply then discretize'' approach advocated here since all sufficiently resolved
computations are approximating the same limiting problem.  In Figure \ref{Figure8} (top row) we show log-permeabilities computed with (\ref{eq:num3}) with parameter $u$ sampled from the prior (\ref{eq:num4}).
There is substantial variability in these samples.

The true log-permeability $\log \kappa^{\dagger}$, shown in Figure \ref{Figure9} (top-left),  is obtained from (\ref{eq:num3}) with $\log \kappa_{1}^{\dagger}$ displayed in Figure \ref{Figure10} (top-left) $\log \kappa_{2}^{\dagger}$ shown in Figure \ref{Figure10} bottom-left and from $a^{\dagger}=0.11$ and $b^{\dagger}=0.86$. The functions $\log \kappa_{1}^{\dagger}$ and $\log \kappa_{2}^{\dagger}$ are draws from the Gaussian measures $N(m_1,C_{1})$ and $N(m_2,C_{2})$ that we use to define the prior (\ref{eq:num3}). However, in order to avoid the inverse crime, these priors that we use to generate $\log \kappa_{1}^{\dagger}$ and $\log \kappa_{2}^{\dagger}$ (and so $u^{\dagger}$) are defined on a discretized domain (of $100\times 100$ grids) that is finer than the one used for the inversion. Synthetic data (data set 1) is generated by using the true permeability in the elliptic PDE and applying the measurement functional with 9 measurement locations as displayed in Figure \ref{Figure9} (bottom-left). Gaussian noise of standard deviation $\gamma=1.0\times 10^{-3}$ is added to the observations. For this case, we sample the posterior with the MCMC Algorithm \ref{MwG} using the following splitting of the unknown $u=(a,b,\log \kappa_{1}, \log \kappa_{2})$ for the outer
Gibbs loop: (i) $\log \kappa_{1}$, (ii) $\log \kappa_{2}$ and (iii) $(a,b)$. Similar to the previous experiment, splitting the unknown yields the best performance in terms of decorrelation times. As before, 20 parallel chains were generated and the MPSRF was computed for assessing the convergence of the independent chains. Trace plots from one of these chains are presented in Figure \ref{Figure11}. The samples from all chains were combined to produce the mean and variance of the unknown. The mean and variance of $\log \kappa_{1}$ and $\log \kappa_{2}$ are shown in Figure \ref{Figure10} (second column) and Figure \ref{Figure12}, respectively  (first column). The mean for the geometric parameters are $\hat{a}=0.198$ and $\hat{b}=0.639$. The corresponding variances are $\sigma_{a}=1.03\times 10^{-2}$ and $\sigma_{b}=9.5\times 10^{-3}$, respectively. The permeability (\ref{eq:num3}) corresponding to the mean $\hat{u}$ is displayed in Figure \ref{Figure9} (top-middle).

We repeat this experiment with a synthetic data (data set 2) that we now generate from a configuration of 36 measurement locations as specified in Figure \ref{Figure9} (bottom-right). Mean and variance of $\log \kappa_{1}$ and $\log \kappa_{2}$ are displayed in Figure \ref{Figure10} (third column) and Figure \ref{Figure12}  (second column), respectively. The mean and variances for the geometric parameters are $\hat{a}=0.088$ and $\hat{b}=0.822$ and $\sigma_{a}=8\times 10^{-4}$ and $\sigma_{b}=7\times 10^{-4}$, respectively. The permeability (\ref{eq:num3}) corresponding to the mean $\hat{u}$ is displayed in Figure \ref{Figure9} (top-right).

For both experiments, Figure \ref{Figure13} shows the autocorrelation of the geometric parameters as well as the KL-coefficients $\xi_{1,i}$ and $\xi_{10,i}$ from (\ref{eq:num5}). In Figure \ref{Figure14} we present the posterior and the prior densities for geometric parameters and the aforementioned KL coefficients. From this figure we note that for some of the components of the unknown (e.g. $a$, $b$, $\xi_{1,2}$), the corresponding posterior density tends to be more peaked around the truth when more measurements are assimilated. Some less sensitive parameters seemed to be unaffected when more measurements are included. In particular, we see that the densities for the coefficients of smaller eigenvalues (e.g. $\xi_{9,2}$, $\xi_{10,2}$) for the permeability in $D_{2}$ is almost identical to the prior; indicating the uninformative effect of the data.

\begin{figure}[htbp]
\begin{center}
\includegraphics[scale=0.2]{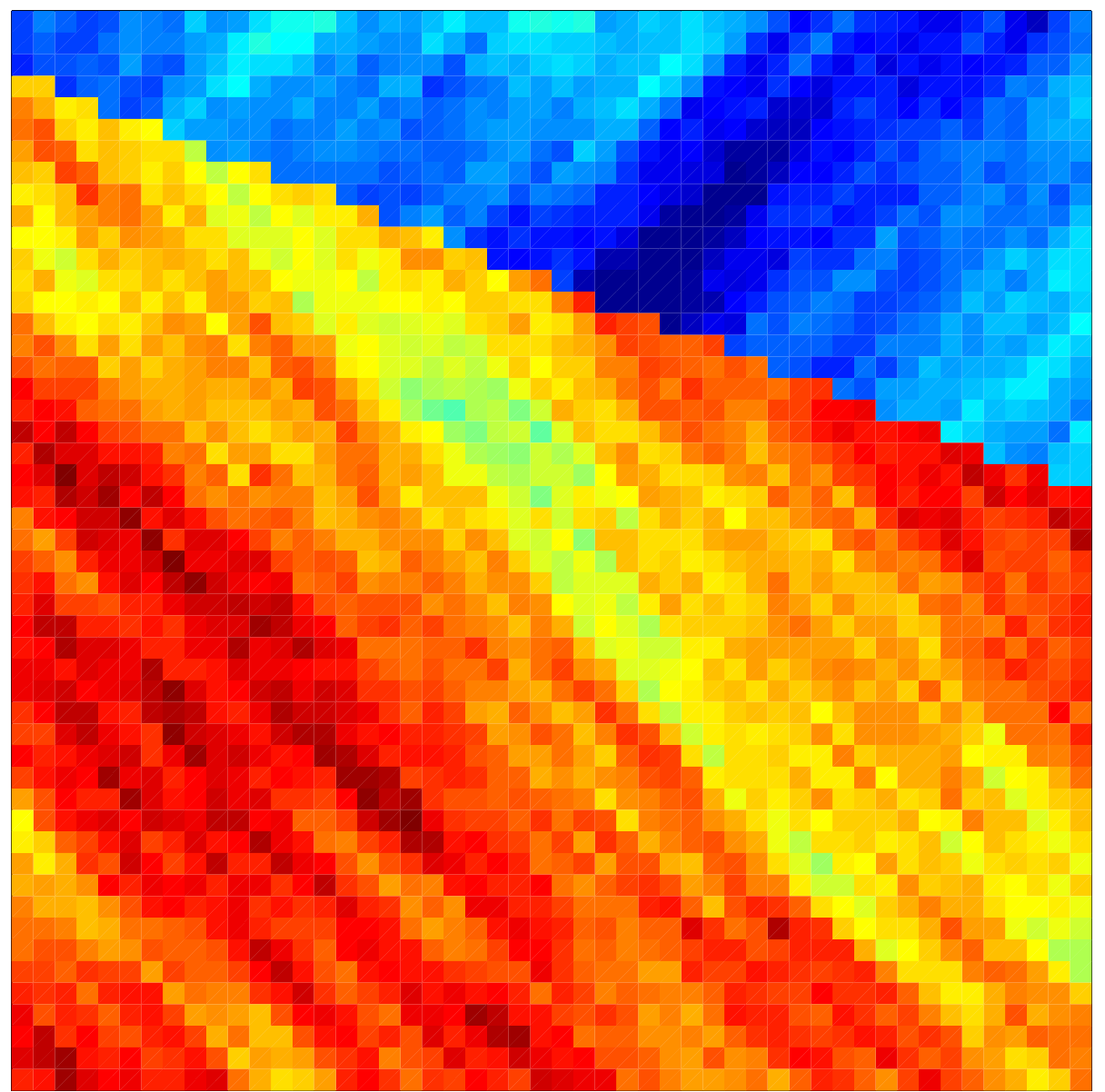}
\includegraphics[scale=0.2]{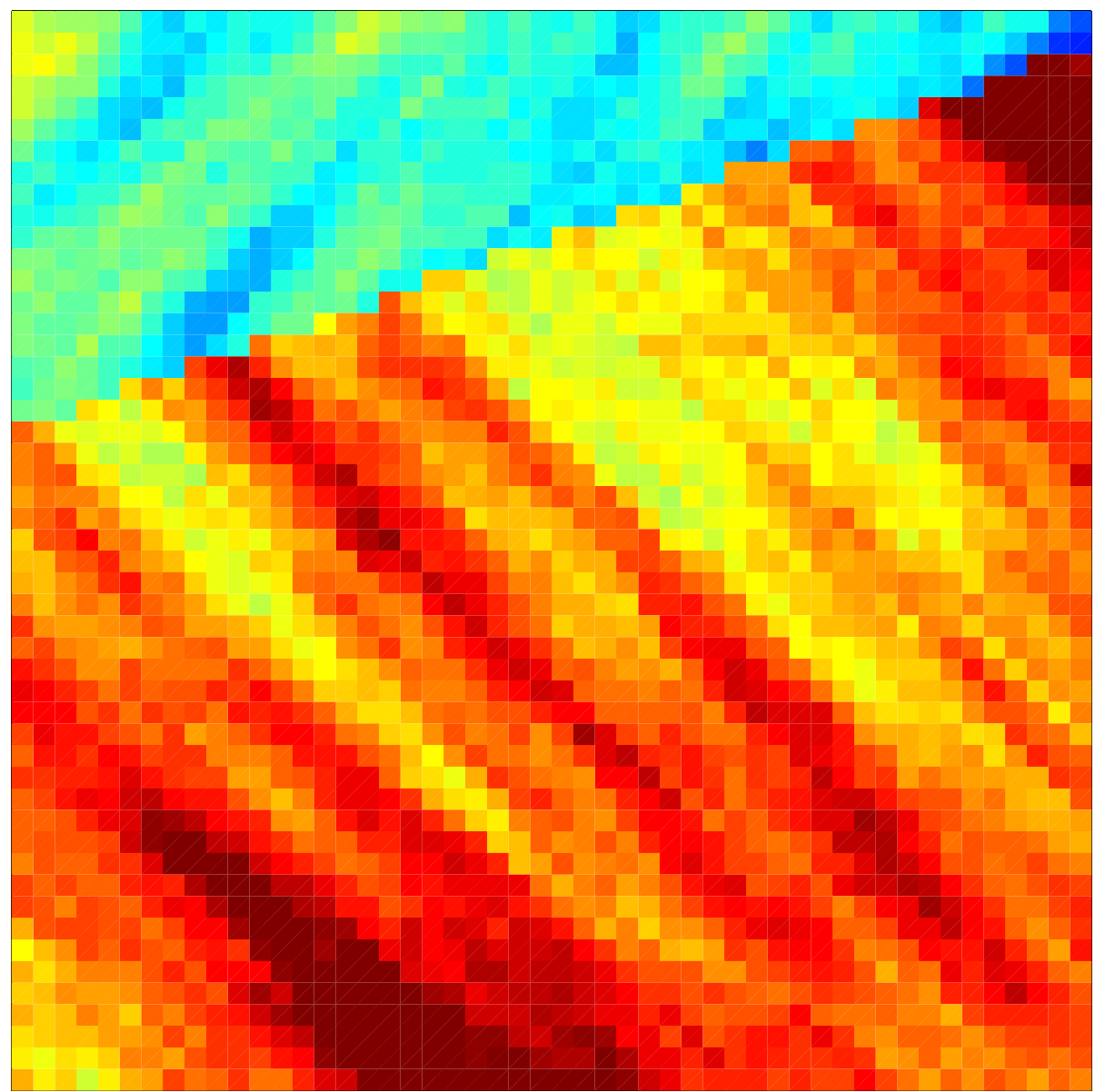}
\includegraphics[scale=0.2]{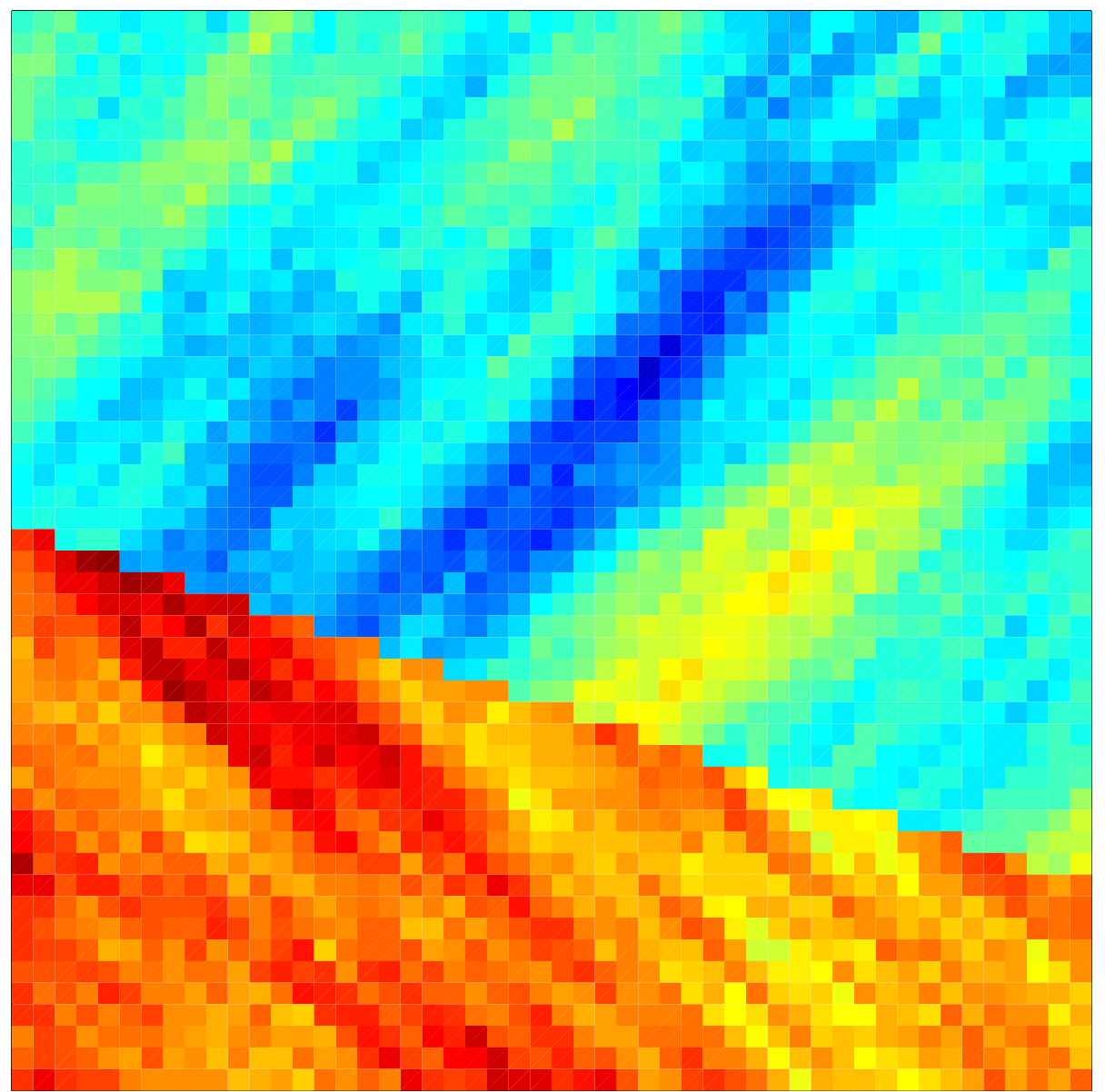}
\includegraphics[scale=0.2]{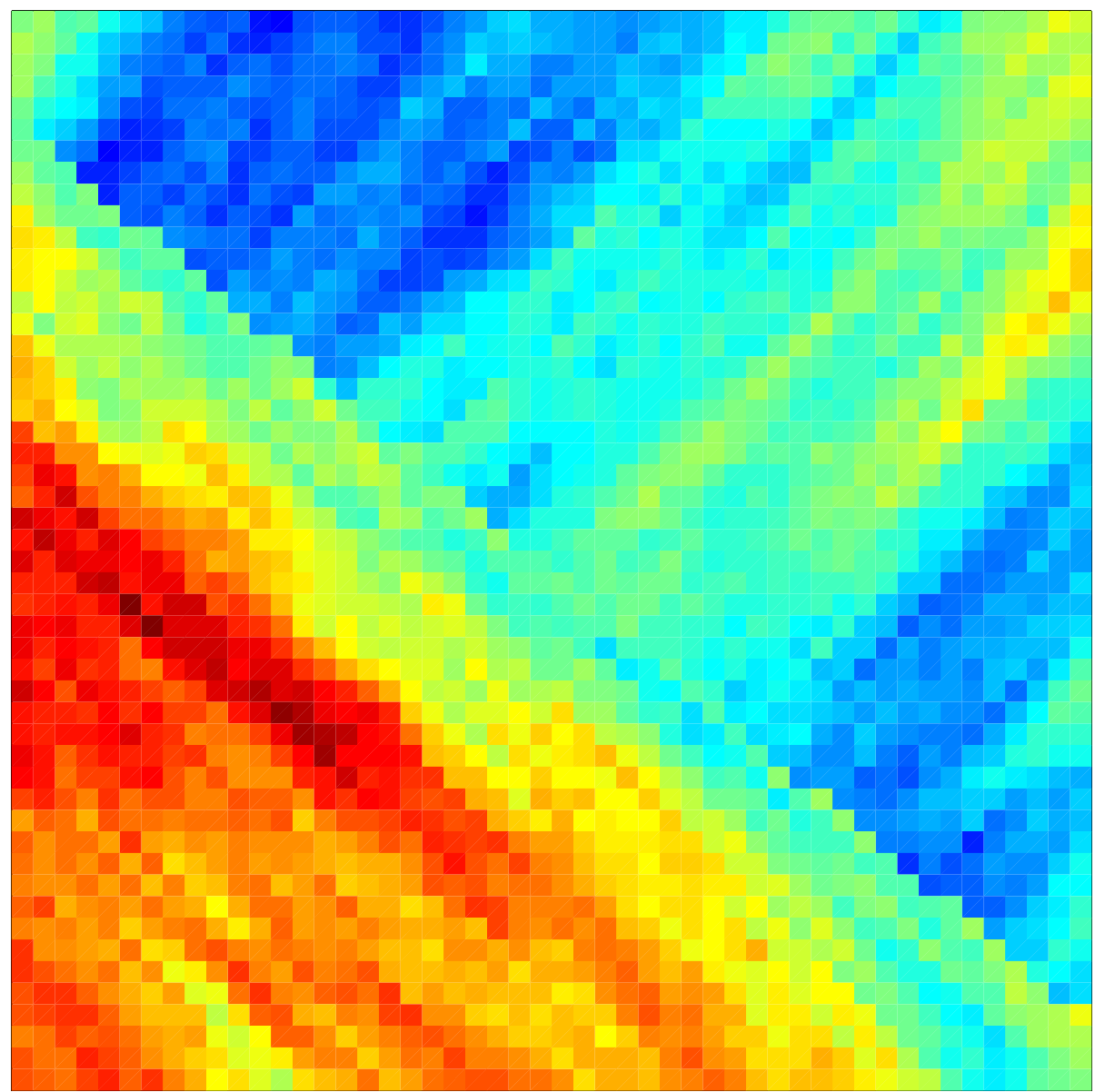}
\includegraphics[scale=0.2]{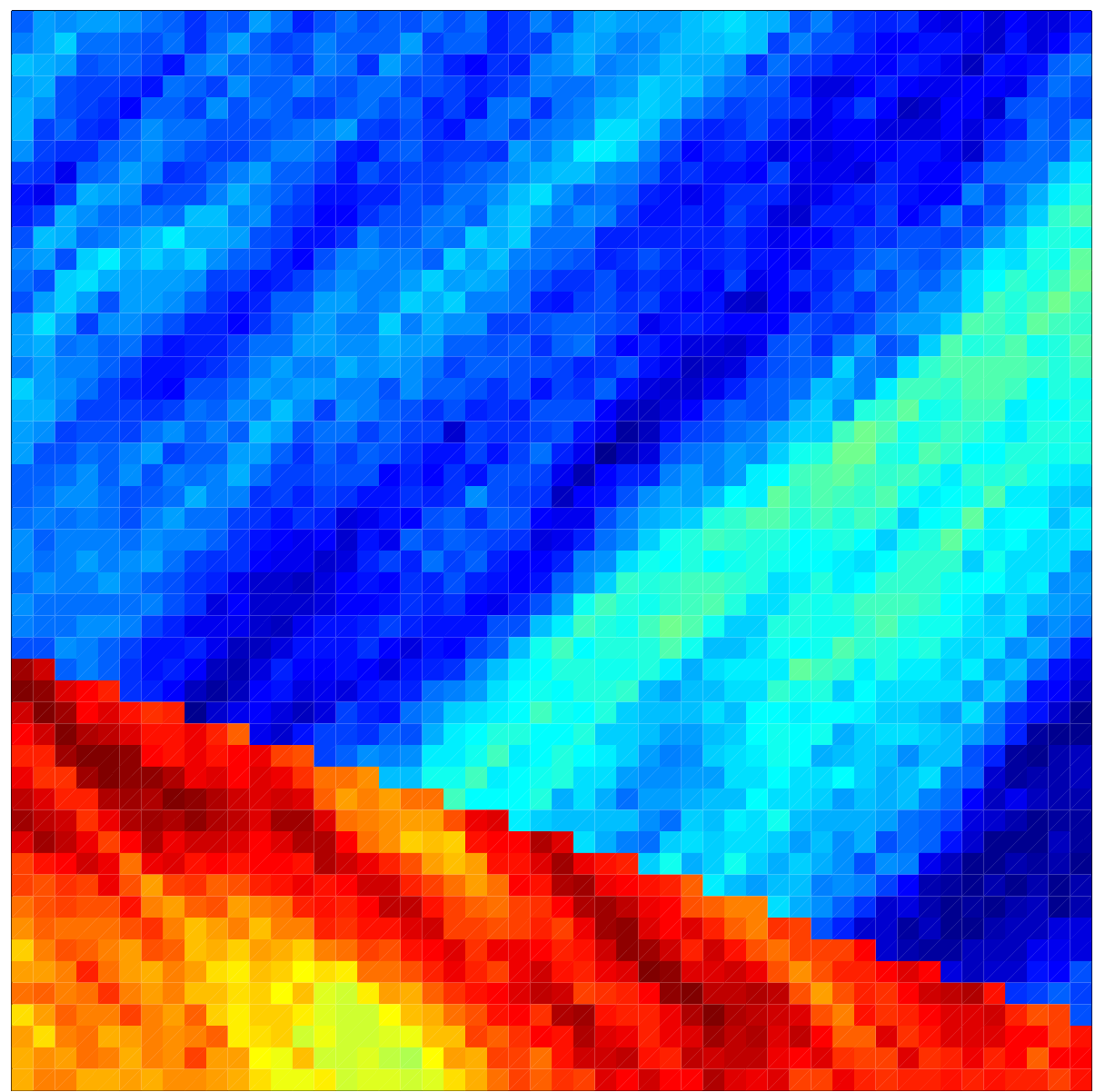}
\includegraphics[scale=0.2]{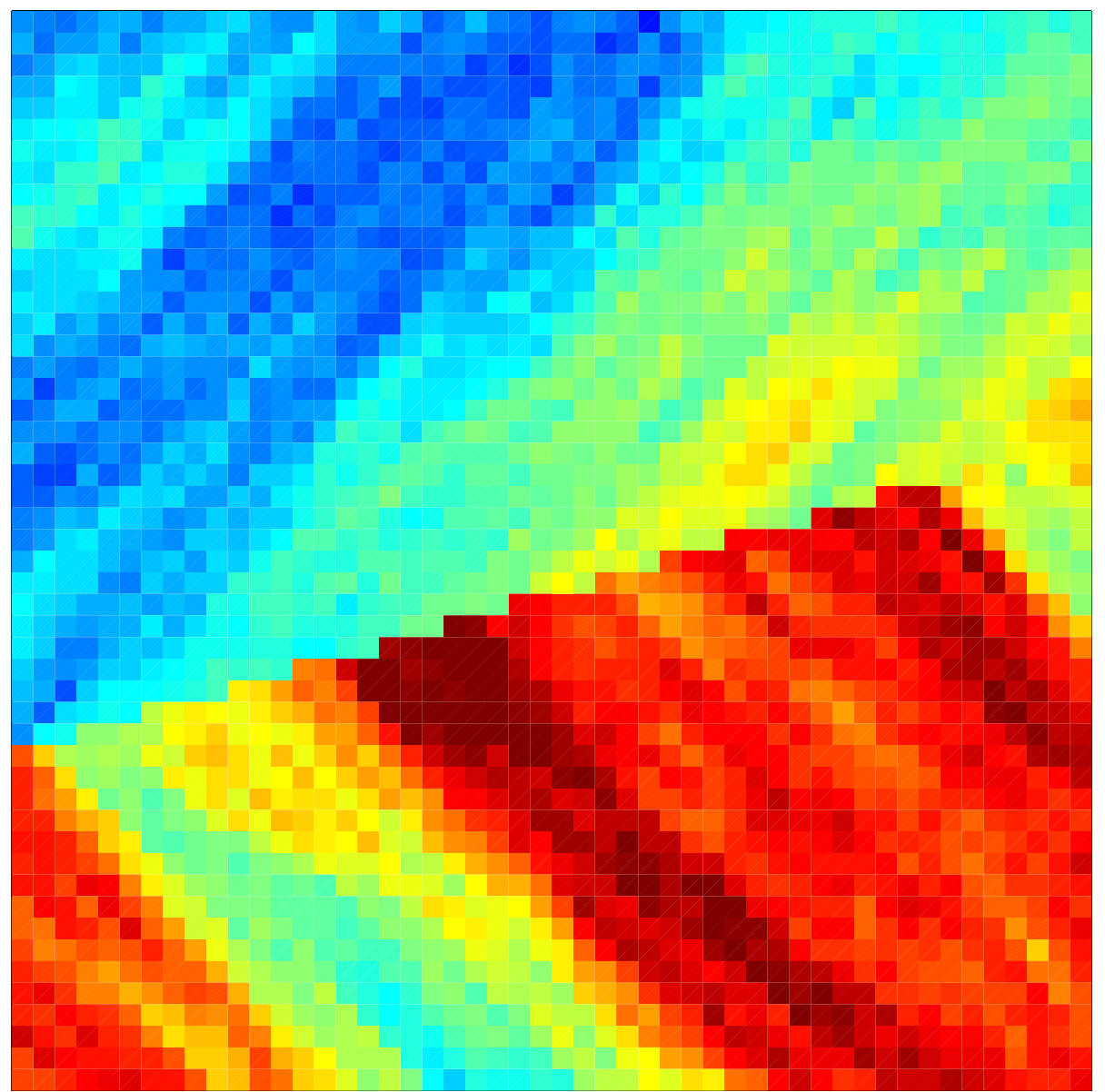}\\
\includegraphics[scale=0.2]{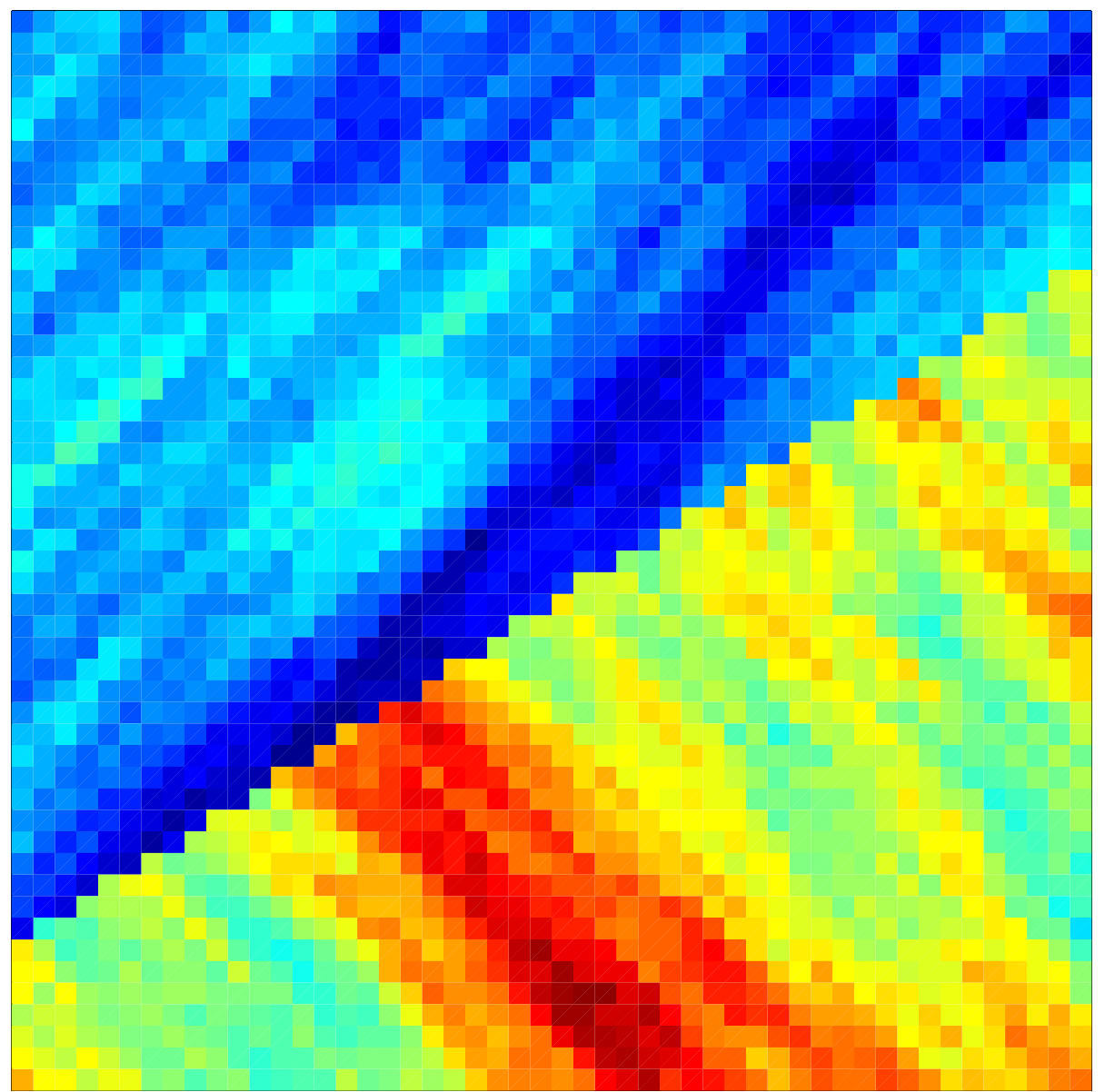}
\includegraphics[scale=0.2]{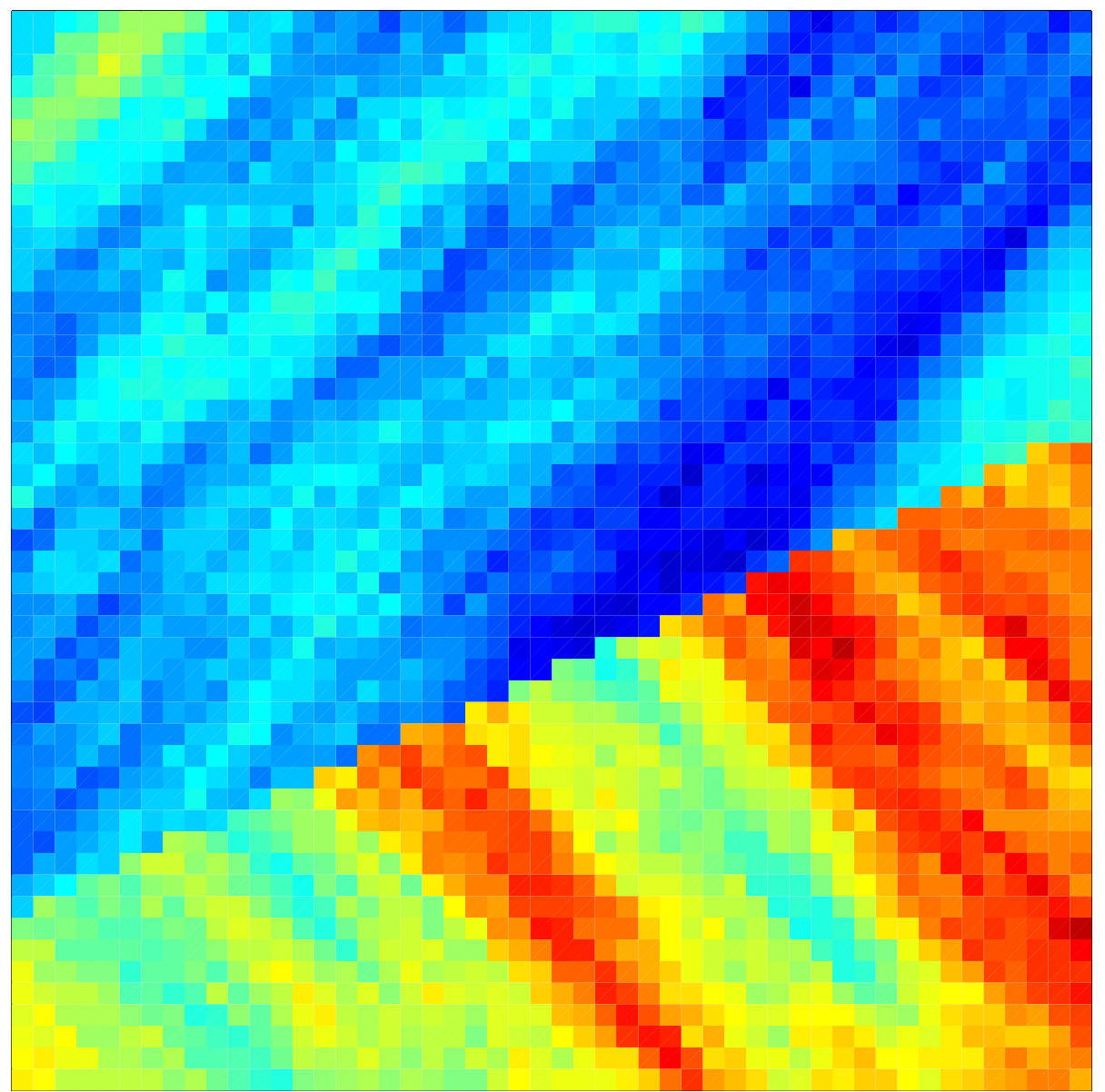}
\includegraphics[scale=0.2]{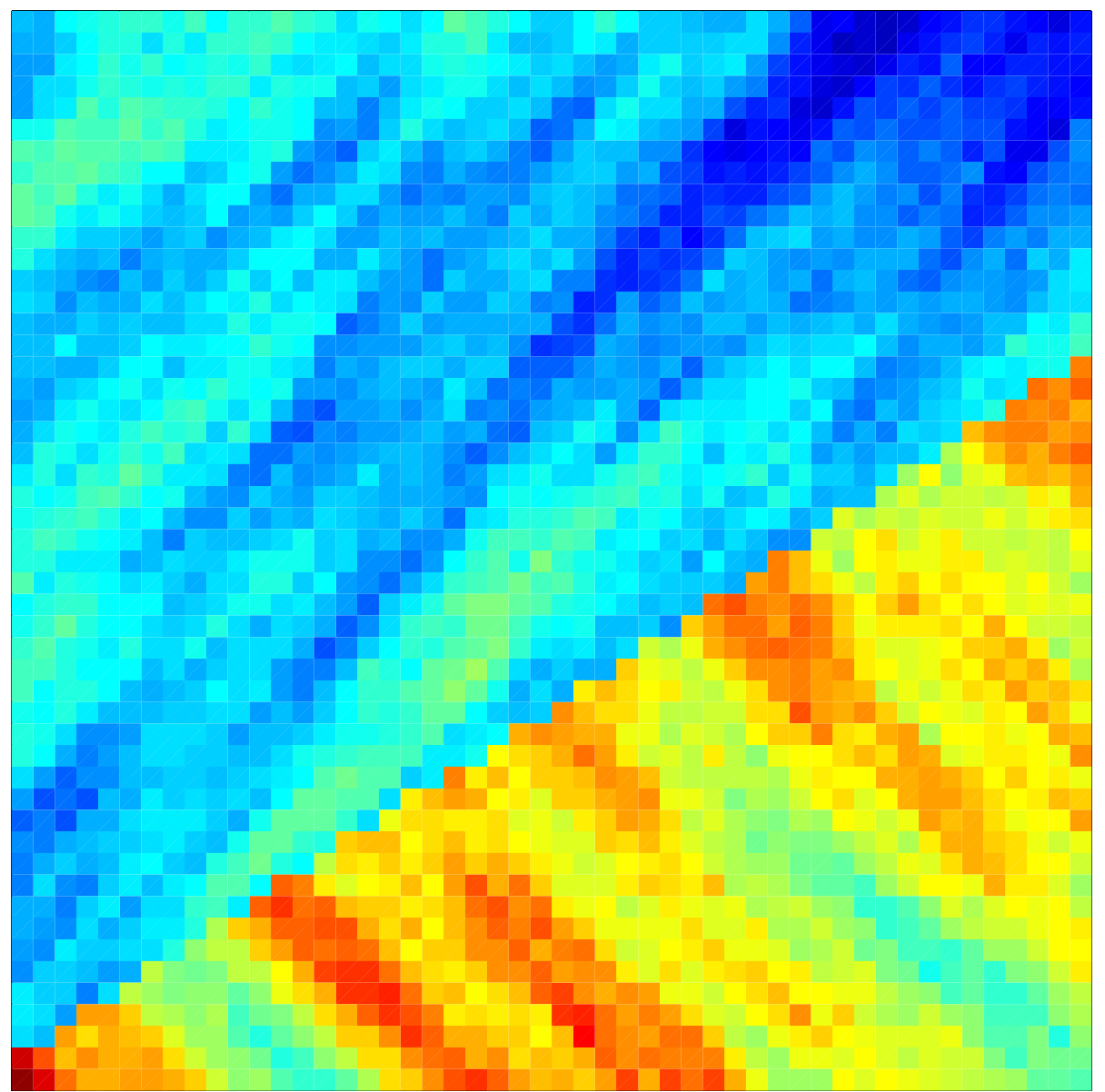}
\includegraphics[scale=0.2]{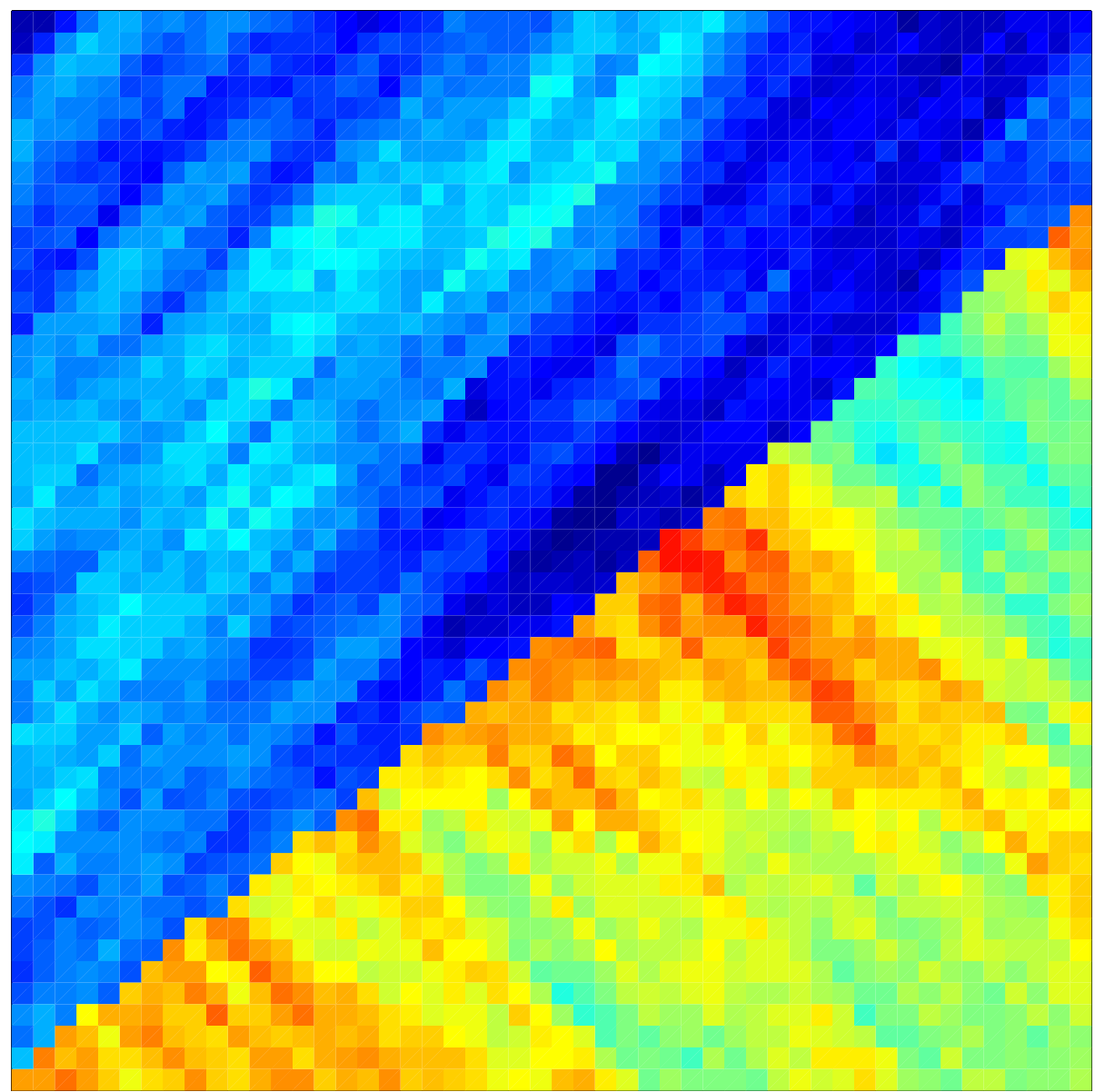}
\includegraphics[scale=0.2]{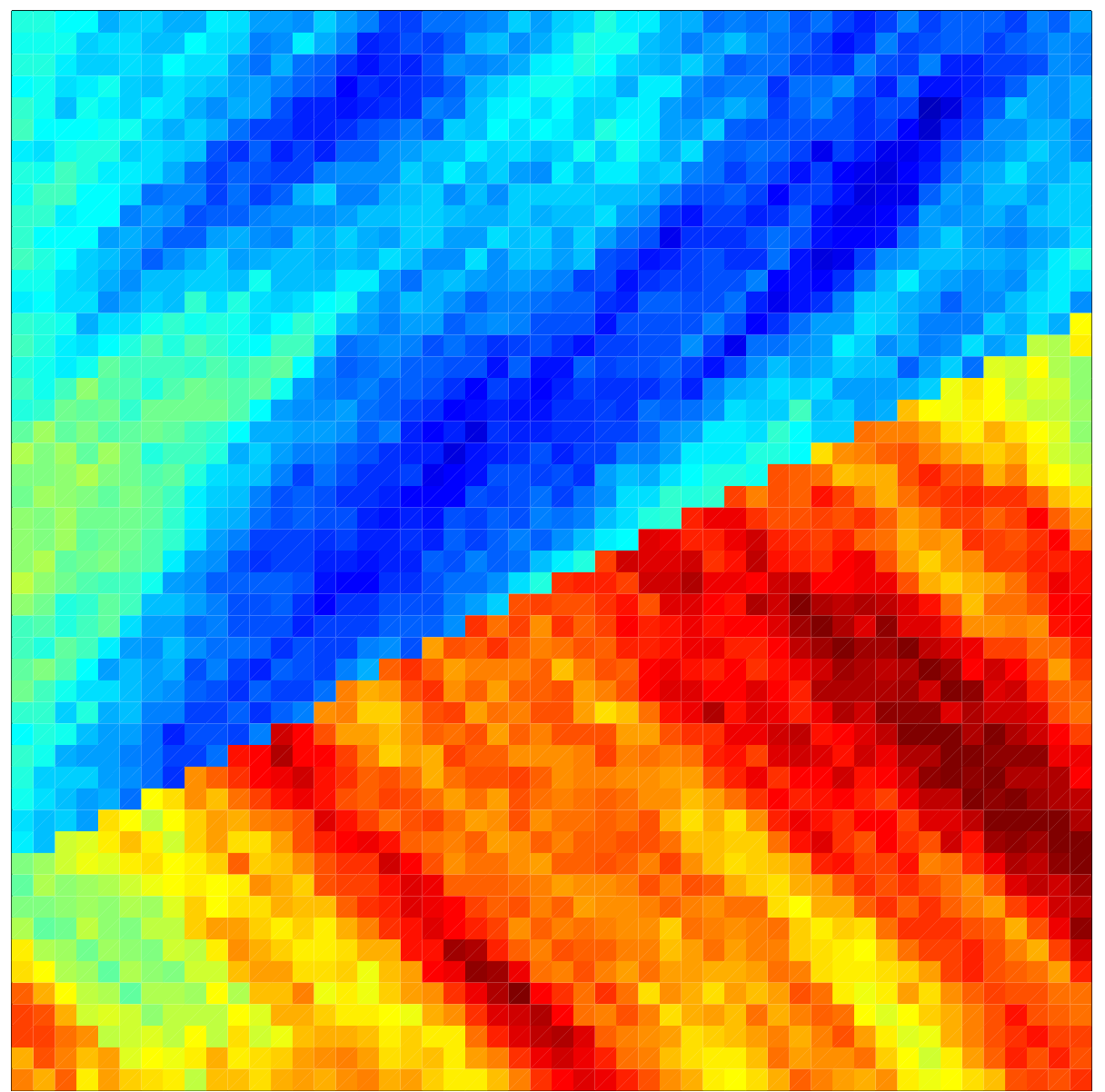}
\includegraphics[scale=0.2]{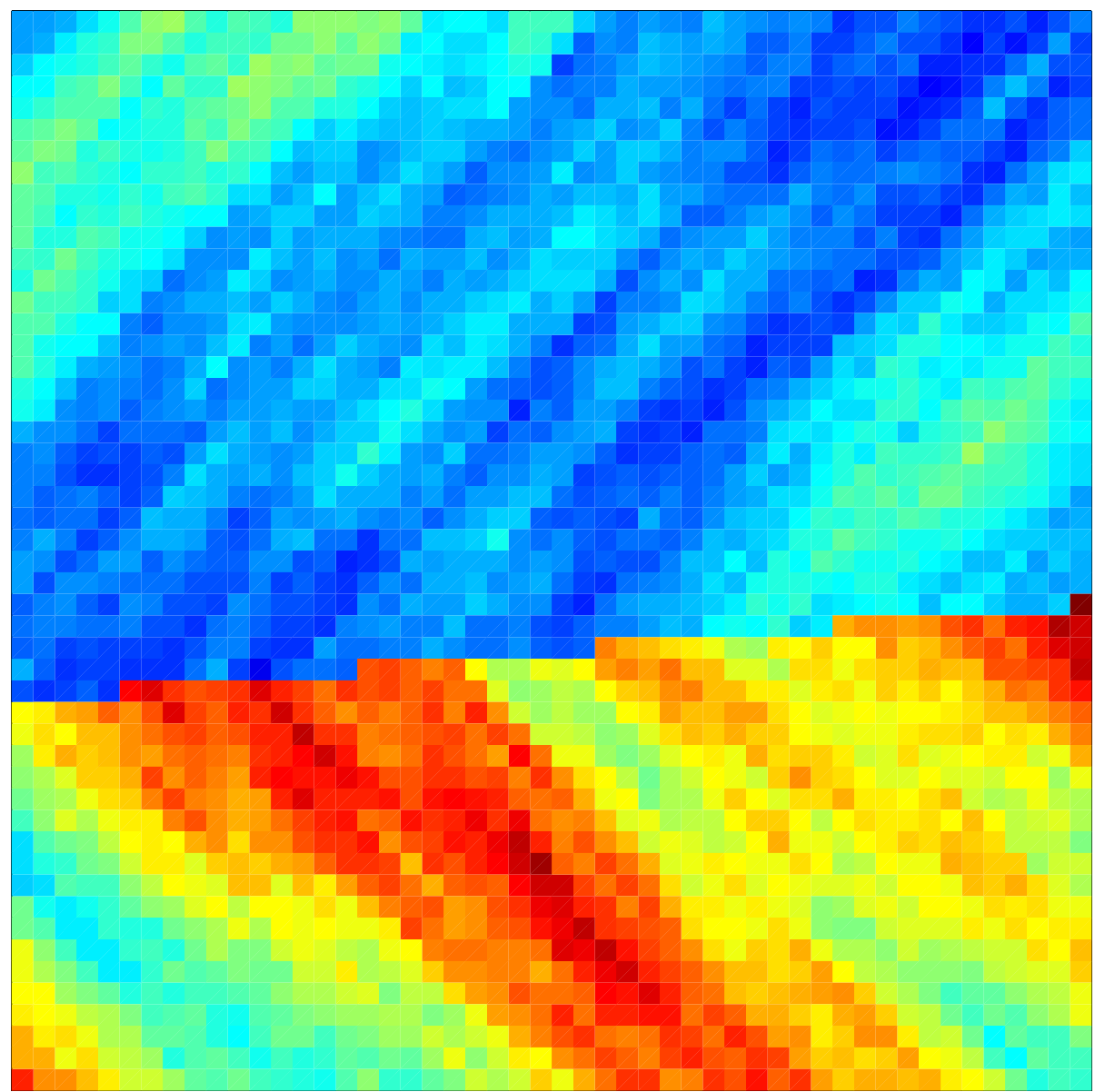}\\
\includegraphics[scale=0.2]{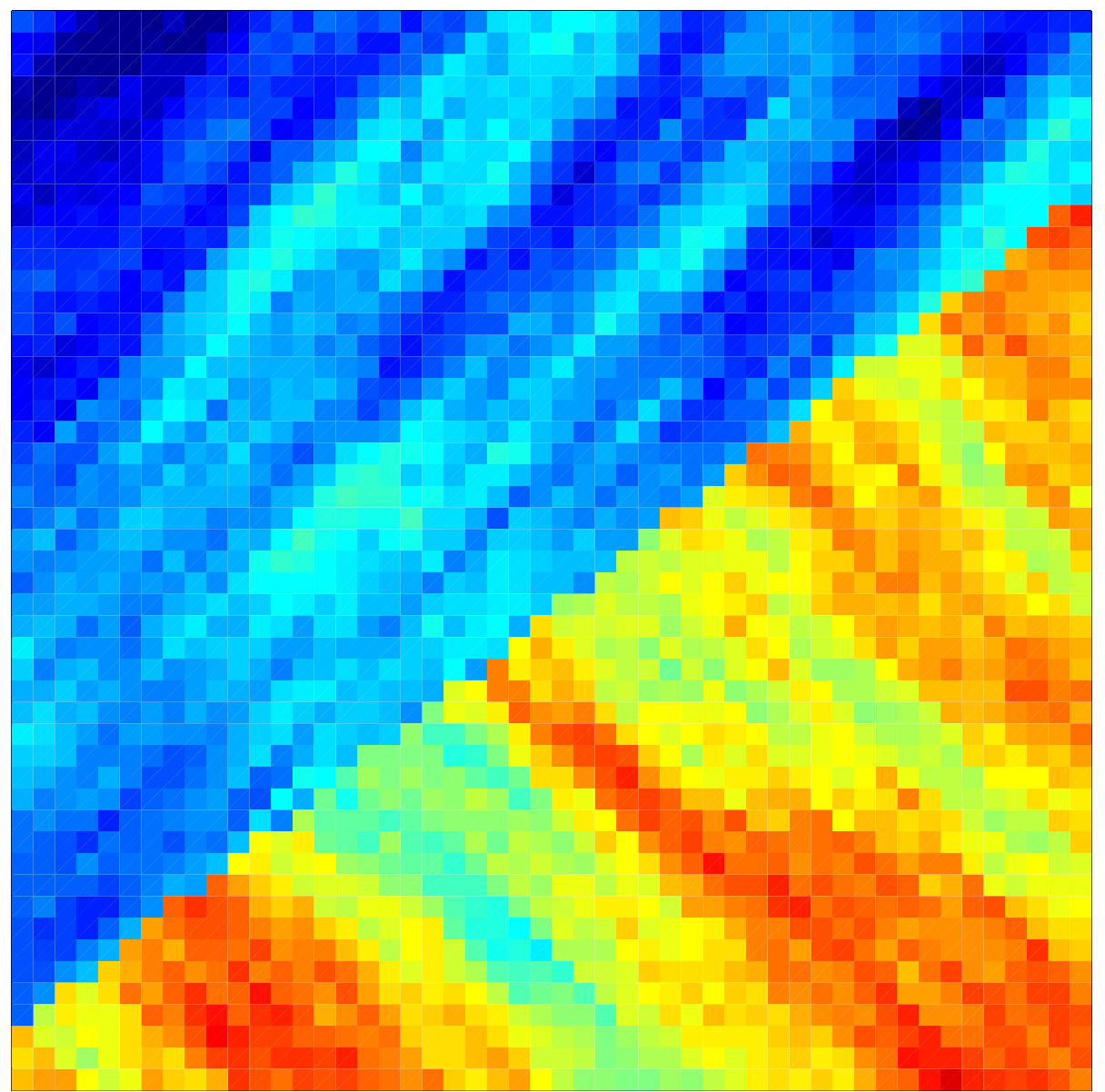}
\includegraphics[scale=0.2]{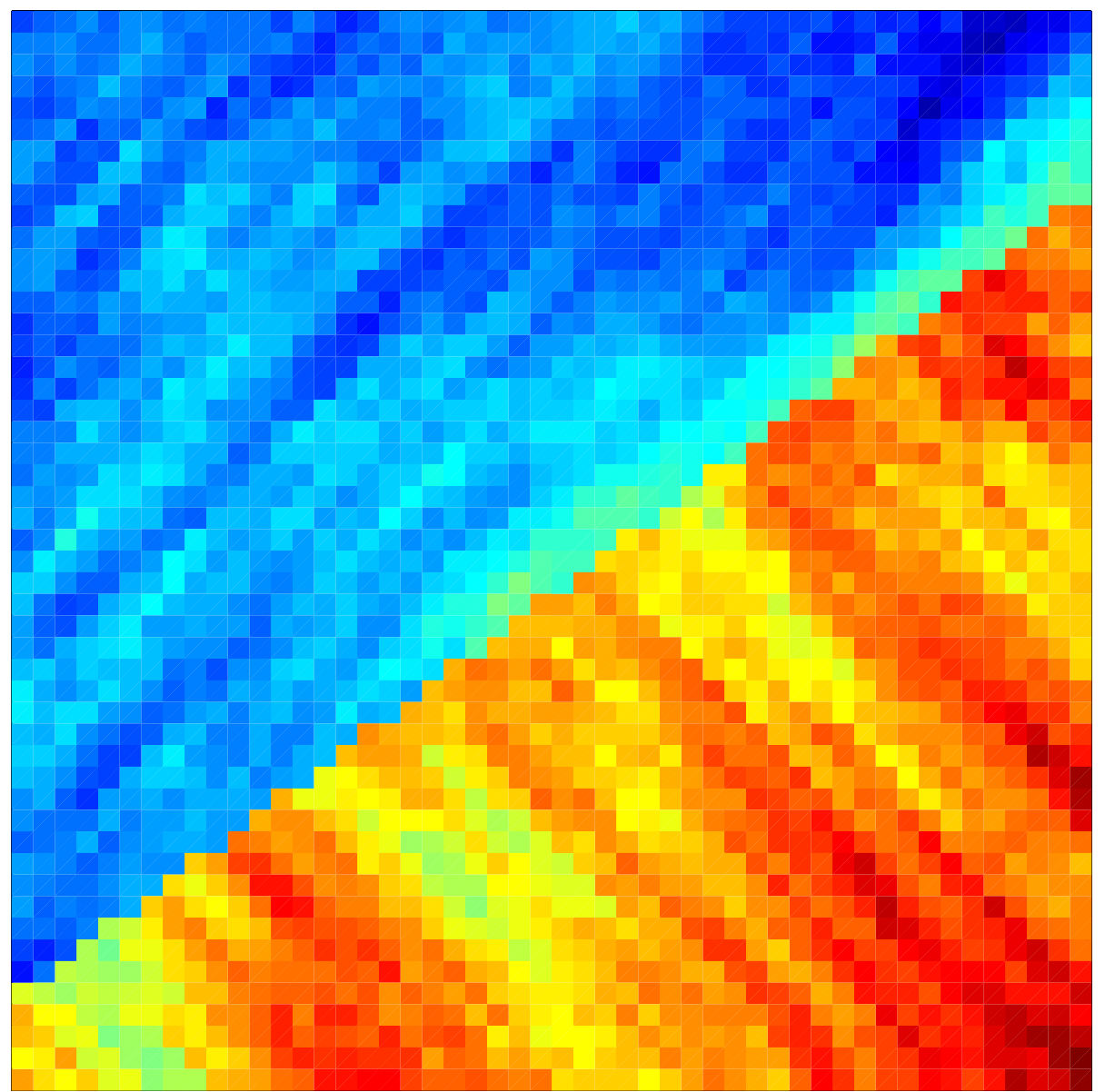}
\includegraphics[scale=0.2]{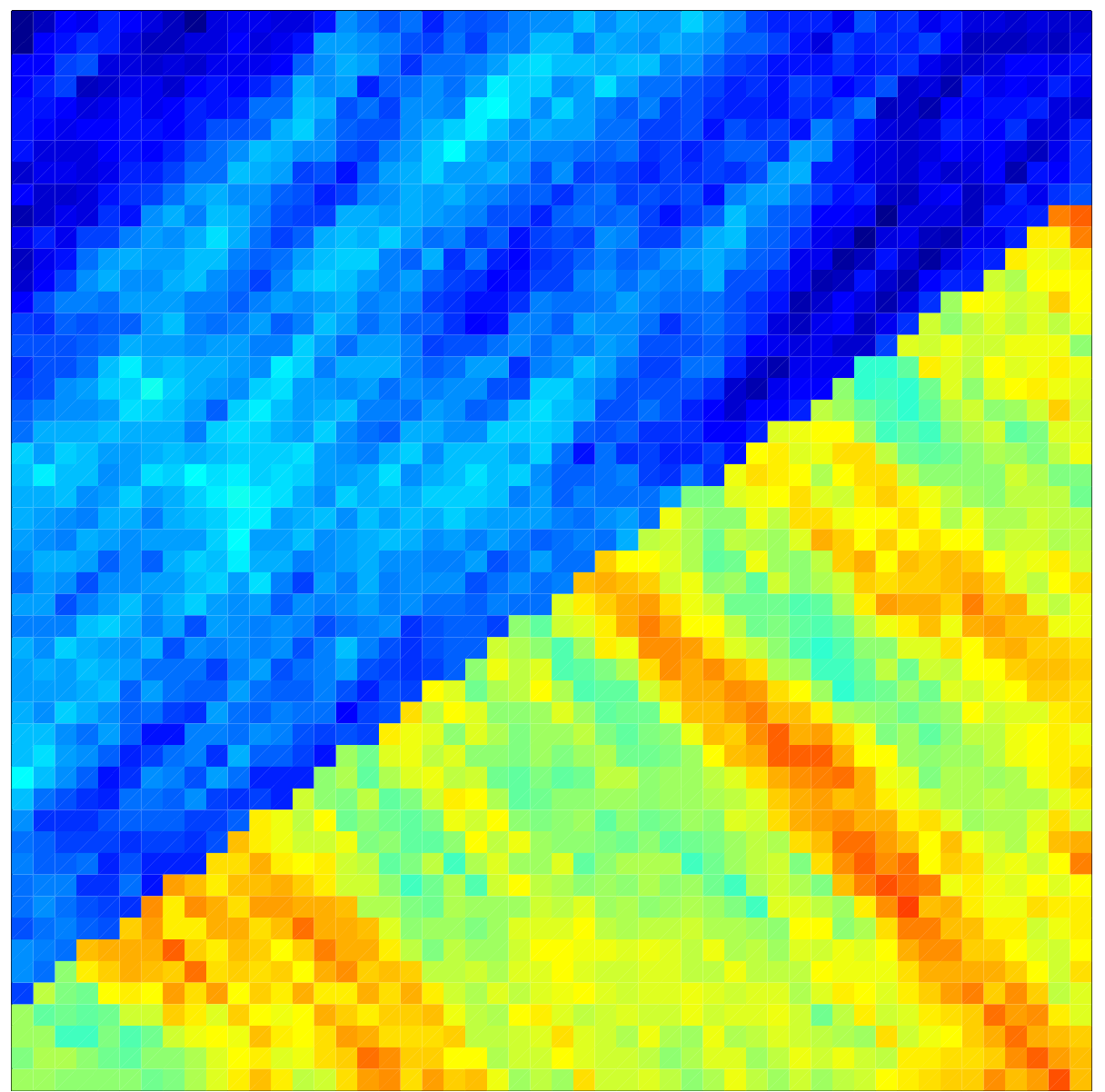}
\includegraphics[scale=0.2]{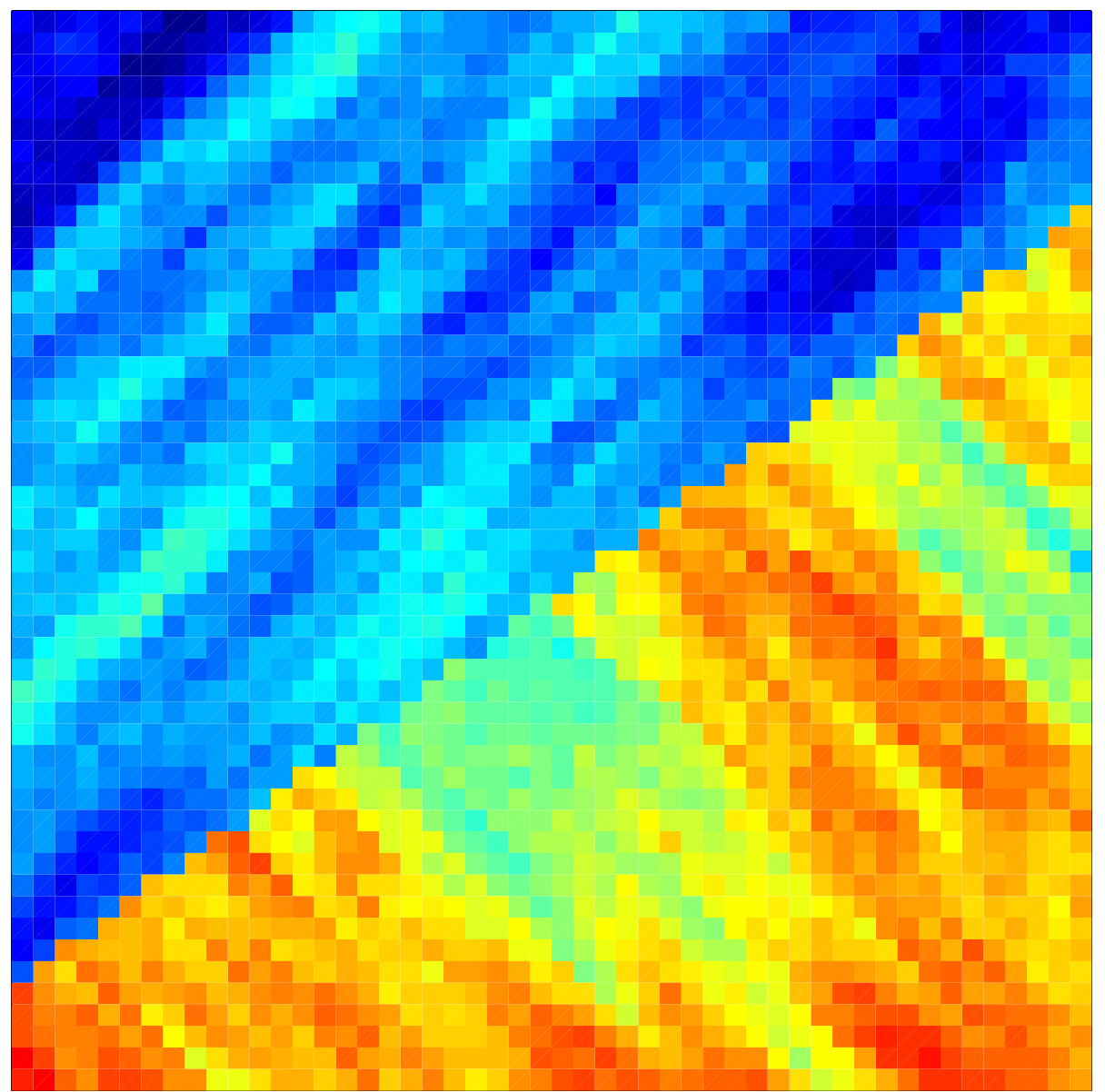}
\includegraphics[scale=0.2]{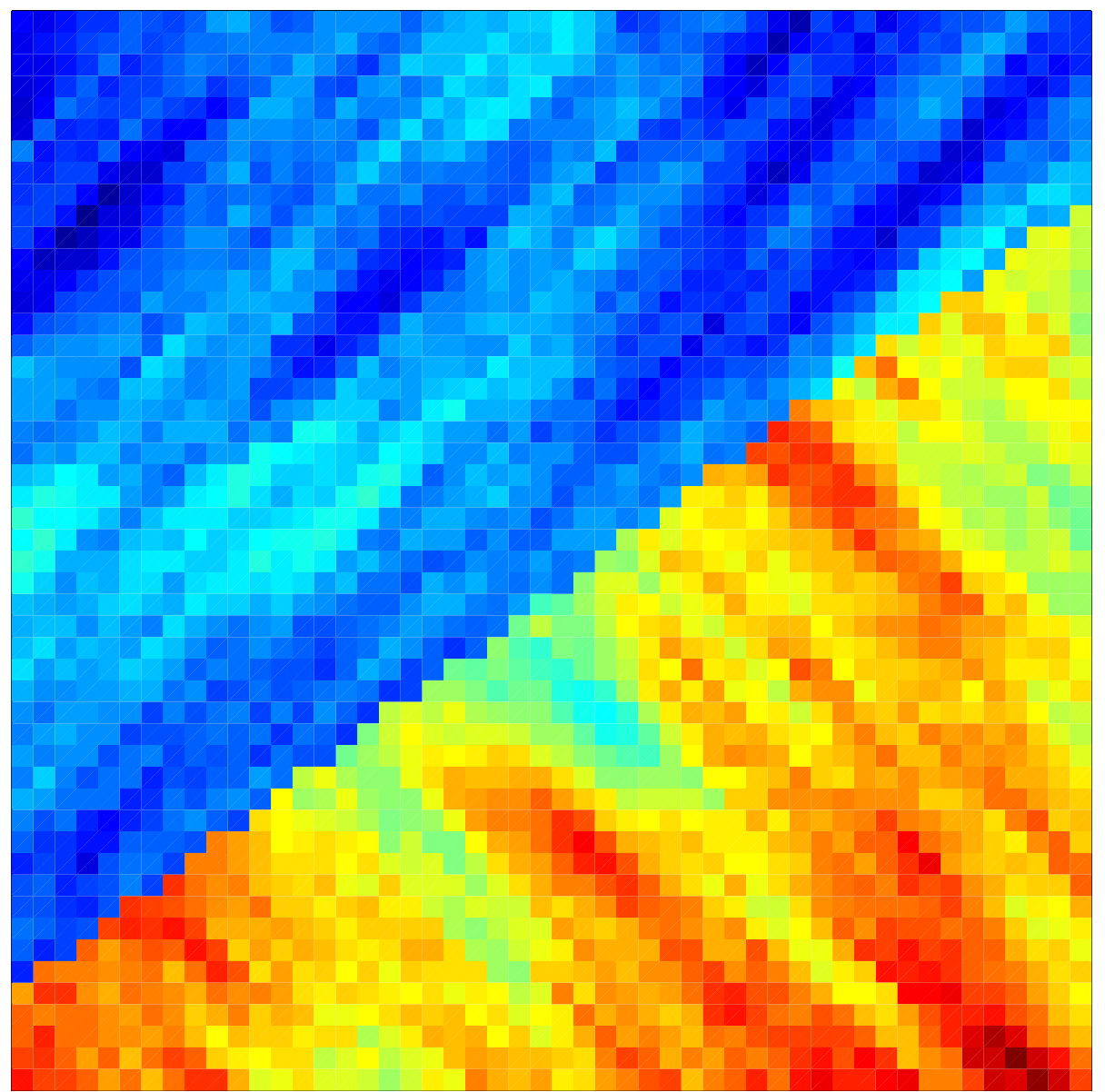}
\includegraphics[scale=0.2]{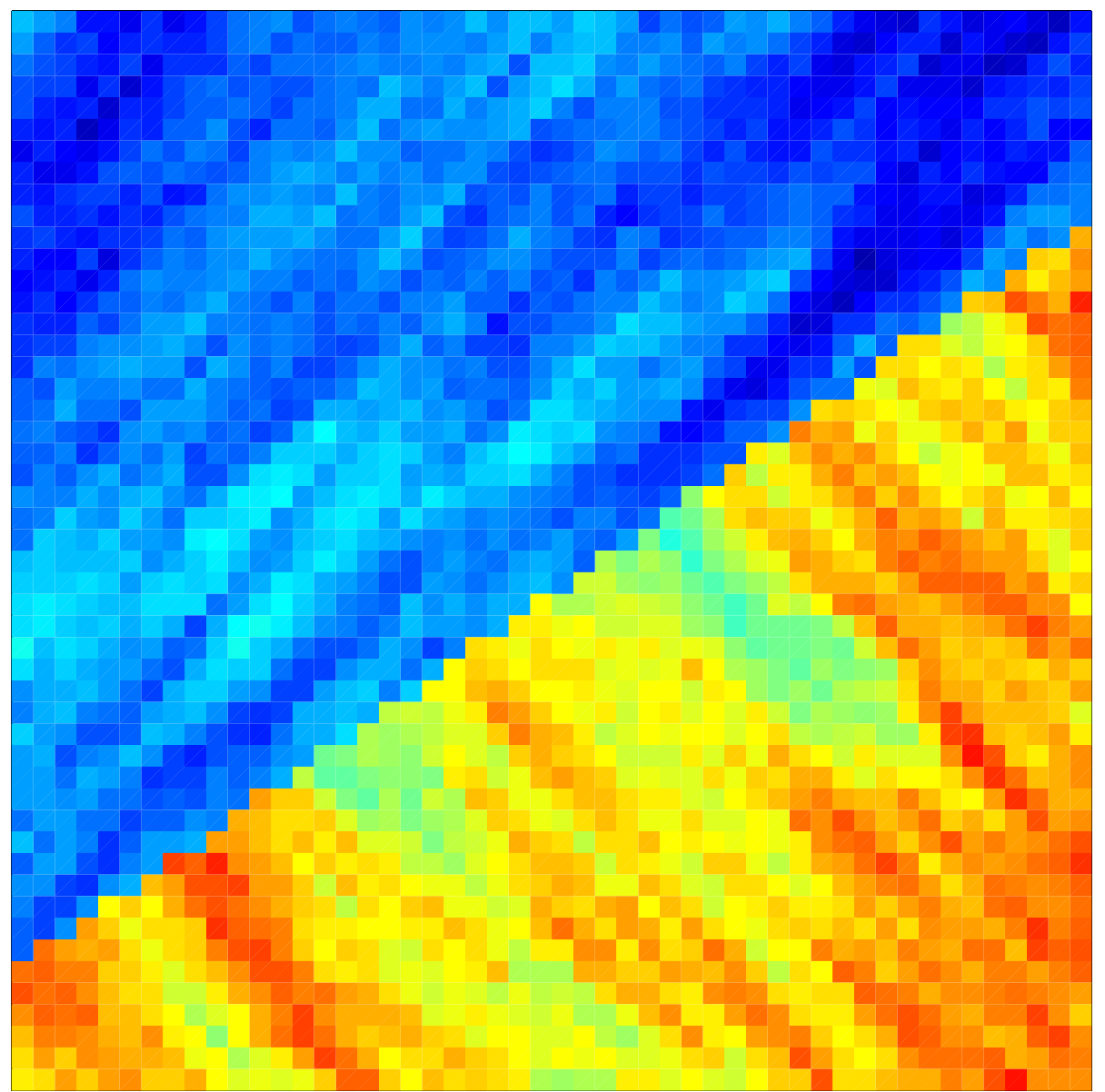}\\
 \caption{$\log \kappa$'s defined by (\ref{eq:num3}) from samples of the prior (top row), posterior with fewer measurements (middle row) and posterior with more measurements (bottom-row)}
    \label{Figure8}
\end{center}
\end{figure}

\begin{figure}[htbp]
\begin{center}
\includegraphics[scale=0.30]{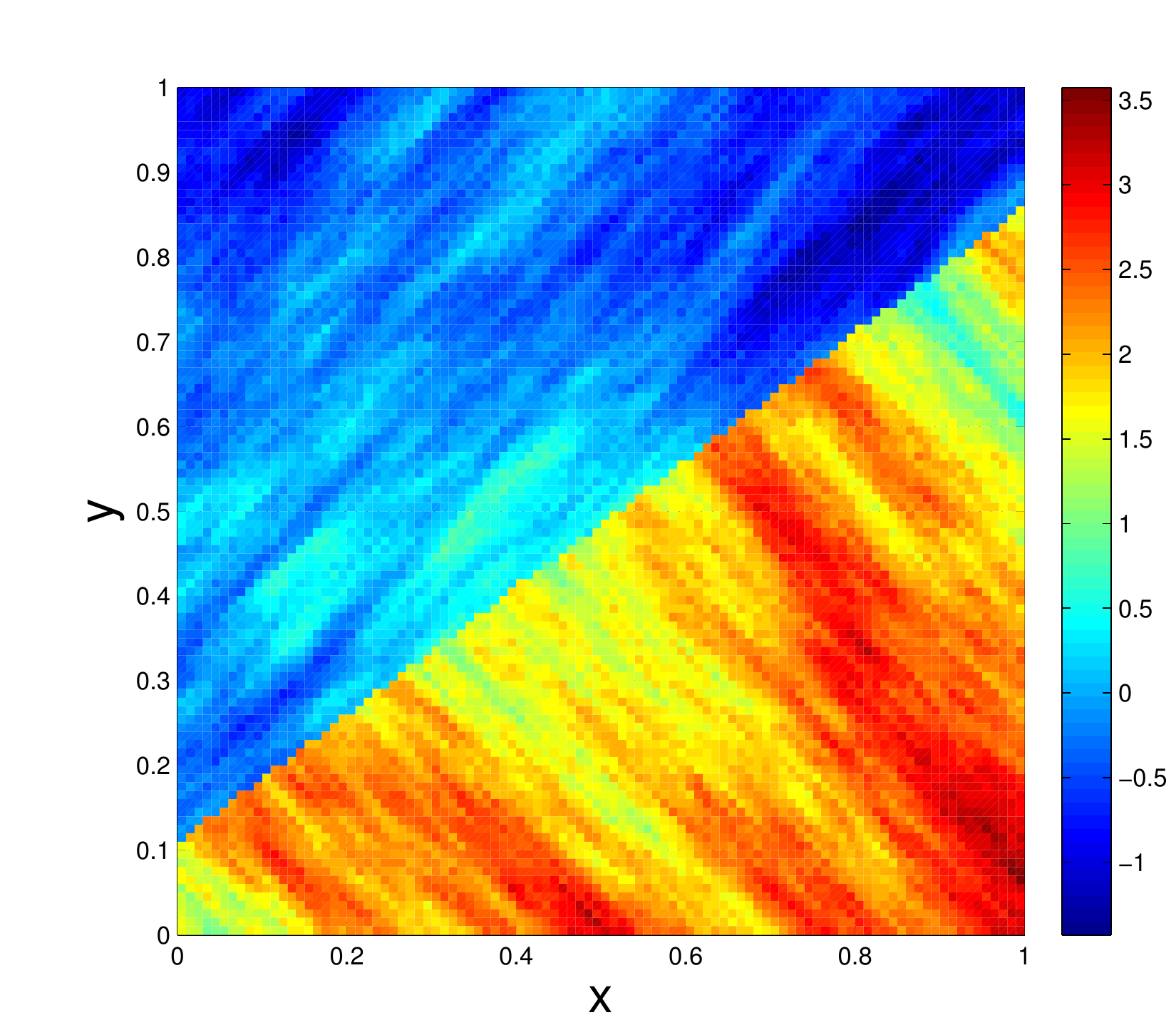}
\includegraphics[scale=0.30]{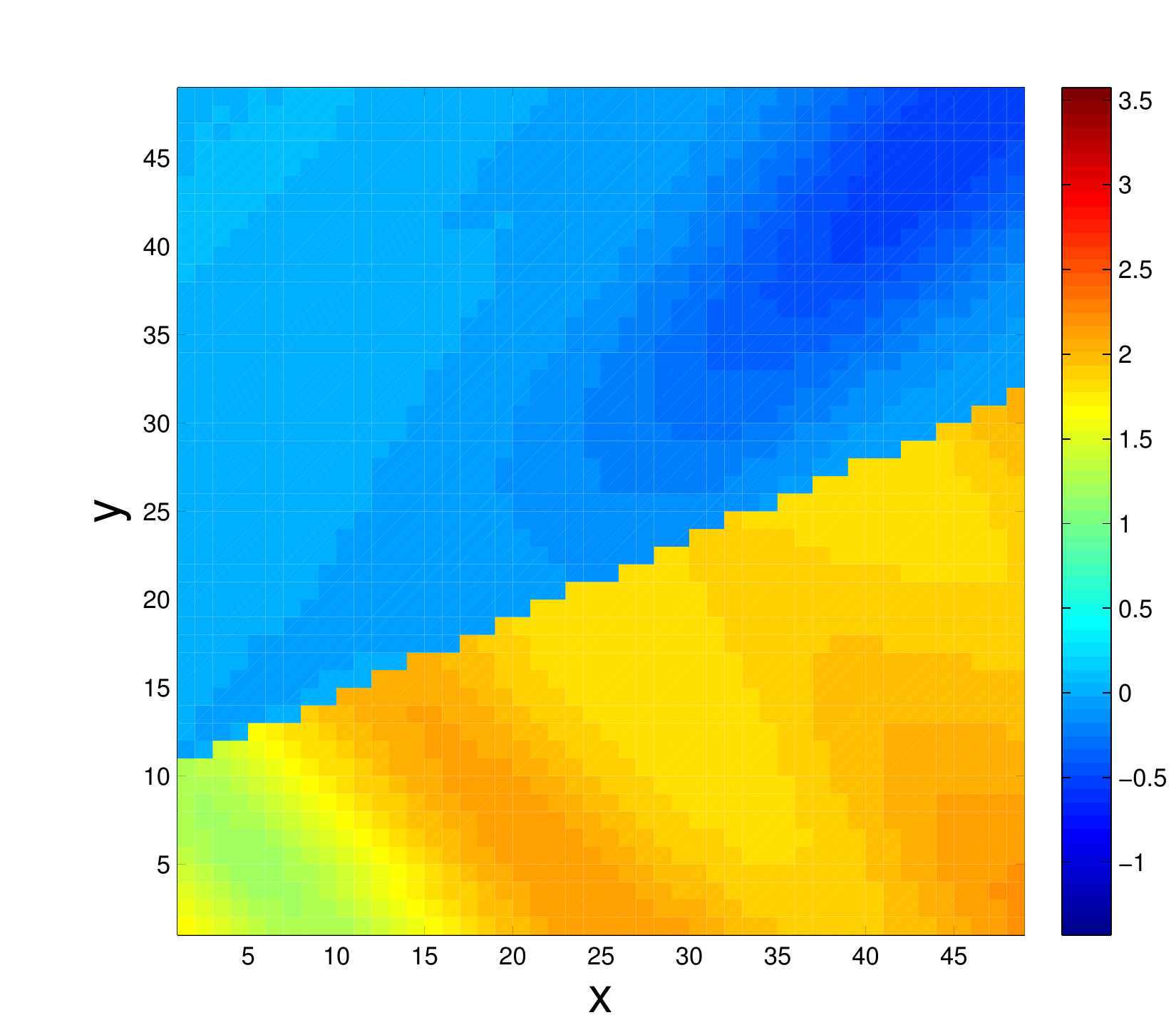}
\includegraphics[scale=0.30]{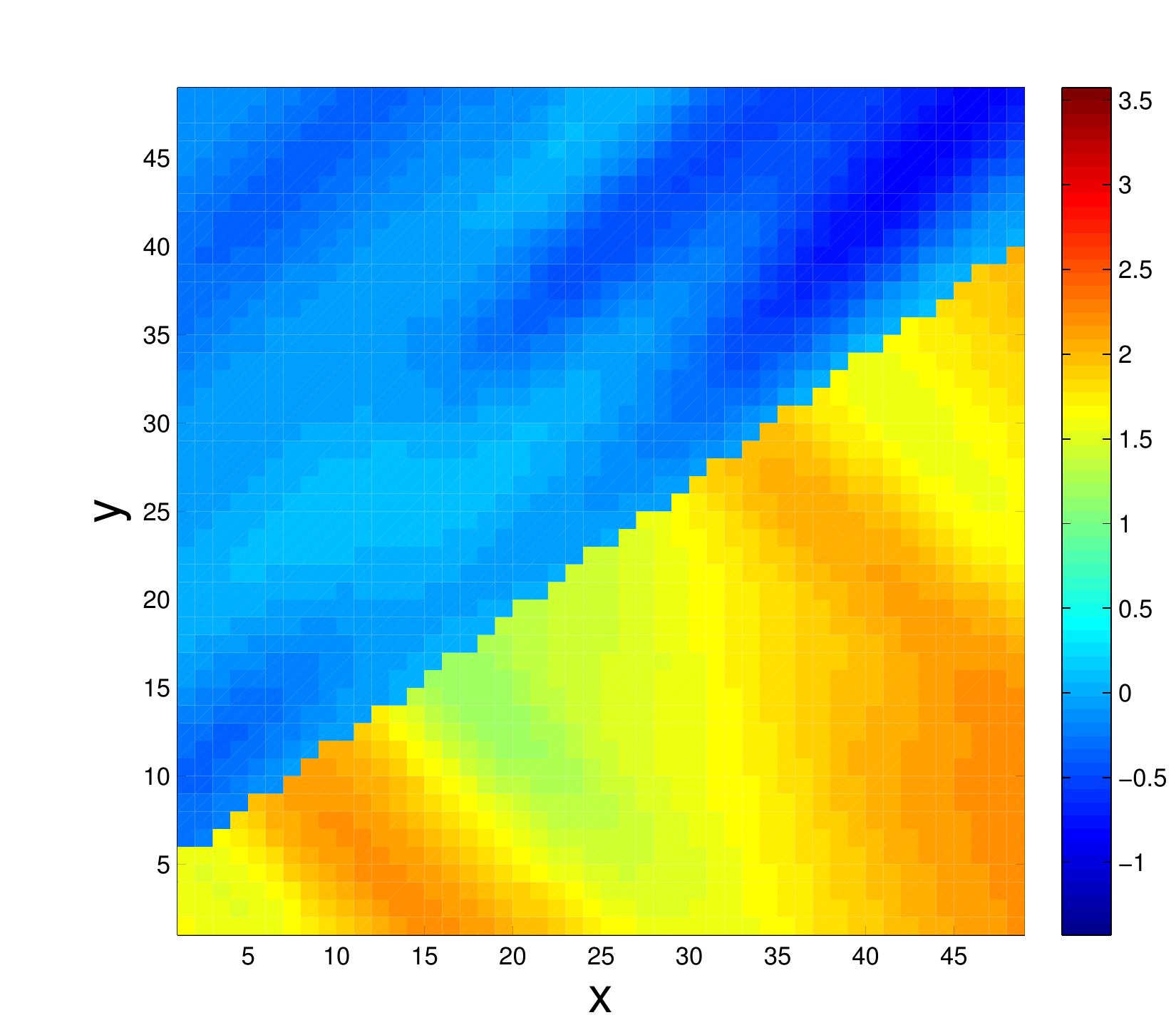}\\
\includegraphics[scale=0.30]{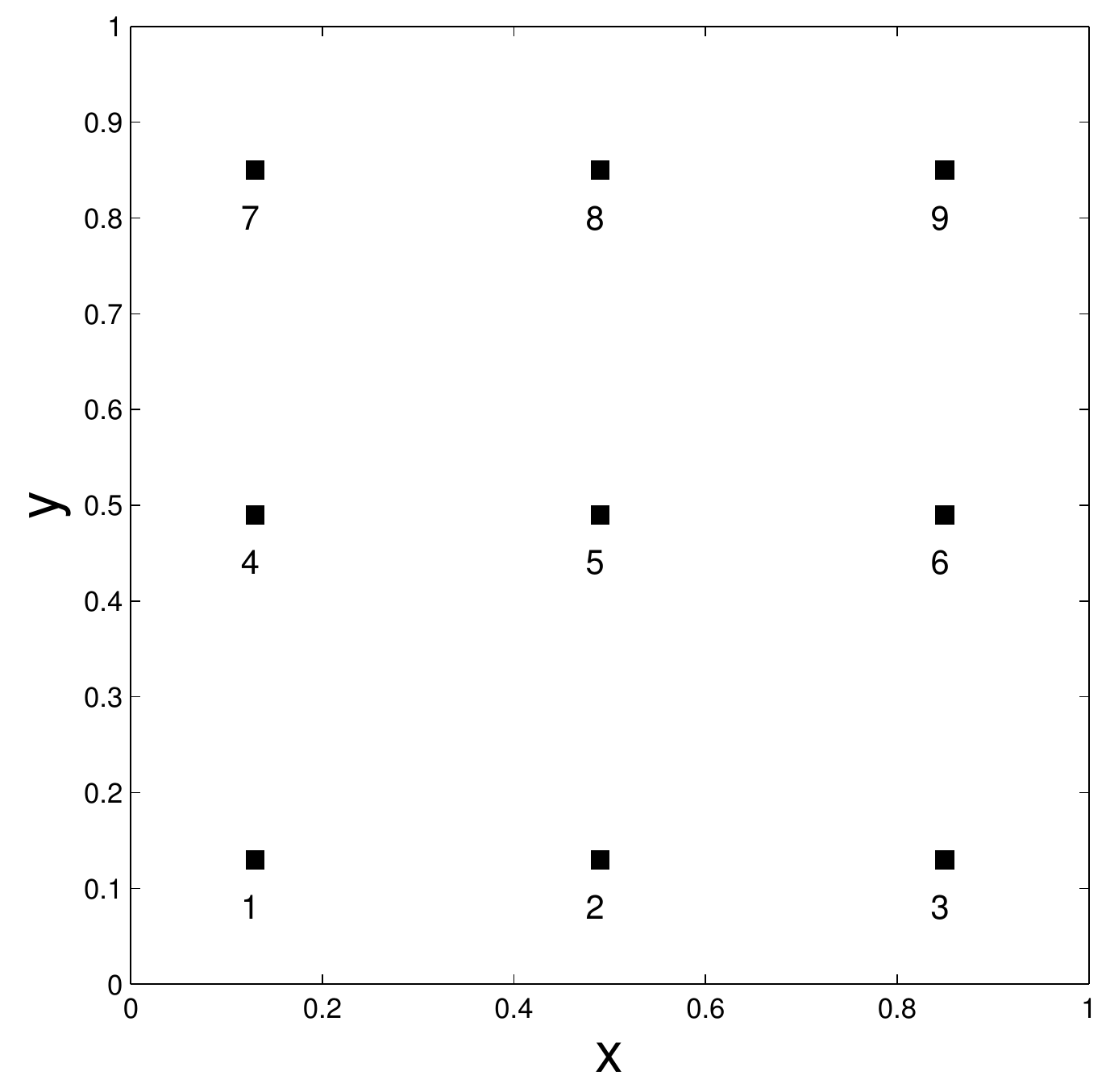}
\includegraphics[scale=0.30]{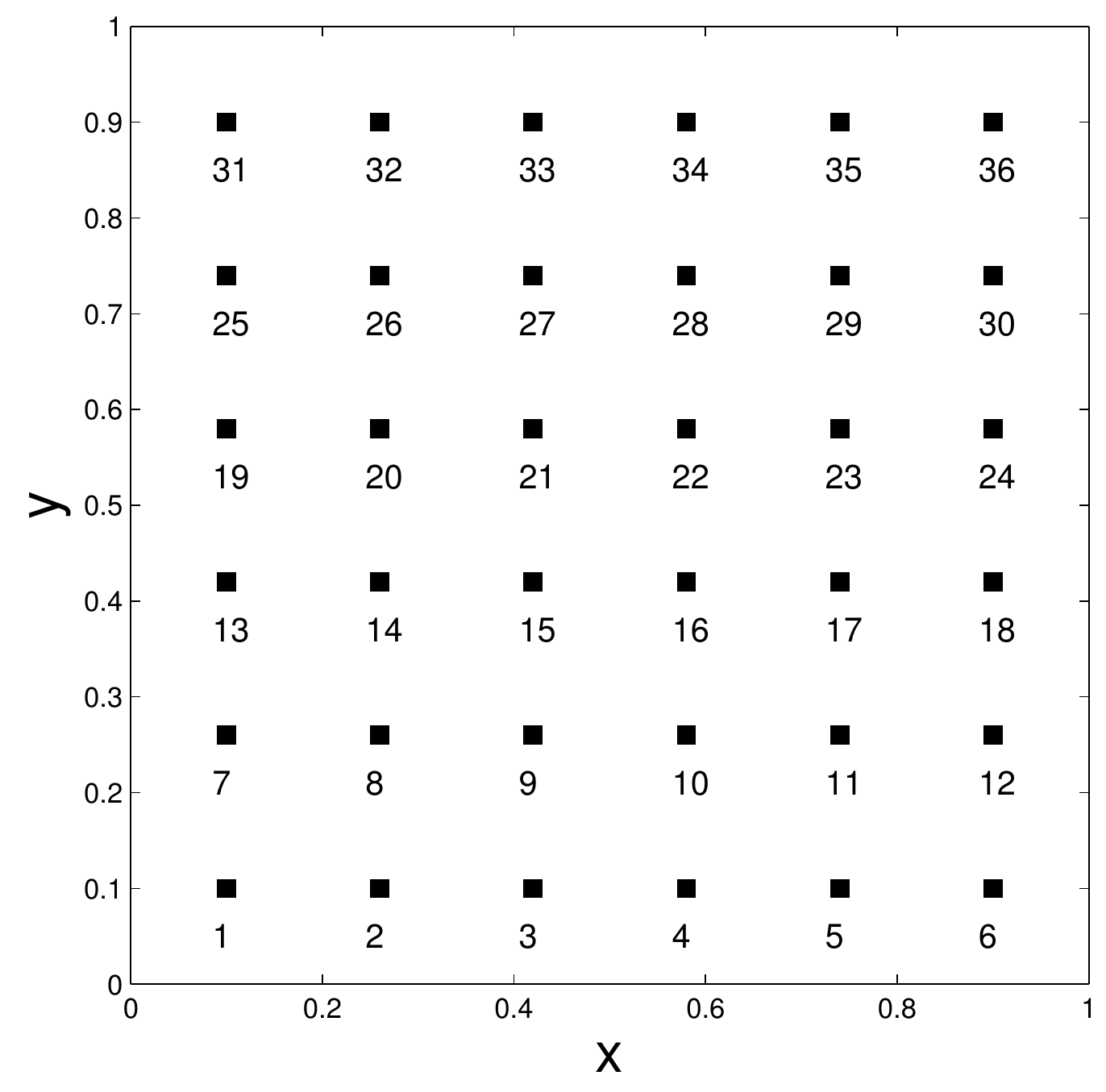}
 \caption{Top, from left to right: truth, mean (fewer measurements), mean (more measurements) of $\log \kappa$. Bottom: measurement locations for data with less (left) and more (right) measurements.}

    \label{Figure9}
\end{center}
\end{figure}

\begin{figure}[htbp]
%\begin{center}
\includegraphics[scale=0.30]{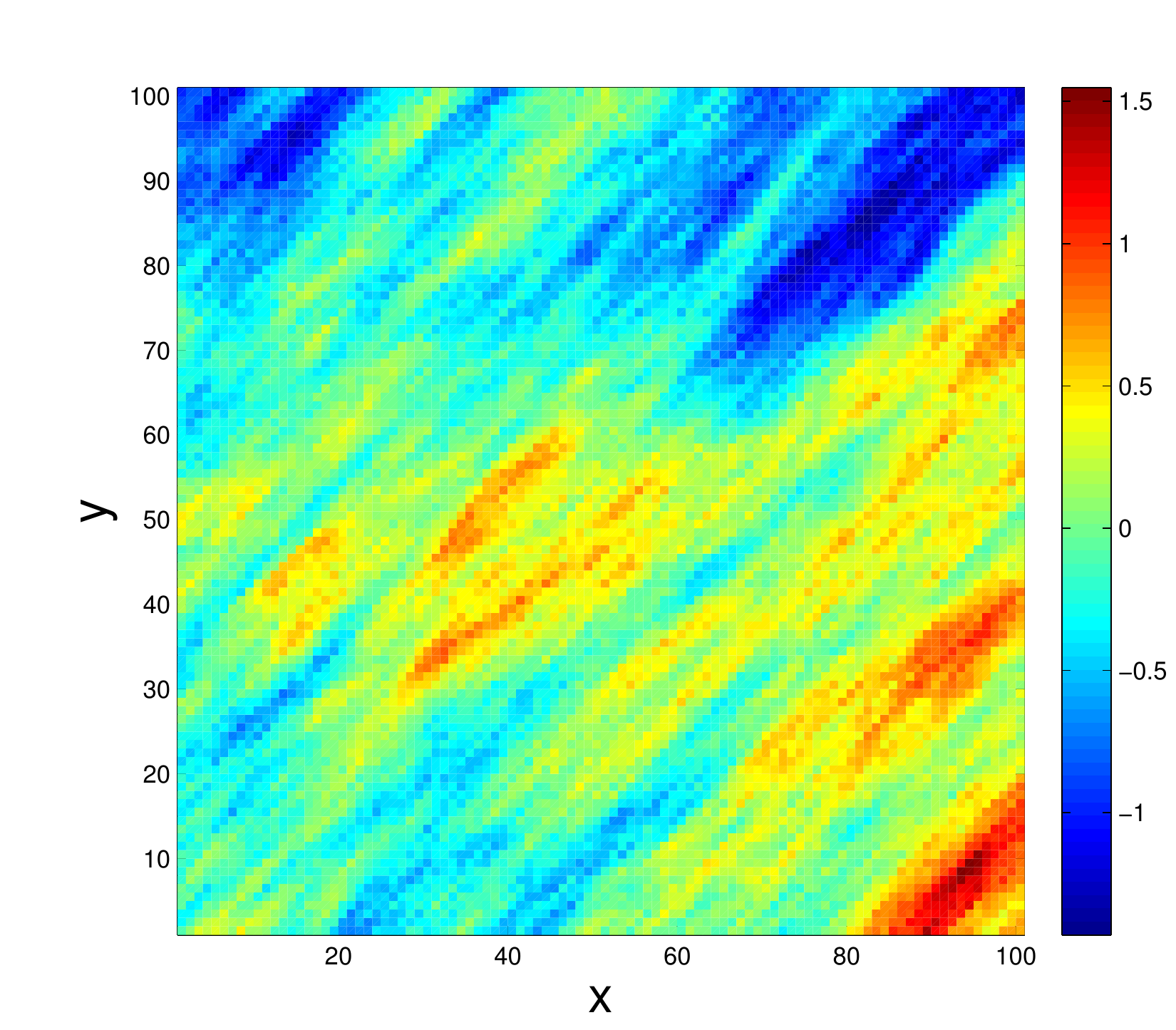}
\includegraphics[scale=0.30]{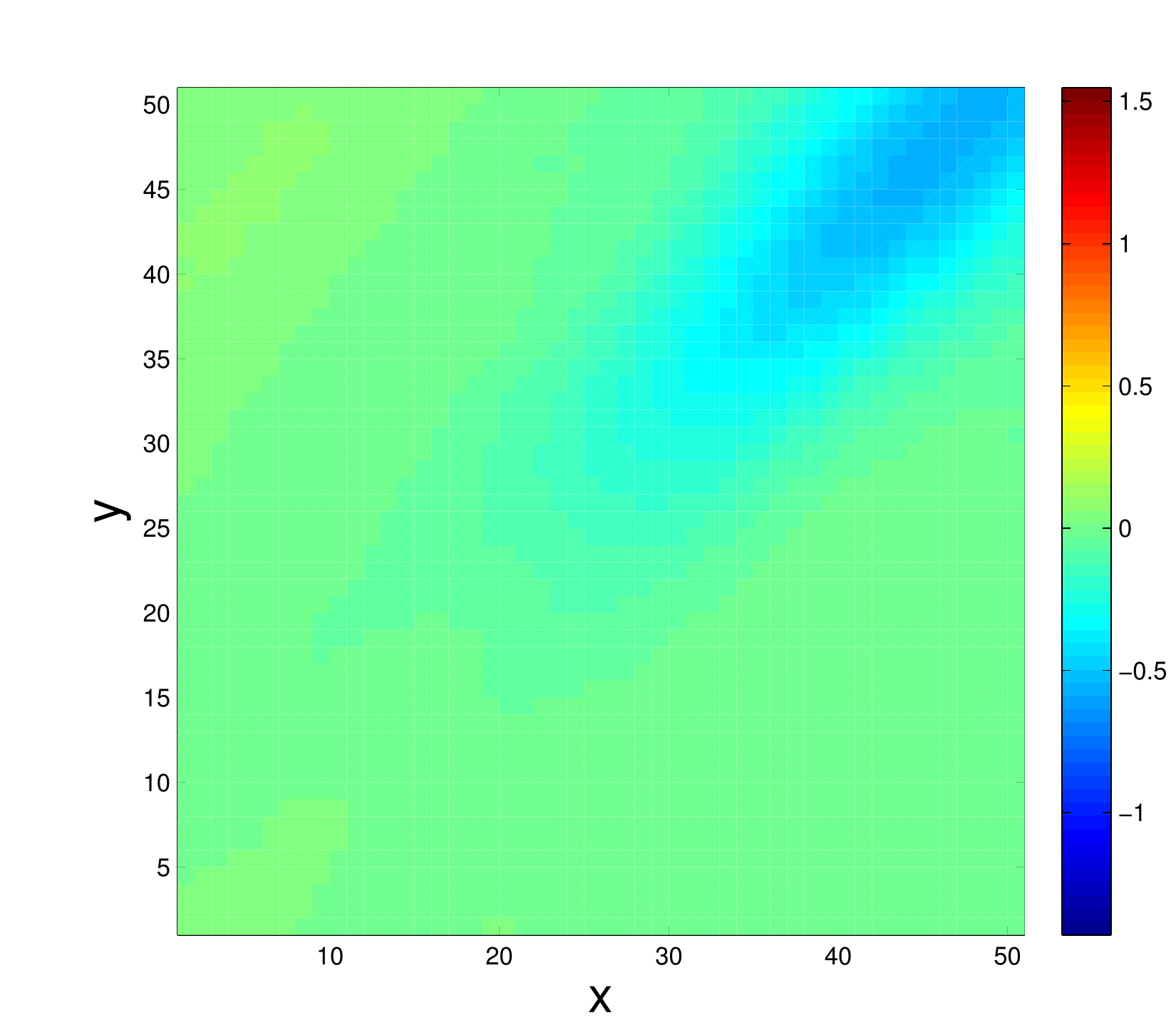}
\includegraphics[scale=0.30]{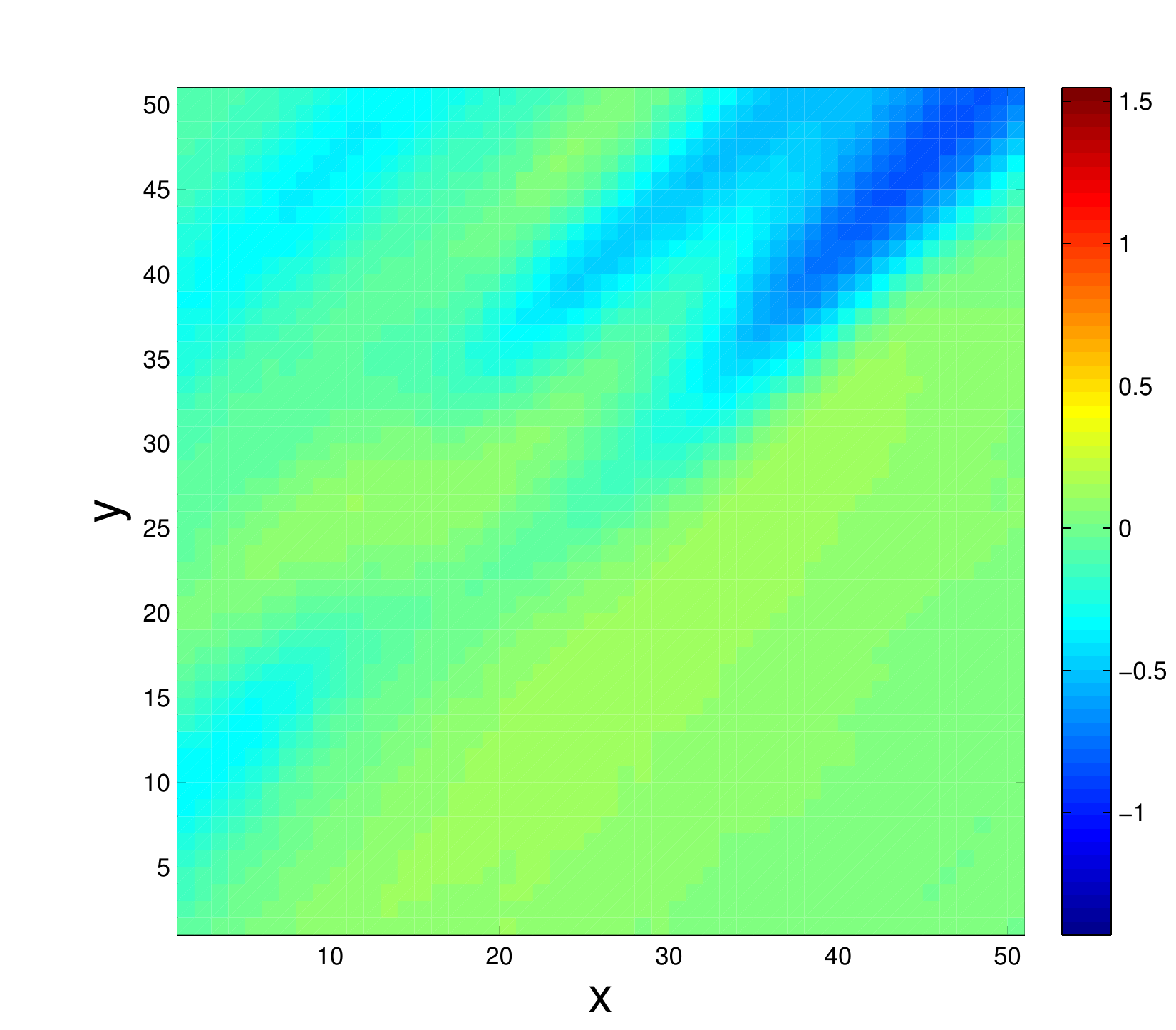}\\
\includegraphics[scale=0.30]{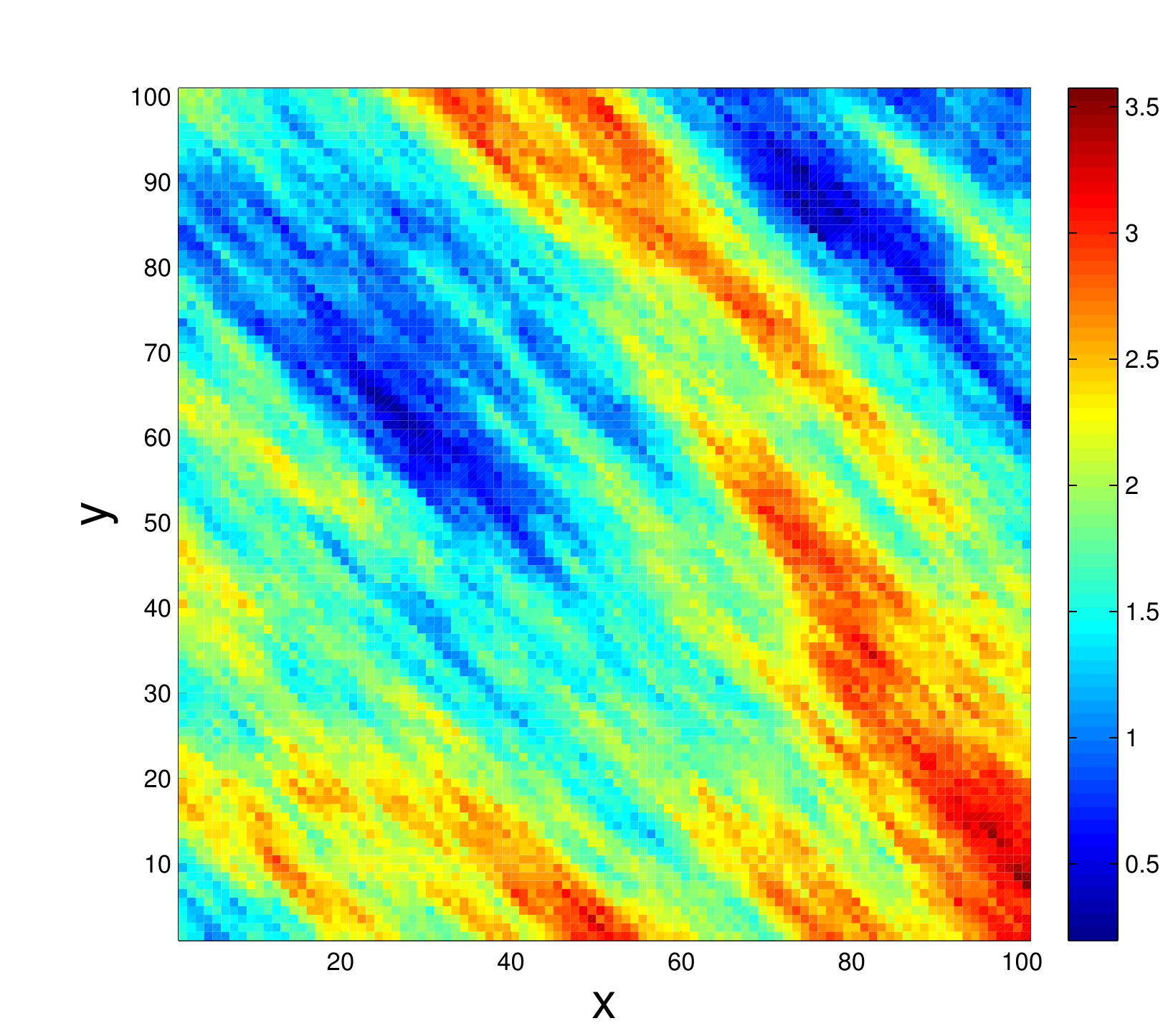}
\includegraphics[scale=0.30]{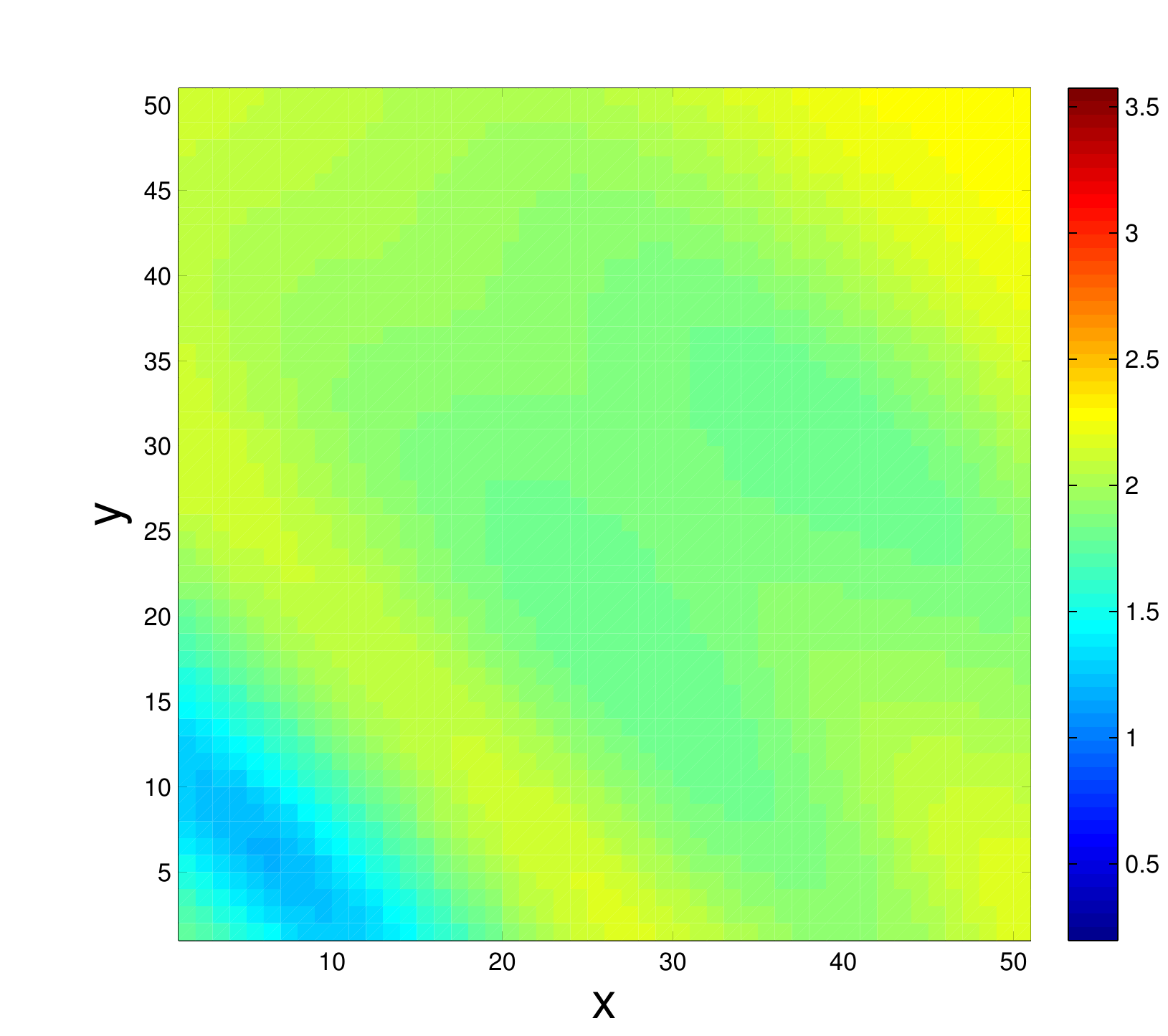}
\includegraphics[scale=0.30]{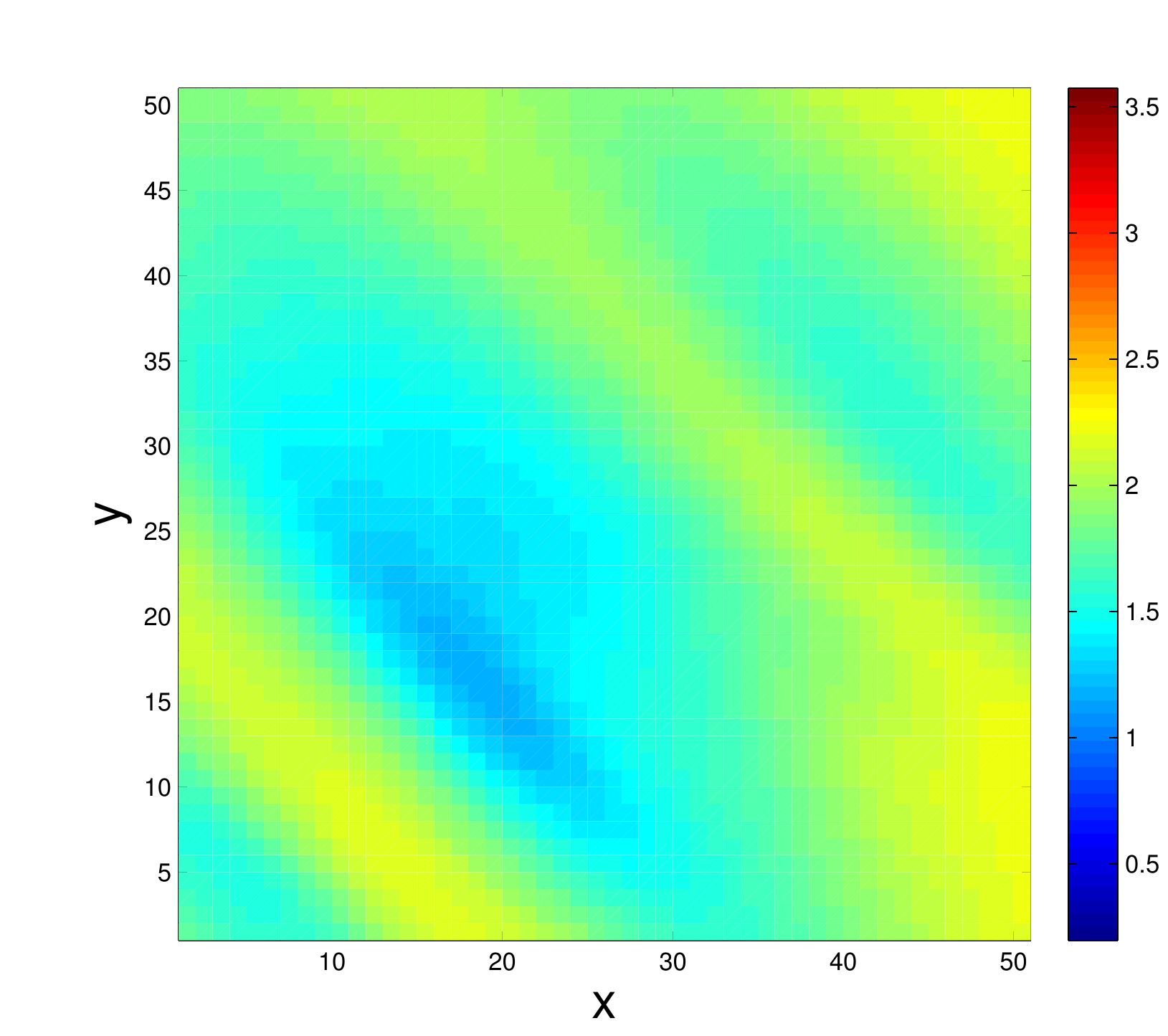}
 \caption{Top:  $\log \kappa_{1}$. Bottom: $\log \kappa_{2}$. From left to right: truth, mean (fewer measurements), mean (more measurements).  }

    \label{Figure10}
%\end{center}
\end{figure}

\begin{figure}[htbp]
\begin{center}
\includegraphics[scale=0.285]{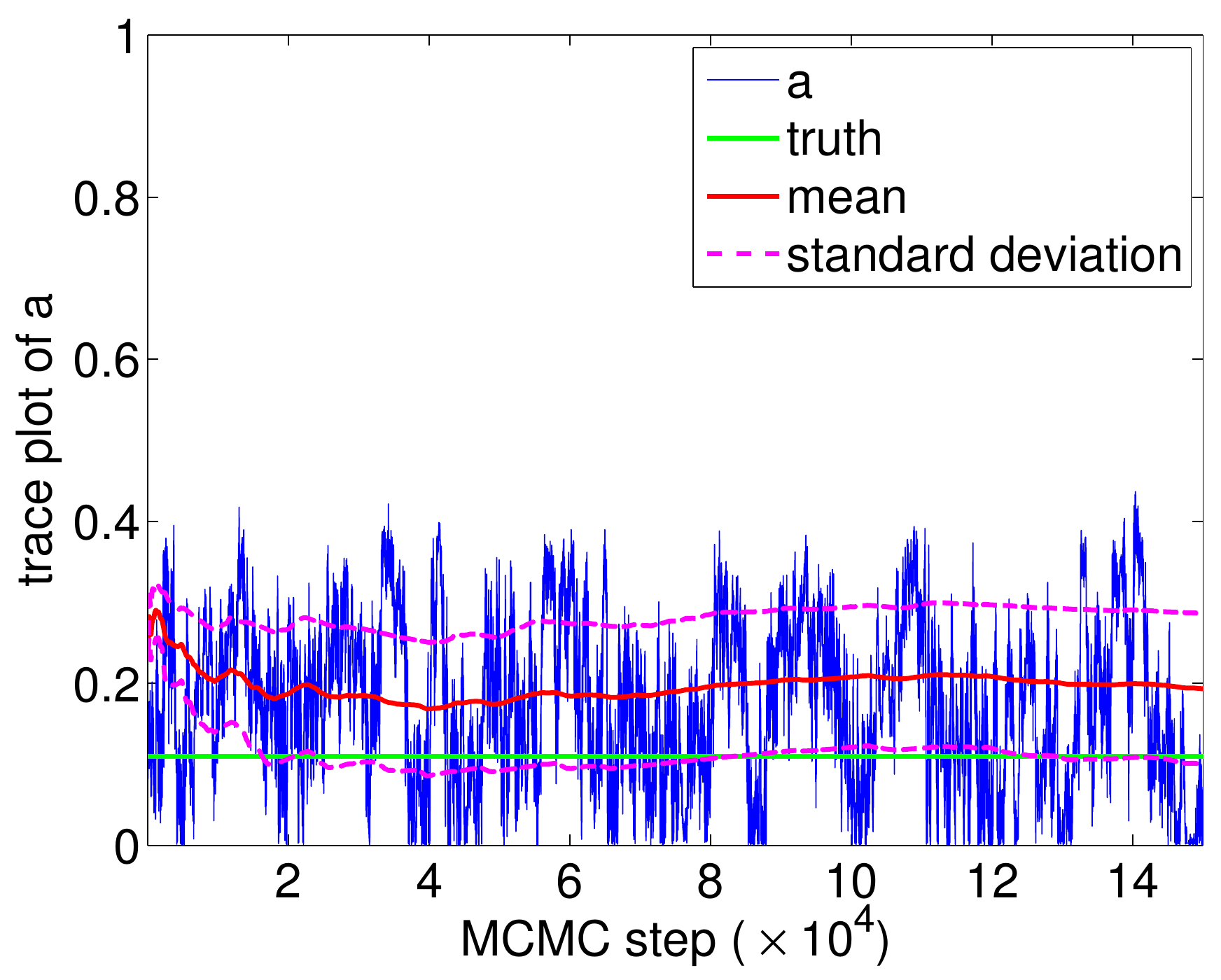}
\includegraphics[scale=0.285]{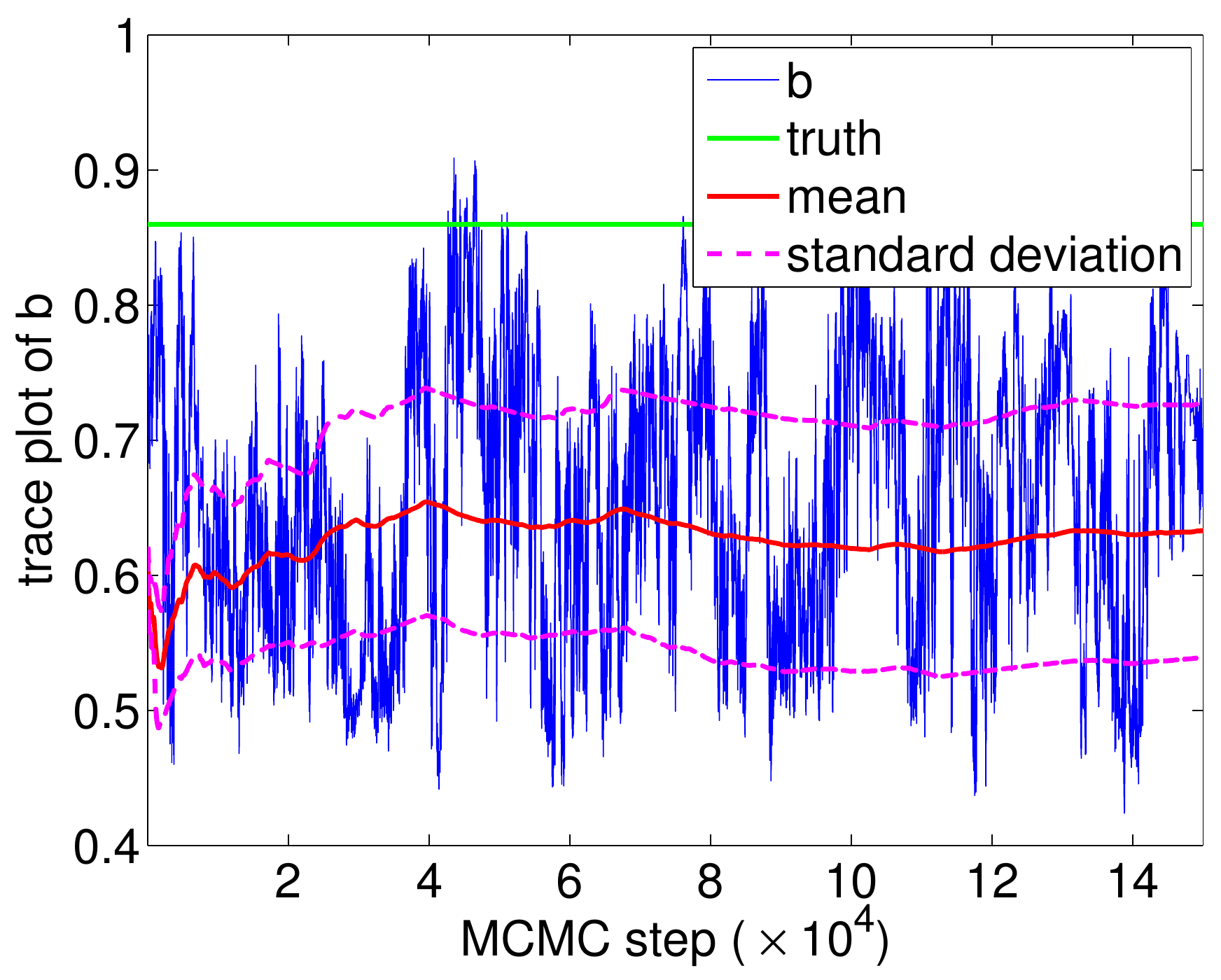}
\includegraphics[scale=0.285]{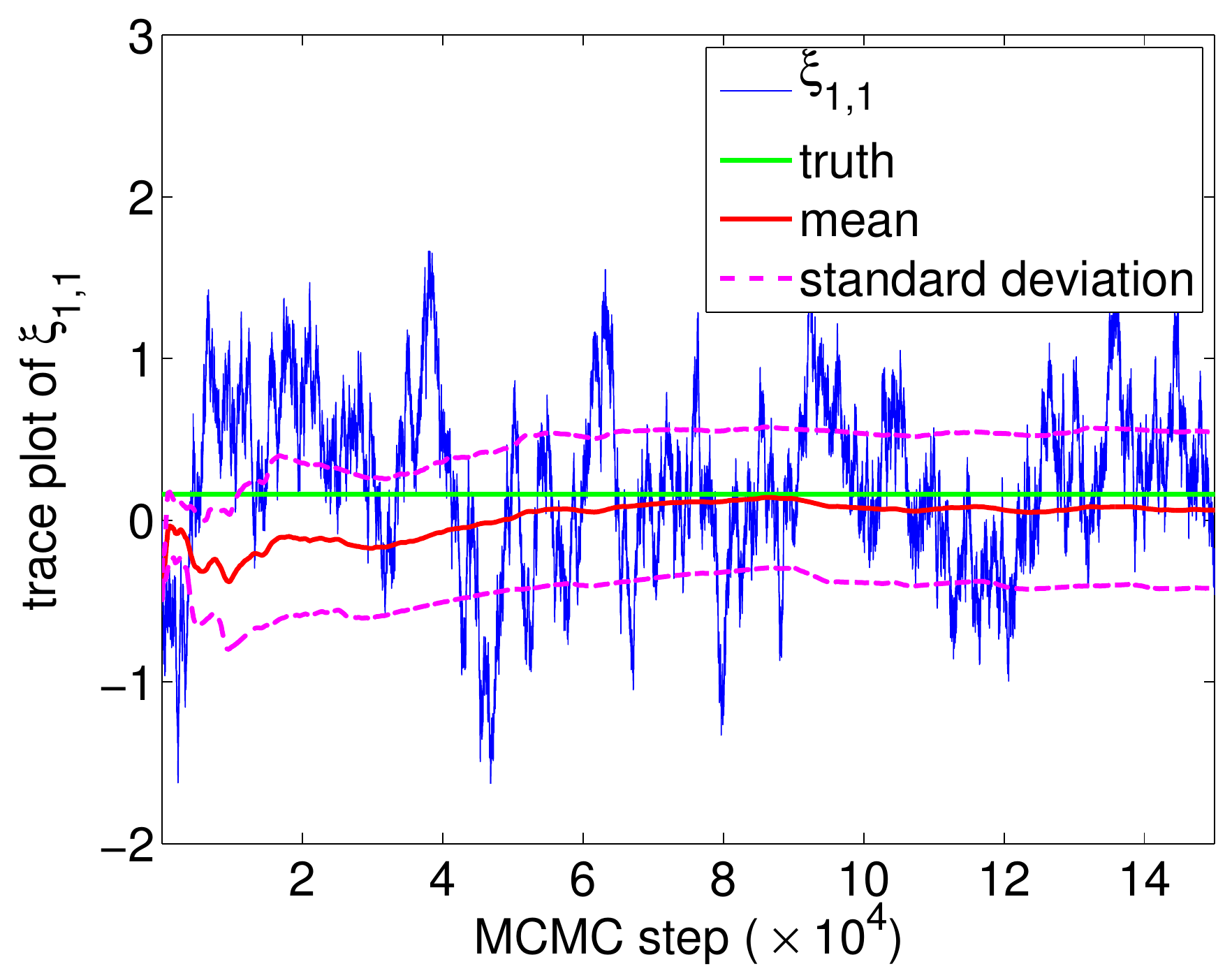}\\
\includegraphics[scale=0.285]{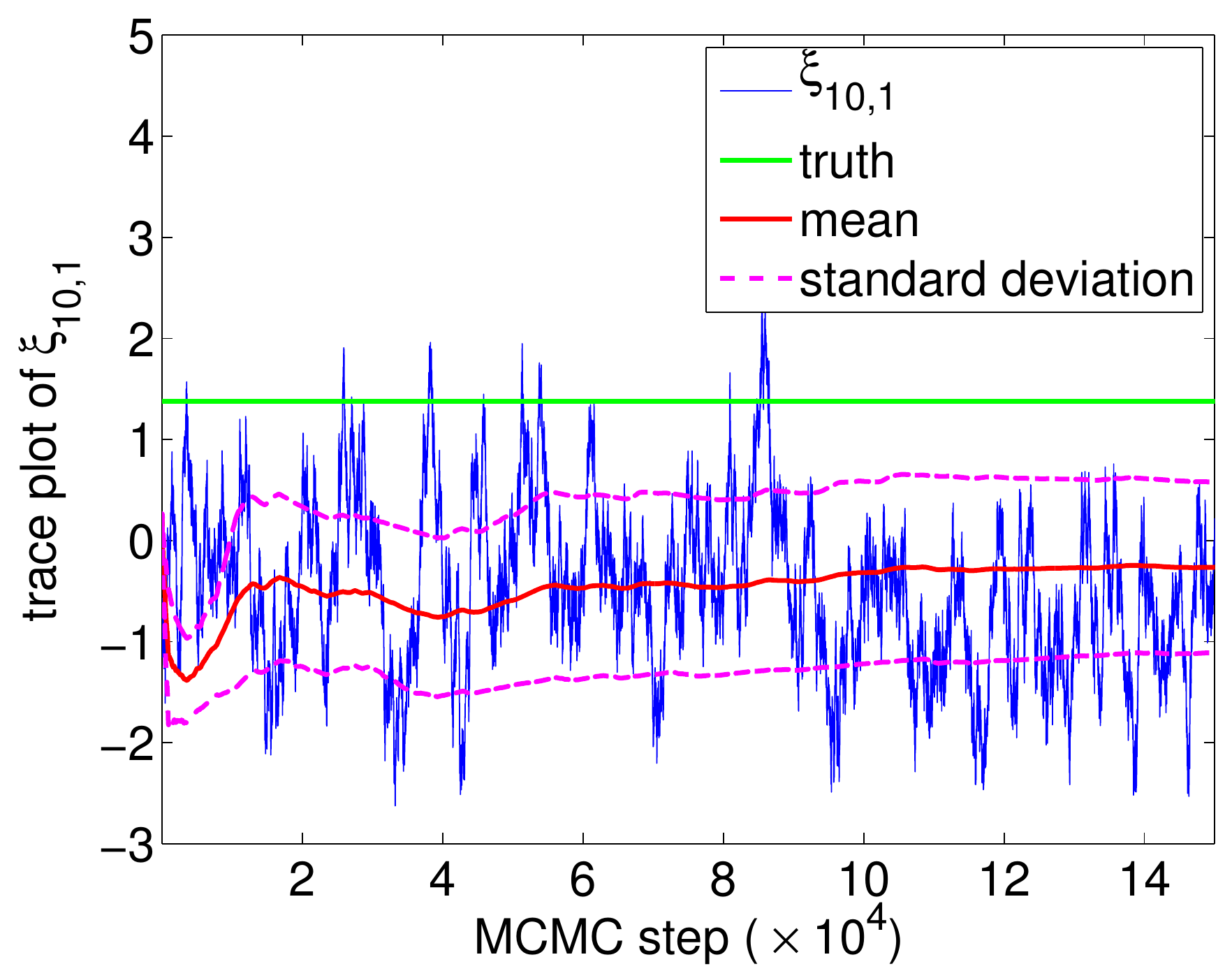}
\includegraphics[scale=0.285]{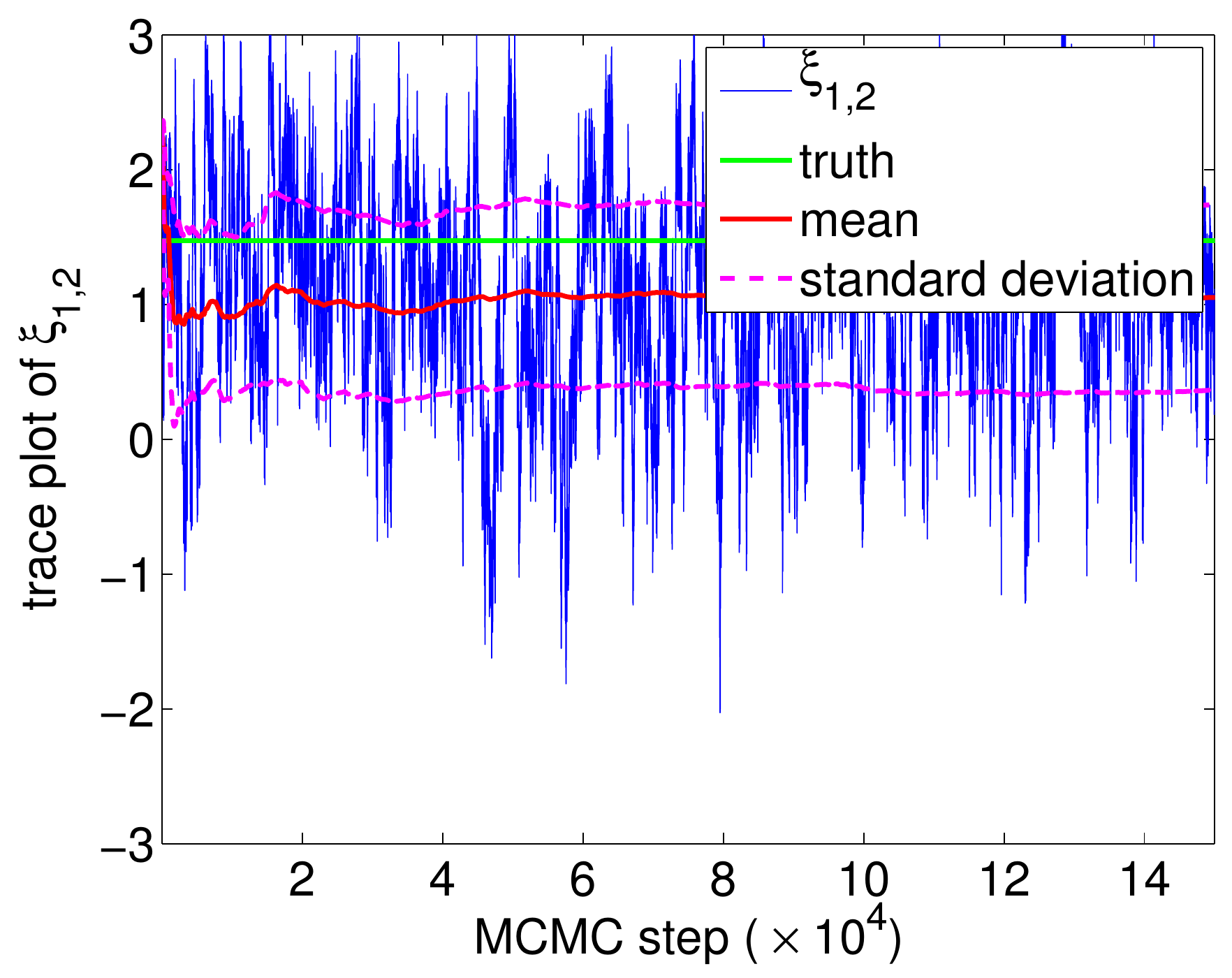}
\includegraphics[scale=0.285]{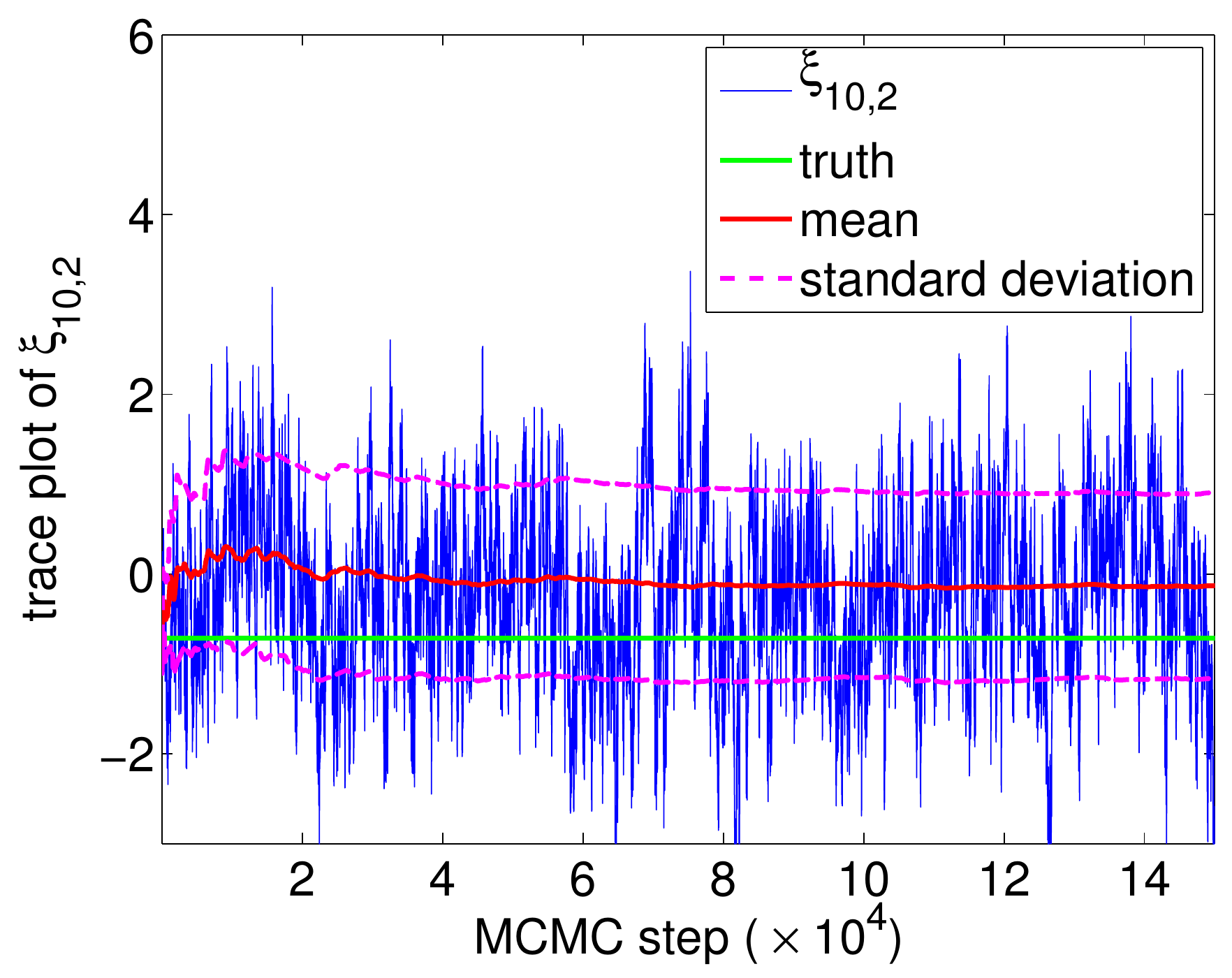}

 \caption{Trace plot from one MCMC chain (data 1). Top: geometric parameters. Bottom: Some KL modes of $\log \kappa_{1}$ and $\log \kappa_{2}$}
    \label{Figure11}
\end{center}
\end{figure}

\begin{figure}[htbp]
\begin{center}
\includegraphics[scale=0.30]{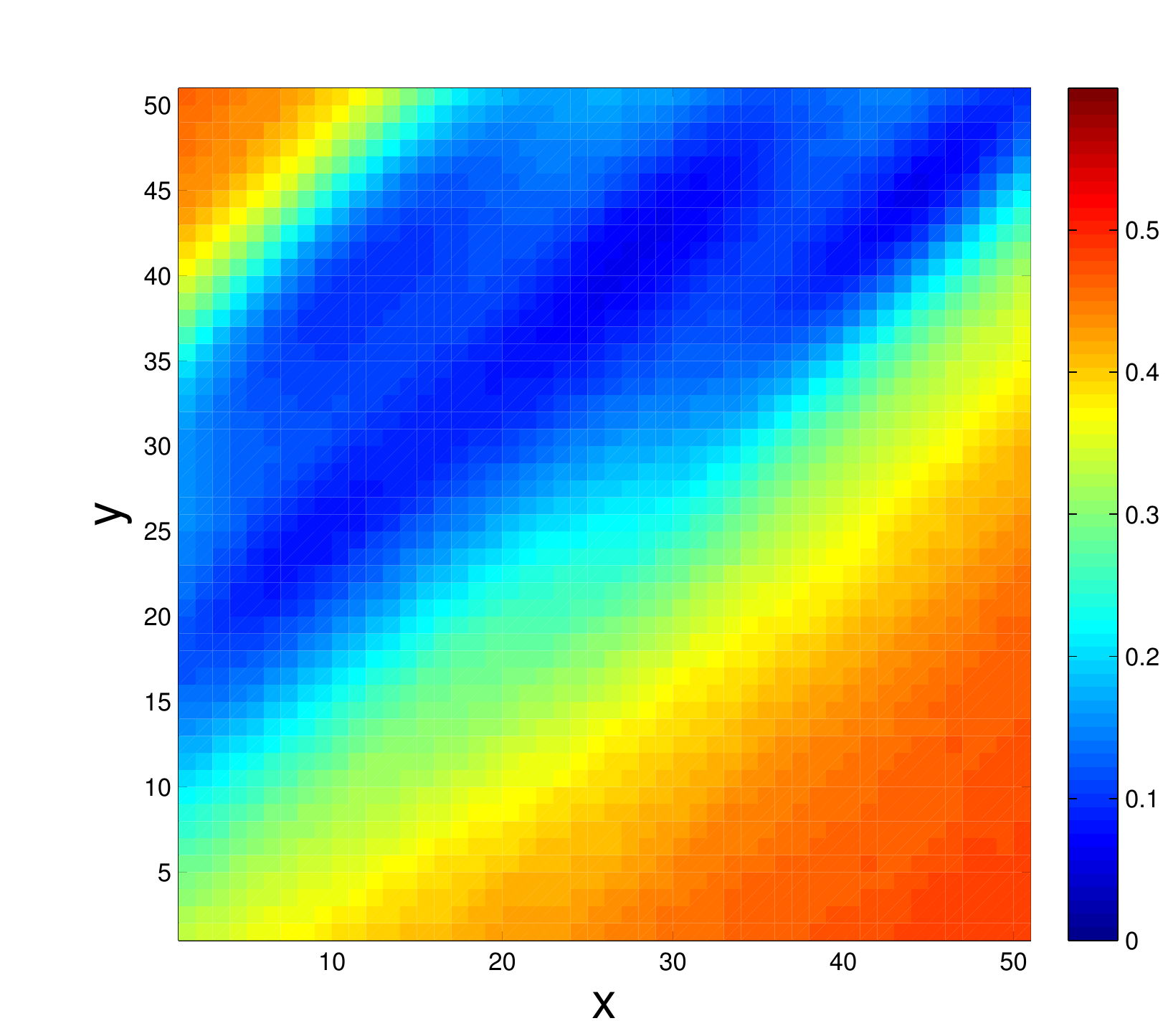}
\includegraphics[scale=0.30]{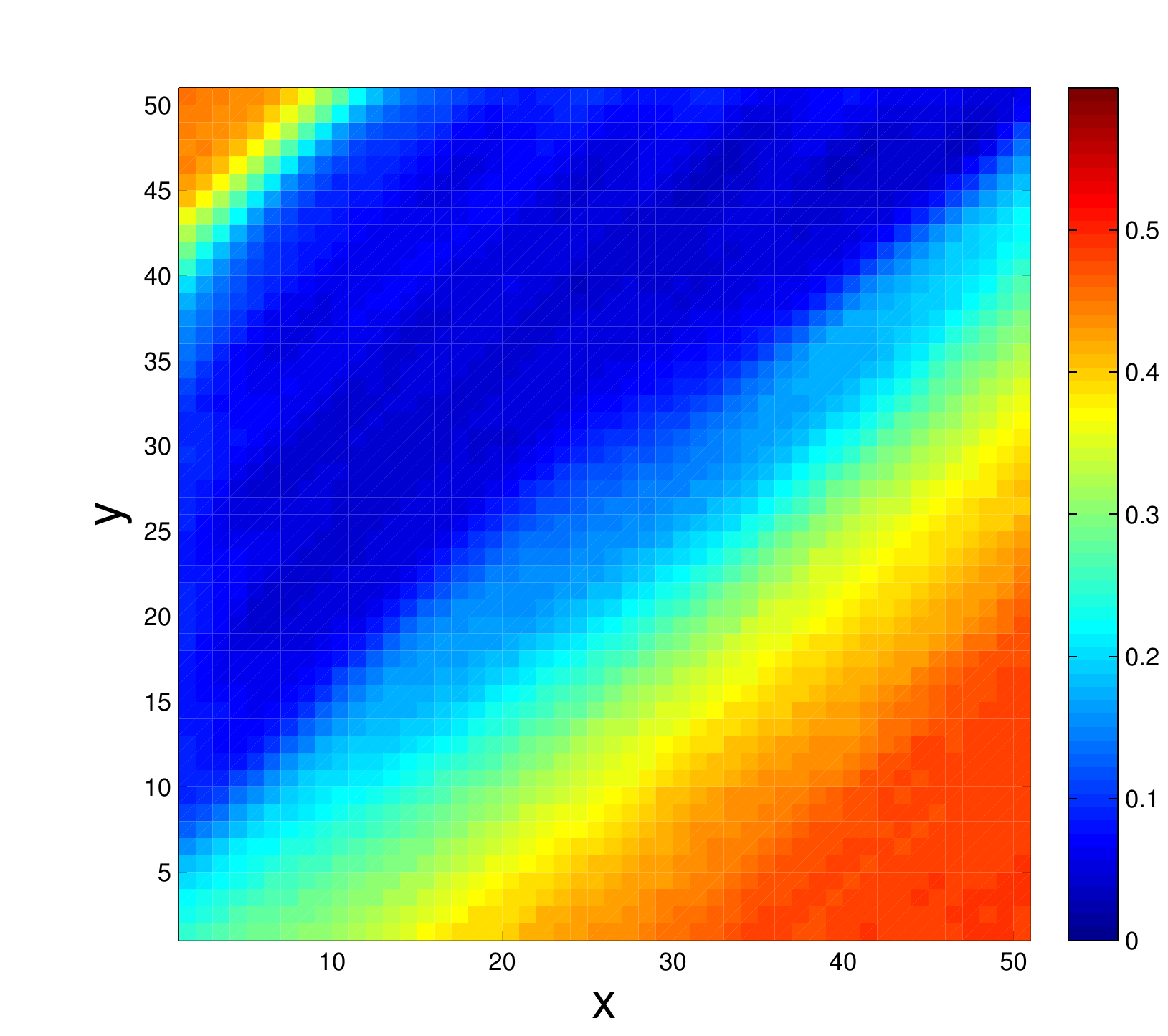}\\
\includegraphics[scale=0.30]{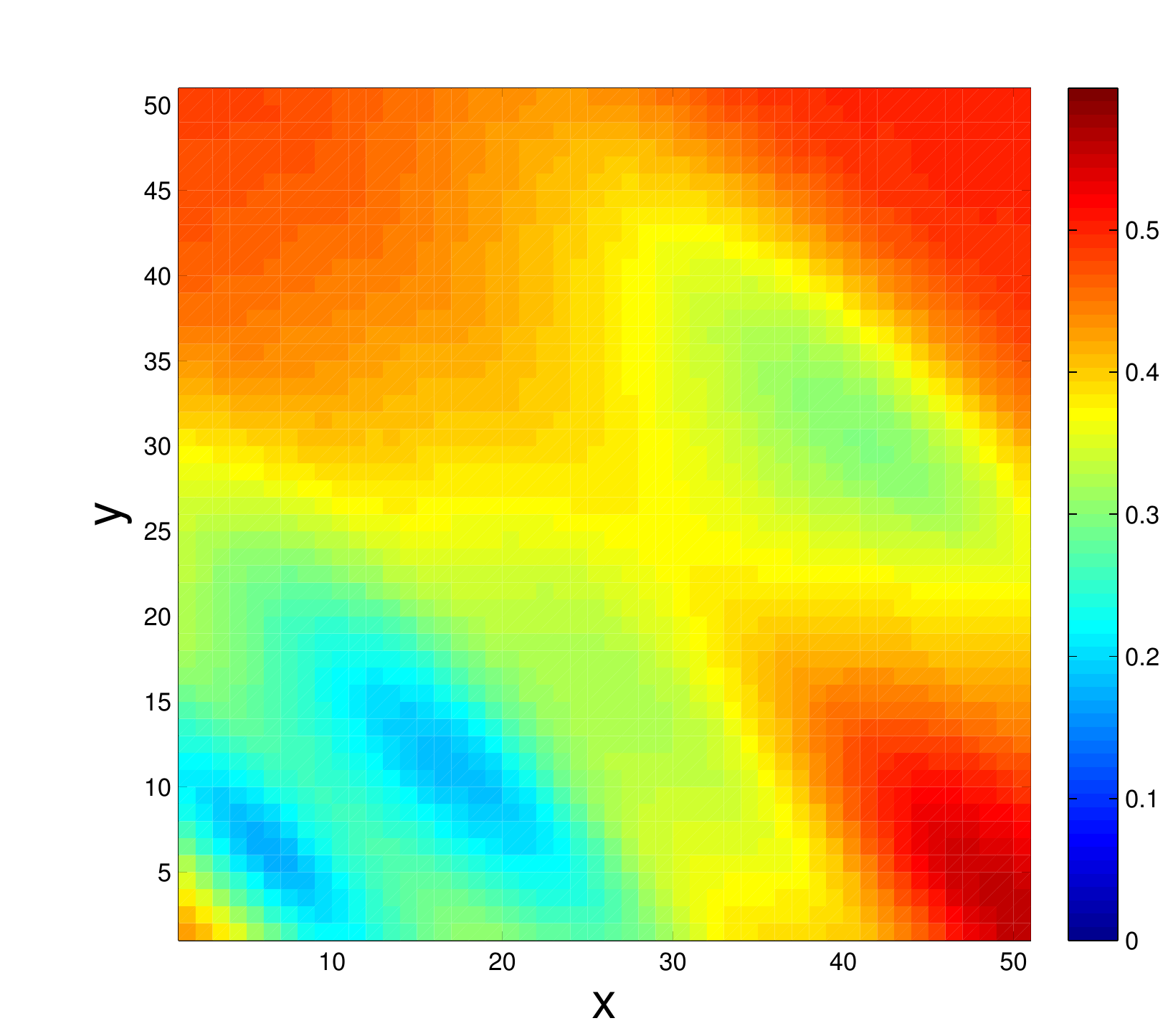}
\includegraphics[scale=0.30]{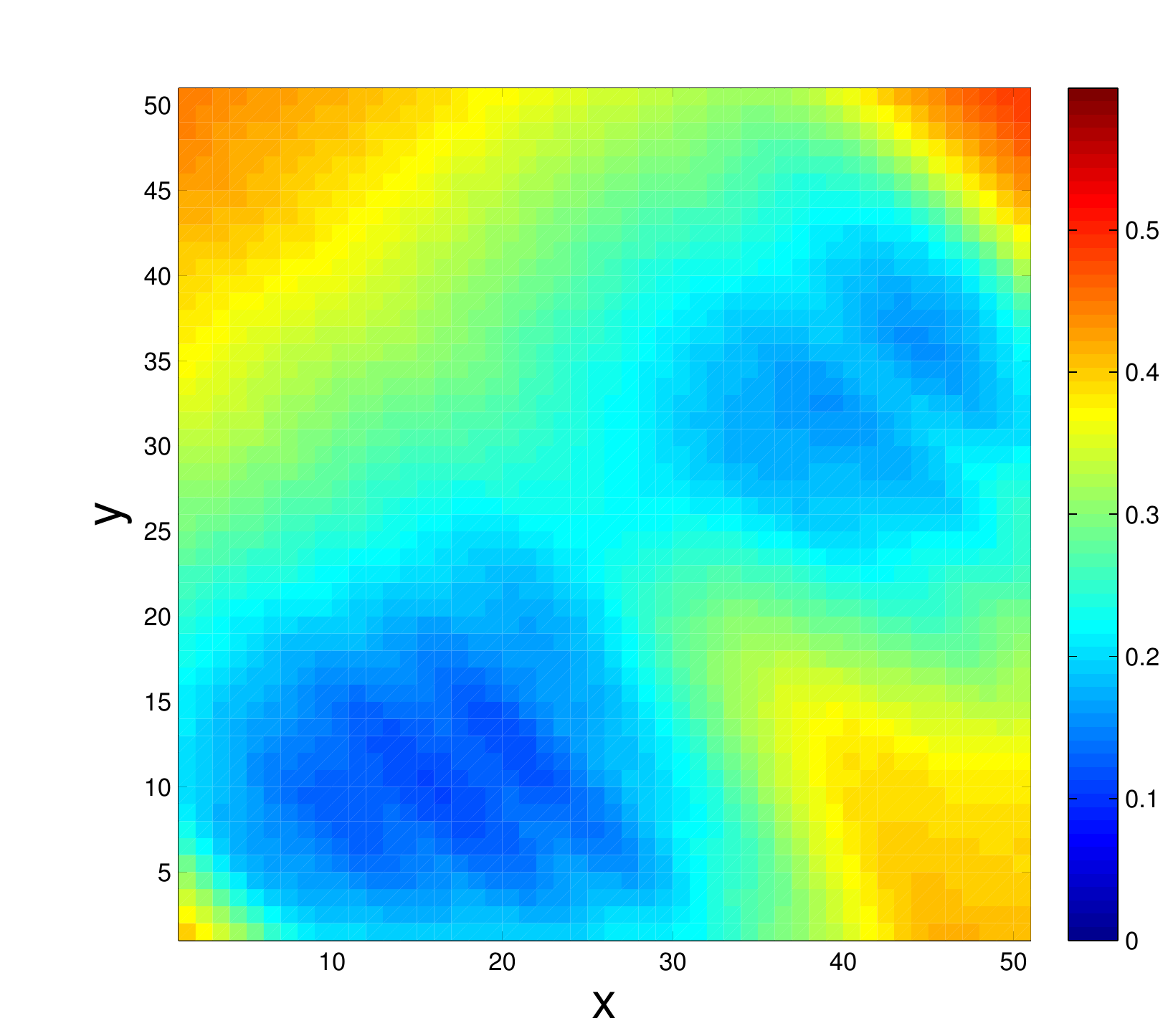}
 \caption{Top to bottom: fewer measurements to more measurements. Left to right: variance of $\log \kappa_{1}$, Right: variance $\log \kappa_{2}$.}

    \label{Figure12}
\end{center}
\end{figure}

\begin{figure}[htbp]
\begin{center}
\includegraphics[scale=0.35]{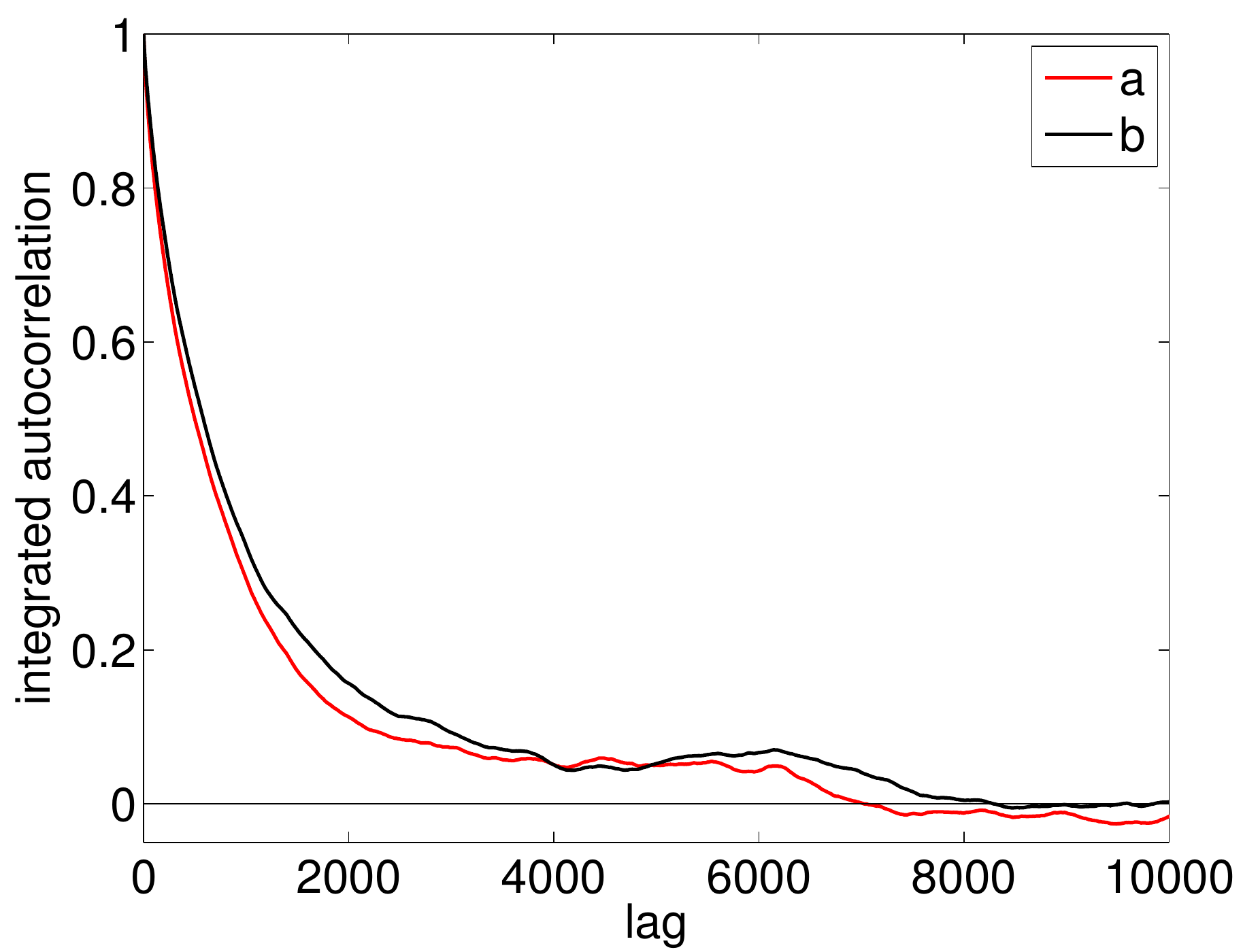}
\includegraphics[scale=0.35]{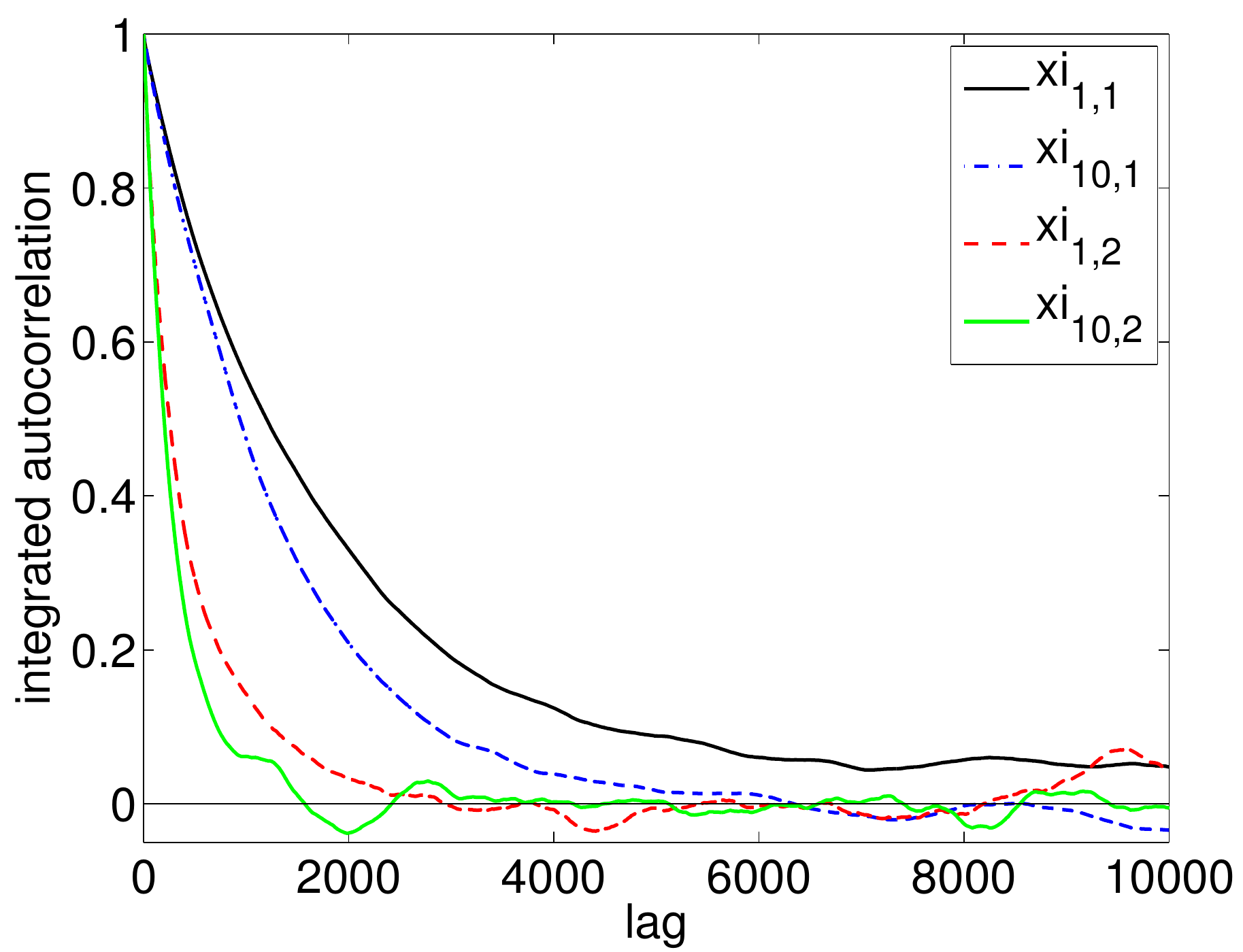}\\
\includegraphics[scale=0.35]{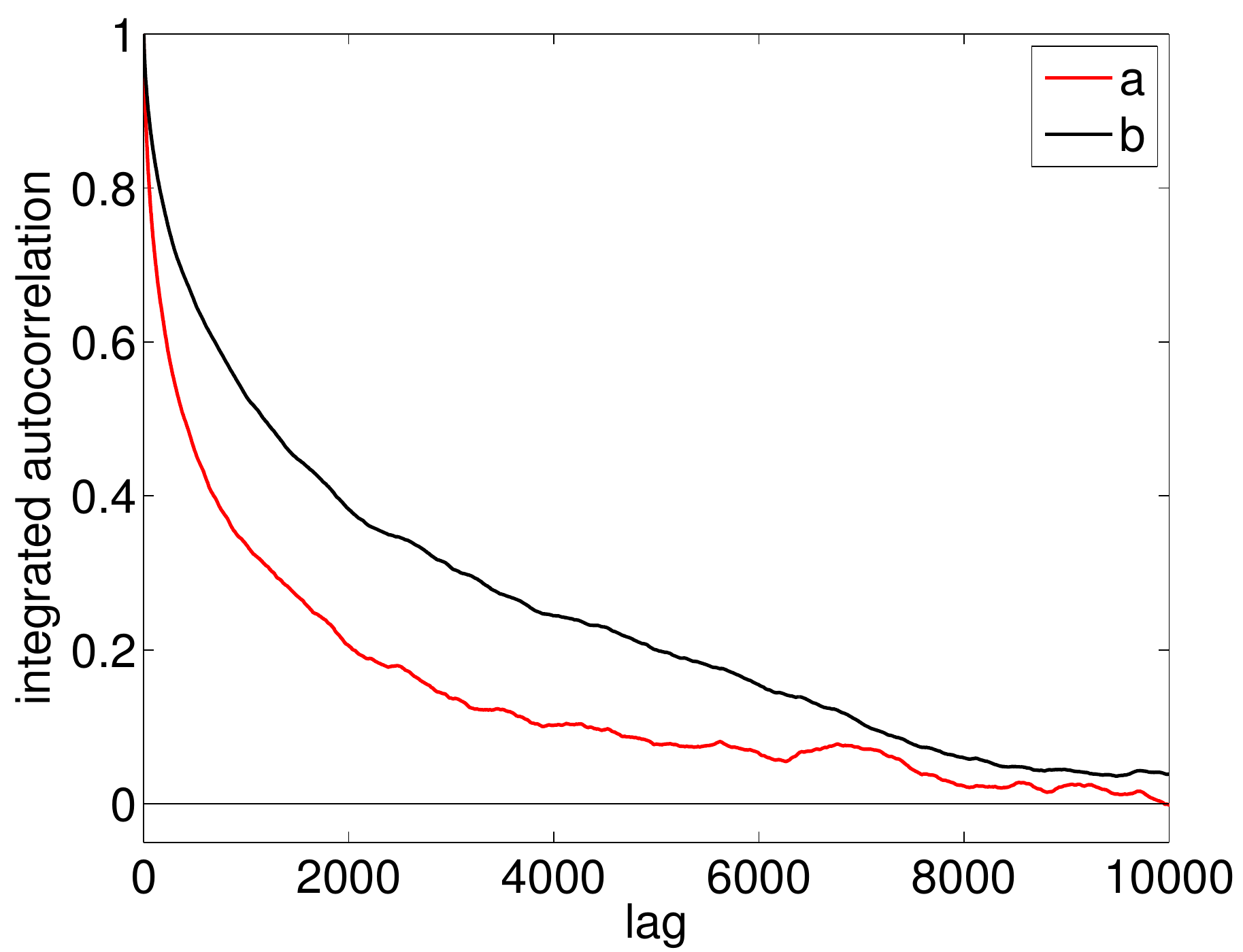}
\includegraphics[scale=0.35]{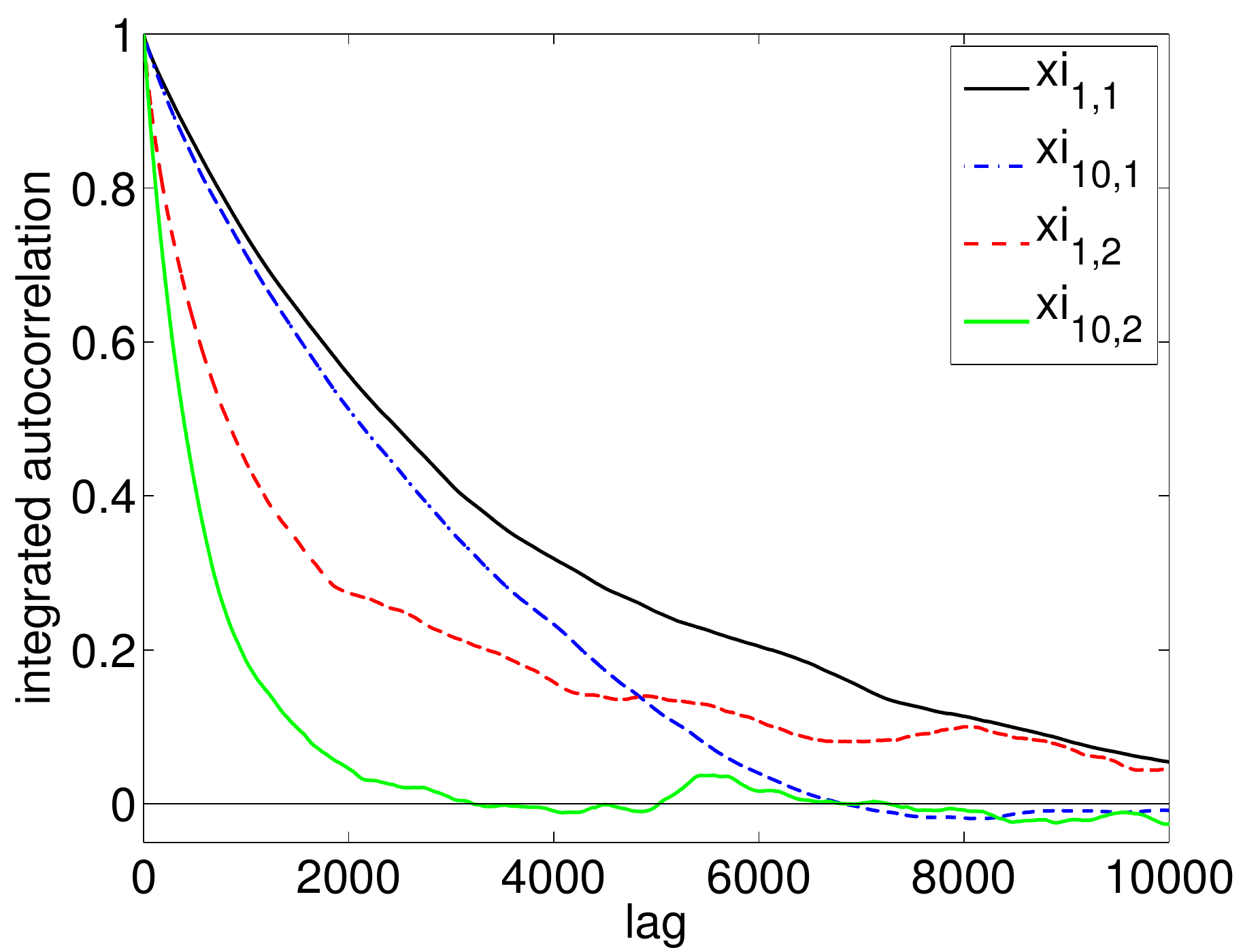}
 \caption{Autocorrelation from one MCMC chain. Top: data 1. Bottom: data 2. Left: geometric parameters. Right: Some KL model of  $\log \kappa_{1}$ and $\log \kappa_{2}$}
    \label{Figure13}
\end{center}
\end{figure}

\begin{figure}[htbp]
\begin{center}
\includegraphics[scale=0.25]{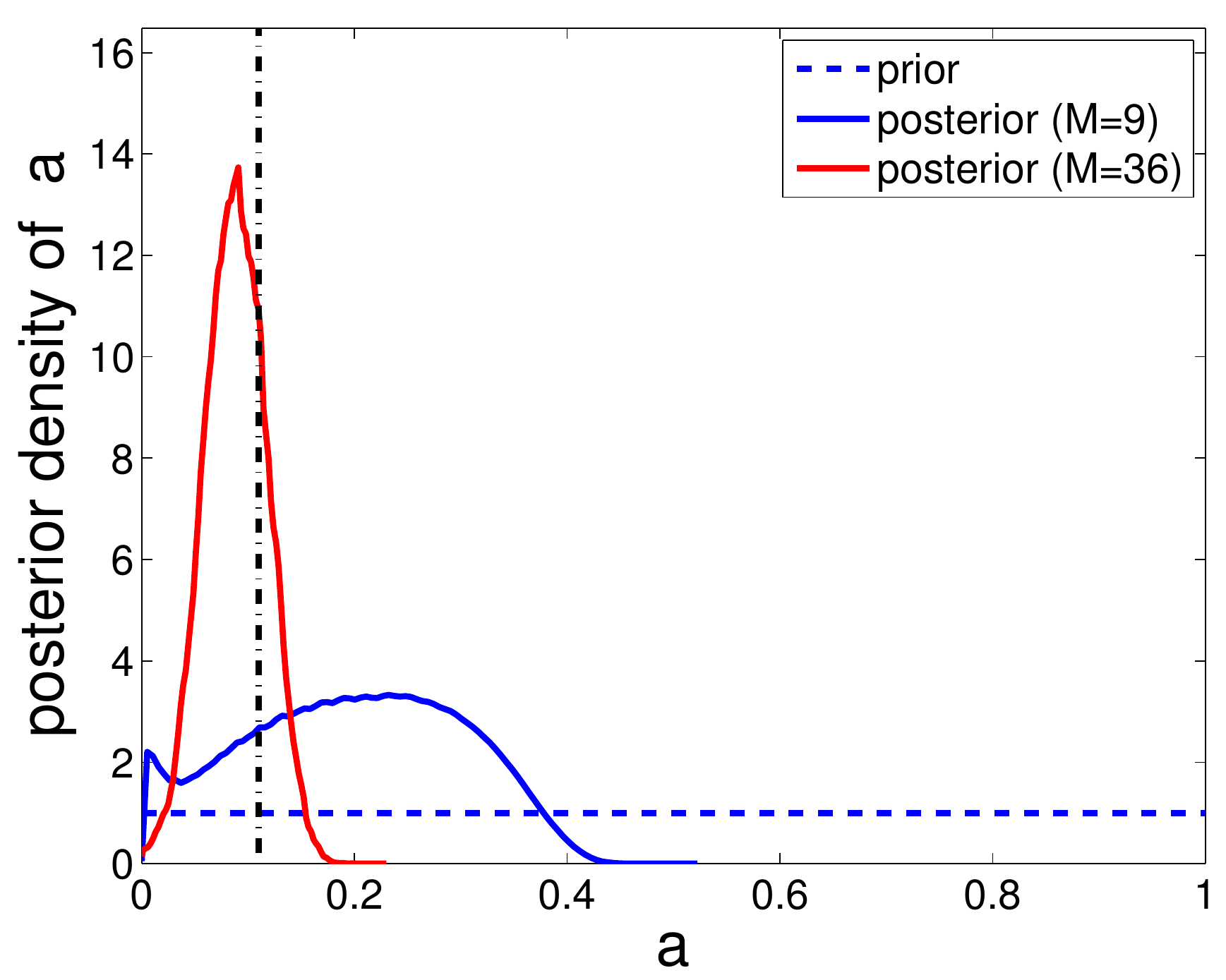}
\includegraphics[scale=0.25]{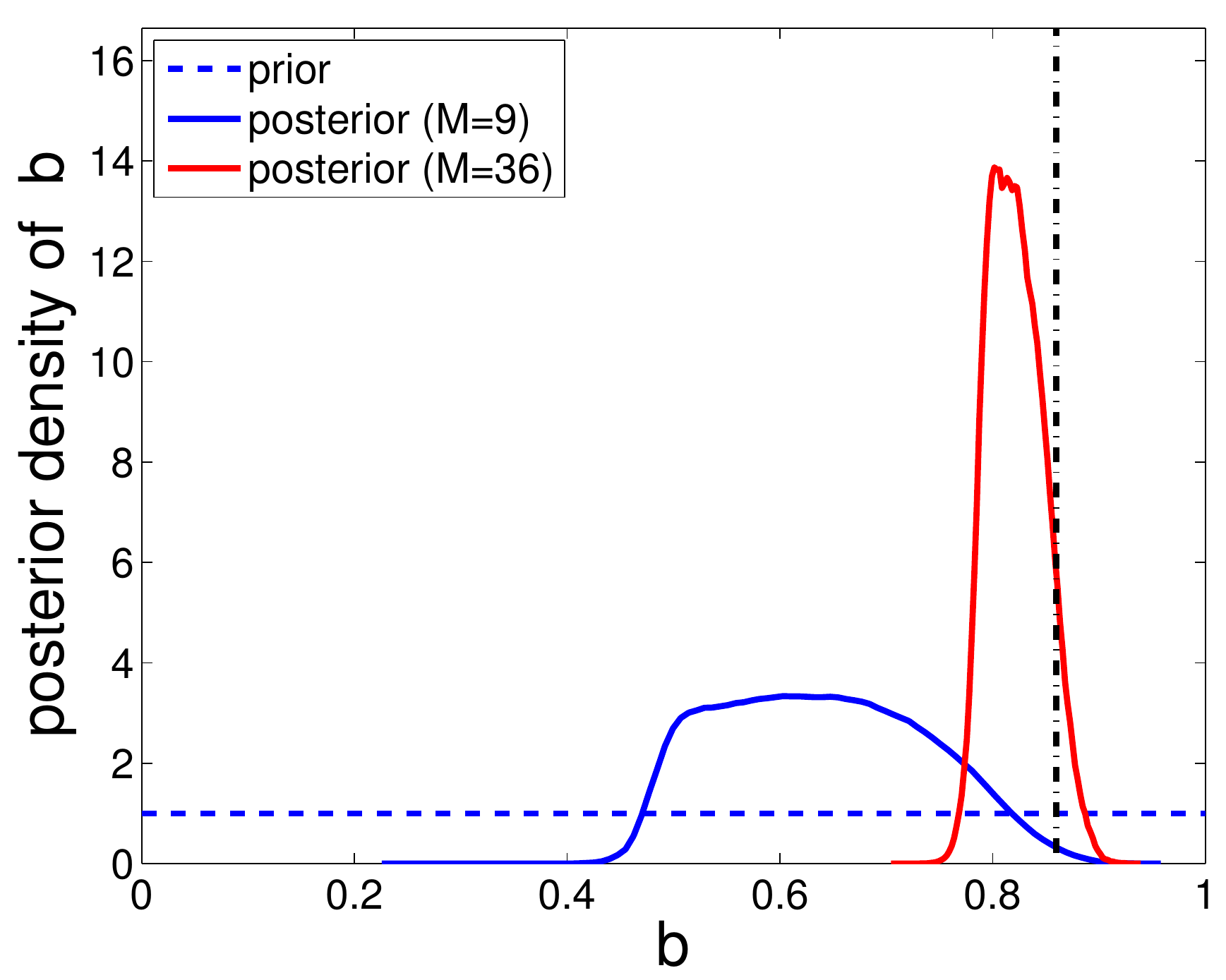}
\includegraphics[scale=0.25]{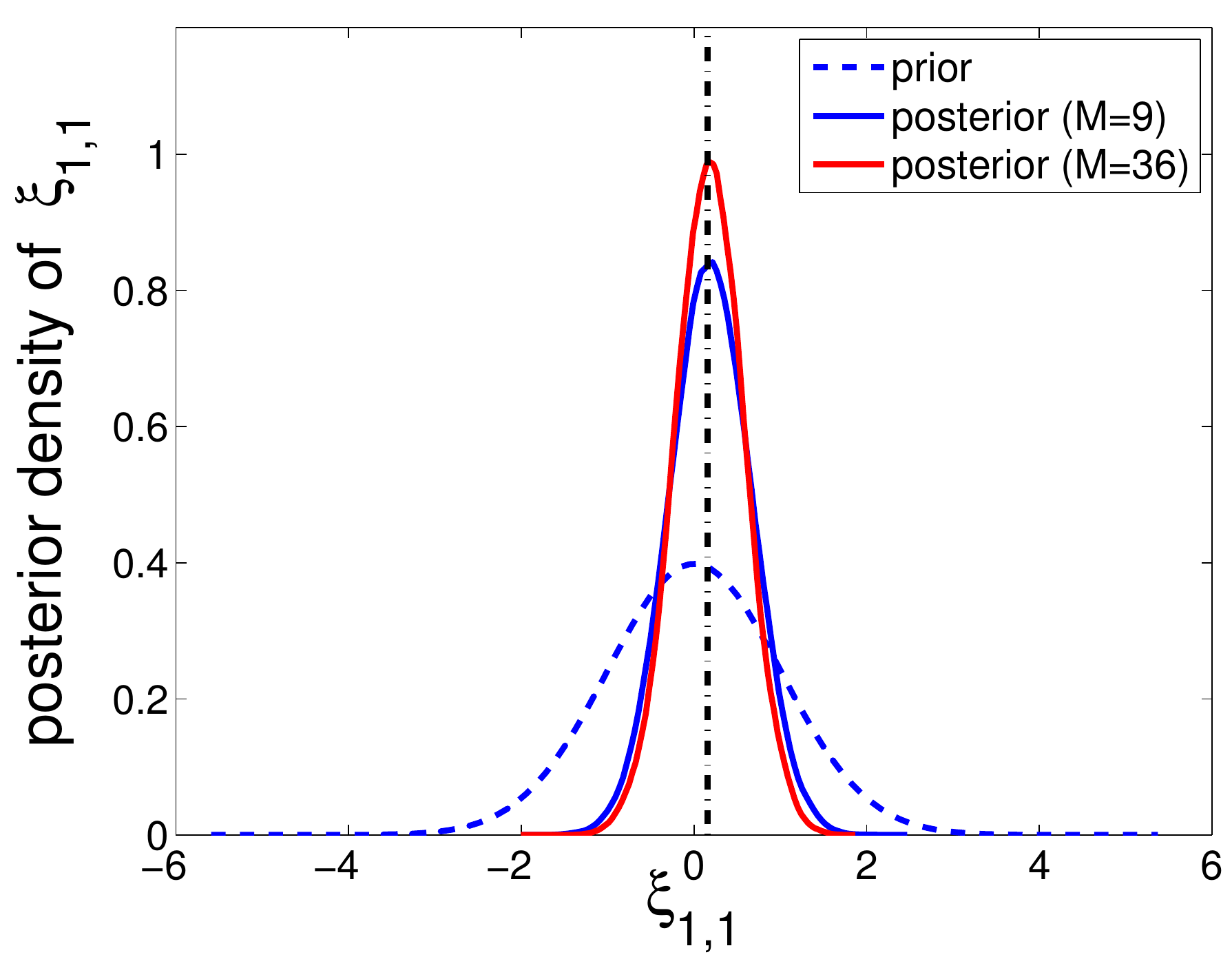}\\
\includegraphics[scale=0.25]{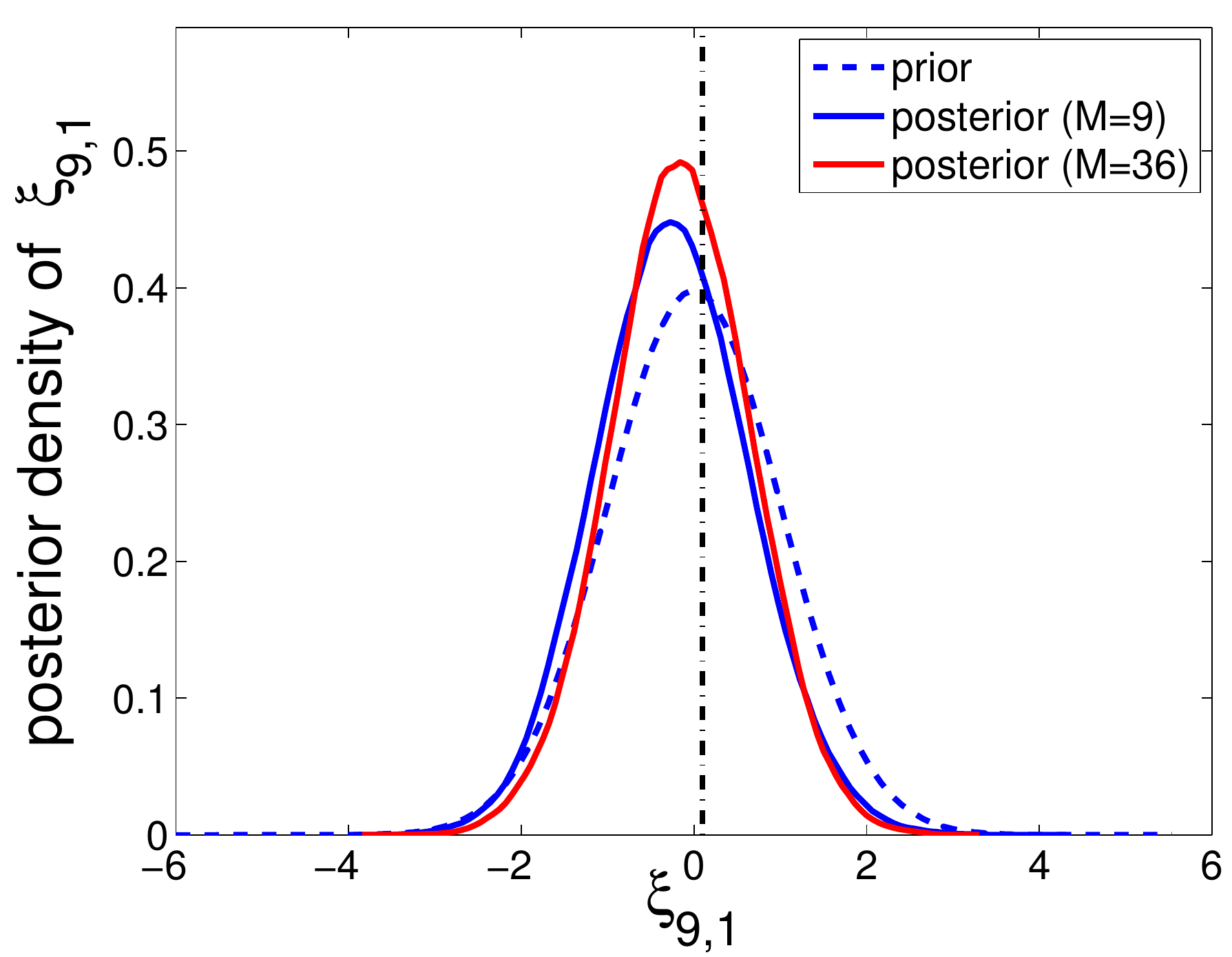}
\includegraphics[scale=0.25]{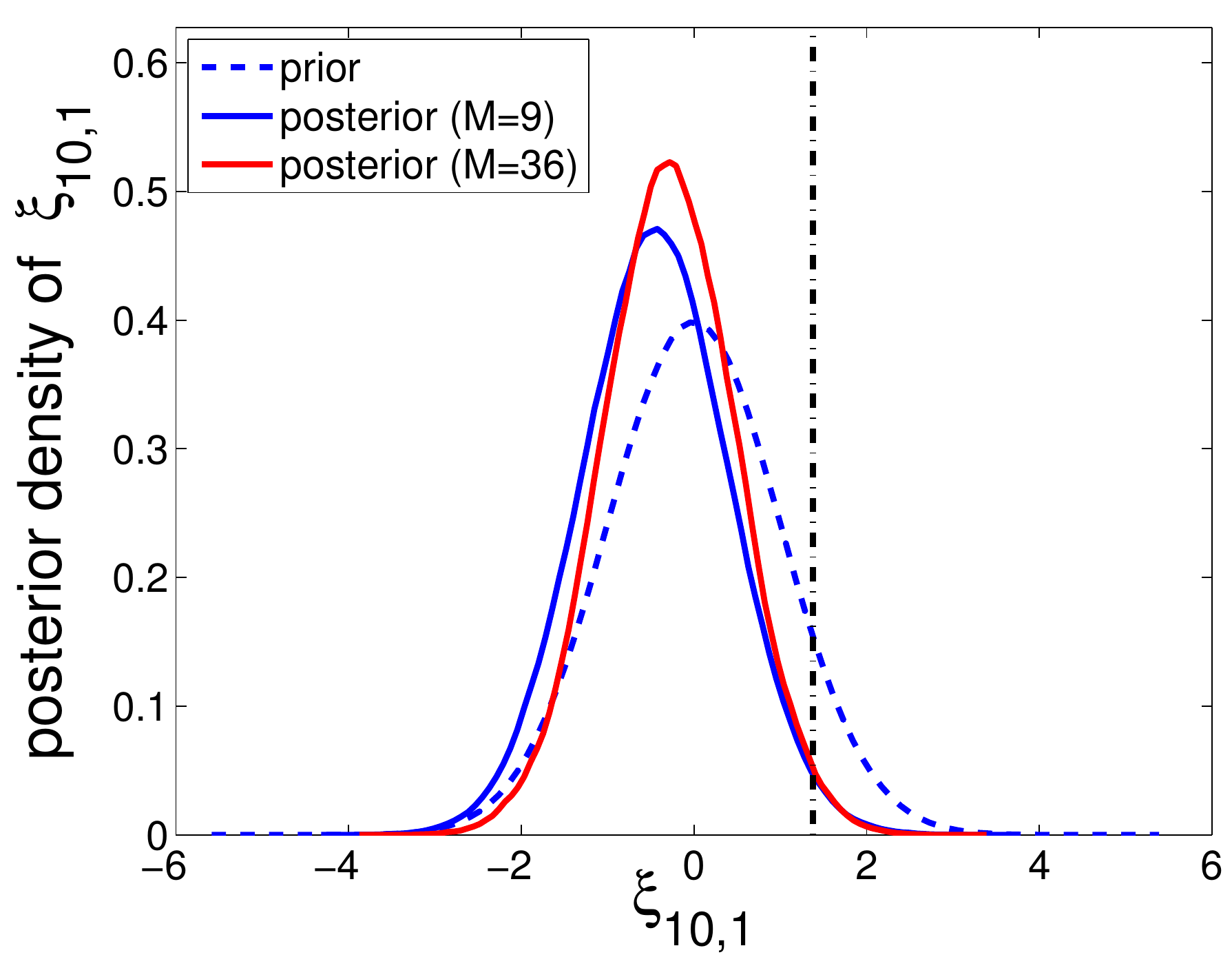}
\includegraphics[scale=0.25]{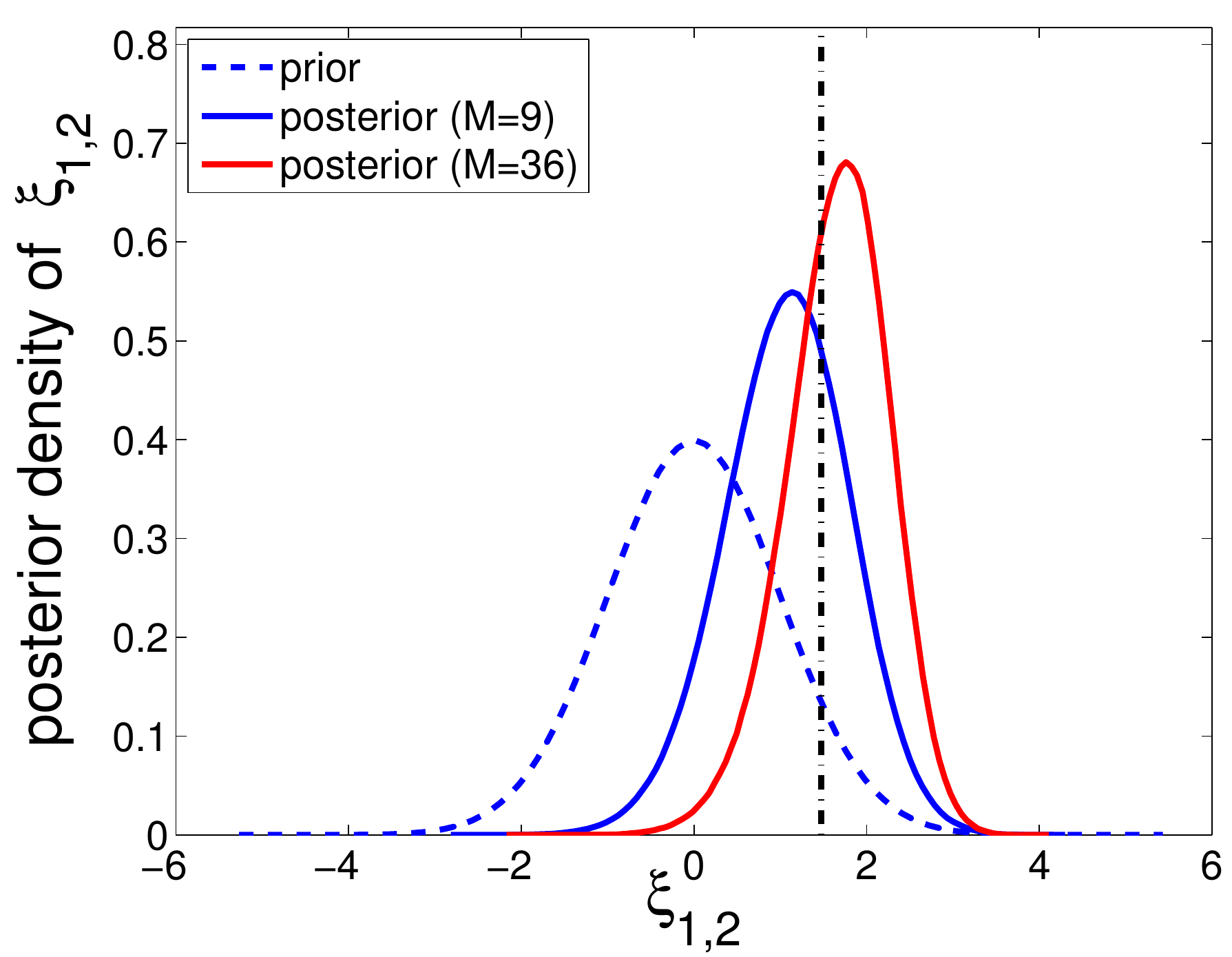}
\includegraphics[scale=0.25]{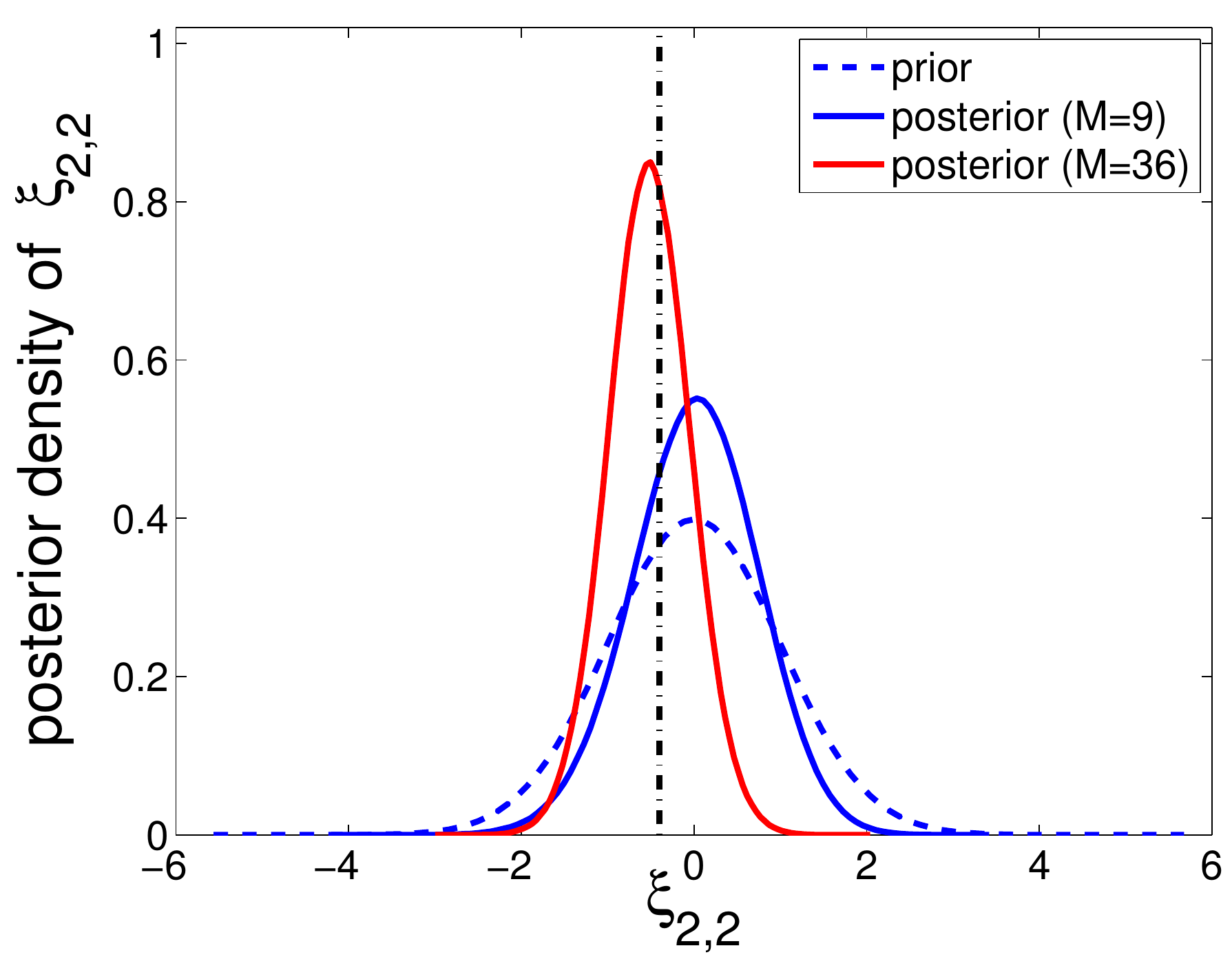}
\includegraphics[scale=0.25]{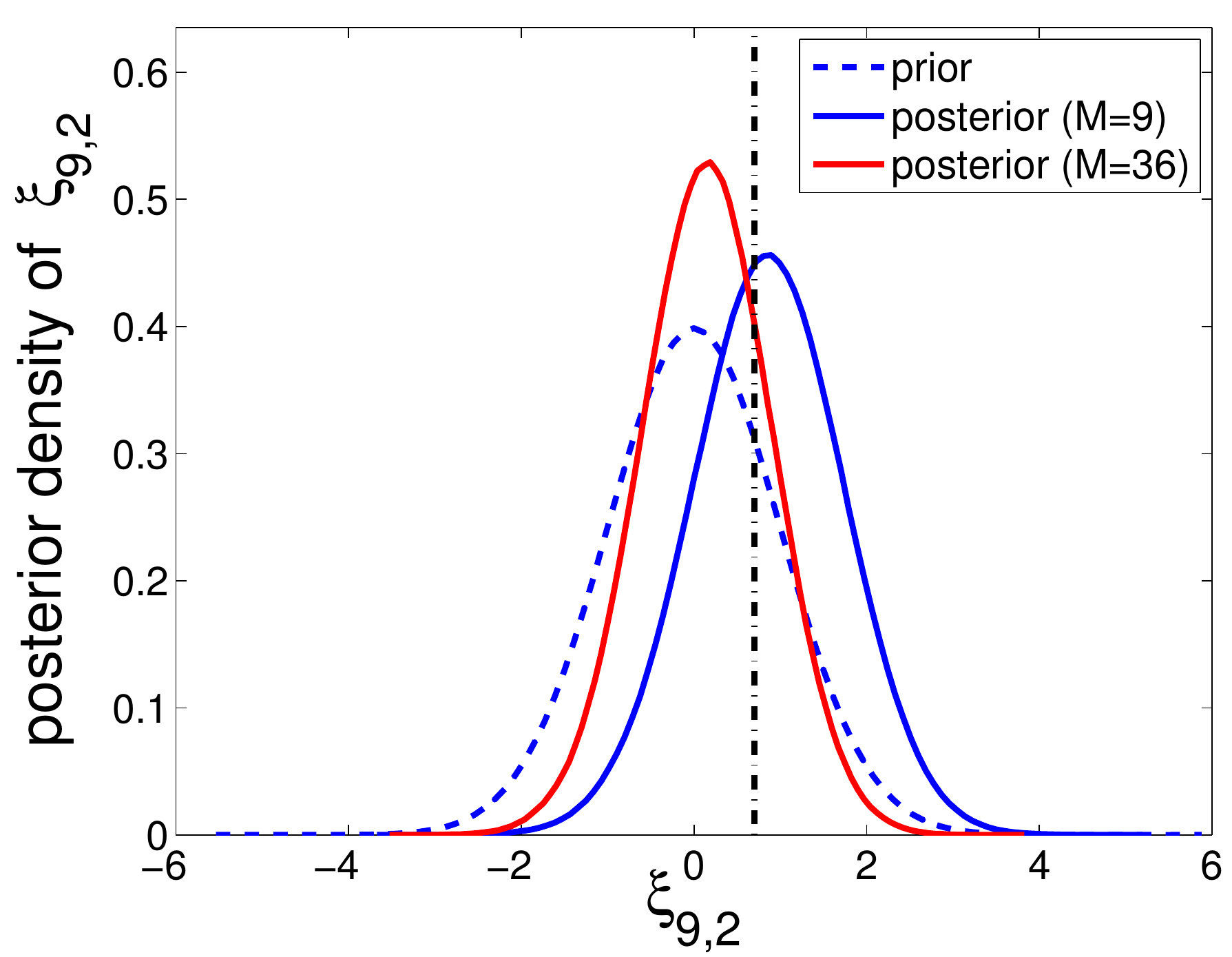}
\includegraphics[scale=0.25]{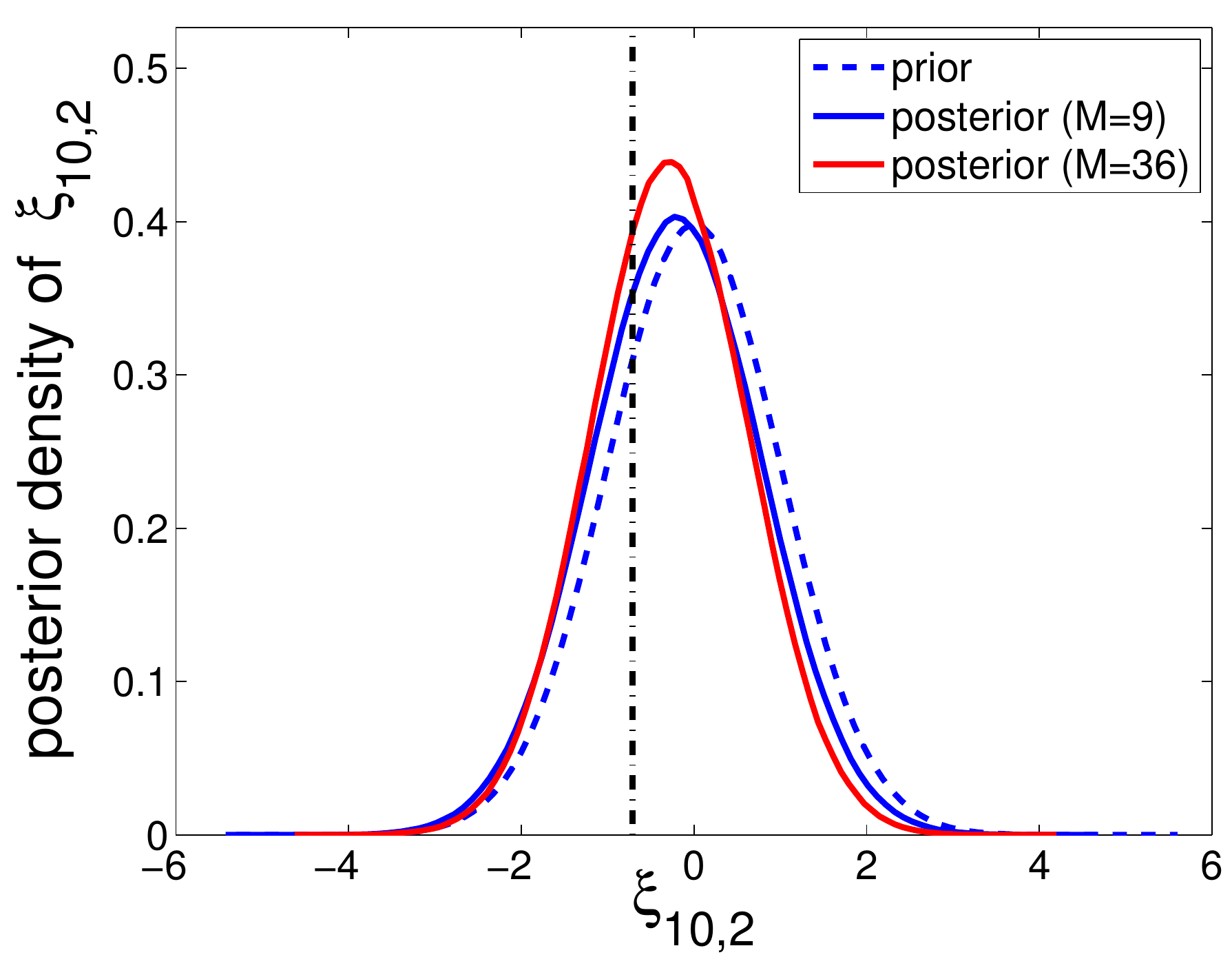}

 \caption{Prior and posterior densities of the unknown $u$}
    \label{Figure14}
\end{center}
\end{figure}

%\begin{figure}[htbp]
%\begin{center}
%\includegraphics[scale=0.25]{density_meas_1B}
%\includegraphics[scale=0.25]{density_meas_4B}
%\includegraphics[scale=0.25]{density_meas_5B}
%\includegraphics[scale=0.25]{density_meas_6B}
%\includegraphics[scale=0.25]{density_meas_7B}
%\includegraphics[scale=0.25]{density_meas_9B}
 %\caption{Posterior densities of the data predictions $G(u)$ (for the case of 9 measurements). Vertical lines indicate the nominal value of synthetic observations: dotted black}
 %   \label{Figure15}
%\end{center}
%\end{figure}

\clearpage

\subsection{Channelized Permeability}

In this experiment we consider channelized permeabilities of the form (\ref{eq:num3}) where $D_{1}$ and $D_{2}$ are the  domains corresponding to the interior and exterior of the channel, respectively. These domains are  parametrized with five parameters as shown in Figure \ref{Figure1B}. Two fields for $\log \kappa_{i}$ are considered as in the previous experiments.  The unknown parameter in this case is $u=(d_{1},\dots,d_{5},\log \kappa_{1}, \log\kappa_{2})\in \mathbb{R}^5\times C(\bD;\bbR^2)$. We consider a prior distribution of the form
\begin{equation}\label{eq:num4B}
\fl \mu_{0}(du)=  \Pi_{i=1}^{5}\pi_0^{B_{i},g}(d_{i})\otimes N(m_1,C_{1}) N(m_2,C_{2})
\end{equation}
where the set $B_{i}$ in the definition of $\pi_0^{B_{i},g}(\cdot)$ (for each parameter)  is a specified interval in $\mathbb{R}$, and $C_{1}$ and $C_{2}$ are covariance operators as in the previous experiment.

The true log-permeability $\log \kappa^{\dagger}$, shown in Figure \ref{Figure16} (left),  is obtained from (\ref{eq:num3}) with $\log \kappa_{1}^{\dagger}$ displayed in Figure \ref{Figure17} (top-left) $\log \kappa_{2}^{\dagger}$ shown in Figure \ref{Figure17} (bottom-left) and from the geometric parameters specified in Table \ref{Table3}. The true fields $\log \kappa_{i}^{\dagger}$ are generated as described in the preceding subsection. We consider the measurement configuration displayed in Figure \ref{Figure16} (right) and generate synthetic data as before. For the present example, the measurements are corrupted with noise with standard deviation of $\gamma=2.5\times 10^{-4}$. Algorithm \ref{MwG} is applied to characterize the posterior distribution with 20 parallel chains that passed Gelman-Rubin MPSRF diagnostic \cite{Gelman}. For this experiment, we use an outer Gibbs loop in which we update each of the geometric parameters independently; then, both log-permeabilities field are updated simultaneously. Combined samples from all chains are used to compute the mean and variance of the unknown. Trace plots can be found in Figure \ref{Figure19}. In Figure \ref{Figure18} we show log-permeabilities with parameter $u$ sampled from the prior (\ref{eq:num4B}) (top row)  and the posterior  (bottom row) respectively. In Figure \ref{Figure17} we show the posterior mean and variance for the fields $\log \kappa_{1}$ (second column) and $\log \kappa_{2}$ (third column), respectively. The mean and variance for the geometric parameters are reported in Table \ref{Table3}. The permeability (\ref{eq:num3}) corresponding to the mean $\hat{u}$ is displayed in Figure \ref{Figure16} (middle). The autocorrelation of the geometric parameters and KL-coefficients $\xi_{1,i}$ and $\xi_{10,i}$ are shown in Figure \ref{Figure20}. For these variables, in Figure \ref{Figure21} we display the posterior and the prior densities. Similar to our previous experiments, we observe that the log-permeability obtained with the mean parameters resembles the truth. However, the uncertainty in the problem is reflected in the variability in the possible inverse estimates of the log-permeability.

\begin{table}[!ht]
\centering
\begin{tabular}{|c|c|c|c|}
  \hline
  % after \\: \hline or \cline{col1-col2} \cline{col3-col4} ...
    parameter  &  true value  &   mean &  variance \\
\hline
(amplitude) & $0.2$ & 0.225& $1.3\times 10^{-3}$\\
(frecuency) & $11$ &11.161& $1.12\times 10^{-1}$\\
(angle) & $0.39$ &0.363 &$5.3\times 10^{-3}$\\
(initial point) & $0.4$ &0.388& $1.2\times 10^{-3}$\\
(width) & $0.3$   &0.262& $1.6\times 10^{-3}$\\
  \hline
\end{tabular}
 \caption{Data relevant to the experiment with the channelized permeability.}
 \label{Table3}
\end{table}

\begin{figure}[htbp]
%\begin{center}
\includegraphics[scale=0.30]{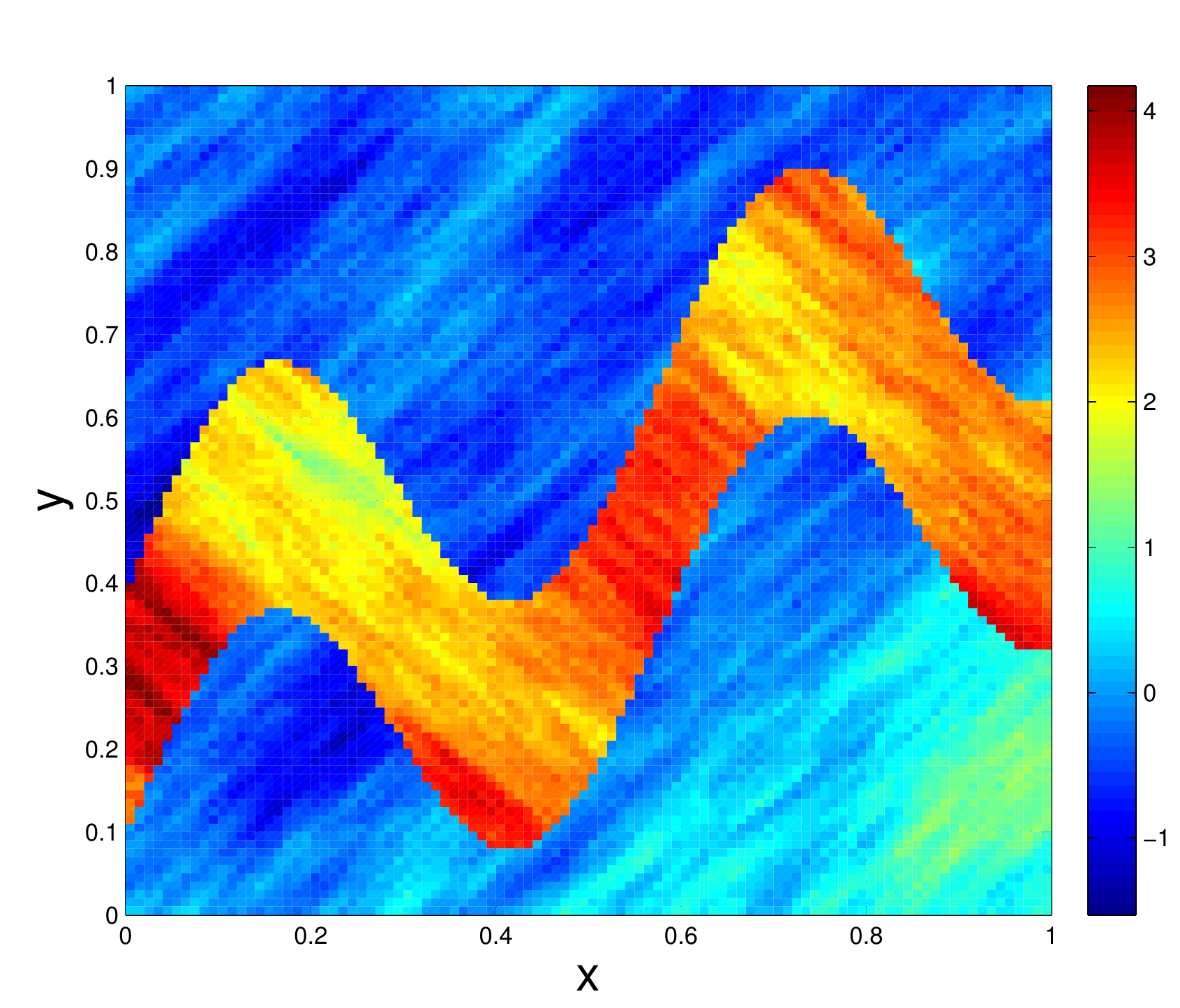}
\includegraphics[scale=0.30]{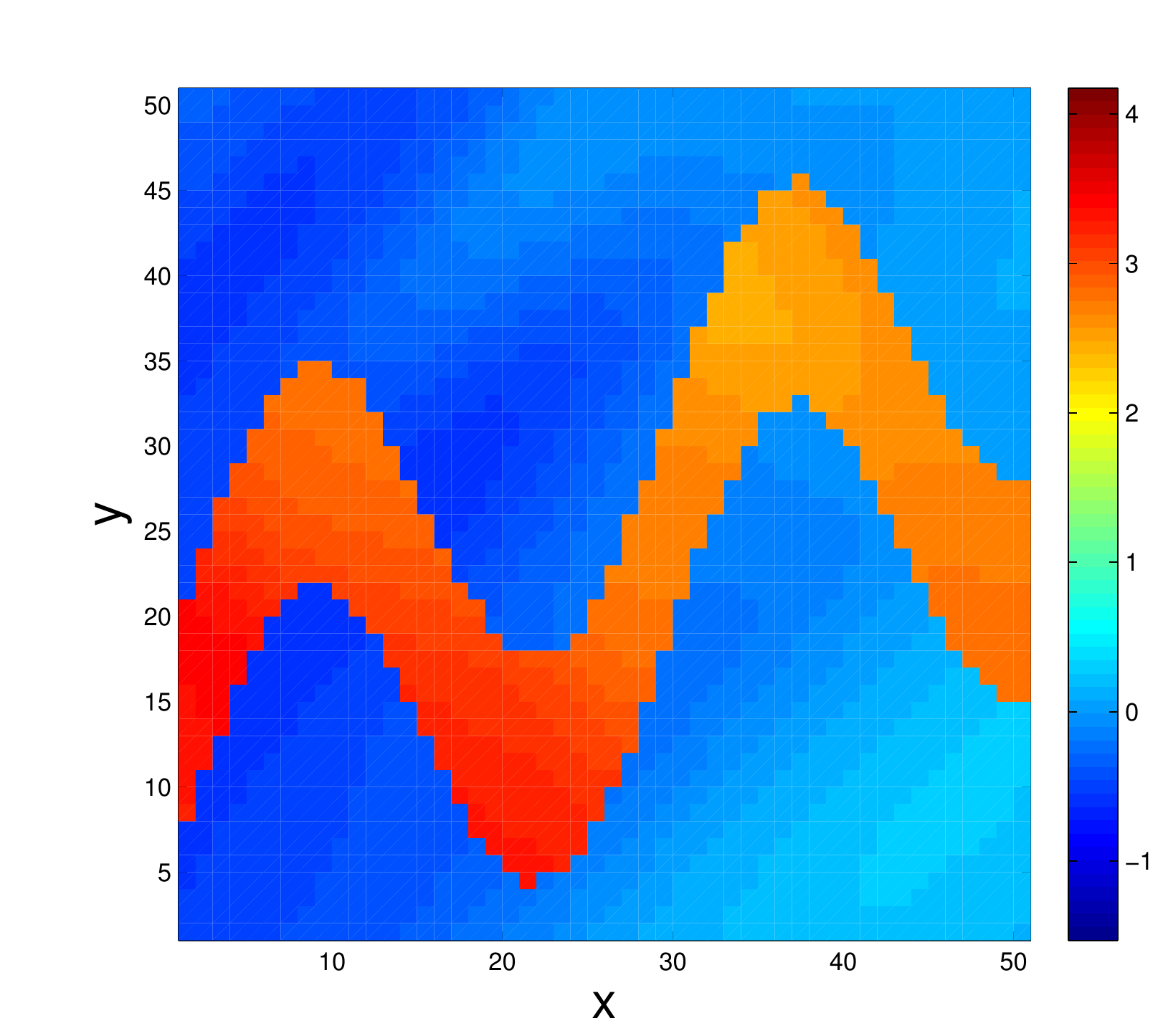}
\includegraphics[scale=0.30]{Fault_wells}

 \caption{Left: true $\log \kappa$. Middle: mean $\log \kappa$.  Right: Measurement configuration.}
    \label{Figure16}
%\end{center}
\end{figure}

\begin{figure}[htbp]
%\begin{center}
\includegraphics[scale=0.30]{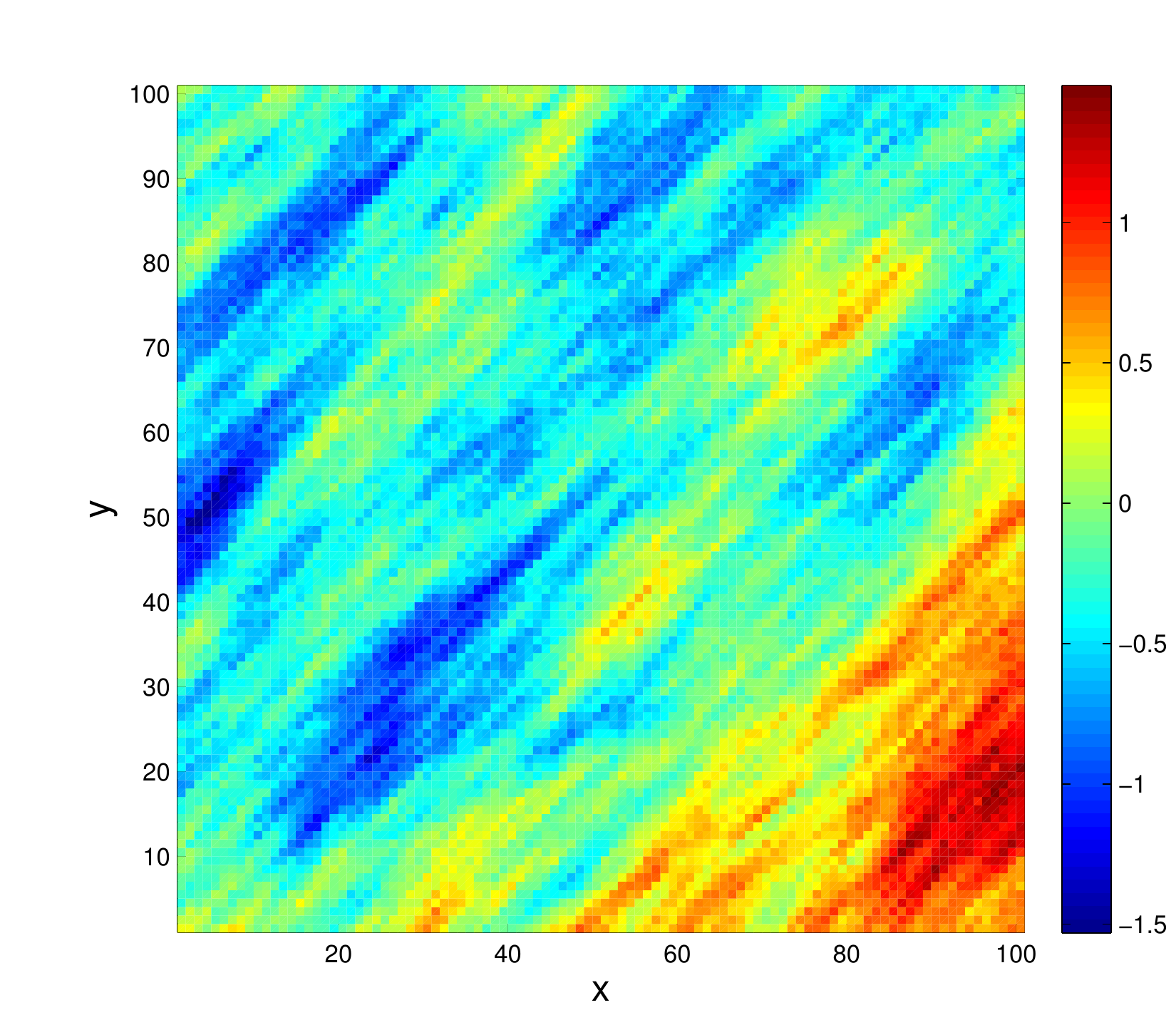}
\includegraphics[scale=0.30]{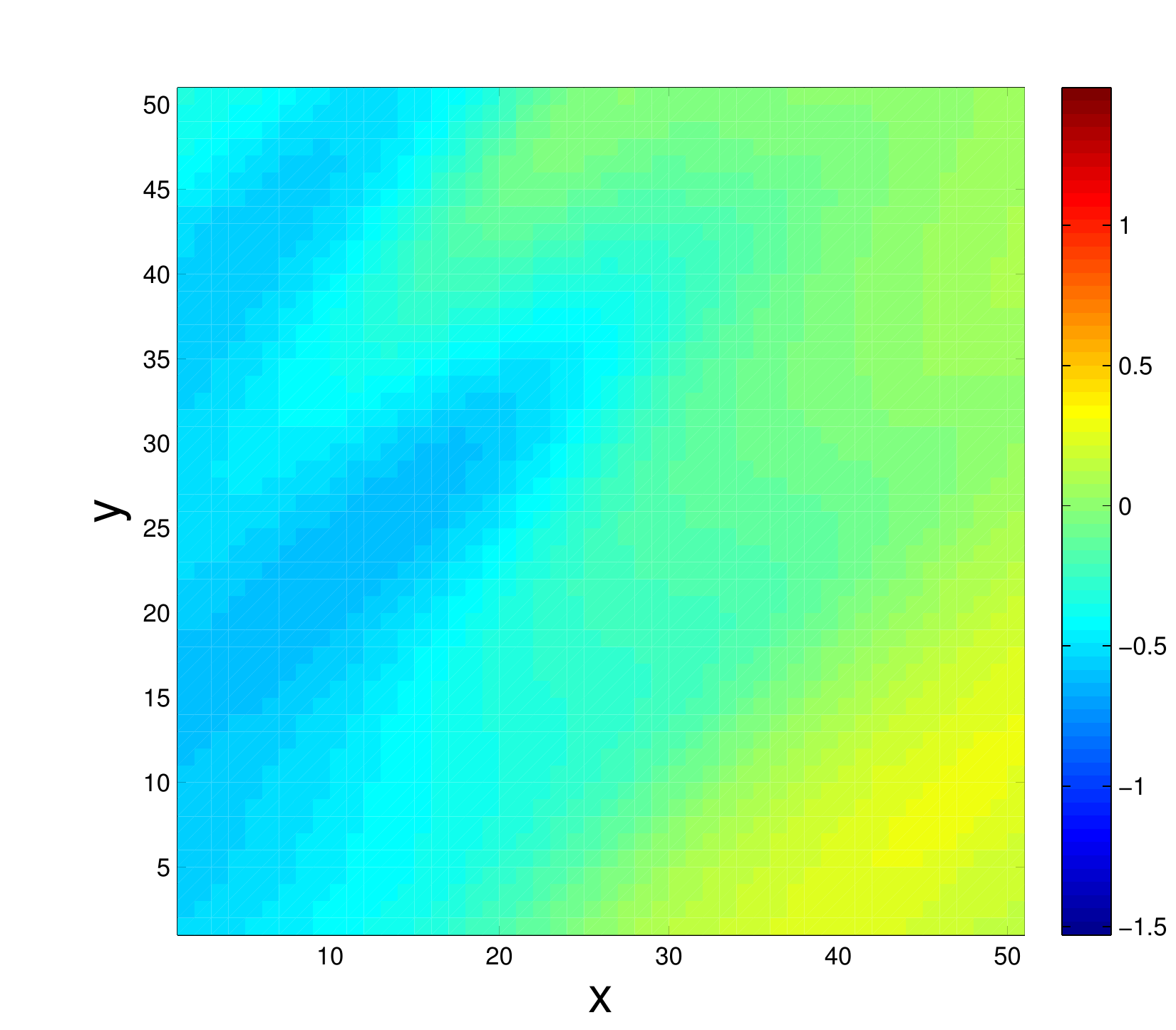}
\includegraphics[scale=0.30]{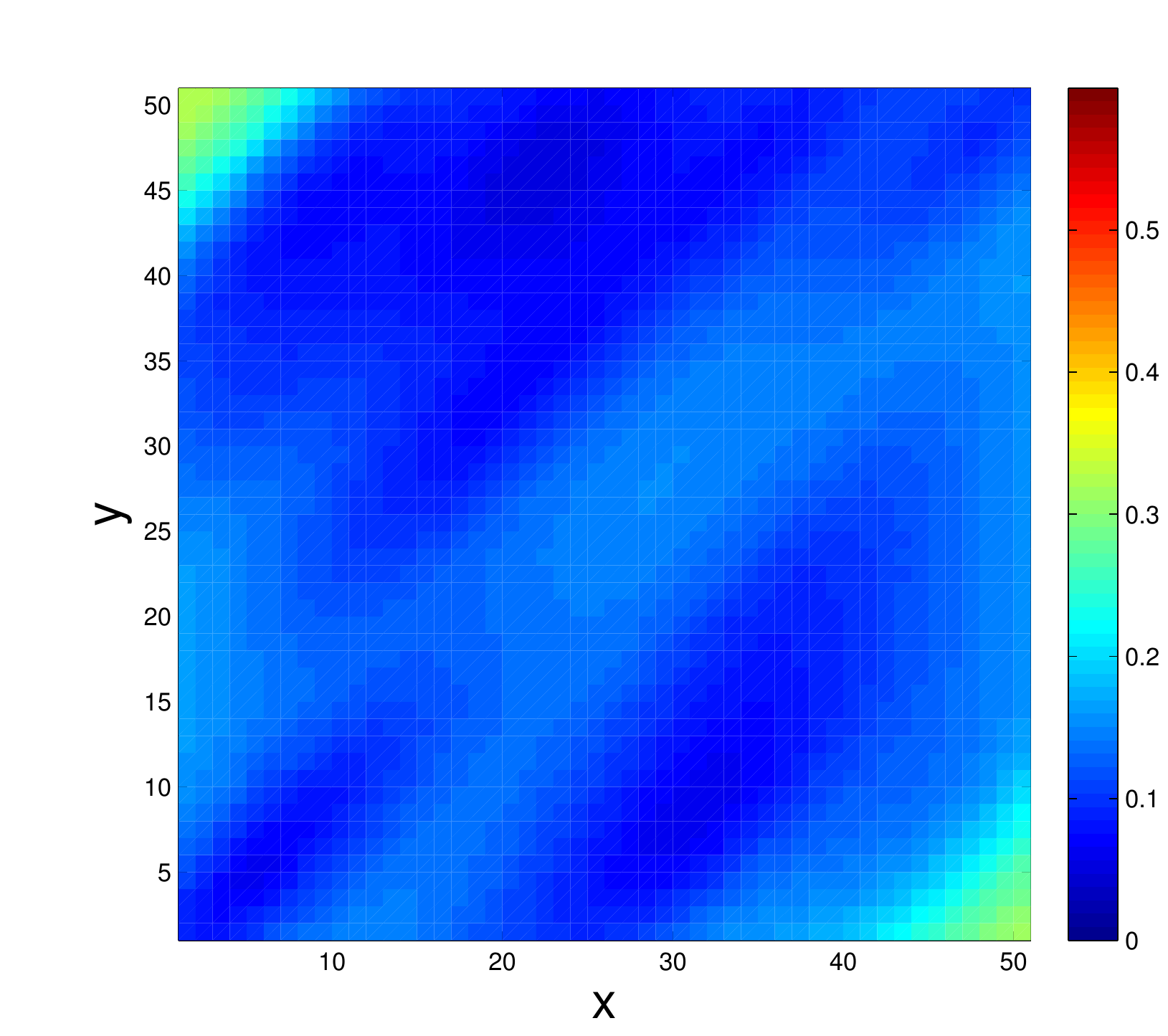}\\
\includegraphics[scale=0.30]{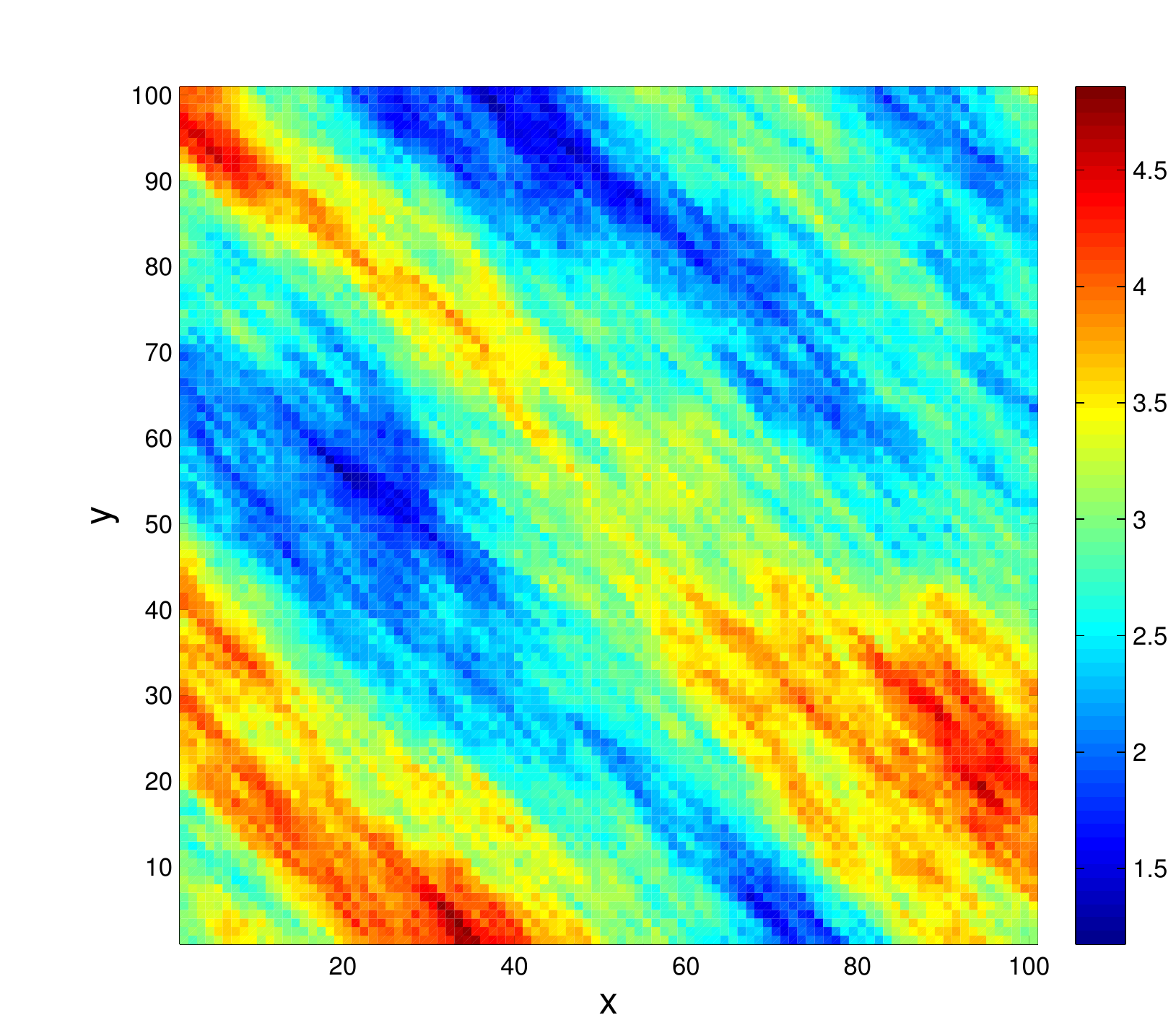}
\includegraphics[scale=0.30]{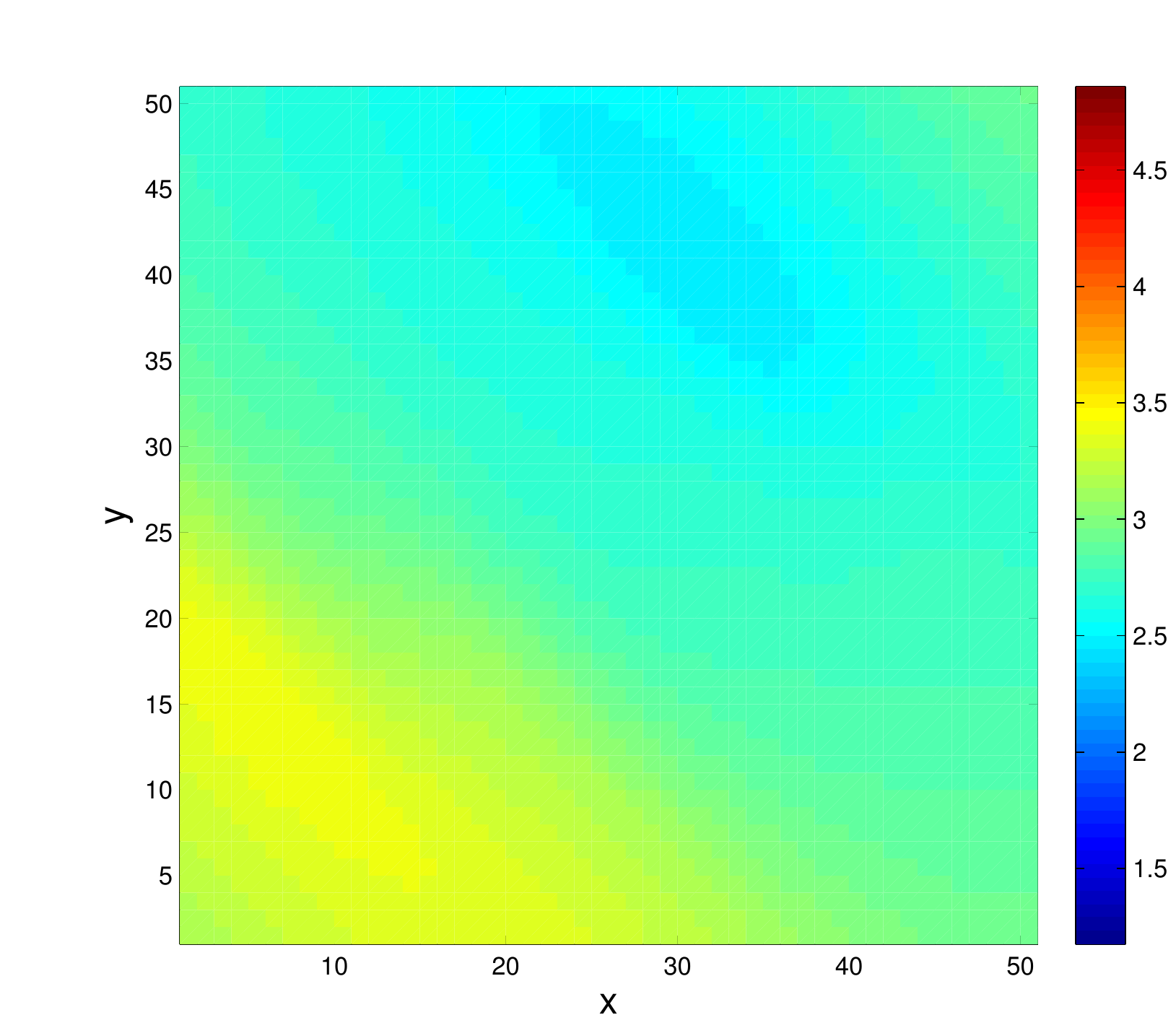}
\includegraphics[scale=0.30]{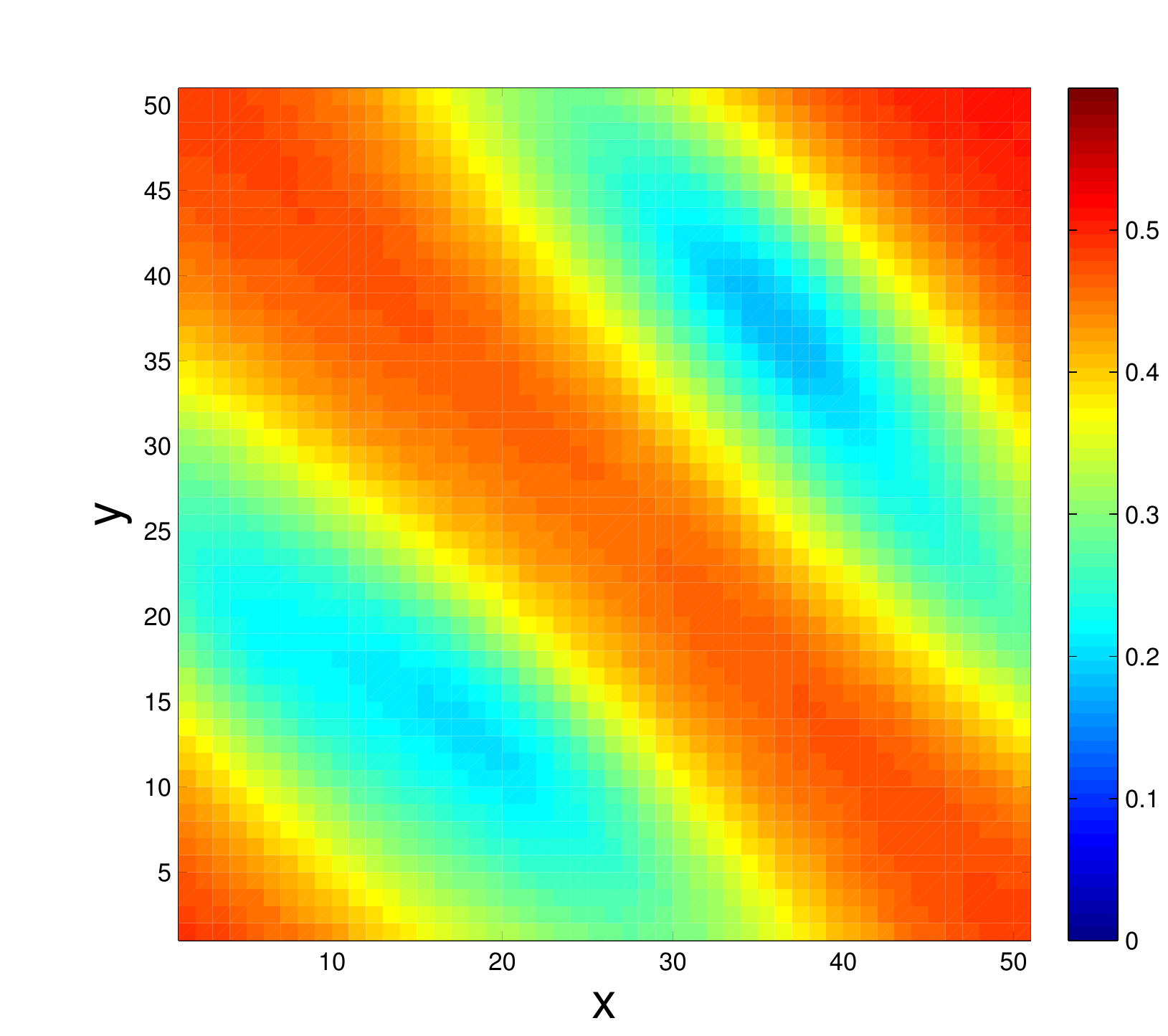}
 \caption{From left to right: truth, mean and variance of $\log \kappa_{1}$ (top) and $\log \kappa_{2}$ (bottom)}
    \label{Figure17}
%\end{center}
\end{figure}

\begin{figure}[htbp]
\begin{center}
\includegraphics[scale=0.2825]{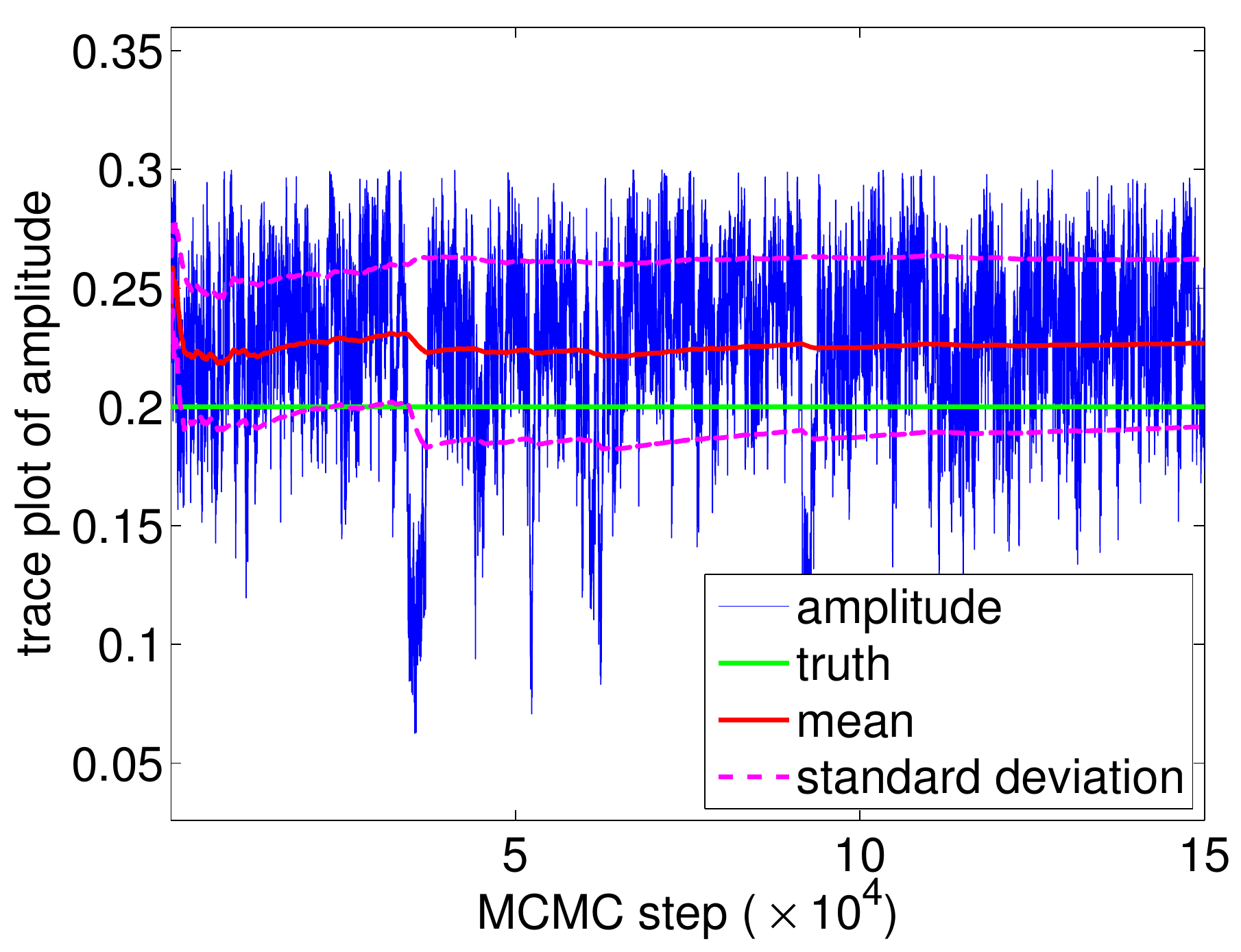}
\includegraphics[scale=0.2825]{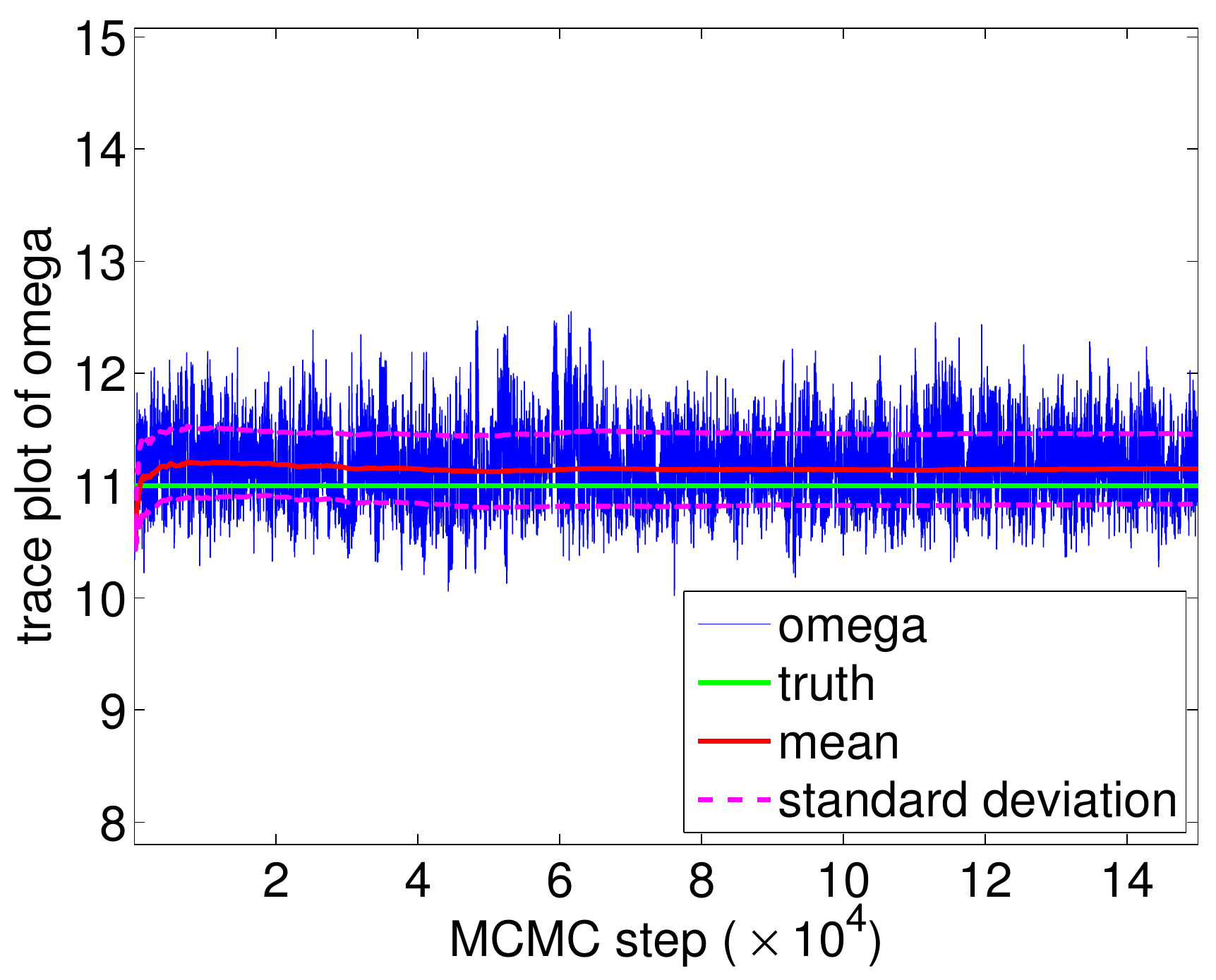}
\includegraphics[scale=0.2825]{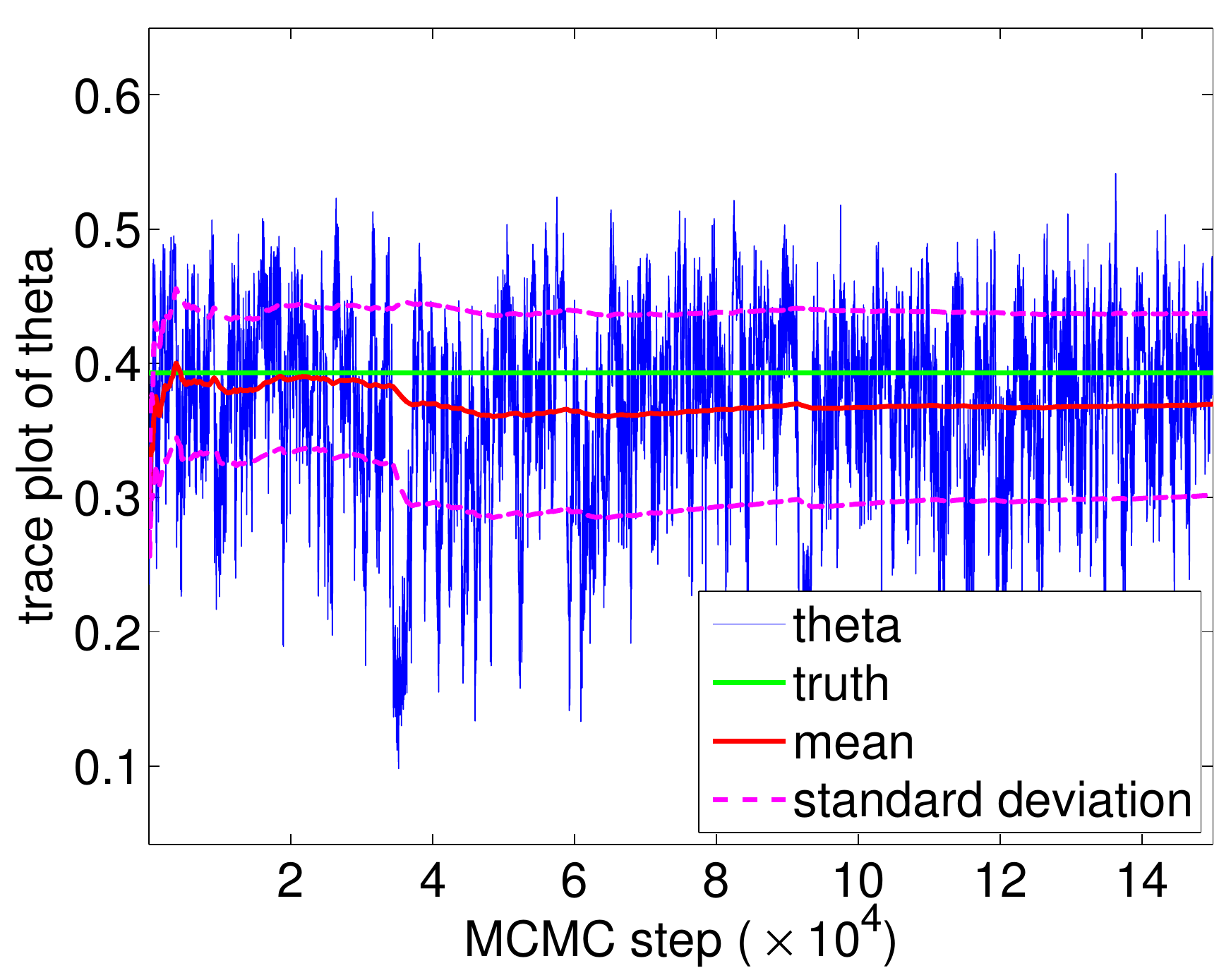}\\
\includegraphics[scale=0.2825]{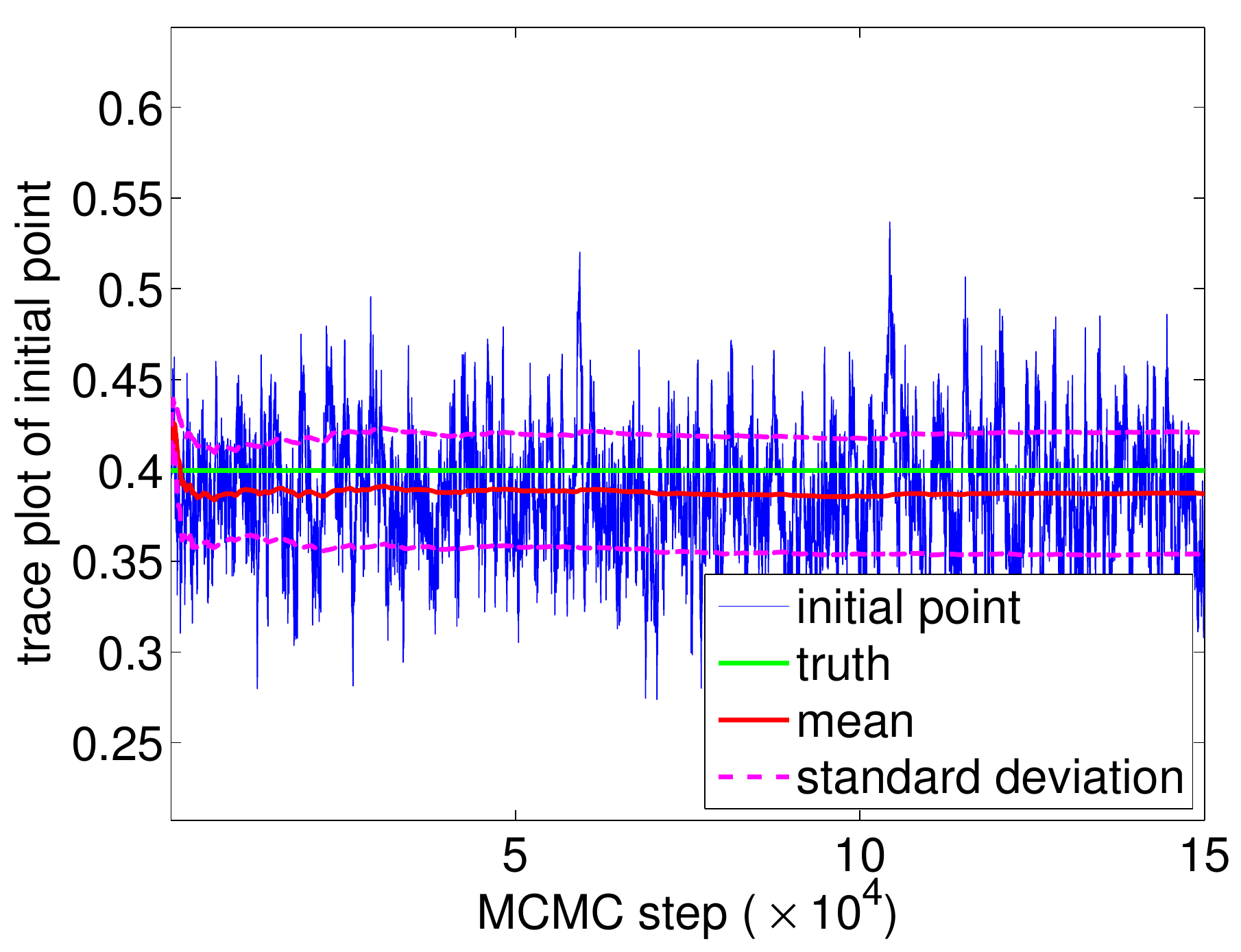}
\includegraphics[scale=0.2825]{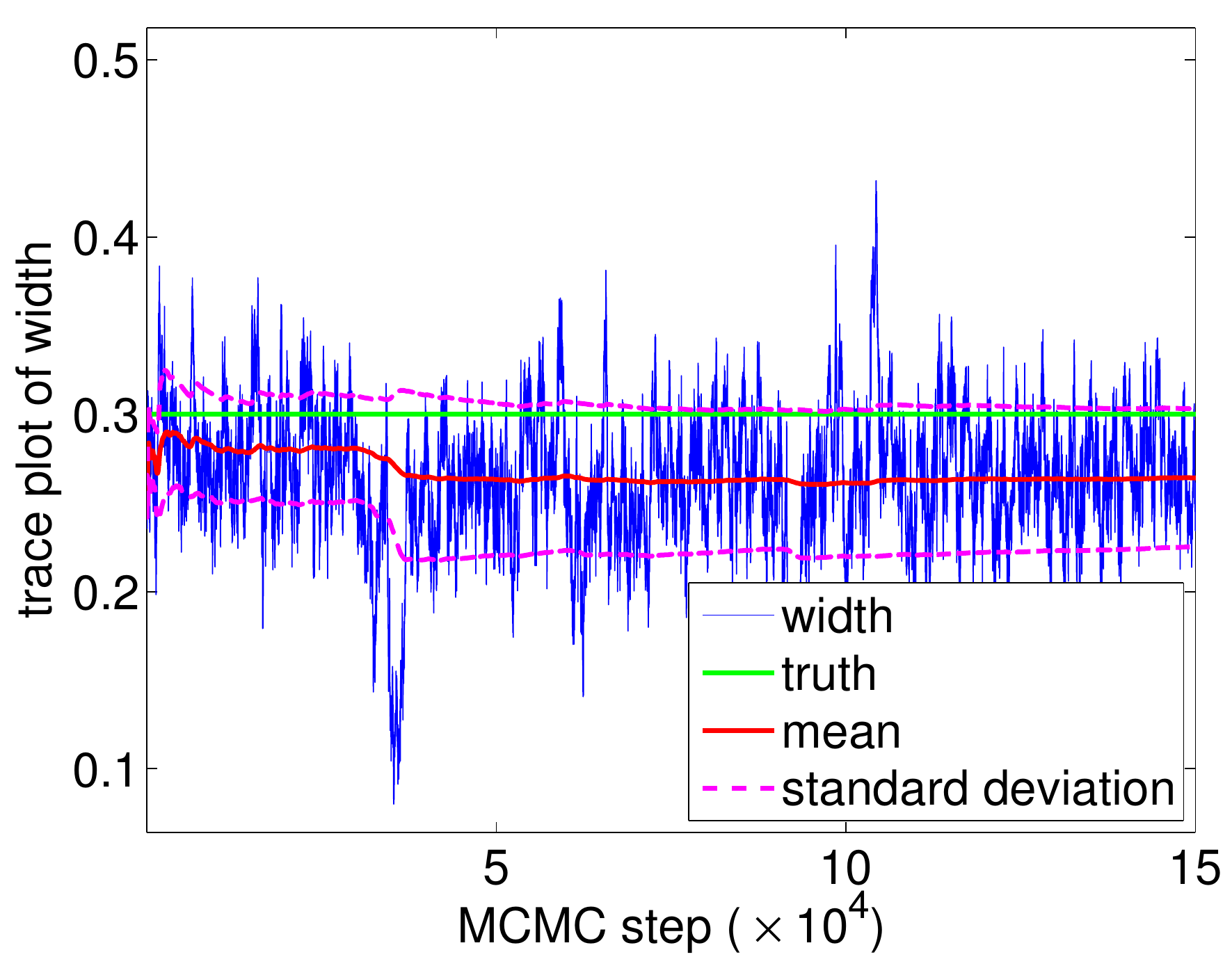}
\includegraphics[scale=0.2825]{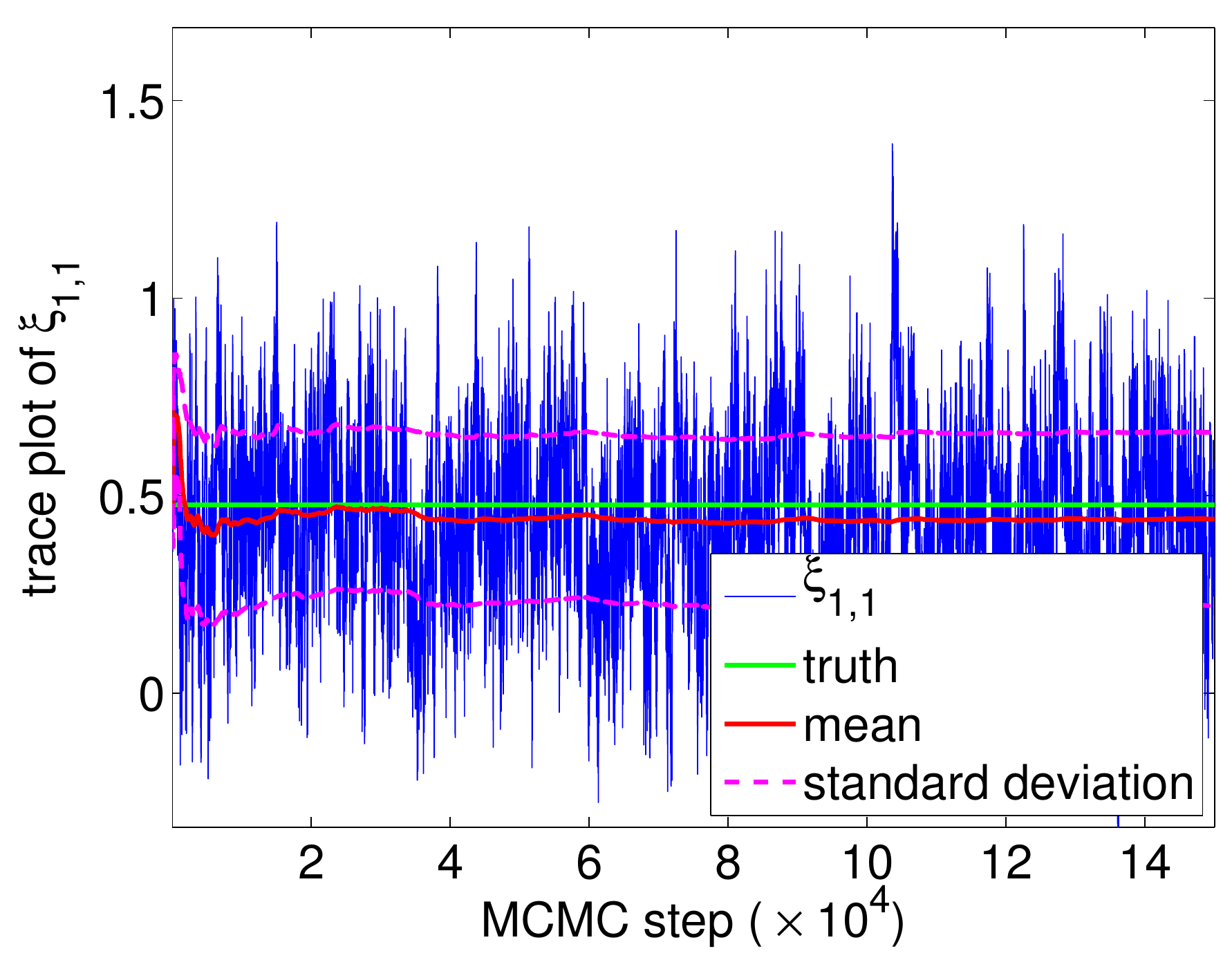}\\
\includegraphics[scale=0.2825]{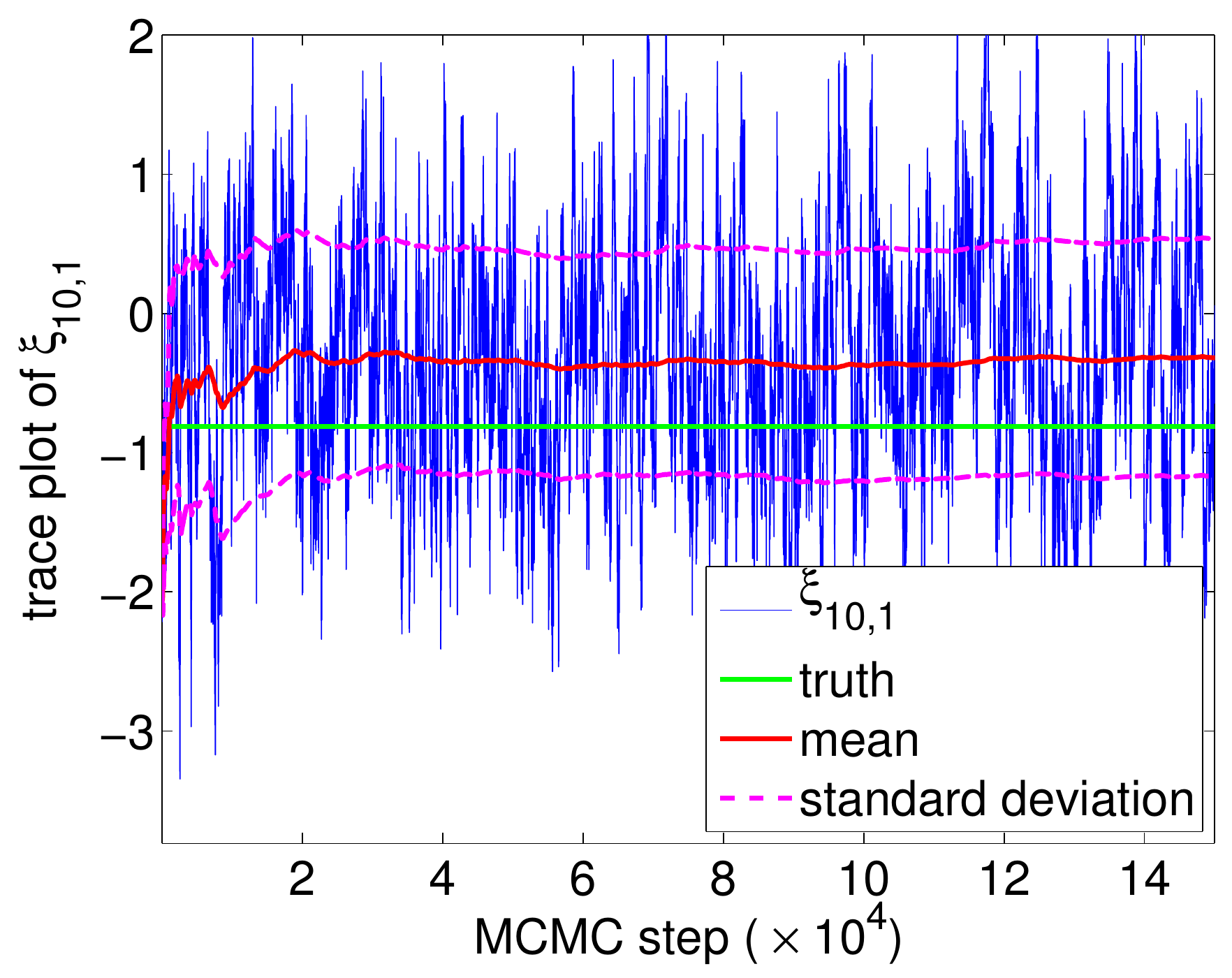}
\includegraphics[scale=0.2825]{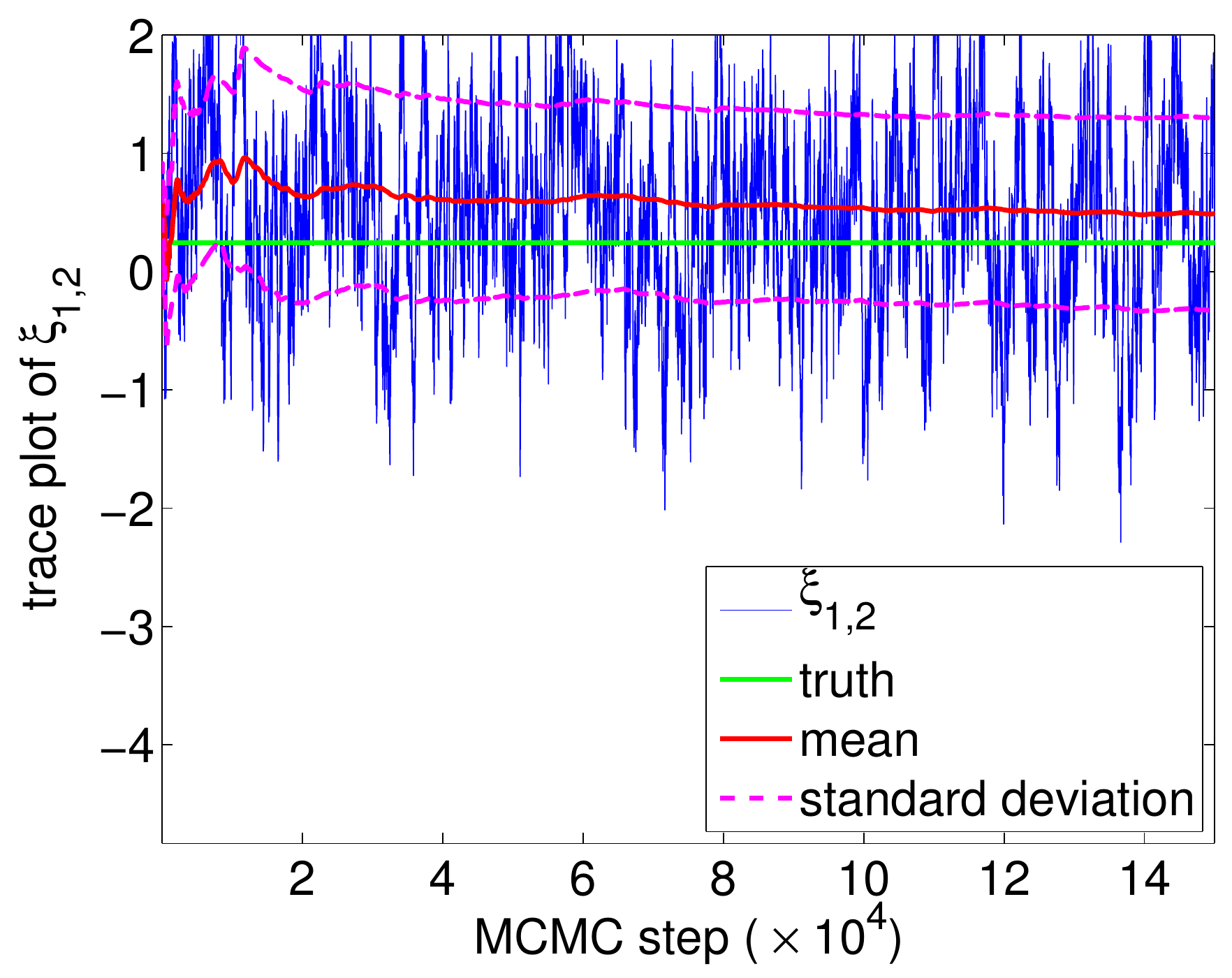}
\includegraphics[scale=0.2825]{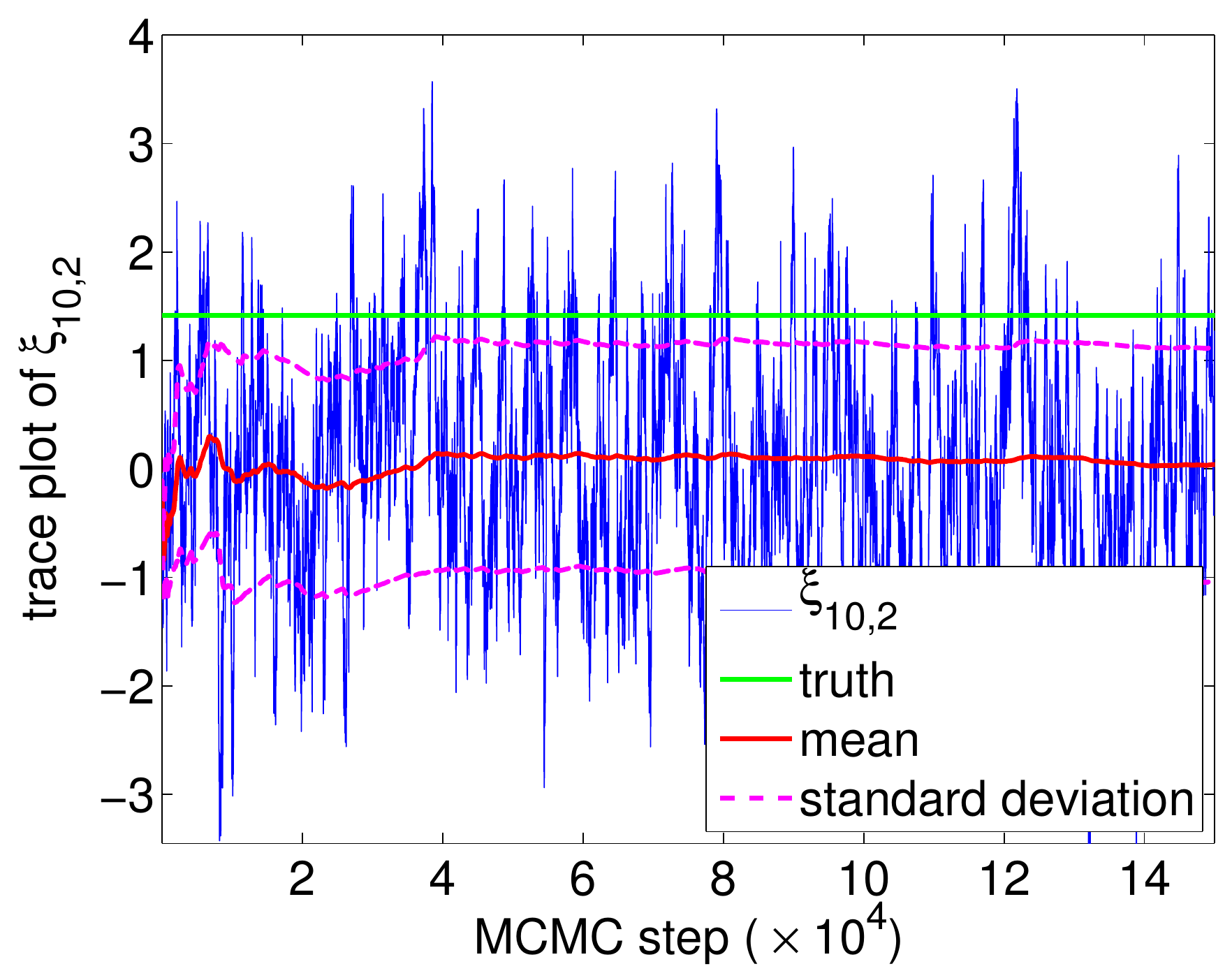}
 \caption{Trace plots from one MCMC chain.}
    \label{Figure19}
\end{center}
\end{figure}

\begin{figure}[htbp]
\begin{center}
\includegraphics[scale=0.2]{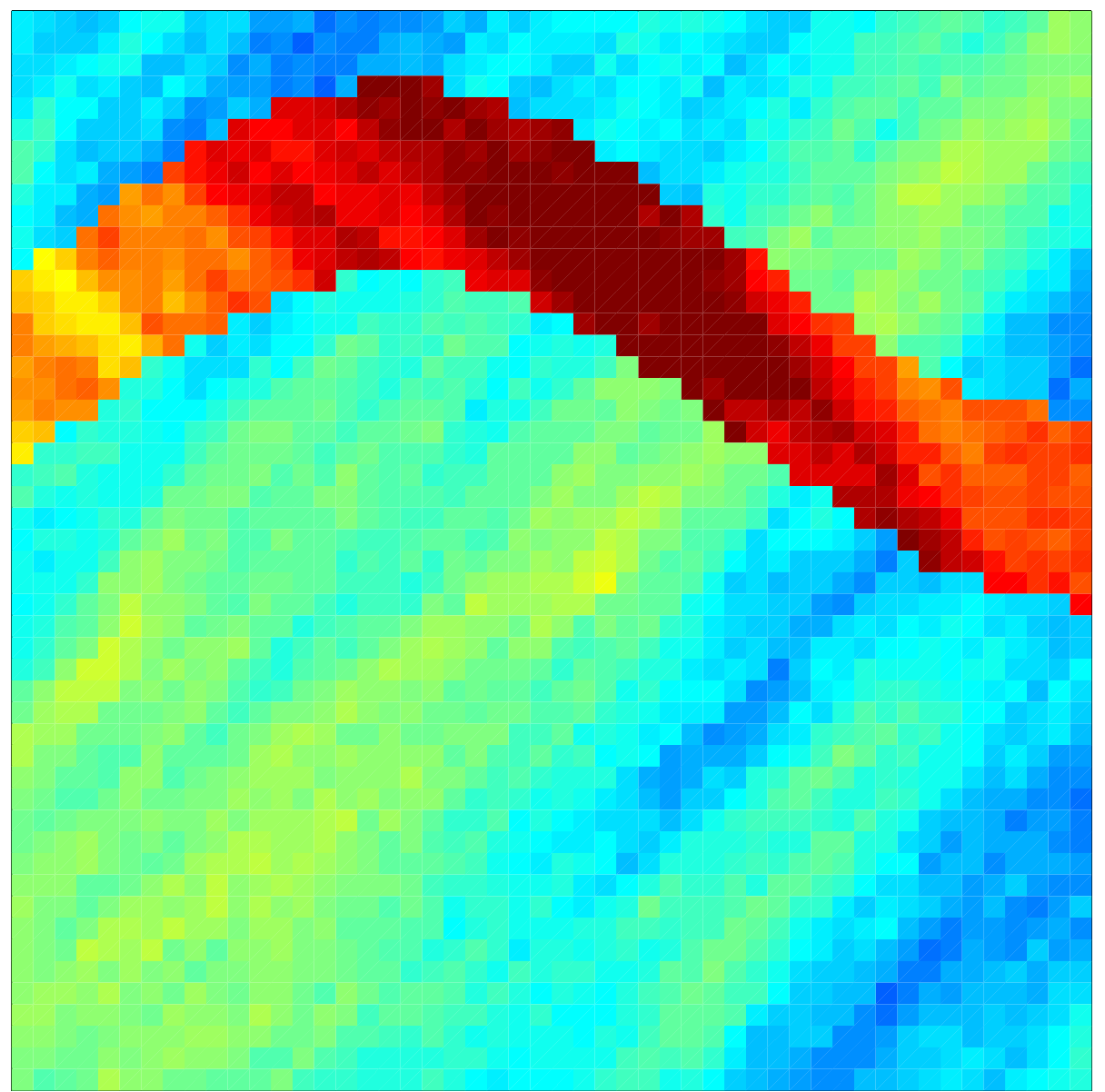}
\includegraphics[scale=0.2]{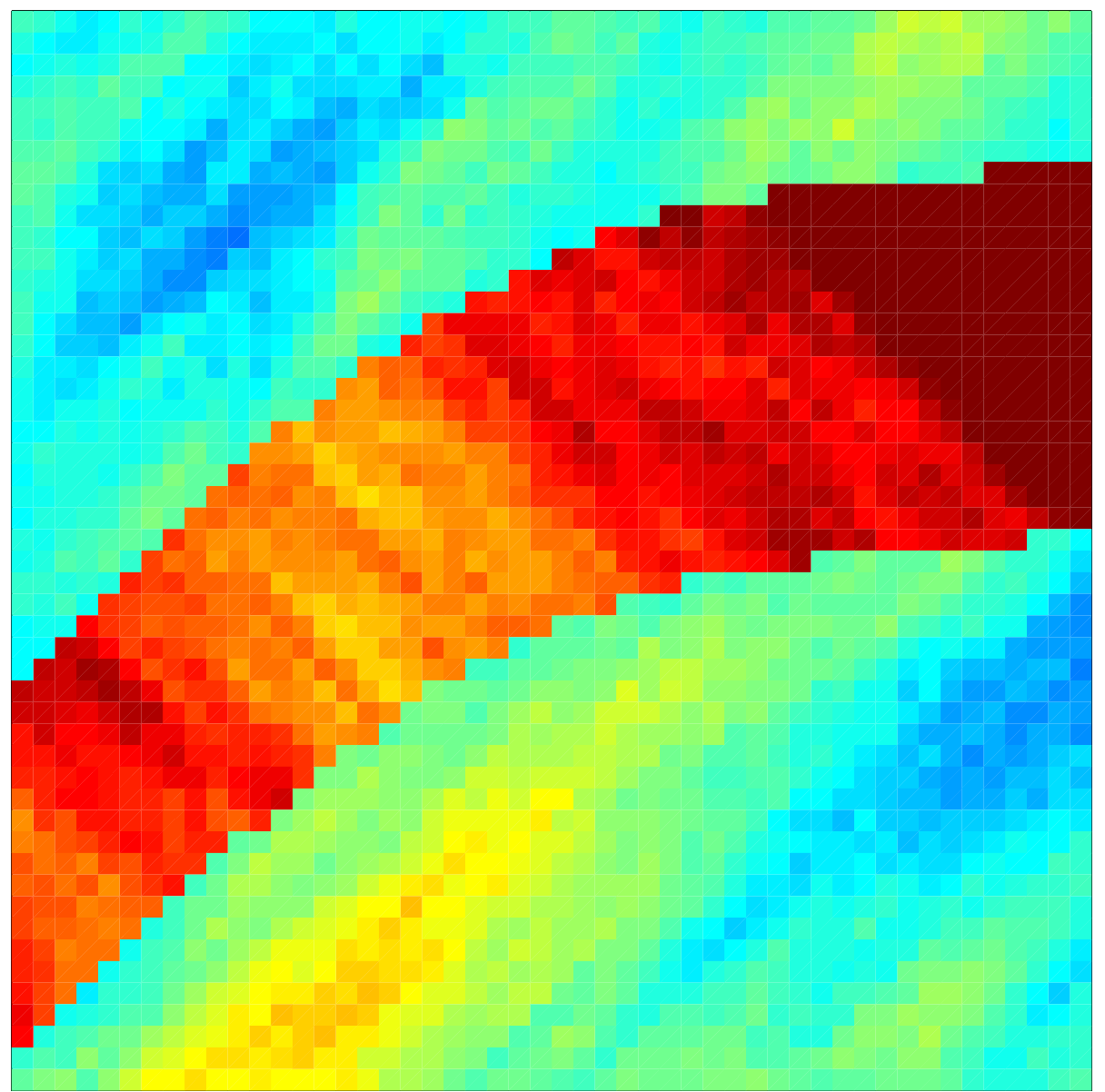}
\includegraphics[scale=0.2]{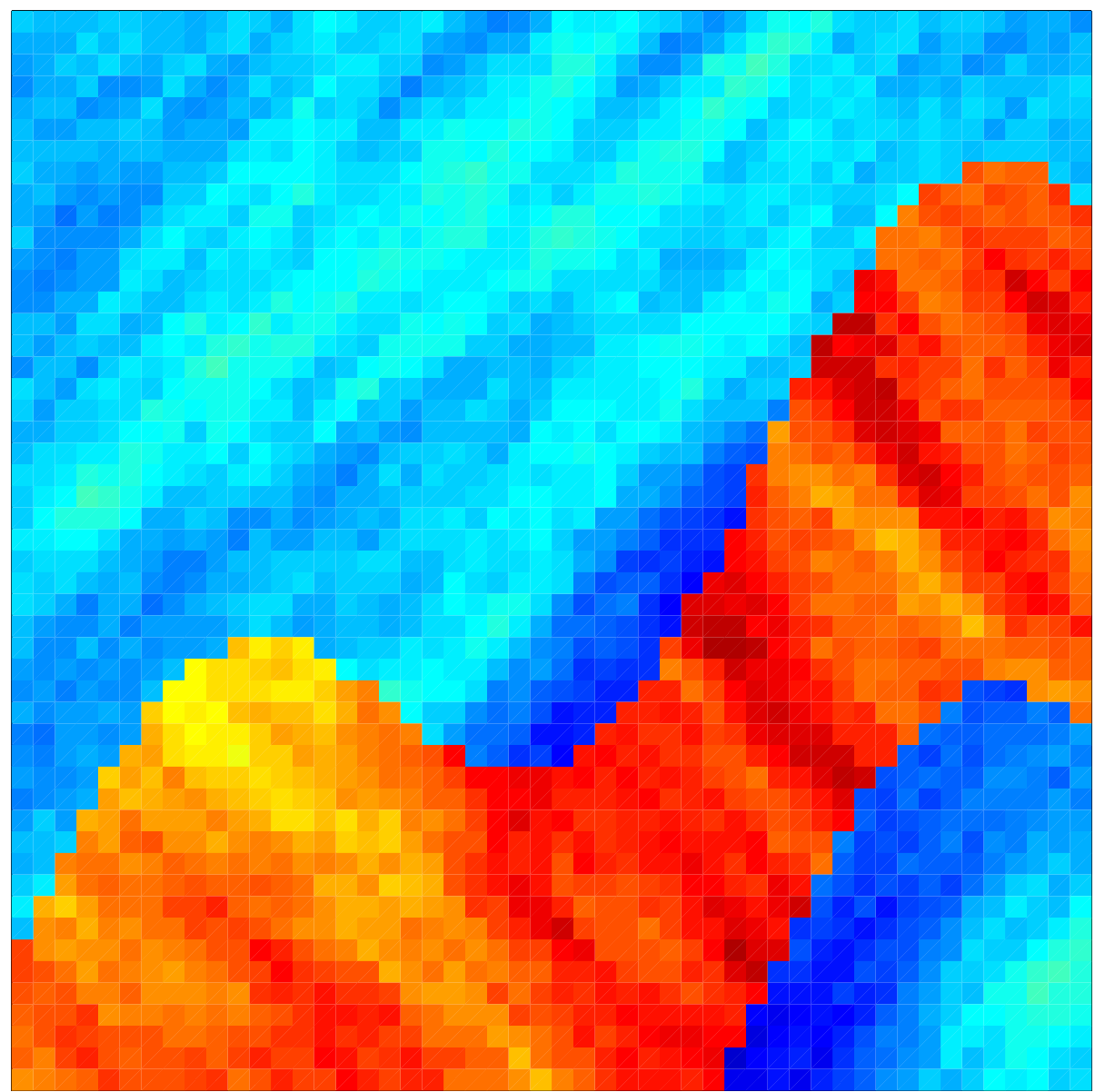}
\includegraphics[scale=0.2]{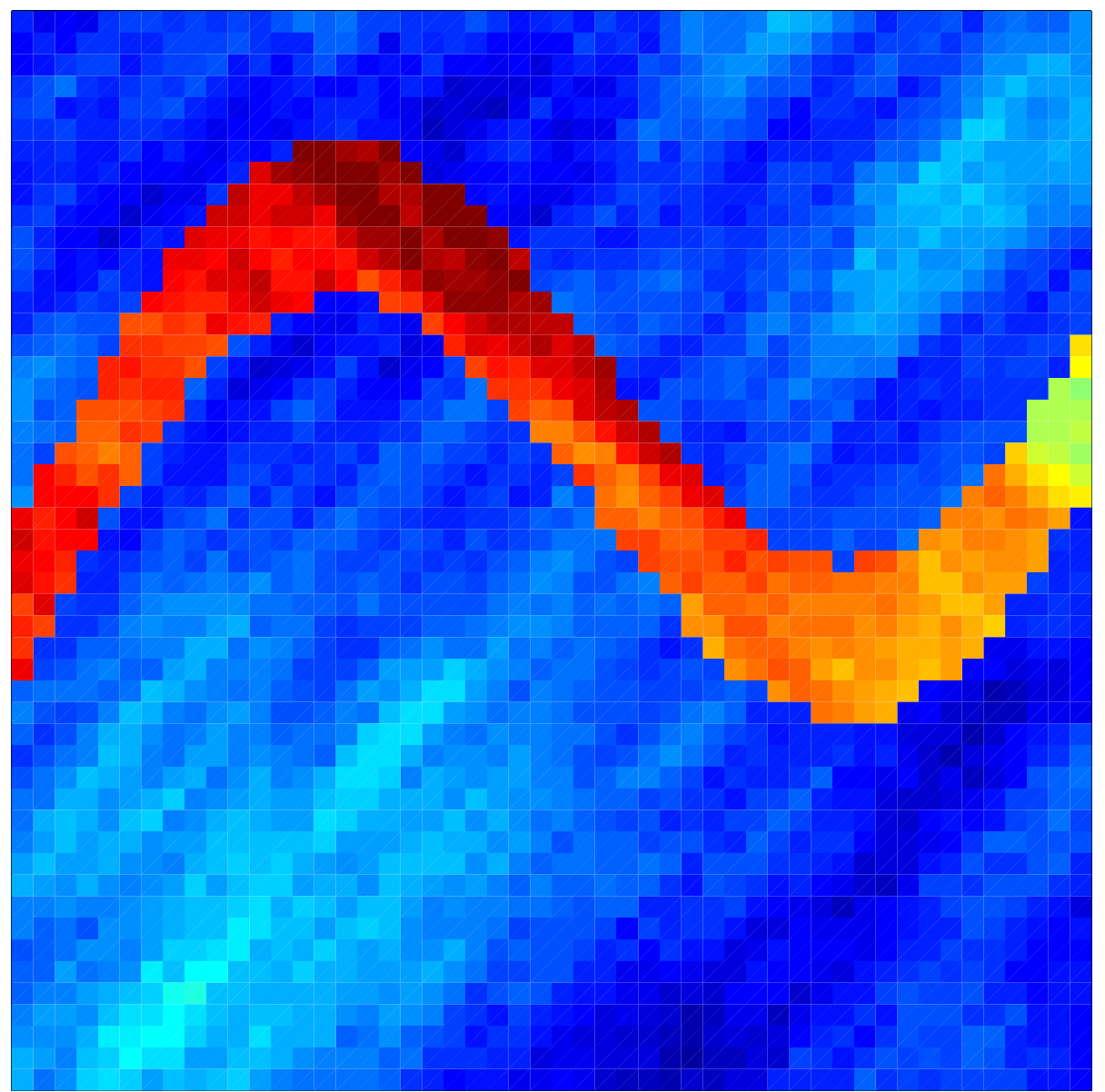}
\includegraphics[scale=0.2]{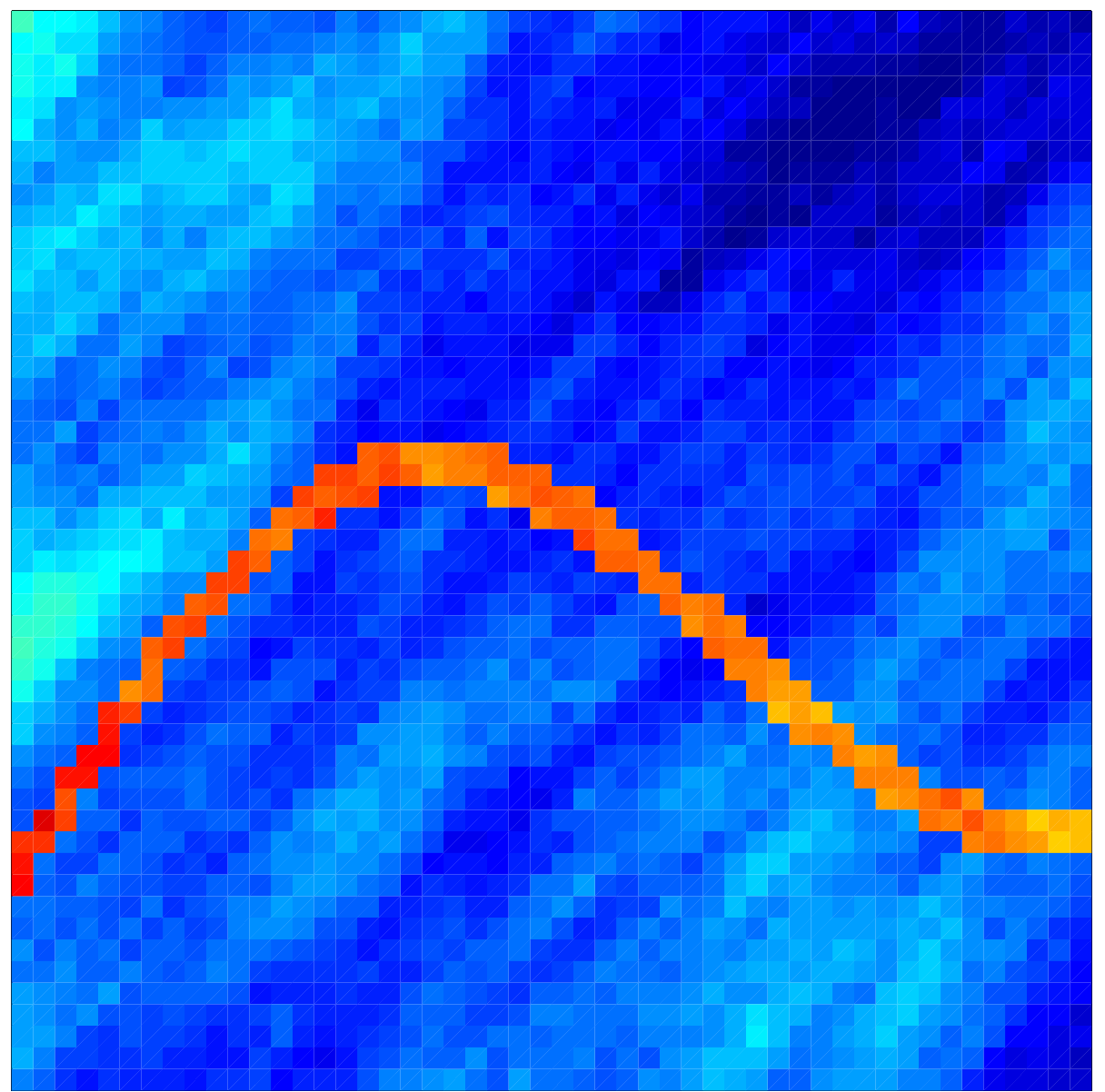}
\includegraphics[scale=0.2]{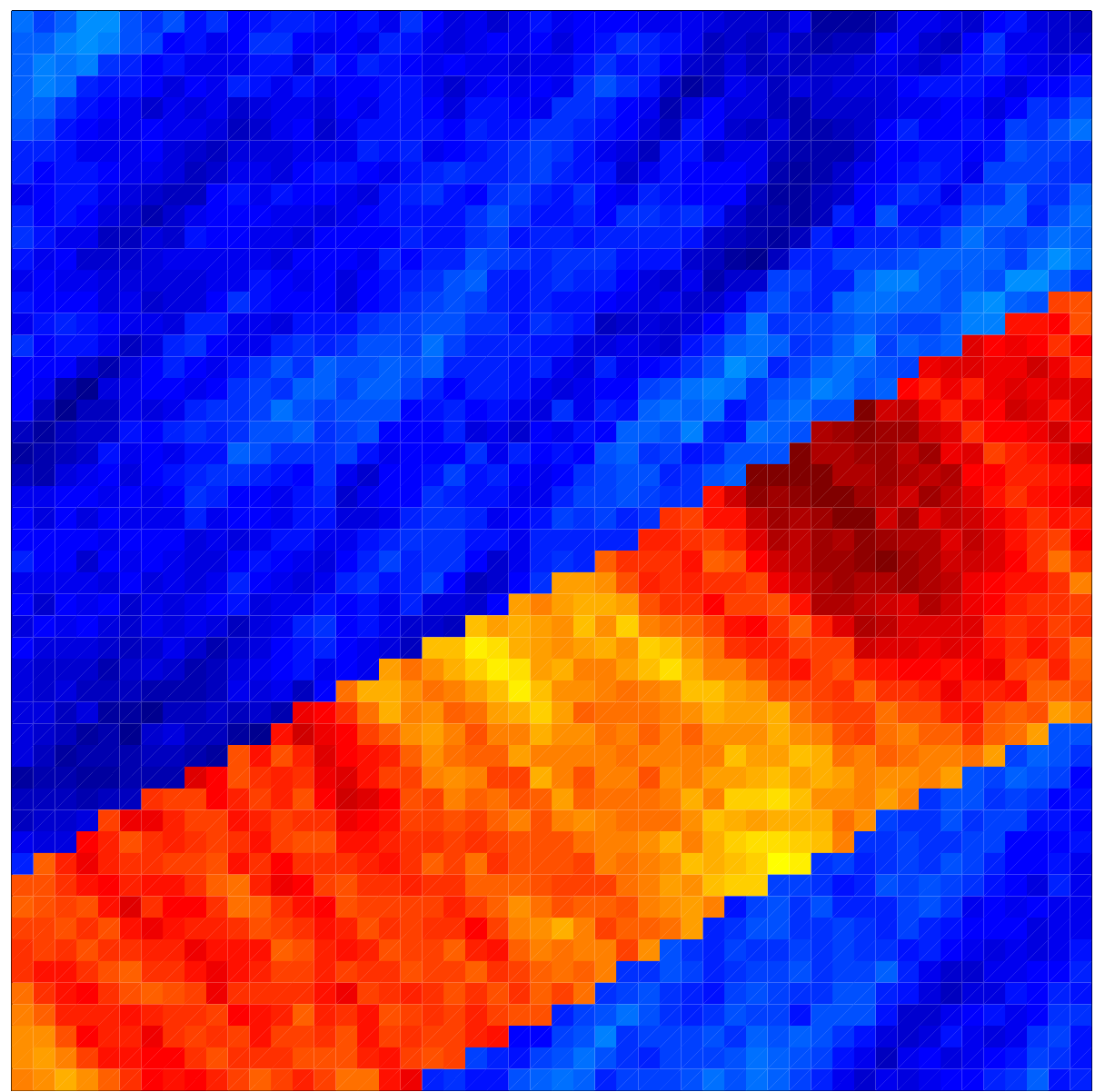}\\
\includegraphics[scale=0.2]{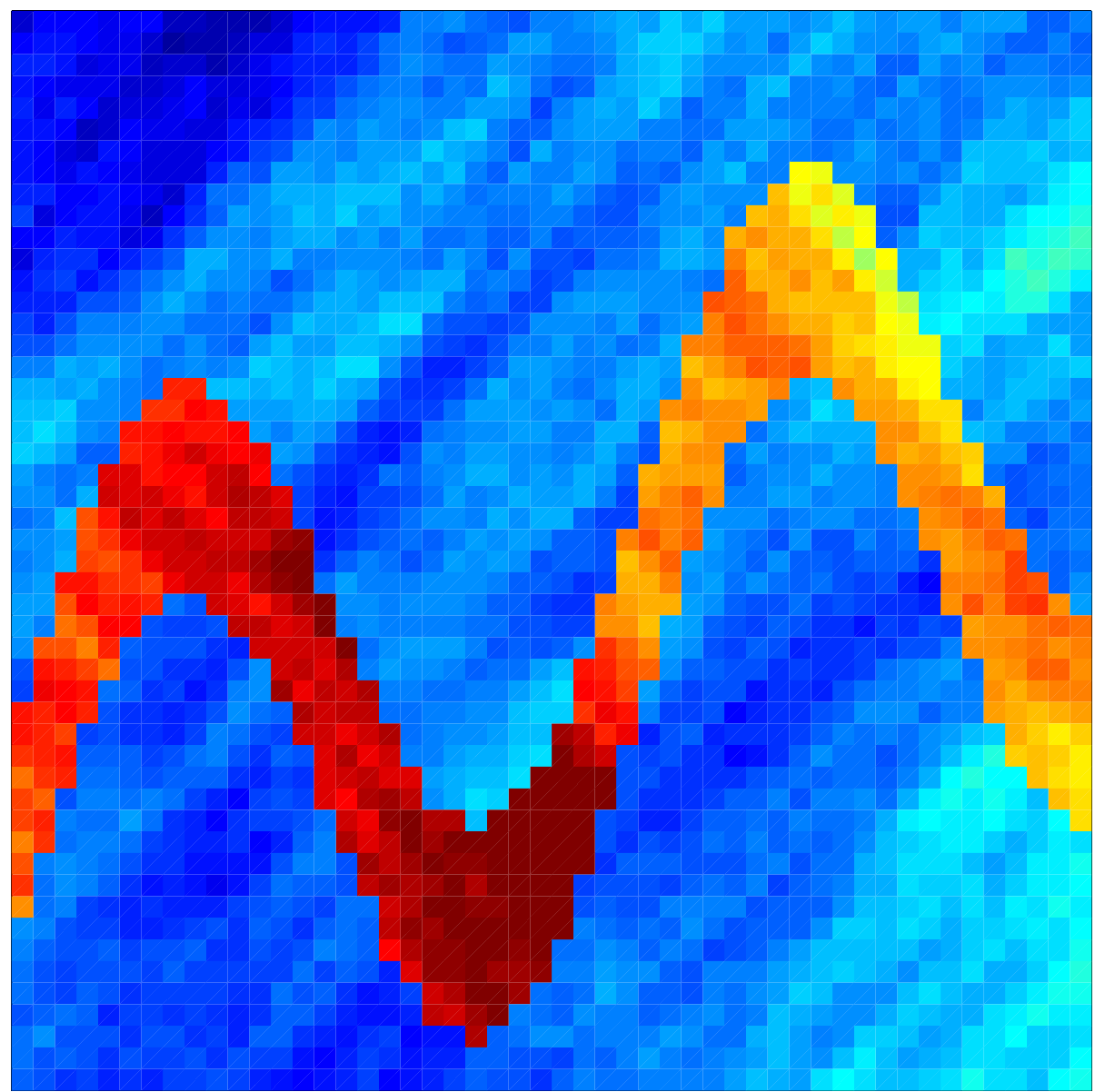}
\includegraphics[scale=0.2]{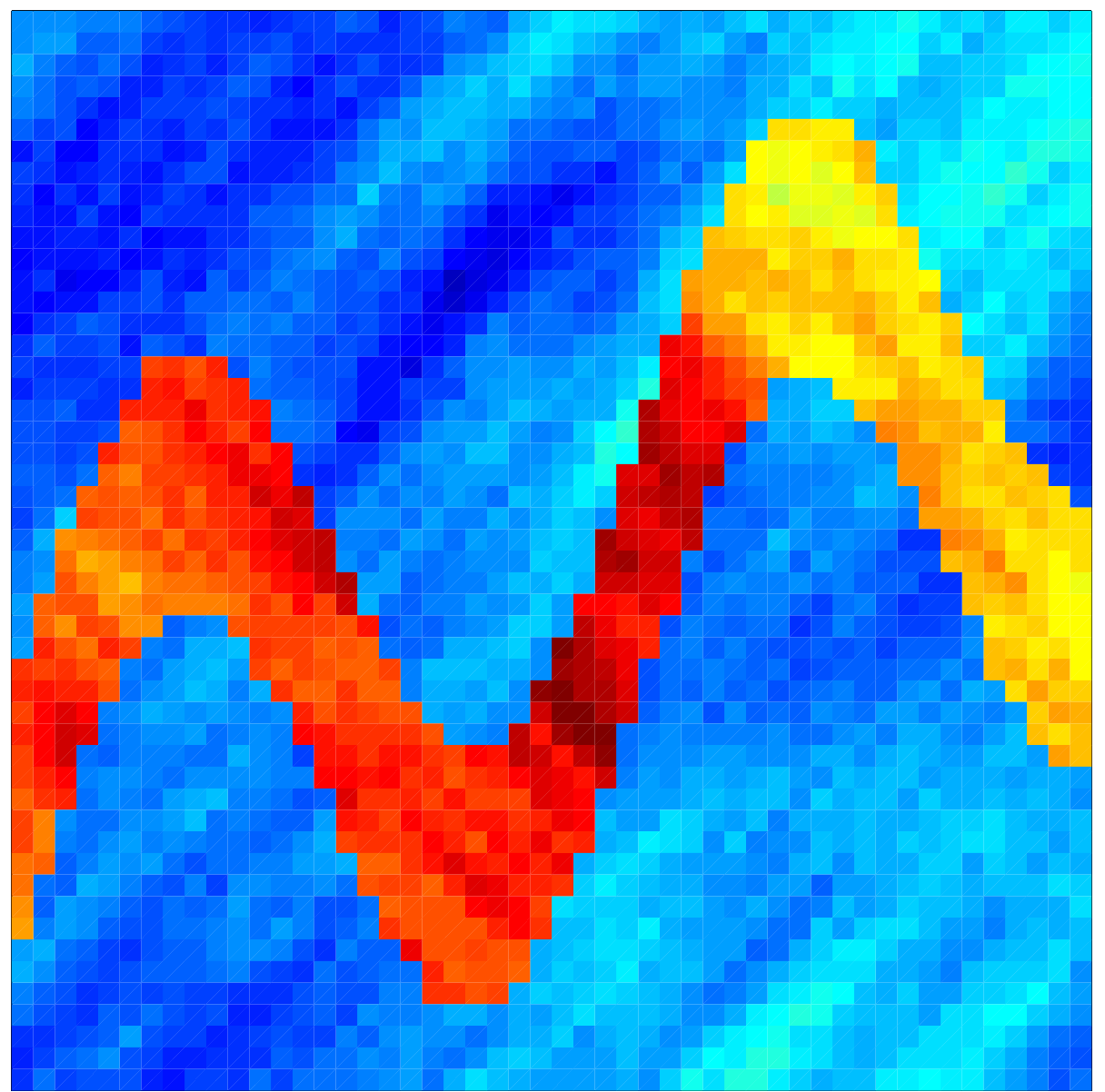}
\includegraphics[scale=0.2]{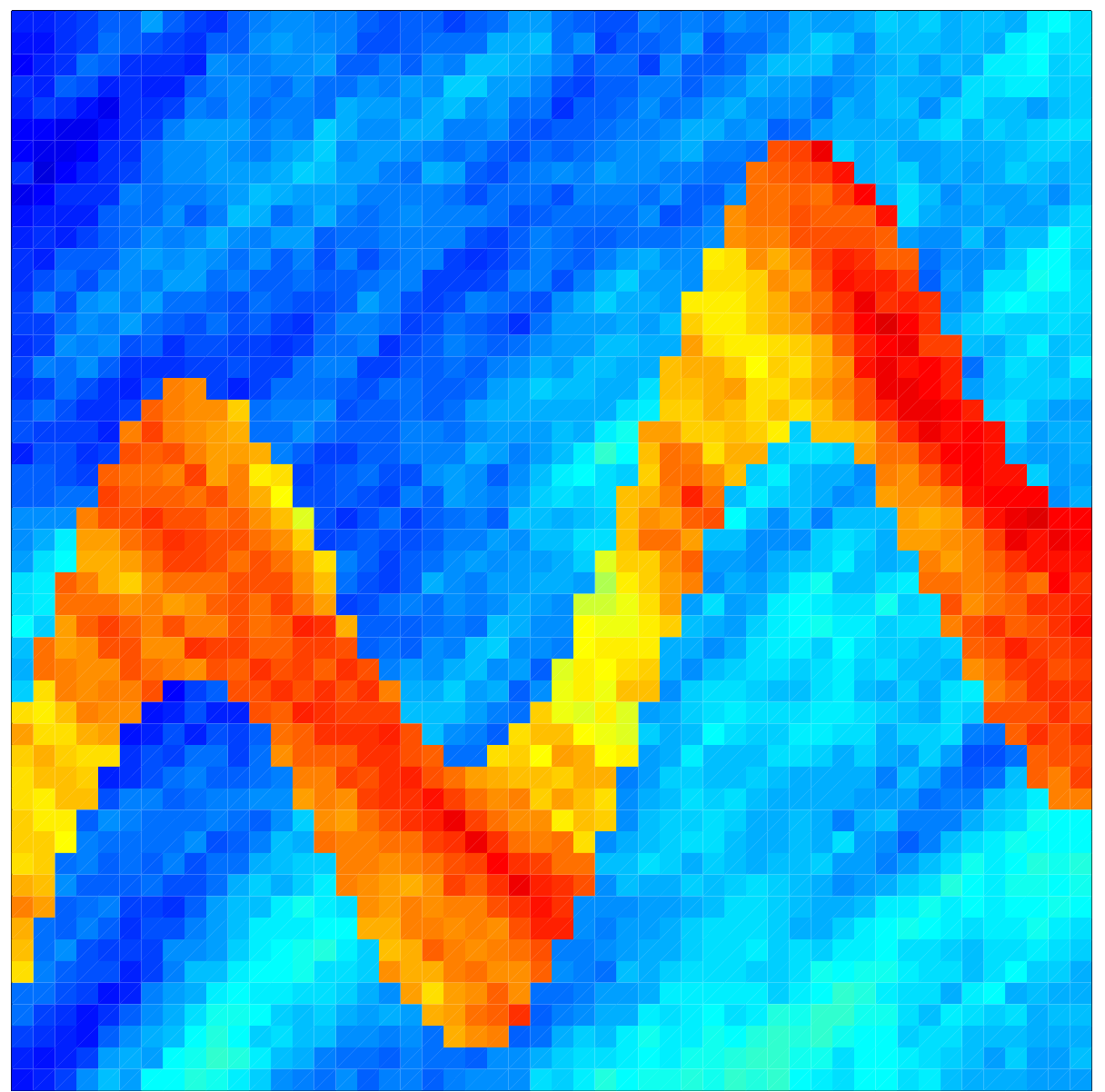}
\includegraphics[scale=0.2]{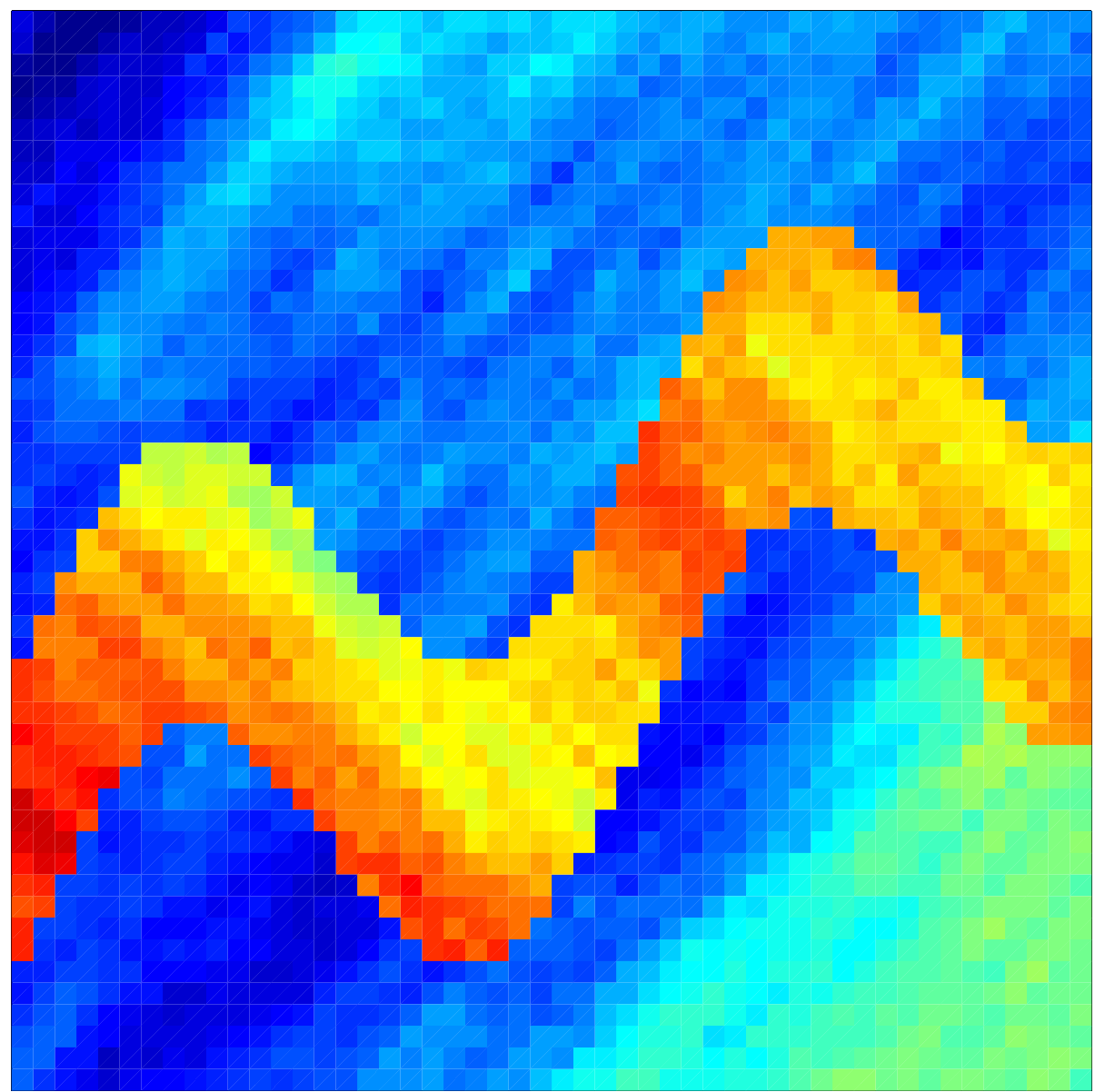}
\includegraphics[scale=0.2]{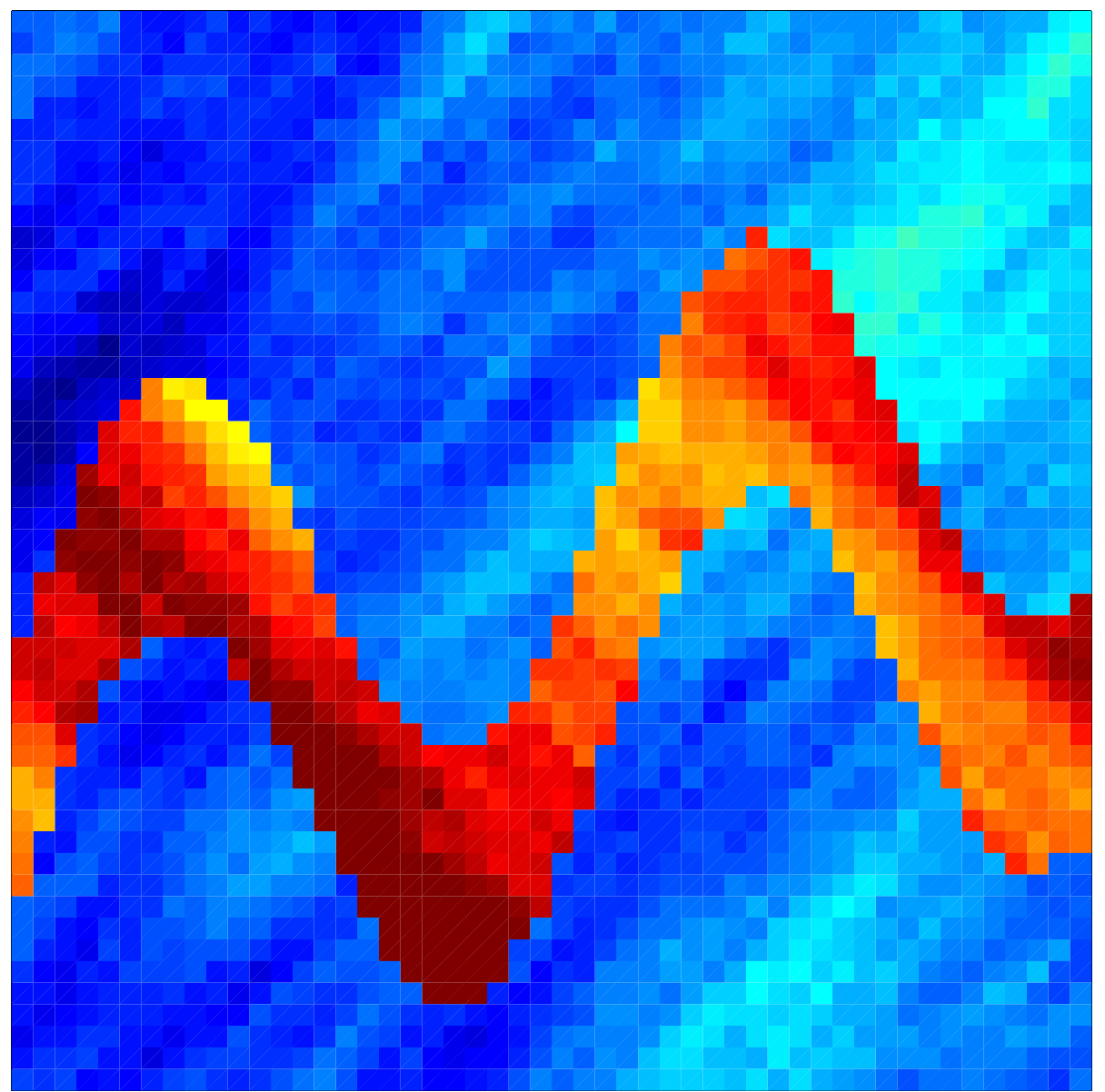}
\includegraphics[scale=0.2]{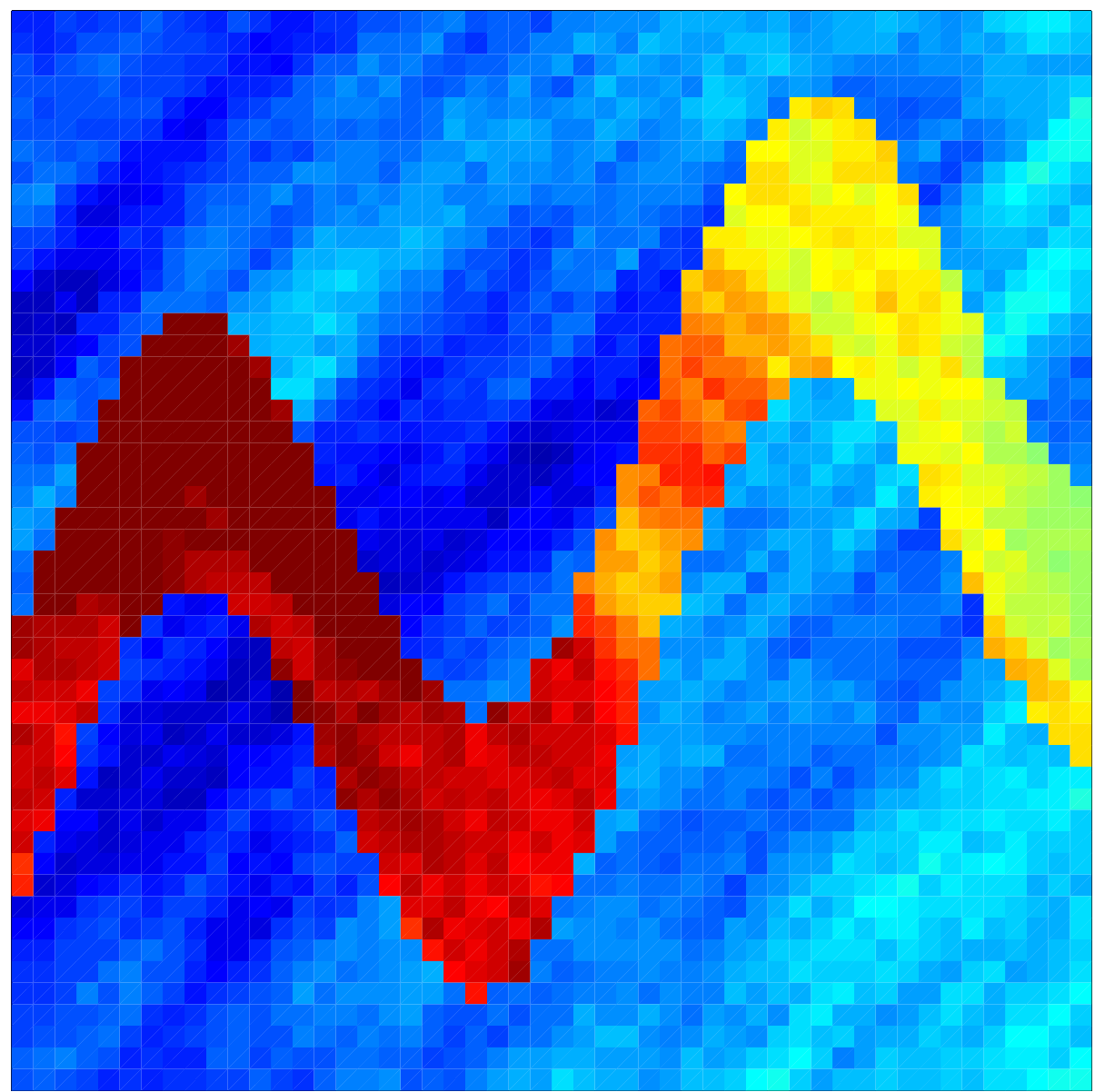}
 \caption{$\log \kappa$'s defined by (\ref{eq:num3}) (for the channelized geometry) from samples of the prior (top row) and the posterior(bottom-row)} \label{Figure18}
\end{center}
\end{figure}

\begin{figure}[htbp]
\begin{center}
\includegraphics[scale=0.35]{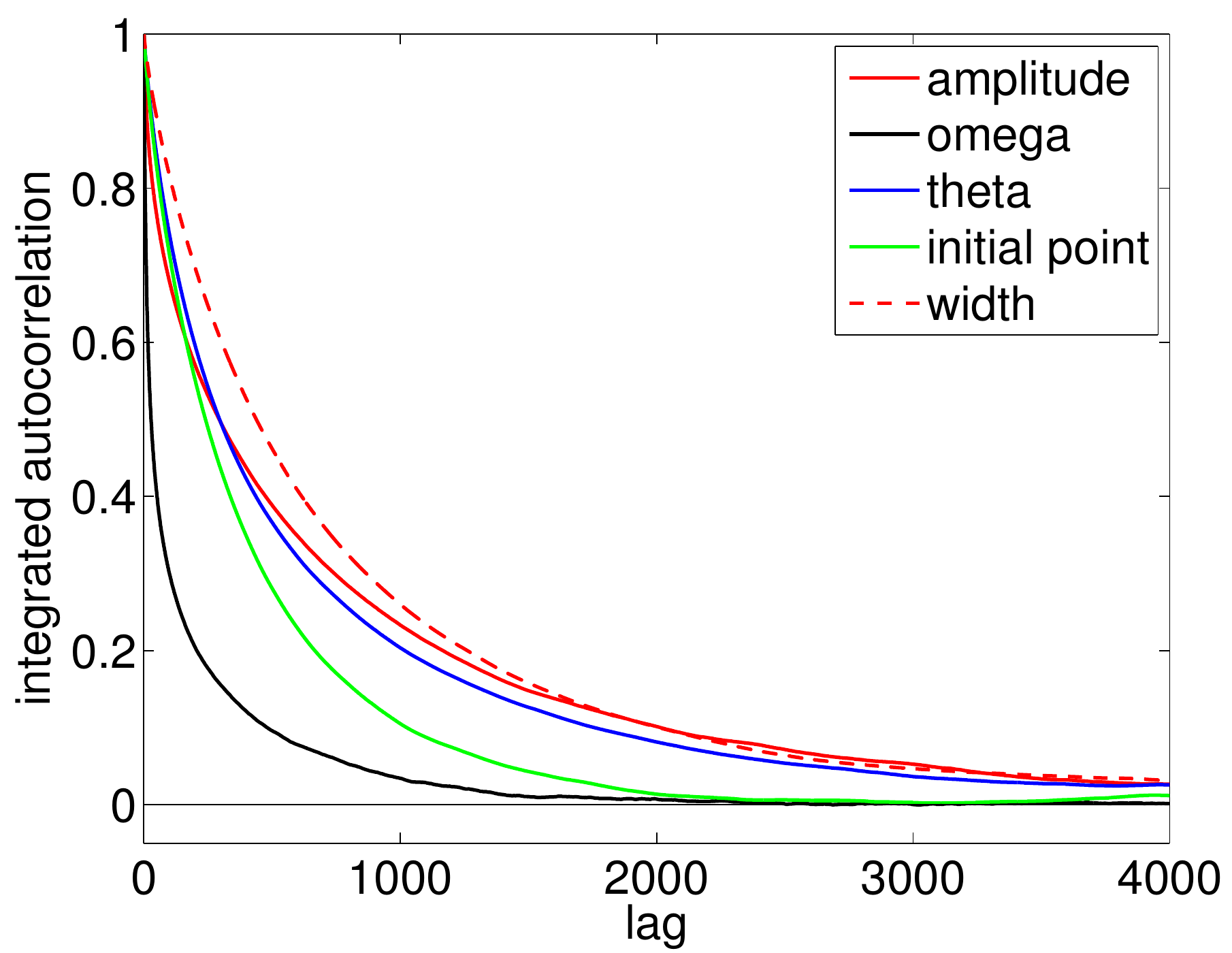}
\includegraphics[scale=0.35]{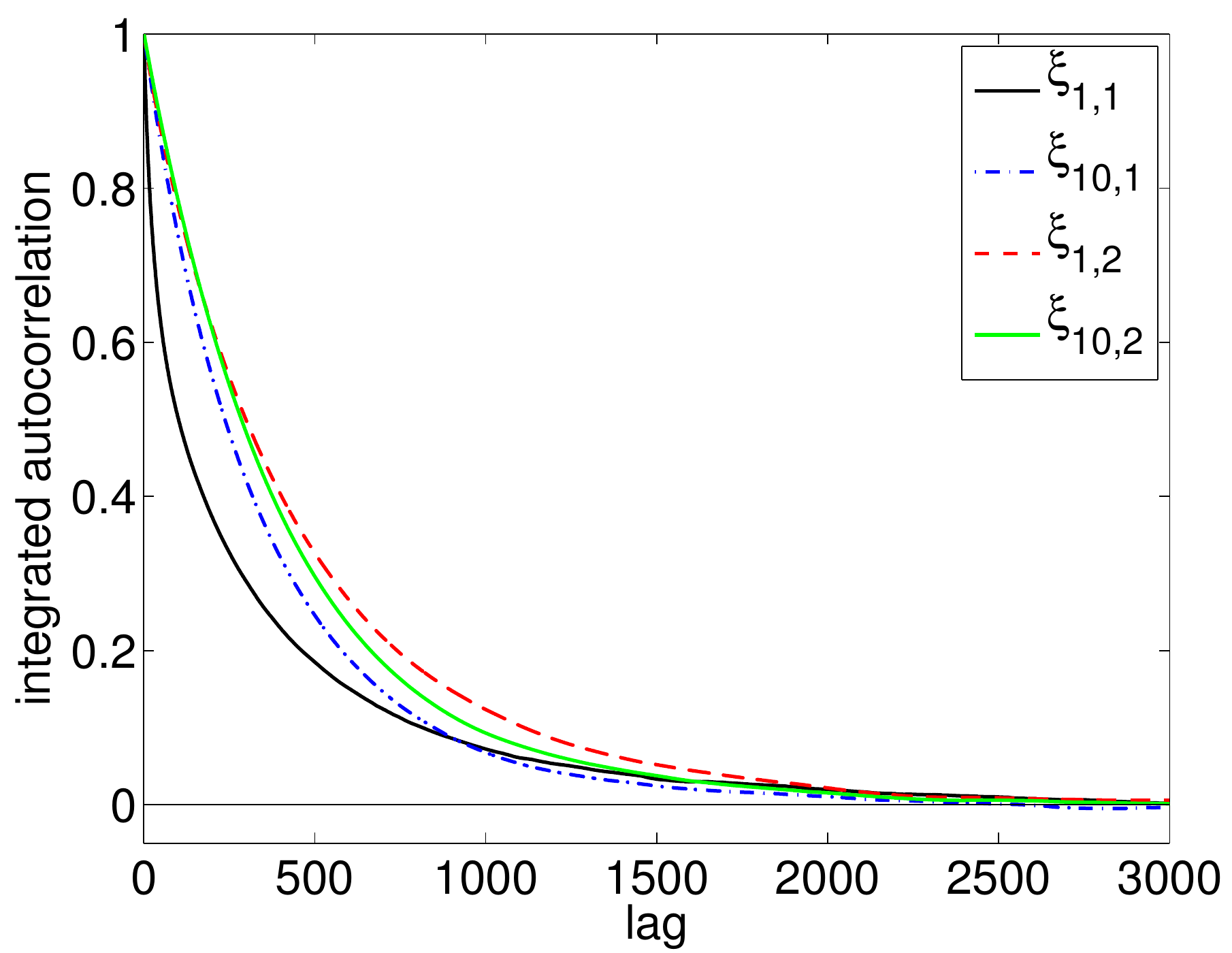}
 \caption{Autocorrelation from one MCMC chain. Left: geometric parameters. Right: Some KL model of  $\log \kappa_{1}$ and $\log \kappa_{2}$}
    \label{Figure20}
\end{center}
\end{figure}

\begin{figure}[htbp]
\begin{center}
\includegraphics[scale=0.25]{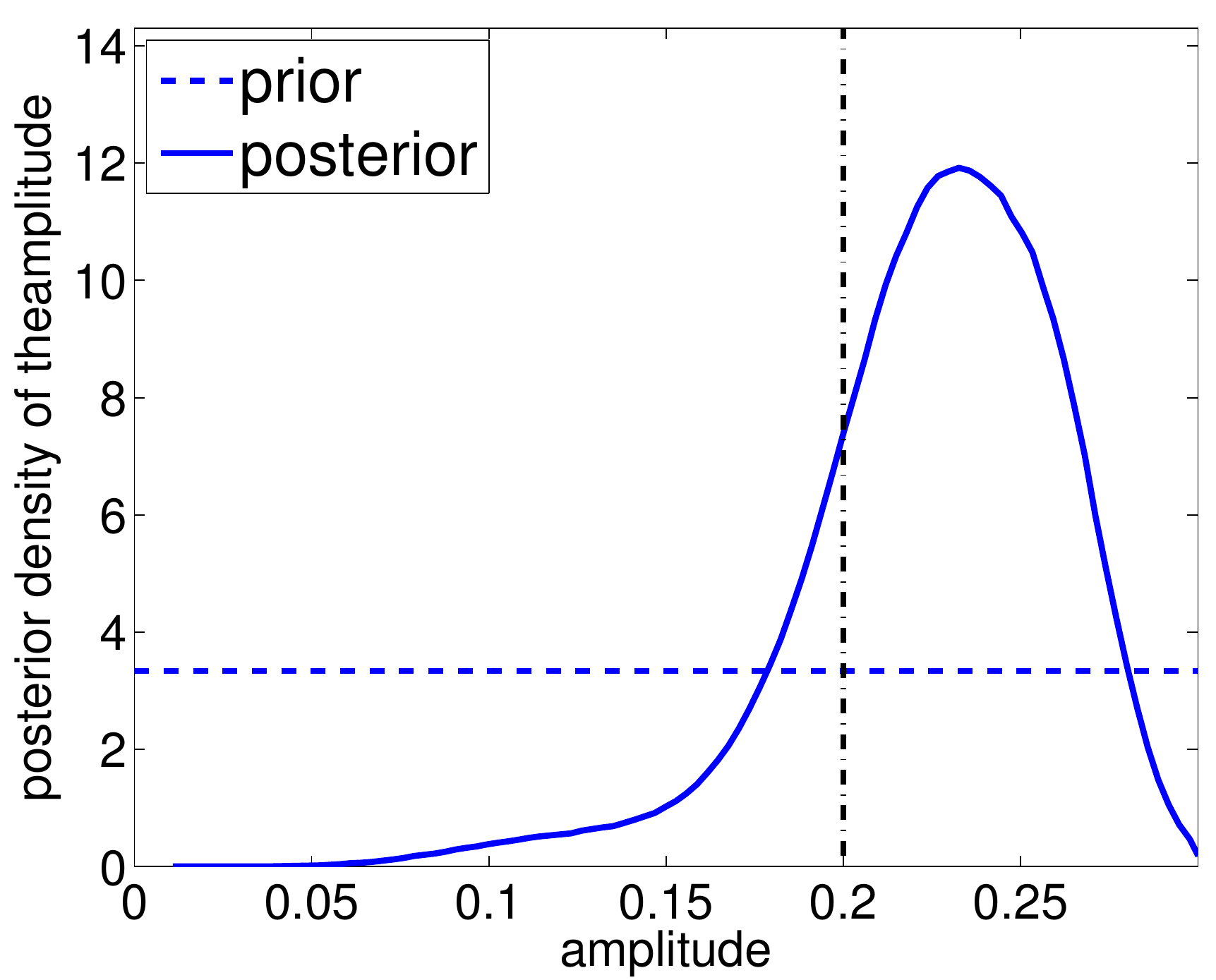}
\includegraphics[scale=0.25]{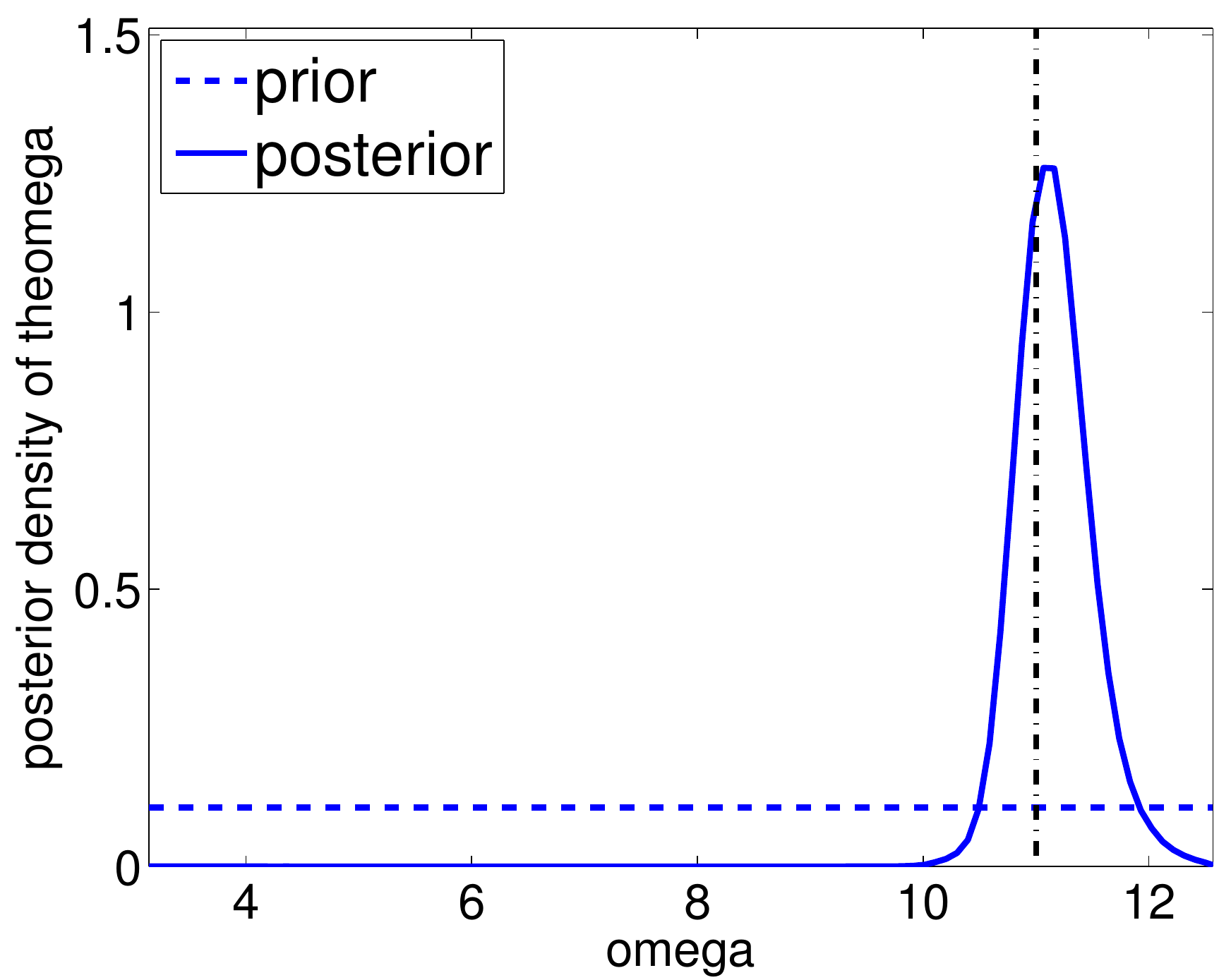}
\includegraphics[scale=0.25]{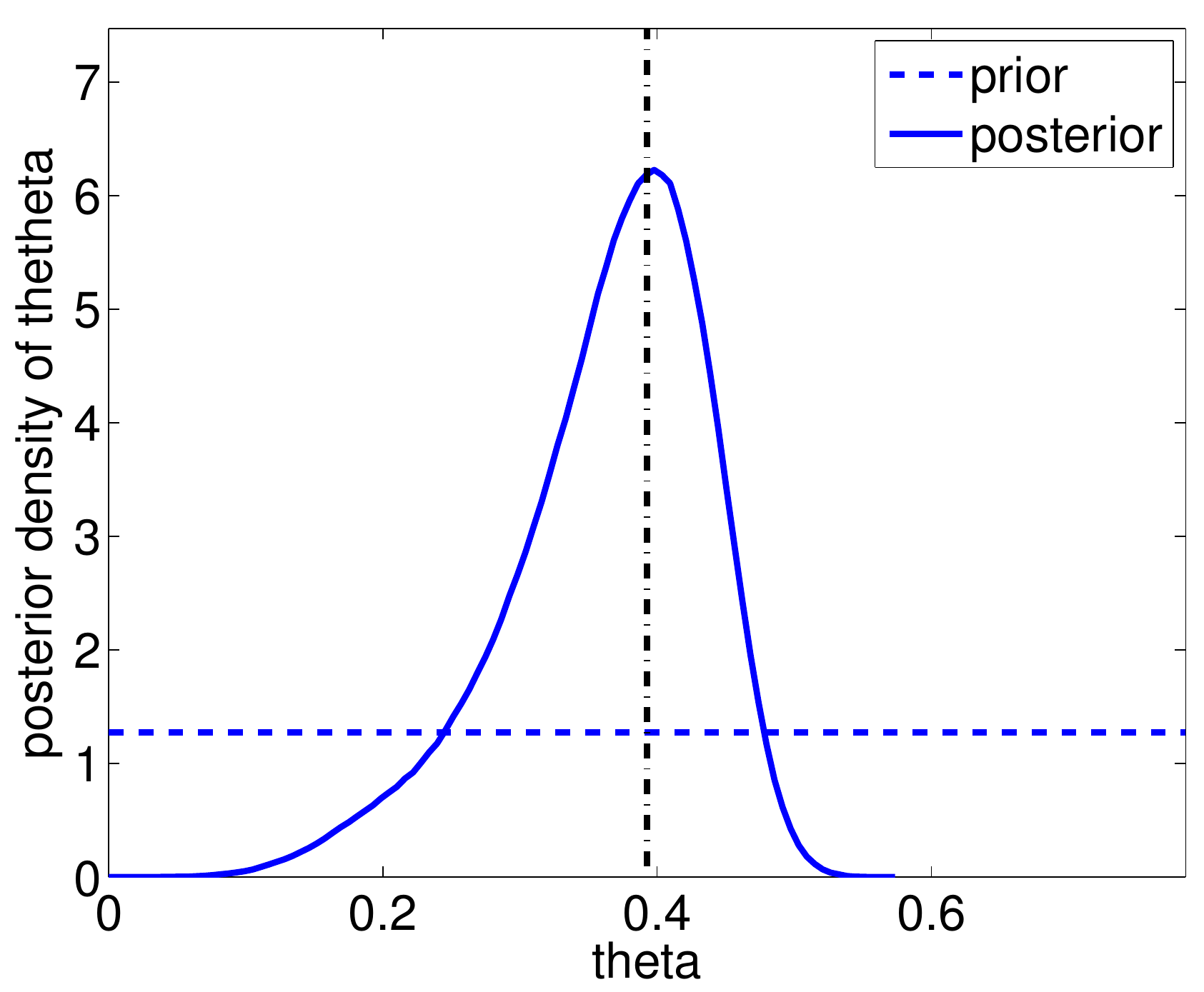}\\
\includegraphics[scale=0.25]{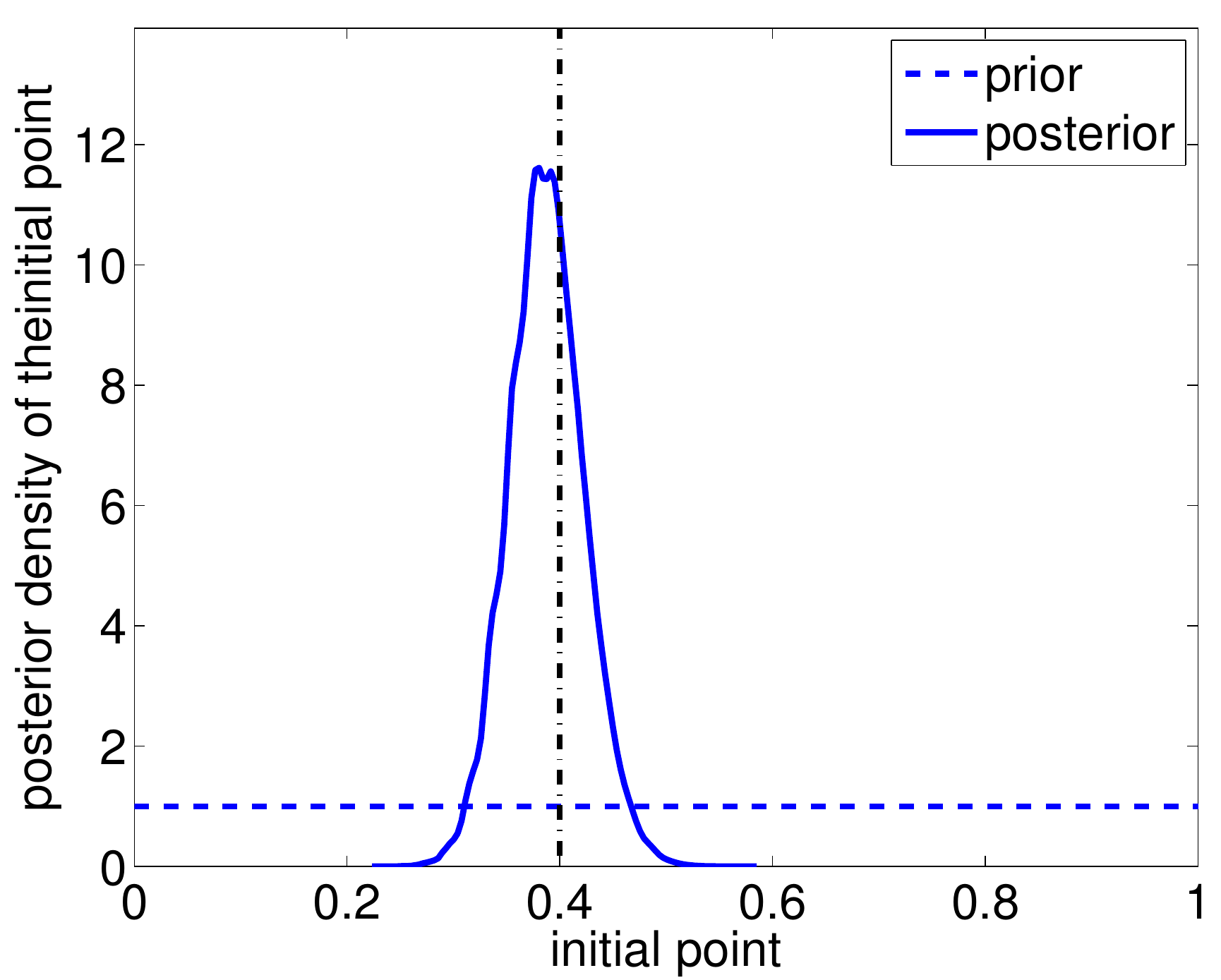}
\includegraphics[scale=0.25]{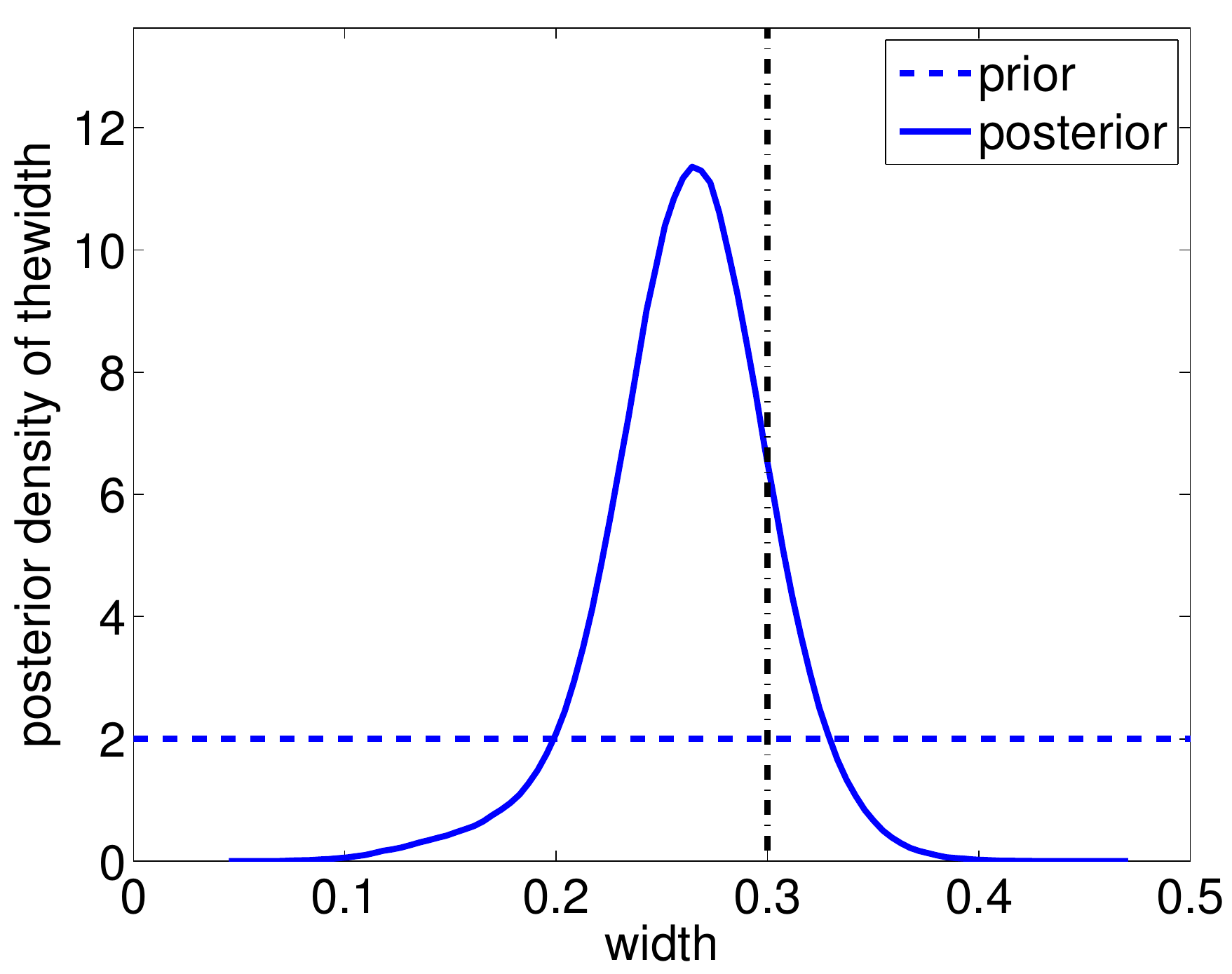}
\includegraphics[scale=0.25]{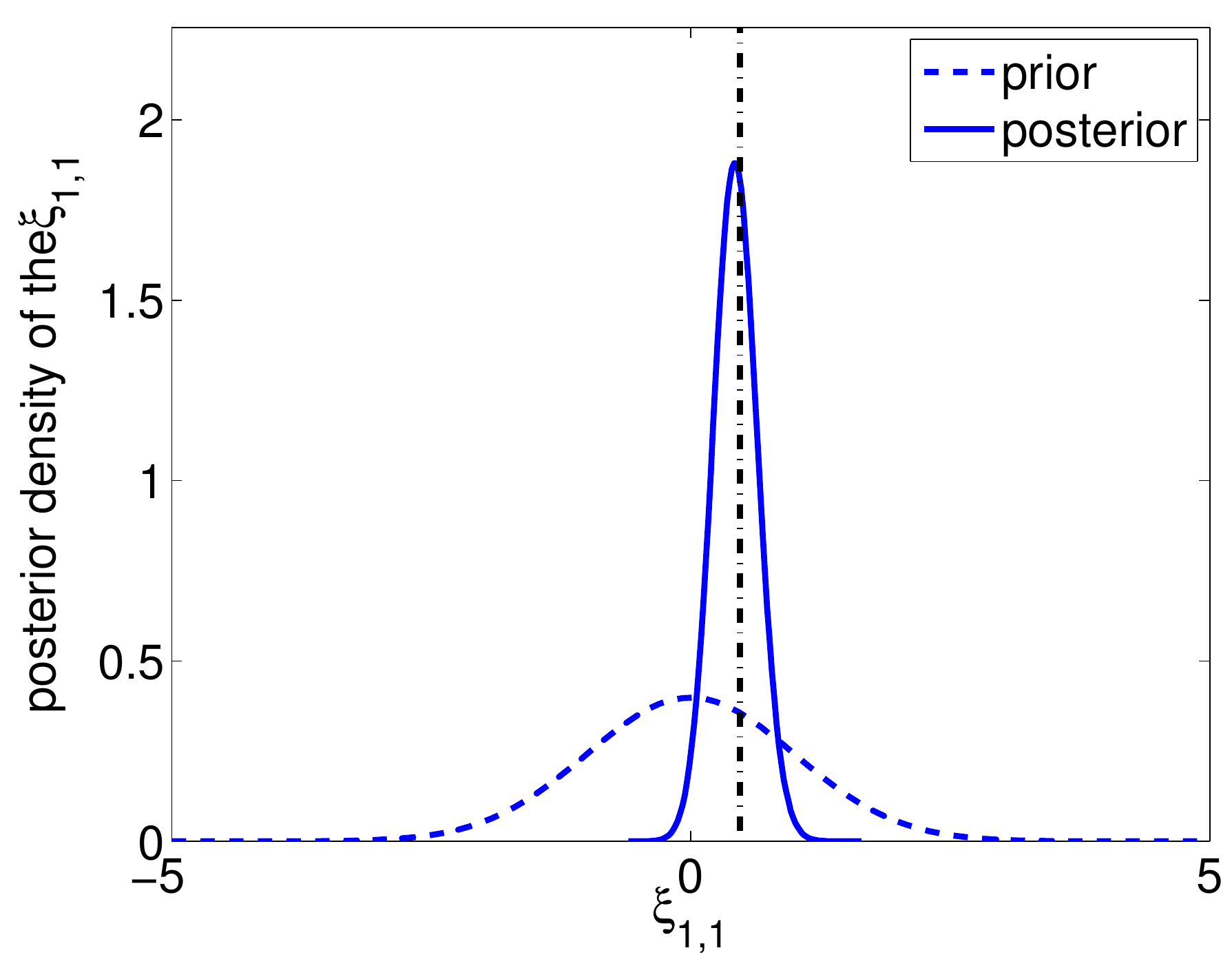}\\
\includegraphics[scale=0.25]{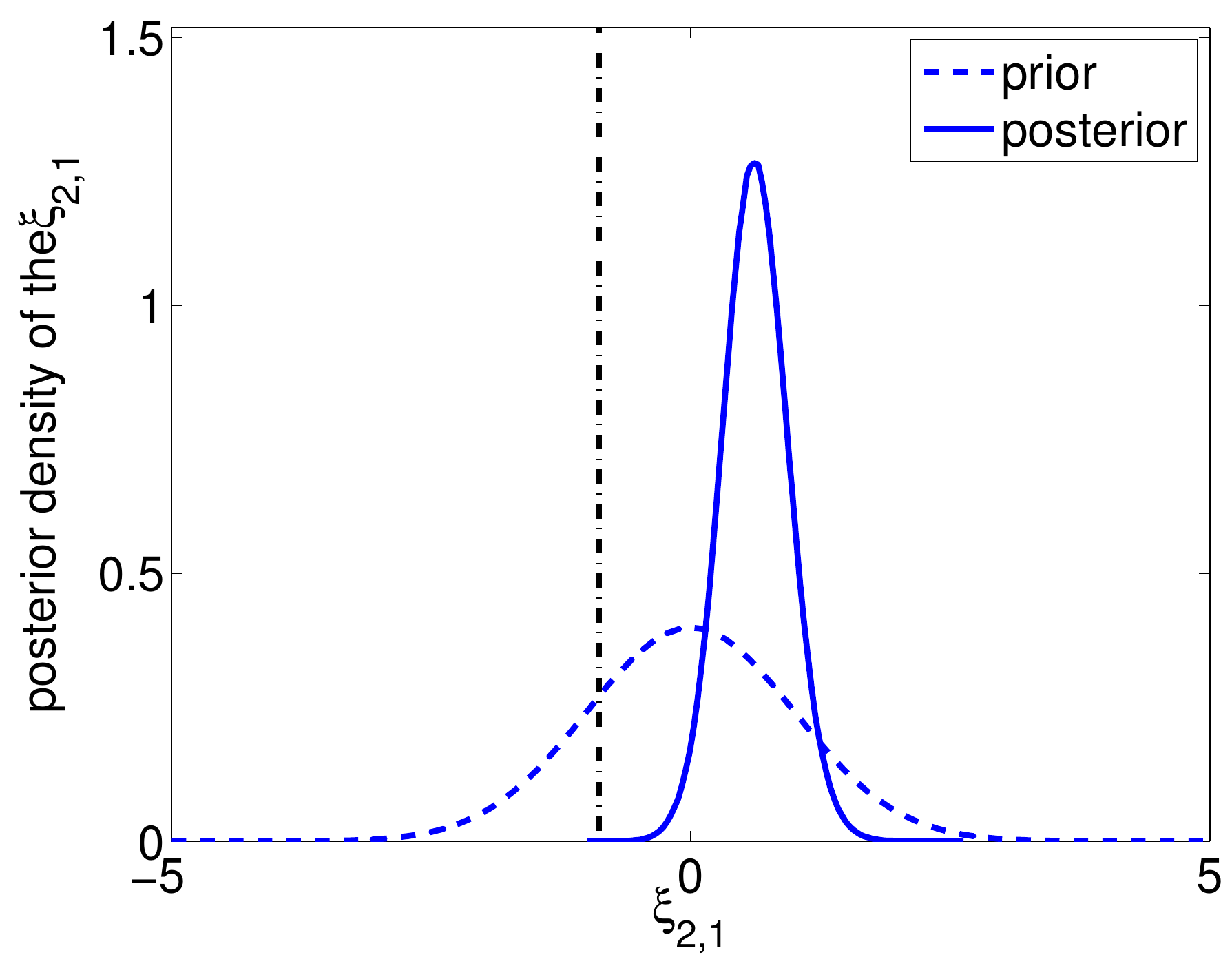}
\includegraphics[scale=0.25]{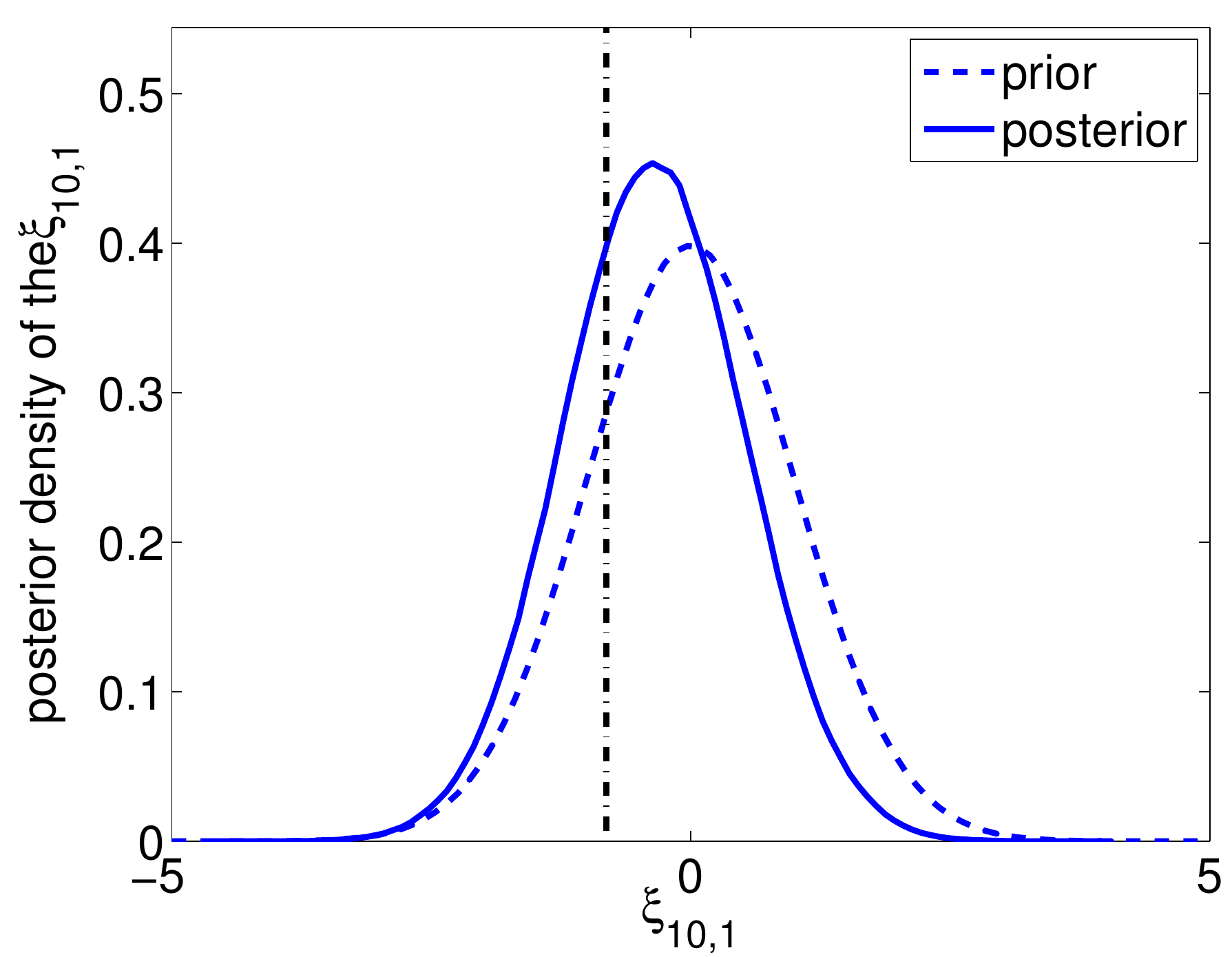}
\includegraphics[scale=0.25]{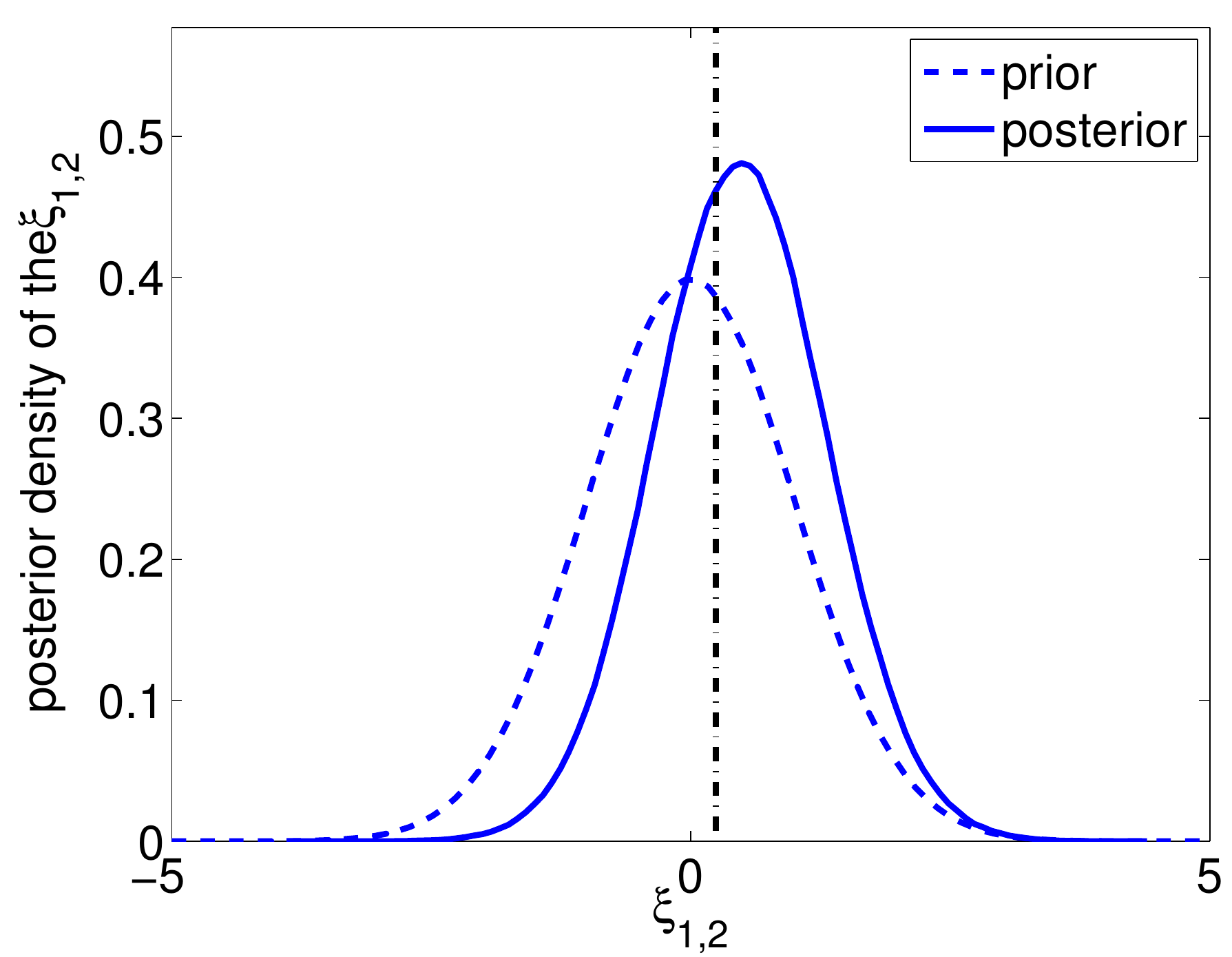}

 \caption{Prior and posterior densities of the unknown $u$}
    \label{Figure21}
\end{center}
\end{figure}

\clearpage
\section{Conclusion}

The Bayesian framework provides a rigorous quantification of the uncertainty in the solution to the inverse problem of estimating rock properties given data from the subsurface flow. A key aspect of the proposed Bayesian approach is to incorporate prior knowledge that honors geometric features relevant to the characterization of complex geologic properties of the subsurface. Although other authors have considered geometrically defined priors,
see for example \cite{Efendiev1,Mondal,Liu2005147,EnKF_level,EfendievL}, this is the first paper to give a rigorous
function-space based Bayesian formulation for such problems \cite{AMS10}.
Such formulations lead to the development of improved algorithms and allow
for rigorous estimation of the various approximation errors that necessarily
enter into the Bayesian approximation of inverse problems. In the present
work we establish the existence and well-posedness of the Bayesian posterior
that arises from determination of permeability within a Darcy flow model,
define function-space MCMC methods, which therefore have convergence rates
independent of the level of mesh-refinement used, and demonstrate the efficacy
of these methods on a variety of test problems exhibiting faults, channels
and spatial variability within different parts of the rock formations.
Particular highlights of the work include: (i) the introduction of a novel
Metropolis-within-Gibbs methodology which separates the effect of
parameters describing the geometry from those describing spatial
variability to accelerate convergence and does not require adjoint
solves, only forward model runs; (ii) demonstration of choices of prior
on the permeability values which lead to H\"older, but not Lipshitz,
continuity of the posterior distribution with respect to perturbations
in the data.

Our results indicate that the mean of the posterior often produce parameters whose permeabilities resemble the truth. However, substantial uncertainty in the inverse problem arises from the observational noise and the lack of observations. Increasing the accuracy in the data or increasing the measurement locations resulted in a significant decrease in the uncertainty in the inverse problem. In other words, we obtained posterior densities concentrated around the truth. In contrast to deterministic inverse problems where a variational optimization method is implemented to recover the truth, the proposed application of the Bayesian framework provides a derivative free method that produces a reasonable estimate of the truth alongside with an accurate estimate of its uncertainty. The present study indicates that the Bayesian framework herein, and resulting algorithmic approaches, have the potential to be applied to more complex
flow models and geometries arising in subsurface applications where
uncertainty quantification is required.

There are a number of natural directions in which this work might be
extended. As mentioned earlier in the text, the study of model error
is potentially quite fruitful: there is substantial gain in computational
expediency stemming from imposing simple models of the geometry; determining
how this is balanced by loss of accuracy when the actual data contains more
subtle geometric effects, not captured our models, is of interest. It is
also of interest to consider implementation of reversible jump type
algorithms \cite{green1995reversible}, in cases where the number of geometric
parameters (e.g. the number of layers) is not known. And finally it will be
of interest to construct rigorous Bayesian formulation of geometric inverse
problems where the interfaces are functions, and require an infinite set of
parameters to define them. We also highlight the fact that although we
emphasize the importance of MCMC methods which are mesh-independent this does
not mean that we have identified the definitive version of such methods; indeed
it would be very interesting to combine our mesh-independent approach with
other state-of-the art ideas in MCMC sampling such as adaptivity and delayed
acceptance, which are used in the context of geophysical applications in
\cite{cui2011bayesian}, and Riemannian manifold
methods \cite{girolami2011riemann}, which also give rise to a natural
adaptivity.

\section{Appendix}
\begin{proof}[Proof of Theorem \ref{t:c}] Since $L$ comprises a finite number of linear functionals it suffices
to prove continuity of the solution $p$ to (\ref{eq:elliptic}) with
respect to changes in the parameters $u$ which define $\kappa.$
Assuming $p$ (resp. $p^{\ve} $) is the solution of (\ref{eq:elliptic}) with  $f\in V^*$ and diffusion coefficient $\kappa\in X$ (resp. $\kappa^{\ve}\in X$) we first show that
\begin{equation}
\label{eq:estimate1}
\|p^{\ve}-p\|_V\leq\frac{1}{\kappa^{\ve}_{min}}\left(\int_D |\kappa^{\ve}-\kappa|^2|\nabla p|^2 dx\right)^{\frac{1}{2}}.
\end{equation}
Define $e=p^{\ve}-p$, $d=\kappa^{\ve}-\kappa$. Then by (\ref{eq:elliptic})
we have that $e\in H_0^1(D)$, $d\in L^{\infty}(D)$ satisfy
\begin{eqnarray*}
    -\nabla\cdot(\kappa^{\ve}\nabla e)& =\nabla\cdot(d\nabla p),\quad &x \in D\nonumber\\
    \phantom{-\nabla\cdot(\kappa^{\ve}(x}e & =0,\quad &x \in \partial D.
   % \label{eq:del_equ}
\end{eqnarray*}
Integration by parts gives
$$\int_D\kappa^{\ve}\nabla e\cdot \nabla e dx=-\int_D d\nabla p\cdot \nabla e dx.$$
Hence,
  \begin{eqnarray*}
      \kappa^{\ve}_{min}\|e\|^2_V  \leq \int_D|d\nabla p\cdot \nabla e|dx
                            \leq \int_D|d||\nabla p| |\nabla e| dx
                                        \leq \left(\int_D d^2|\nabla p|^2 dx\right)^{\frac{1}{2}}\|e\|_V,
\end{eqnarray*}
therefore (\ref{eq:estimate1}) holds. Now, by (\ref{eq:k_piecewise}) we have
\begin{equation}
\kappa^{\ve}-\kappa=\sum_{i=1}^n\sum_{j=1}^n (\kappa_j^{\ve}(x)-\kappa_i(x)) \chi_{D_j^{\ve}\cap D_i}(x).
\label{eq:del_k_function}
\end{equation}
Substituting (\ref{eq:del_k_function}) into (\ref{eq:estimate1}), we obtain that

\begin{eqnarray*}
\fl  \| p^{\ve}-p\|_V & \leq \frac{1}{\kappa^{\ve}_{min}}\left(\sum_{i=1}^n\sum_{j=1}^n\int_{D_j^{\ve}\cap D_i}|\kappa_j^{\ve}(x)-\kappa_i(x)|^2|\nabla p|^2dx\right)^{\frac{1}{2}}\nonumber\\
  & \leq \frac{1}{\kappa^{\ve}_{min}}\Big(\sum_{i=1}^n\|\kappa_i^{\ve}-\kappa_i\|_{X}^2\int_{D_i^{\ve}\cap D_i}|\nabla p|^2dx  +\sum_{i\neq j}\|\kappa_j^{\ve}-\kappa_i\|_{X}^2\int_{D_j^{\ve}\cap D_i}|\nabla p|^2dx \Big)^{\frac{1}{2}}\\
  \fl & \leq \frac{1}{\kappa^{\ve}_{min}}\Big(\sum_{i=1}^n\|\kappa_i^{\ve}-\kappa_i\|_{X}^2\int_{D_i^{\ve}\cap D_i}|\nabla p|^2dx +\sum_{i\neq j}2\bigl(\|\kappa_j^{\ve}\|_X^2+\|\kappa_i\|_{X}^2\bigr)\int_{D_j^{\ve}\cap D_i}|\nabla p|^2dx \Big)^{\frac{1}{2}}.
\end{eqnarray*}

The first term on the right hand side goes to zero as $\ve \to 0$ because
$p \in V$ and small changes in $u \in U \subset \cB$, measured with respect
to the norm $\|\cdot\rb$, lead to small
changes in the $\kappa_i$ in $X$ for both the constant and continuous function models.
Since Hypothesis \ref{hy:geo} holds for all these geometric models of the domains $D_i$, and because $p \in V$,
the dominated convergence
theorem shows that
$$\int_{D_j^{\ve}\cap D_i}|\nabla p|^2dx=\int_{D}\chi_{D_j^{\ve}\cap D_i}|\nabla p|^2dx
\to 0$$
as $\ve \to 0$.
Thus the second term goes to zero since $\kappa_i$ and $\kappa_j^{\ve}$ are bounded on bounded subsets of $X$.
\end{proof}

\begin{lemma}
Assume that $\kappa(x)$ is a piecewise constant function corresponding to parameter $u \in
U \subset \cB$.
Let $\Phi(u;y)$ be the model-data misfit function in (\ref{eq:potential}) and
assume that $y, y' \in Y$ with $\max\{|y|_{\Gamma},|y'|_{\Gamma}\}<r$.
If the prior $\mu_0$ on $u$ is constructed from the exponential distribution
on the values of the $\kappa_i$, then for any $\iota\in(0,1)$, $\int_U M^\iota(r,u) d\mu_0<\infty,$ and
     \begin{equation}
  \label{eq:phi_integral}
  \int_U |\Phi(u;y)-\Phi(u,y')|^{\iota}d\mu_0\leq C~|y-y'|_{\Gamma}^{\iota},
  \end{equation}
  where $C=\int_U M^\iota(r,u) d\mu_0$.
  \end{lemma}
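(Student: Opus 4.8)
The plan is to prove the moment bound $\int_U M^\iota(r,u)\,d\mu_0<\infty$ first, since then (\ref{eq:phi_integral}) follows immediately. Indeed, recall from (\ref{eq:diff_phi}) that whenever $\max\{|y|_\Gamma,|y'|_\Gamma\}<r$ we have the pointwise estimate $|\Phi(u;y)-\Phi(u;y')|\le M(r,u)\,|y-y'|_\Gamma$. Raising this to the power $\iota\in(0,1)$ and integrating against $\mu_0$ gives
\begin{equation*}
\int_U|\Phi(u;y)-\Phi(u;y')|^\iota\,d\mu_0\le\Bigl(\int_U M^\iota(r,u)\,d\mu_0\Bigr)|y-y'|_\Gamma^\iota,
\end{equation*}
which is precisely (\ref{eq:phi_integral}) with $C=\int_U M^\iota(r,u)\,d\mu_0$. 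So the entire statement reduces to finiteness of this integral.

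To bound it, I would start from the Lax--Milgram estimate (\ref{eq:lax-mil}): since $L$ consists of finitely many bounded functionals, $|\cG(u)|_\Gamma\le C_0/\min_i\kappa_i$, where $C_0$ depends only on $\|f\|_{V^*}$, the norms of the $l_j$, and $\Gamma$ — and, crucially, this bound is \emph{uniform over the geometric parameters}. Hence $M(r,u)=r+|\cG(u)|_\Gamma\le r+C_0(\min_i\kappa_i)^{-1}$. Using subadditivity of $t\mapsto t^\iota$ for $\iota\in(0,1)$, together with $(\min_i\kappa_i)^{-\iota}=\max_i\kappa_i^{-\iota}\le\sum_{i=1}^n\kappa_i^{-\iota}$, we obtain $M^\iota(r,u)\le r^\iota+C_0^\iota\sum_{i=1}^n\kappa_i^{-\iota}$. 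The integrand now depends only on the permeability values; since the prior (\ref{prioreq1}) factors as the product of the uniform geometric density (which integrates to one) and the independent exponential densities $\pi_0^i(\kappa_i)=\lambda_i e^{-\lambda_i\kappa_i}$, it suffices to bound each one-dimensional integral $\int_0^\infty\kappa_i^{-\iota}\lambda_i e^{-\lambda_i\kappa_i}\,d\kappa_i$.

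The decisive computation is this last integral: the substitution $t=\lambda_i\kappa_i$ gives $\int_0^\infty\kappa_i^{-\iota}\lambda_i e^{-\lambda_i\kappa_i}\,d\kappa_i=\lambda_i^\iota\int_0^\infty t^{-\iota}e^{-t}\,dt=\lambda_i^\iota\,\Gamma(1-\iota)$, with $\Gamma$ the Euler Gamma function, and this is finite precisely because $\iota<1$ (near $t=0$ the integrand behaves like $t^{-\iota}$, which is integrable if and only if $\iota<1$). Assembling the pieces yields $\int_U M^\iota(r,u)\,d\mu_0\le r^\iota+C_0^\iota\,\Gamma(1-\iota)\sum_{i=1}^n\lambda_i^\iota<\infty$, which is the first claim and simultaneously the value of $C$ in (\ref{eq:phi_integral}). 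The only point worth flagging is that the restriction $\iota\in(0,1)$ is genuinely sharp: the negative first moment $\mathbb{E}[\kappa_i^{-1}]$ of an exponential random variable diverges, and it is exactly this divergence at the lower support endpoint that forces the merely Hölder (rather than Lipschitz) well-posedness recorded in Theorem \ref{thm:wellposed}(ii).
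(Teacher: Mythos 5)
Your proof is correct and follows essentially the same route as the paper's: reduce everything to finiteness of $\int_U M^\iota(r,u)\,d\mu_0$ via the Lipschitz bound (\ref{eq:diff_phi}), control $|\cG(u)|_\Gamma$ by $C/\min_i\kappa_i$ from Lax--Milgram, and show the negative $\iota$-th moment of an exponential random variable is finite for $\iota<1$. The only differences are cosmetic --- you use subadditivity of $t\mapsto t^\iota$ and an exact evaluation $\lambda_i^\iota\,\Gamma(1-\iota)$ where the paper uses a case split and a cruder bound on the one-dimensional integral --- and your closing remark correctly identifies why the H\"older (rather than Lipschitz) exponent is forced.
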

  \begin{proof}
    By the Lax-Milgram Theorem, we have
     \begin{eqnarray*}
    |\cG(u)|_{\Gamma}& \leq \frac{C}{\min_i\{\kappa_i\}}= C\max_i\{\kappa_i^{-1}\},
    \end{eqnarray*}
    which yields
    \begin{eqnarray*}
\fl     \int_U M^{\iota}(r,u)d\mu_0 \leq \int_U \left(r+C\max_i\{\kappa_i^{-1}\}\right)^{\iota}d\mu_0 \nonumber\\
 \fl = \int_{\{r>C\max_i\{\kappa_i^{-1}\}\}} \left(r+C\max_i\{\kappa_i^{-1}\}\right)^{\iota}d\mu_0
 + \int_{\{r\leq C\max_i\{\kappa_i^{-1}\}\}}  \left(r+C\max_i\{\kappa_i^{-1}\}\right)^{\iota}d\mu_0\nonumber\\
\leq(2r)^{\iota}\mu_0\left({\{r>C\max_i\{\kappa_i^{-1}\}\}}\right)+(2C)^{\iota}\int_U\max_i\{\kappa_i^{-\iota}\}d\mu_0 < \infty.
    \end{eqnarray*}
    The second term of the right hand side is finite, since when $\iota \in (0,1)$,
    \begin{eqnarray*}
\fl     \int_U\max_i\{\kappa_i^{-\iota}\}d\mu_0 \leq  \int_U\sum_{i=1}^{n}\frac{1}{\kappa_i^{\iota}}d\mu_0
 =  \sum_{i=1}^n\int_0^{+\infty}\lambda_i\frac{1}{\kappa_i^{\iota}}\exp(-\lambda_i\kappa_i)d\kappa_i\\
 \leq  \sum_{i=1}^n\lambda_i\left(\int_0^{1}\frac{1}{\kappa_i^{\iota}}d\kappa_i+\int_1^{+\infty}\exp(-\lambda_i\kappa_i)d\kappa_i\right) < \infty.
    \end{eqnarray*}
Therefore, by (\ref{eq:diff_phi}) we obtain
  \begin{equation*}
  \int_U |\Phi(u;y)-\Phi(u,y')|^{\iota}d\mu_0\leq \int_U M^{\iota}(r,u)d\mu_0~|y-y'|_{\Gamma}^{\iota}.
  \end{equation*}
  \end{proof}
  \begin{lemma}
Assume that $\kappa(x)$ is a piecewise constant function corresponding to parameter $u \in
U \subset \cB$.
Let $\mu_0$ be the prior distribution on $U$
and such that the values of permeability $\kappa_i$ are drawn from the
exponential distribution. Then for any $\iota \in (0,1)$,
 $$\mu_0\left(|\Phi(u;y)-\Phi(u,y')|>1\right)\leq\int_U M^{\iota}(r,u)d\mu_0~|y-y'|_{\Gamma}^{\iota}.$$
  \label{lem:markov}
  \end{lemma}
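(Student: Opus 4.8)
The plan is to treat this as a direct application of Markov's inequality to the function $M^{\iota}(r,u)$, using the deterministic pointwise bound on the difference of misfits already recorded in the proof of Theorem \ref{thm:wellposed}. First I would recall from (\ref{eq:diff_phi}) that, for all $u \in U$ and all $y,y'$ with $\max\{|y|_{\Gamma},|y'|_{\Gamma}\}<r$,
\begin{equation*}
|\Phi(u;y)-\Phi(u;y')| \leq M(r,u)\,|y-y'|_{\Gamma}, \qquad M(r,u)=r+|\cG(u)|_{\Gamma}.
\end{equation*}
Consequently the event $\{u : |\Phi(u;y)-\Phi(u;y')|>1\}$ is contained in the event $\{u : M(r,u)\,|y-y'|_{\Gamma}>1\}$, and hence (raising to the power $\iota\in(0,1)$, which preserves the inequality since both sides are nonnegative) in $\{u : M^{\iota}(r,u)\,|y-y'|_{\Gamma}^{\iota}>1\}$.

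Next I would apply Markov's inequality to the nonnegative function $u \mapsto M^{\iota}(r,u)\,|y-y'|_{\Gamma}^{\iota}$ with respect to $\mu_0$:
\begin{equation*}
\mu_0\bigl(|\Phi(u;y)-\Phi(u;y')|>1\bigr) \leq \mu_0\bigl(M^{\iota}(r,u)\,|y-y'|_{\Gamma}^{\iota}>1\bigr) \leq \int_U M^{\iota}(r,u)\,|y-y'|_{\Gamma}^{\iota}\,d\mu_0,
\end{equation*}
and then pull the constant $|y-y'|_{\Gamma}^{\iota}$ out of the integral to obtain the claimed bound. The finiteness of $\int_U M^{\iota}(r,u)\,d\mu_0$ under the exponential prior — so that the right-hand side is genuinely a finite constant times $|y-y'|_{\Gamma}^{\iota}$ — was established in the preceding lemma (via the estimate $|\cG(u)|_{\Gamma}\le C\max_i\{\kappa_i^{-1}\}$ together with the integrability of $\kappa_i^{-\iota}$ against the exponential density near $\kappa_i=0$ precisely when $\iota<1$), so I would simply invoke that.

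There is no real obstacle here: the only subtlety, and the reason the statement is phrased with the fractional exponent $\iota$ rather than $\iota=1$, is that $M(r,u)$ itself is not $\mu_0$-integrable for the exponential prior (since $\kappa_i^{-1}$ is not integrable against $\lambda_i e^{-\lambda_i \kappa_i}$ at the origin), so the naive first-moment Markov bound would be vacuous; passing to $M^{\iota}$ with $\iota<1$ is exactly what restores integrability, and the monotonicity of $t\mapsto t^{\iota}$ is what lets the event containment survive the power. I would state this one-line caveat and conclude.
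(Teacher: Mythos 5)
Your argument is correct and is essentially the paper's own: both proofs reduce to the generalized Markov (Chebyshev) inequality with exponent $\iota$ applied to the Lipschitz bound $|\Phi(u;y)-\Phi(u;y')|\le M(r,u)\,|y-y'|_{\Gamma}$, the only cosmetic difference being that the paper applies the $\iota$-th moment inequality directly to $|y+y'-2\cG(u)|_{\Gamma}$ with threshold $2/|y-y'|_{\Gamma}$, whereas you first pass to the event containment $\{M^{\iota}(r,u)\,|y-y'|_{\Gamma}^{\iota}>1\}$ and then use the first-moment form. Your closing remark on why $\iota<1$ is needed (non-integrability of $\kappa_i^{-1}$ near the origin under the exponential prior) correctly identifies the role of the fractional exponent.
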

\begin{proof}
By the Markov inequality we have
\begin{eqnarray*}
\fl  \mu_0 \left( | \Phi(u,y) - \Phi(u,y') | > 1 \right) =  \mu_0 \left( \frac{1}{2} \left| \langle y-y' , y+y'-2\mathcal{G}(u) \rangle_{\Gamma} \right| > 1 \right) \\[0.7em]
\fl \leq  \mu_0 \left( \left| y-y' \right|_{\Gamma} \left| y+y'-2\mathcal{G}(u) \right|_{\Gamma} > 2 \right)
=  \mu_0 \left( \left| y+y'-2\mathcal{G}(u) \right|_{\Gamma} > \frac{2}{\left| y-y' \right|_{\Gamma}} \right) \\
\fl \leq  \frac{\left| y-y' \right|_{\Gamma}^{\iota}}{2^{\iota}} \int_{U} \left| y+y'-2\mathcal{G}(u) \right|_{\Gamma}^{\iota} ~\mathrm{d}\mu_0 \leq  \frac{\left| y-y' \right|_{\Gamma}^{\iota}}{2^{\iota}} \int_{U} 2^{\iota} \left( r+|\mathcal{G}(u)|_{\Gamma} \right)^{\iota} ~\mathrm{d}\mu_0 \\
\leq  \left| y-y' \right|_{\Gamma}^{\iota} \int_{U} M^{\iota}(r,u) ~\mathrm{d}\mu_0
\end{eqnarray*}
\end{proof}

\begin{proof}[Proof of Proposition \ref{thm:mh}]
In order to construct a prior-reversible proposal for density $\pi_0(u)$ on $U$
we first extend this density to a target on the whole of $\bbR^d$ by setting the density
to zero outside $U$. We then draw $w$ from $p(u,w)$ and define $v$ according
to
\begin{equation}
 v= \left\{ \begin{array}{cc} w & \mathrm{with~probability}~a(u,w) \\ u & \mathrm{otherwise}\end{array} \right. ,
 \label{eq:mh4v}
\end{equation}
where
\begin{equation*}
a(u,w) = \frac{\pi_0(w) p(w,u)}{\pi_0(u) p(u,w)} \wedge 1.
\end{equation*}
Together these steps define a Markov kernel $q(u,v)$ on $U \times U$ which
is $\pi_0$ reversible. Furthermore, as we now show, the method reduces
to that given in Proposition \ref{thm:mh}. Because of the property (\ref{eq:rev_p}) of $p$ we have that, if $u \in U$, $w \in U$ then
\begin{equation*}
a(u,w) = \frac{\pi_0(w) p(w,u)}{\pi_0(u) p(u,w)} \wedge 1 = 1.
\end{equation*}

If $u \in U$, $w \notin U$ then
\begin{equation*}
a(u,w) = \frac{\pi_0(w) p(w,u)}{\pi_0(u) p(u,w)} \wedge 1 = 0 \wedge 1 = 0.
\end{equation*}

Therefore, if $u \in U$, then
\begin{equation*}
a(u,w) = \left\{ \begin{array}{ll} 1 & w \in U \\ 0 & w \notin U \end{array} \right. ,
\end{equation*}
and the Metropolis-Hastings algorithm (\ref{eq:mh4v}) simplifies to give the process
where $w$ is drawn from $p(u,w)$ and we set
\begin{equation*}
v = \left\{ \begin{array}{ll} w & w \in U \\ u & w \notin U \end{array} \right. .
\end{equation*}
This coincides with what we construct for $q(u,v)$ in (\ref{eq:gen_v}).
Hence, the process in (\ref{eq:gen_v}) is actually generated by a
Metropolis-Hastings algorithm for target distribution $\pi_0(u)$.
By the theory of Metropolis-Hastings algorithms,
we know that $q(u,v)$ will be reversible with respect to the equilibrium distribution $\pi_0$.
\end{proof}
\section*{References}
\bibliography{geo_elliptic}
%\bibliographystyle{unsrt}
%\bibliography{}
\bibliographystyle{plain}
\nocite{*}

\end{document}